\numberwithin{equation}{section} %
\theoremstyle{plain}
\newtheorem{Thm}{Theorem}[section]
\newtheorem{Pro}[Thm]{Proposition}
\newtheorem{Lem}[Thm]{Lemma}
\newtheorem{Cor}[Thm]{Corollary}
\theoremstyle{definition}
\newtheorem{Exm}[Thm]{Example}
\newtheorem{Def}[Thm]{Definition}
\newtheorem{Rem}[Thm]{Remark}
\DeclareMathOperator{\dom}{dom}
\DeclareMathOperator{\ran}{ran}
\DeclareMathOperator{\mul}{mul}
\DeclareMathOperator{\re}{Re}
\DeclareMathOperator{\im}{Im}
\author{Jussi Behrndt and Till Micheler}
\title{Elliptic differential operators on Lipschitz domains and abstract boundary value problems}
\begin{document}
\maketitle
\begin{abstract}
  This paper consists of two parts. In the first part, which is of more abstract nature,
  the notion of quasi boundary triples and associated Weyl functions
  is developed further in such a way that it can be applied to elliptic boundary value
  problems on non-smooth domains. A key feature is the extension of the boundary
  maps by continuity to the duals of certain range spaces, which directly leads to a description
  of all self-adjoint extensions of the underlying symmetric operator with the help of
  abstract boundary values.
  In the second part of the paper a complete description is obtained
  of all self-adjoint realizations of the Laplacian
  on bounded Lipschitz domains, as well as Kre\u{\i}n type resolvent formulas
  and a spectral characterization in terms of energy dependent Dirichlet-to-Neumann
  maps. These results can be viewed as the natural generalization of recent
  results from Gesztesy and Mitrea for quasi-convex domains. In this connection we also
  characterize the maximal range spaces of the Dirichlet and Neumann trace operators on a bounded Lipschitz
  domain in terms of the Dirichlet-to-Neumann map. The general results from
  the first part of the paper are also applied to higher order elliptic operators on smooth domains,
  and particular attention is paid to the second order case which is illustrated with various examples.
\end{abstract} 
%
%
\section{Introduction}\label{sec:intro}
Spectral theory of elliptic partial differential operators has received a lot of attention in the recent past,
in particular, modern techniques from abstract operator theory were applied to extension and spectral problems
for symmetric and self-adjoint elliptic differential operators on bounded and unbounded domains. We refer the reader to the recent
contributions \cite{AmPe04,BeLa07,BeLa12,BeLaLo11,BrGrWo09,BMNW08,Gr11a,Gr11b,Gr12,Ma10} on smooth domains,  
\cite{AE11,AEKS13,GeMi08,GeMi09-2,GeMi09,Gr08,Gr11,PoRa09,P13} on non-smooth domains, and we point out
the paper \cite{GeMi11} by Gesztesy and Mitrea which has inspired parts of the present work.
Many of these contributions are based on the classical works Grubb~\cite{Gr68} and Vi\v{s}ik~\cite{Vi63}
on the parametrization of the closed realizations of a given elliptic differential expression on a smooth domain,
and other classical papers on realizations with local and non-local boundary conditions, see, e.g. \cite{AgDoNi59,BF62,Be65,Br60,F62,Sc59} and
the monograph \cite{LiMa72} by Lions and Magenes.

In \cite{GeMi11} Gesztesy and Mitrea obtain a complete description of the self-adjoint realizations of the Laplacian
on a class of bounded non-smooth, so-called \emph{quasi-convex} domains.
The key feature of quasi-convex domains is that the functions in the domains
of the self-adjoint Dirichlet realization $\Delta_D$
and the self-adjoint Neumann realization $\Delta_N$ possess $H^2$-regularity,
a very convenient property which is well-known to be false for the case of Lipschitz domains; cf.~\cite{JeKe95}.
Denote by $\tau_D$ and $\tau_N$ the Dirichlet and Neumann trace operator, respectively.
Building on earlier work of Maz'ya, Mitrea and Shaposhnikova \cite{MaMiSh10}, see also \cite{BuGe01, DuMu01, GeKr00},
the range spaces $\mathscr G_0 := \tau_D(\dom\Delta_N)$
and $\mathscr G_1 := \tau_N(\dom\Delta_D)$ were characterized for quasi-convex domains in \cite{GeMi11}, and
the self-adjoint realizations of the Laplacian were parametrized
via tuples $\{\mathscr X, L\}$, where $\mathscr X$ is a
closed subspace of the anti-dual $\mathscr G_0'$ or  $\mathscr G_1'$ and $L$ is a self-adjoint operator from $\mathscr X$ to 
$\mathscr X'$.
This parametrization technique has its roots in \cite{Bi56,Kr47} and was used in \cite{Gr68,Vi63}, see also \cite[Chapter~13]{Gr09}.
In \cite{BrGrWo09} the connection to the notion of (ordinary) boundary triples from extension theory of symmetric operators
was made explicit.

The theory of ordinary boundary triples and Weyl functions originates in the works of Ko\u{c}ube\u{\i} \cite{K75}, Bruk \cite{B76},
Gorbachuk and Gorbachuk \cite{GoGo91}, and Derkach and Malamud \cite{DeMa91,DeMa95}. A boundary triple $\{\mathcal G,\Gamma_0,\Gamma_1\}$
for a symmetric operator $A$ in a Hilbert space $\mathcal H$ consists of an auxiliary Hilbert space $\mathcal G$ and two boundary mappings
$\Gamma_0,\Gamma_1:\dom A^*\rightarrow\mathcal G$ which satisfy an abstract Green's identity and a maximality condition.
With the help of a boundary triple the closed extensions of the underlying symmetric operator $A$ can be parametrized in an efficient
way with closed operators and subspaces $\Theta$ in the boundary space $\mathcal G$.
The concept of
ordinary boundary triples was applied successfully to various problems in extension and spectral theory, in particular, in the
context of ordinary differential operators, see \cite{BGP08} for a review and further references. However, for the Laplacian (or more
general symmetric elliptic differential operators) on a domain $\Omega\subset\mathbb R^n$, $n\geq 2$, with boundary $\partial\Omega$
the natural choice $\Gamma_0=\tau_D$ and $\Gamma_1=-\tau_N$ does not lead to an ordinary boundary triple since Green's identity
does not extend to the domain of the maximal operator $A^*$. This simple observation led to a generalization of the concept
of ordinary triples, the so-called {\it quasi boundary triples}, which are designed for applications to PDE problems.
Here the boundary mappings $\Gamma_0=\tau_D$ and $\Gamma_1=-\tau_N$ are only defined on some suitable subset of $\dom A^*$, e.g.\ $H^2(\Omega)$,
and the realizations are labeled with  operators and subspaces $\Theta$ in the boundary space $L^2(\partial\Omega)$
via boundary conditions of the form $\Theta\tau_D f +\tau_N f = 0$, $f\in H^2(\Omega)$.
One of the advantages of this approach is that the Weyl function corresponding
to the quasi boundary triple $\{L^2(\partial\Omega),\tau_D,-\tau_N\}$ coincides (up to a minus sign)
with the usual family of Dirichlet-to-Neumann maps on the boundary $\partial\Omega$, and hence the spectral
properties of a fixed self-adjoint extension can be described
with the Dirichlet-to-Neumann map and the parameter $\Theta$ in the boundary condition.

The aim of the present paper is twofold. Our first objective is to further develop the abstract
notion of quasi boundary triples and their Weyl functions.
The main new feature is that we shall assume that the spaces
\[
\mathscr G_0=\ran\bigl(\Gamma_0 \upharpoonright \ker \Gamma_1\bigr)\quad\text{and}\quad
\mathscr G_1=\ran\bigl(\Gamma_1\upharpoonright\ker\Gamma_0\bigr)
\]
are reflexive Banach spaces densely embedded in the boundary space $\mathcal G$;
this assumption is natural in the context of PDE problems and related Sobolev spaces on the boundary of the domain, and is satisfied in applications
to the Laplacian on Lipschitz domains and other elliptic boundary value problems treated in the second part of the present paper.
In fact, this assumption is the abstract analog of the the properties
of the range spaces in \cite{GeMi11},
and it is also automatically satisfied in many abstract settings,
e.g. for ordinary and so-called generalized boundary triples; cf. \cite{DeMa95}
and Section~\ref{sec:counter} for a counterexample in the general case.
Under the density assumption it then follows that the boundary maps $\Gamma_0$ and $\Gamma_1$ can be extended by continuity to surjective mappings from $\dom A^*$ onto
the anti-duals $\mathscr G_1'$ and $\mathscr G_0'$, respectively.
Then also the $\gamma$-field and the Weyl function admit continuous extensions to operators mapping in between the appropriate spaces;
for the special case of generalized boundary triples and $\mathscr G_0$, $\mathscr G_1$ equipped with particular topologies this was noted in the
abstract setting earlier in
\cite[Proposition~6.3]{DeMa95} and \cite[Lemma 7.22]{DeHaMaSn12}.
Following the regularisation procedure in the PDE case we then show that a quasi boundary triple
with this additional density property can be transformed into a quasi boundary triple which is the restriction of an ordinary boundary triple, and hence
can be extended by continuity;
a similar argument can also be found in a different abstract form in \cite{DeHaMaSn12}.
As a consequence of these considerations we obtain a complete description of all
closed extensions of the underlying symmetric operator in Section~\ref{sec:extension}, as well as
abstract regularity results, Kre\u{\i}n type resolvent formulas and new sufficient criteria for the parameter $\Theta$ in the boundary condition
to imply self-adjointness of the corresponding extension.

The second objective of this paper is to apply the abstract quasi boundary triple technique to various non-local PDE problems.
In particular, in Section~\ref{sec:laplace} we
extend the characterization of the self-adjoint realizations $\Delta_\Theta$ of the Laplacian on quasi convex domains to the more natural case
of usual Lipschitz domains.
Here the Hilbert spaces $\mathscr G_0$ and $\mathscr G_1$ are topologized with the help of the Dirichlet-to-Neumann map
in a similar manner as in \cite{DeHaMaSn12, DeMa95} for abstract generalized boundary triples.
This also leads to a continuous extension of the Dirichlet and Neumann trace operators
on a Lipschitz domain to the maximal domain of the Laplacian, and hence to a description of the
Dirichlet boundary data for $L^2$-solutions of $-\Delta f=\lambda f$.
For the special case of quasi convex domains and $C^{1,r}$-domains with $r\in(\frac 12,1]$ we establish the link
to the approach in \cite{GeMi11}, and recover many of the results in \cite{GeMi11} as corollaries of the abstract methods
developed in Section~\ref{sec:qbt} and Section~\ref{sec:extension}. In
Section~\ref{sec:elliptic} we illustrate the abstract methods in the
classical case of $2m$-th order elliptic differential operators with smooth coefficients on smooth bounded domains, where
the spaces $\mathscr G_0$ and $\mathscr G_1$ coincide with the usual product Sobolev trace spaces on $\partial\Omega$.
Here e.g. some classical trace extension results follow from the abstract theory developed in the first part of the paper.
Finally, we pay particular attention to the second order case on bounded and unbounded domains with compact smooth boundary
in Section~\ref{sec:ellipticsecond}. Here we recover various recent results on the description and the spectral properties of the
self-adjoint extensions of a symmetric second order elliptic differential operator, and extend these by adding, e.g. regularity results.
This section contains also some simple examples, among them self-adjoint extensions with Robin boundary conditions.
One of the examples is also interesting from a more abstract point of view: It turns out that there exist self-adjoint parameters
in the range of the boundary maps of a quasi boundary triple such that the corresponding extension is
essentially self-adjoint, but not self-adjoint.

\vskip 0.2cm\noindent
{\bf Acknowledgement.} 
Till Micheler gratefully acknowledges financial support by the Studienstiftung des deutschen Volkes.
The authors are indebted to Fritz Gesztesy and Marius Mitrea for very valuable comments,
and to Seppo Hassi and Henk de Snoo for pointing out connections to recent abstract results. 
Moreover, the authors also wish to thank Vladimir Lotoreichik, Christian K\"{u}hn, and 
Jonathan Rohleder for many helpful discussions and remarks.

%
%
%
\section{Quasi boundary triples and their Weyl functions}\label{sec:qbt}
The concept of boundary triples and their Weyl functions is a useful and efficient tool in extension and spectral theory
of symmetric and self-adjoint operators, it originates in the works \cite{B76,K75} and was further developed in 
\cite{DeMa91,DeMa95,GoGo91}; cf. \cite{BGP08} for a review. 
In the recent past different generalizations
of the notion of boundary triples were introduced, among them  
boundary relations, boundary pairs and boundary triples associated with quadratic forms,
and other related concepts, see \cite{A00,DeHaMaSn06,DeHaMaSn09,DeHaMaSn12,Po04,Po08,P12,P13,R07,R09}.
The concept of quasi boundary triples and their Weyl functions introduced in \cite{BeLa07} is designed for 
the analysis of elliptic differential operators. It can be viewed as a slight generalization of the notions of boundary and generalized
boundary triples.  
In this section we first recall some definitions and basic properties which can be found in \cite{BeLa07,BeLa12}. 
Our main objective is to show that
under an additional density condition the corresponding boundary maps can be extended by continuity and that the 
corresponding quasi boundary triple can be transformed (or regularized) such that it turns 
into an ordinary boundary triple; cf.~\cite{DeHaMaSn12,W12,W13} for related investigations.
\subsection{Ordinary and quasi boundary triples}
Let throughout this section $A$ be a closed, densely defined, symmetric operator in a separable Hilbert space 
$\mathcal H$.
\begin{Def}\label{qbt}
Let $T\subset A^*$ be a linear operator in $\mathcal H$ such that $\overline T=A^*$.
A triple $\{\mathcal G,\Gamma_0,\Gamma_1\}$ is called \emph{quasi boundary triple} for $T$ 
  if $\mathcal G$ is a Hilbert space and $\Gamma_0,\Gamma_1:\dom T\rightarrow\mathcal G$ are linear mappings such that 
\begin{itemize}
 \item [{\rm (i)}] the \emph{abstract Green's identity}
  \begin{equation}\label{eqn:abstract_green}
    (T f,g)_{\mathcal H} - (f,Tg)_{\mathcal H} 
    = (\Gamma_1 f,\Gamma_0 g)_{\mathcal G}
    - (\Gamma_0 f,\Gamma_1 g)_{\mathcal G} 
  \end{equation}
  holds for all $f,g\in\dom T$,
 \item [{\rm (ii)}] the map $\Gamma := (\Gamma_0,\Gamma_1)^\top : \dom T \to 
  \mathcal G \times \mathcal G$ has dense range,
 \item [{\rm (iii)}] and $A_0:=T\upharpoonright \ker\Gamma_0$ is a self-adjoint operator in $\mathcal H$.
\end{itemize}
In the special case $T=A^*$ a quasi boundary triple $\{\mathcal G,\Gamma_0,\Gamma_1\}$ is called 
\emph{ordinary boundary triple}. 
\end{Def}
Let $\{\mathcal G,\Gamma_0,\Gamma_1\}$ be a quasi boundary triple for $T\subset A^*$. 
Then the mapping $\Gamma = (\Gamma_0,\Gamma_1)^\top: \dom T \to  \mathcal G \times \mathcal G$ is 
closable with respect to the graph norm of $A^*$ 
and $\ker \Gamma = \dom A$ holds; cf.~\cite[Proposition~2.2]{BeLa07}. Moreover, according to \cite[Theorem~2.3]{BeLa07} 
(see also Proposition~\ref{pro:rate} below) we have 
$T=A^*$ if and only if $\ran \Gamma = \mathcal G \times \mathcal G$, in this case 
$\Gamma= (\Gamma_0,\Gamma_1)^\top : \dom A^* \to \mathcal G \times \mathcal G$ 
is onto and continuous with respect to the graph norm of $A^*$, and the 
restriction $A_0=A^*\upharpoonright \ker\Gamma_0$ is automatically self-adjoint.
Thus, the above definition of an ordinary boundary triple coincides with the usual one, see, e.g.\ \cite{DeMa91}.
We also note that a quasi boundary triple is in general not a boundary relation in the sense of \cite{DeHaMaSn06,DeHaMaSn09}, but
it can be viewed as a certain transform of a boundary relation; cf. \cite[Proposition 5.1]{W13}.

For later purposes we recall a variant of \cite[Theorem~2.3]{BeLa07}.

\begin{Pro}\label{pro:rate}
  Let $\mathcal G$ be a Hilbert space and let $T$ be a linear operator in $\mathcal H$.
  Assume that $\Gamma_0,\Gamma_1 : \dom T \to \mathcal G$ are linear mappings such that the following conditions are satisfied:
  \begin{enumerate}[\rm(i)]
  \item $T\upharpoonright\ker \Gamma_0$ contains a self-adjoint linear operator $A$ in $\mathcal H$,
  \item The range and the kernel of $\Gamma := (\Gamma_0,\Gamma_1)^\top : \dom T \to \mathcal G \times \mathcal G$ are dense in
$\mathcal G \times \mathcal G$ and $\mathcal H$, respectively, 
  \item The abstract Green's identity \eqref{eqn:abstract_green} holds for all $f,g\in\dom T$.
  \end{enumerate}
  Then $ S := T \upharpoonright \ker \Gamma$ is a densely defined, closed symmetric operator in $\mathcal H$ and 
  $\{\mathcal G,\Gamma_0,\Gamma_1\}$ is a quasi boundary triple 
  for $S^*$ such that $A=T\upharpoonright\ker\Gamma_0=A_0$. Moreover, $T=S^*$ if and only if $\ran \Gamma = \mathcal G \times \mathcal G$.
\end{Pro}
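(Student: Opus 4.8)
The plan is to verify directly that $\{\mathcal G,\Gamma_0,\Gamma_1\}$ satisfies the three axioms of Definition~\ref{qbt} for the operator $S^*$; the real content is the identity $\overline T=S^*$, and everything else will follow from Green's identity \eqref{eqn:abstract_green} together with elementary adjoint bookkeeping.

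First I would collect the easy inclusions. Applying \eqref{eqn:abstract_green} to $f,g\in\ker\Gamma=\dom S$ makes the right-hand side vanish, so $S$ is symmetric, and it is densely defined because $\ker\Gamma$ is dense by assumption~(ii). Next, taking $g\in\ker\Gamma$ in \eqref{eqn:abstract_green} gives $(Tf,g)_{\mathcal H}=(f,Sg)_{\mathcal H}$ for all $f\in\dom T$, hence $T\subset S^*$. From assumption~(i) one has $A\subset T$, so on passing to adjoints $T^*\subset A^*=A$; combined with $\overline S\subset T^*$ (the adjoint form of $T\subset S^*$) this yields the chain $\overline S\subset T^*\subset A$. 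In particular every $v\in\dom T^*$ lies in $\dom A\subset\dom T\cap\ker\Gamma_0$ and satisfies $Tv=Av=T^*v$.

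The main step is to show $T^*=S$ (equivalently $\overline T=S^*$), and this is where I expect the only real difficulty, since it is precisely the point at which the density assumption~(ii) must be used. Fix $v\in\dom T^*$. By the previous paragraph $v\in\dom T$, $\Gamma_0v=0$, and $(Tf,v)_{\mathcal H}=(f,Av)_{\mathcal H}=(f,Tv)_{\mathcal H}$ for all $f\in\dom T$, so the left-hand side of Green's identity \eqref{eqn:abstract_green} vanishes for every $f$; using $\Gamma_0v=0$ this leaves $(\Gamma_0f,\Gamma_1v)_{\mathcal G}=0$ for all $f\in\dom T$. Now assumption~(ii) forces $\ran\Gamma_0$ to be dense in $\mathcal G$ (it is the image of $\ran\Gamma$ under the continuous coordinate projection), whence $\Gamma_1v=0$ and therefore $v\in\ker\Gamma=\dom S$. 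This proves $\dom T^*\subset\dom S$; together with $\dom S\subset\dom\overline S\subset\dom T^*$ it gives $\dom S=\dom\overline S=\dom T^*$, so $S$ is closed and $T^*=S$. Taking adjoints once more (note $T$ is densely defined, since $\dom T\supset\dom A$) yields $\overline T=T^{**}=S^*$.

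It remains to check the outstanding axioms and the final equivalence. The operator $A_0=T\upharpoonright\ker\Gamma_0$ is symmetric by \eqref{eqn:abstract_green} (both arguments have vanishing $\Gamma_0$), it extends the self-adjoint operator $A$, and hence $A_0=A$ by maximality of self-adjoint operators; this simultaneously establishes axiom~(iii) and the asserted identity $A=T\upharpoonright\ker\Gamma_0=A_0$. Axioms~(i) and~(ii) of Definition~\ref{qbt} are nothing but hypotheses~(iii) and~(ii) of the proposition, so $\{\mathcal G,\Gamma_0,\Gamma_1\}$ is a quasi boundary triple for $S^*$. Finally, now that the quasi boundary triple has been constructed, the equivalence $T=S^*\Leftrightarrow\ran\Gamma=\mathcal G\times\mathcal G$ follows from the characterization recorded after Definition~\ref{qbt} (namely \cite[Theorem~2.3]{BeLa07}), of which the present statement is the announced variant.
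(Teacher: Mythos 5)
Your proof is correct, but there is nothing in the paper to compare it with: Proposition~\ref{pro:rate} is stated without proof, being recalled as a variant of \cite[Theorem~2.3]{BeLa07}. Measured against that, your argument actually supplies more than the paper does. The steps you carry out from scratch are all sound: symmetry and dense definedness of $S$ come straight from \eqref{eqn:abstract_green} and hypothesis (ii); taking $g\in\ker\Gamma$ in Green's identity gives $T\subset S^*$; and your sandwich $S\subset\overline S\subset T^*\subset A$ (using $A\subset T\Rightarrow T^*\subset A^*=A$), combined with the observation that any $v\in\dom T^*$ lies in $\dom A\subset\ker\Gamma_0$ and hence satisfies $(\Gamma_0 f,\Gamma_1 v)_{\mathcal G}=0$ for all $f\in\dom T$, forces $\Gamma_1 v=0$ by density of $\ran\Gamma_0$ (which does follow from density of $\ran\Gamma$ under the continuous coordinate projection). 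This yields $T^*=S$, hence closedness of $S$ and $\overline T=T^{**}=S^*$ in one stroke, and the identification $A=A_0$ by maximality of self-adjoint operators within symmetric extensions is also correct. The one piece you do not prove is the final equivalence $T=S^*\Leftrightarrow\ran\Gamma=\mathcal G\times\mathcal G$, which you defer to the discussion following Definition~\ref{qbt}. Note that that discussion justifies the equivalence by citing \cite[Theorem~2.3]{BeLa07} and only ``see also'' Proposition~\ref{pro:rate} itself, so the non-circular reading of your last step is as an appeal to the external reference; this is legitimate (the paper does exactly the same for the whole proposition), but it means the genuinely nontrivial implication --- that surjectivity of $\Gamma$ forces $\dom S^*\subset\dom T$, and conversely that $T=S^*$ forces surjectivity --- is inherited from \cite{BeLa07} rather than proved.
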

Not surprisingly, suitable restrictions of ordinary boundary triples lead to quasi boundary triples.
\begin{Pro}\label{pro:qbt_restriction}
  Let $\{\mathcal G,\Gamma_0,\Gamma_1\}$ be an ordinary boundary triple for $A^*$ with $A_0=A^*\upharpoonright \ker\Gamma_0$. Let
  $T\subset A^*$ be such that $A_0\subset T$ and $\overline T = A^*$. 
  Then the restricted triple $\{\mathcal G,\Gamma_0^T,\Gamma_1^T\}$, where  
  $\Gamma_0^T := \Gamma_0\upharpoonright\dom T$ and $\Gamma_1^T := \Gamma_1\upharpoonright\dom T$ is a 
  quasi boundary triple for $T\subset A^*$.
\end{Pro}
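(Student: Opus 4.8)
The plan is to verify the three defining properties of a quasi boundary triple from Definition~\ref{qbt} for the restricted triple $\{\mathcal G,\Gamma_0^T,\Gamma_1^T\}$; the two structural requirements $T\subset A^*$ and $\overline T=A^*$ are part of the hypothesis, so nothing is to be shown there. Since $\Gamma_0^T$ and $\Gamma_1^T$ are the restrictions of $\Gamma_0$ and $\Gamma_1$ to $\dom T\subset\dom A^*$, the abstract Green's identity \eqref{eqn:abstract_green} for all $f,g\in\dom T$ is immediate: it already holds for all $f,g\in\dom A^*$ because $\{\mathcal G,\Gamma_0,\Gamma_1\}$ is an ordinary boundary triple, and the values of the boundary maps are unchanged under restriction. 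This settles condition~(i).

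For condition~(iii) I would exploit the assumption $A_0\subset T$. Since $A_0=A^*\upharpoonright\ker\Gamma_0$, this inclusion gives $\ker\Gamma_0=\dom A_0\subset\dom T$, and therefore
\[
  \ker\Gamma_0^T=\{f\in\dom T:\Gamma_0 f=0\}=\dom T\cap\ker\Gamma_0=\ker\Gamma_0 .
\]
Because $T\subset A^*$ acts as $A^*$ on its domain, it follows that $T\upharpoonright\ker\Gamma_0^T=A^*\upharpoonright\ker\Gamma_0=A_0$, which is self-adjoint by hypothesis. Hence condition~(iii) holds with the same self-adjoint operator $A_0$ as for the ordinary boundary triple.

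The main work is condition~(ii), the density of $\ran\Gamma^T$ in $\mathcal G\times\mathcal G$, where $\Gamma^T=(\Gamma_0^T,\Gamma_1^T)^\top=\Gamma\upharpoonright\dom T$. Here I would combine two facts recalled in the text: first, for an ordinary boundary triple the map $\Gamma=(\Gamma_0,\Gamma_1)^\top:\dom A^*\to\mathcal G\times\mathcal G$ is onto and continuous with respect to the graph norm of $A^*$; second, $\overline T=A^*$ means precisely that $\dom T$ is dense in $\dom A^*$ in this graph norm. Given any $(\varphi,\psi)\in\mathcal G\times\mathcal G$, surjectivity provides $f\in\dom A^*$ with $\Gamma f=(\varphi,\psi)$, and density supplies a sequence $f_n\in\dom T$ converging to $f$ in the graph norm; continuity of $\Gamma$ then yields $\Gamma^T f_n=\Gamma f_n\to\Gamma f=(\varphi,\psi)$. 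Thus every element of $\mathcal G\times\mathcal G=\ran\Gamma$ lies in the closure of $\ran\Gamma^T$, which is exactly condition~(ii).

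The only point that requires genuine care is this last step, and specifically the appeal to continuity of $\Gamma$ in the graph norm: this is what converts graph-norm approximation of elements of $\dom A^*$ by elements of $\dom T$ into approximation of the corresponding boundary values, and without it the restricted range could a priori fail to be dense. All remaining verifications are purely formal restriction arguments.
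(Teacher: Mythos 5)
Your proposal is correct and follows essentially the same route as the paper: conditions (i) and (iii) are dismissed as formal restriction arguments (you merely spell out the use of $A_0\subset T$ which the paper leaves as "clearly"), and the key density condition (ii) is established exactly as in the paper, by combining surjectivity of $\Gamma$ on $\dom A^*$, graph-norm density of $\dom T$ in $\dom A^*$ from $\overline T=A^*$, and graph-norm continuity of $\Gamma$.
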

\begin{proof}
Clearly, items (i) and (iii) in Definition~\ref{qbt} hold for the restricted triple $\{\mathcal G,\Gamma_0^T,\Gamma_1^T\}$.
Hence it remains to show that $\ran \Gamma^T=\ran(\Gamma_0^T,\Gamma_1^T)^\top$ is dense in $\mathcal G \times \mathcal G$. 
For this let $\hat x \in \mathcal G\times \mathcal G$. Then $\hat x \in \ran \Gamma$ and there exists an element 
  $f\in \dom A^*$ such that $\Gamma f = \hat x$. Since $\overline T=A^*$ there exists a sequence $(f_n)\subset\dom T$ which converges to $f$ in the graph norm of $A^*$. 
As $\Gamma$ is continuous with respect to the graph norm we obtain $\Gamma^T f_n=\Gamma f_n\rightarrow \hat x$ for $n\rightarrow\infty$,
that is, item (ii) in Definition~\ref{qbt} holds and $\{\mathcal G,\Gamma_0^T,\Gamma_1^T\}$ is a quasi boundary triple for $T\subset A^*$.
\end{proof}
The following proposition shows that the converse of Proposition~\ref{pro:qbt_restriction} holds under
an additional continuity assumption. 
In particular, it implies that 
if a quasi boundary triple can be extended to an ordinary boundary triple then this extension is unique. 
\begin{Pro}\label{pro:qbt_restriction2}
  Let $\{\mathcal G,\Gamma_0^T,\Gamma_1^T\}$ be a quasi boundary triple for $T\subset A^*$. Then 
  $\{\mathcal G,\Gamma_0^T,\Gamma_1^T\}$ is a restriction 
  of an ordinary boundary triple $\{\mathcal G,\Gamma_0,\Gamma_1\}$ for $A^*$ on $T$ if and only if the mapping 
  $\Gamma^T = (\Gamma_0^T,\Gamma_1^T)^\top : \dom T \to \mathcal G\times \mathcal G$ is continuous with respect to the graph norm of $A^*$.
\end{Pro}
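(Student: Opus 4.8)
The plan is to prove the two implications separately, with the forward direction being essentially immediate and the converse carrying the real content.

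For the necessity of the continuity condition, I would suppose that $\{\mathcal G,\Gamma_0^T,\Gamma_1^T\}$ is the restriction of an ordinary boundary triple $\{\mathcal G,\Gamma_0,\Gamma_1\}$ for $A^*$. By the discussion following Definition~\ref{qbt}, the map $\Gamma=(\Gamma_0,\Gamma_1)^\top:\dom A^*\to\mathcal G\times\mathcal G$ is continuous with respect to the graph norm of $A^*$. Since $\Gamma^T=\Gamma\upharpoonright\dom T$ and the graph norm on $\dom T$ is the restriction of the graph norm on $\dom A^*$, the map $\Gamma^T$ is continuous as a restriction of a continuous map.

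For the converse, assume that $\Gamma^T$ is continuous with respect to the graph norm of $A^*$. Since $\overline T=A^*$, the subspace $\dom T$ is dense in $\dom A^*$ with respect to the graph norm, so $\Gamma^T$ admits a unique continuous extension $\Gamma=(\Gamma_0,\Gamma_1)^\top:\dom A^*\to\mathcal G\times\mathcal G$; this is consistent with the closability of $\Gamma^T$ recorded after Definition~\ref{qbt}. First I would extend the abstract Green's identity \eqref{eqn:abstract_green} from $\dom T$ to all of $\dom A^*$: both sides are continuous with respect to the graph norm (the left-hand side through $A^*f=Tf$ and the inner product, the right-hand side through the now continuous maps $\Gamma_0,\Gamma_1$), so the identity passes to the limit along graph-norm approximating sequences from $\dom T$. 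Next I would verify the hypotheses of Proposition~\ref{pro:rate} for the operator $A^*$ equipped with the extended maps $\Gamma_0,\Gamma_1$: condition (i) holds because the self-adjoint operator $A_0=T\upharpoonright\ker\Gamma_0^T$ from property (iii) of the original triple satisfies $A_0\subset A^*\upharpoonright\ker\Gamma_0$; condition (ii) holds because $\ran\Gamma\supseteq\ran\Gamma^T$ is dense in $\mathcal G\times\mathcal G$ while $\ker\Gamma\supseteq\ker\Gamma^T=\dom A$ is dense in $\mathcal H$; and condition (iii) is the extended Green's identity. Proposition~\ref{pro:rate} then yields that $S:=A^*\upharpoonright\ker\Gamma$ is a densely defined closed symmetric operator and that $\{\mathcal G,\Gamma_0,\Gamma_1\}$ is a quasi boundary triple for $S^*$.

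The decisive step, which I expect to be the main obstacle, is to identify $S$ with the original symmetric operator $A$ and thereby upgrade the triple to an ordinary one, since continuity alone does not obviously deliver surjectivity of $\Gamma$. Here the extended Green's identity does the work: for $f\in\ker\Gamma$ and arbitrary $g\in\dom A^*$ the right-hand side of \eqref{eqn:abstract_green} vanishes, so $(A^*f,g)_{\mathcal H}=(f,A^*g)_{\mathcal H}$ for all $g\in\dom A^*$, which forces $f\in\dom (A^*)^*=\dom A$ (as $A$ is closed) with $A^*f=Af$. Together with the inclusion $\dom A=\ker\Gamma^T\subseteq\ker\Gamma$ this gives $\ker\Gamma=\dom A$, hence $S=A^*\upharpoonright\ker\Gamma=A$ and $S^*=A^*$. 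Consequently the triple produced by Proposition~\ref{pro:rate} is a quasi boundary triple for $A^*$ itself, and the ``moreover'' part of that proposition, applied with $T=A^*=S^*$, yields $\ran\Gamma=\mathcal G\times\mathcal G$. By Definition~\ref{qbt} the triple $\{\mathcal G,\Gamma_0,\Gamma_1\}$ is therefore an ordinary boundary triple for $A^*$, and by construction its restriction to $\dom T$ returns $\{\mathcal G,\Gamma_0^T,\Gamma_1^T\}$, completing the proof.
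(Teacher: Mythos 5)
Your proof is correct. The necessity direction and the opening of the sufficiency argument are exactly the paper's: extend $\Gamma^T$ by graph-norm continuity to $\Gamma:\dom A^*\to\mathcal G\times\mathcal G$ (possible since $\overline T=A^*$) and pass the abstract Green's identity \eqref{eqn:abstract_green} to the limit. You diverge in how the extended triple is then certified. The paper checks Definition~\ref{qbt} directly for the defining operator $A^*$: the extended Green's identity shows that $A^*\upharpoonright\ker\Gamma_0$ is a \emph{symmetric} extension of the self-adjoint operator $A_0=T\upharpoonright\ker\Gamma_0^T$, hence equals $A_0$, and together with the density of $\ran\Gamma$ this yields conditions (i)--(iii); since by the paper's convention an ordinary boundary triple is nothing but a quasi boundary triple with defining operator $A^*$, the proof ends there, with no need to establish surjectivity of $\Gamma$. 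You instead invoke Proposition~\ref{pro:rate}, whose hypothesis (i) asks only that $A^*\upharpoonright\ker\Gamma_0$ \emph{contain} a self-adjoint operator, and you then identify the symmetric operator $S=A^*\upharpoonright\ker\Gamma$ with $A$ by the correct argument that Green's identity tested against all $g\in\dom A^*$ forces $\ker\Gamma\subset\dom(A^*)^*=\dom A$, while $\dom A=\ker\Gamma^T\subset\ker\Gamma$; the ``moreover'' part of Proposition~\ref{pro:rate} then delivers $\ran\Gamma=\mathcal G\times\mathcal G$. Your route is somewhat longer but makes the surjectivity of $\Gamma$ explicit, so it verifies the classical notion of an ordinary boundary triple rather than only the paper's definition, whereas the paper's argument is more economical, resting on the maximality of self-adjoint operators instead of Proposition~\ref{pro:rate} and the biduality $(A^*)^*=A$. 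One small observation: your identification step is in fact automatic, because the operator you feed into Proposition~\ref{pro:rate} is the closed operator $A^*$, so the conclusion of that proposition already entails $S^*=\overline{A^*}=A^*$ and hence $S=A$.
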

\begin{proof}
  ($\Rightarrow$) Since 
  $\Gamma : \dom A^* \to \mathcal G\times \mathcal G$ is continuous with respect to the graph norm of $A^*$ the same holds for the
  restriction $\Gamma^T : \dom T \to \mathcal G\times \mathcal G$. 
  
  ($\Leftarrow$) Let $\Gamma = (\Gamma_0,\Gamma_1)^\top:\dom A^*\rightarrow\mathcal G\times\mathcal G$ be the 
  continuous extension of $\Gamma^T$ 
  with respect to the graph norm of $A^*$. Then also the abstract Green's identity extends by continuity from $\dom T$ to $\dom A^*$,
  \begin{equation}\label{absg}
    (A^* f,g)_{\mathcal H} - (f,A^*g)_{\mathcal H} 
    = (\Gamma_1 f,\Gamma_0 g)_{\mathcal G}
    - (\Gamma_0 f,\Gamma_1 g)_{\mathcal G},\quad f,g\in\dom A^*, 
  \end{equation}
  and the range of $\Gamma$ is dense in $\mathcal G\times\mathcal G$. Moreover, from \eqref{absg} it follows that 
  the operator $A^*\upharpoonright\ker\Gamma_0 $ is a symmetric extension of the self-adjoint operator $A_0=T\upharpoonright\ker\Gamma_0^T$
  and hence $A_0=A^*\upharpoonright\ker\Gamma_0 $. 
  We conclude that $\{\mathcal G,\Gamma_0,\Gamma_1\}$ is a quasi boundary
  triple for $\overline T = A^*$, that is, $\{\mathcal G,\Gamma_0,\Gamma_1\}$ is an ordinary boundary triple for $A^*$; 
  cf.~Definition~\ref{qbt}.
  Clearly, $\{\mathcal G,\Gamma_0^T,\Gamma_1^T\}$ 
  is the restriction of this ordinary boundary triple to $T$.  
\end{proof}
A simple and useful example of an ordinary and quasi boundary triple is provided in Lemma~\ref{lem:BTonNeta} below, it 
also implies the well-known fact that a boundary triple or quasi boundary triple exists if and only if $A$ has equal deficiency indices
$n_\pm(A) := \dim \ker(A^*\pm i)$, that is, if and only if $A$ admits self-adjoint extensions in $\mathcal H$. 
Recall first that for a self-adjoint extension $A_0\subset T$ of $A$ and $\eta\in\rho(A_0)$ 
the domains of $T$ and $A^*$ admit the direct sum decompositions
\begin{equation}\label{decot}
 \dom T=\dom A_0\,\dotplus\,\mathcal N_\eta(T)\quad\text{and}\quad
 \dom A^*=\dom A_0\,\dotplus\,\mathcal N_\eta(A^*),
\end{equation}
where $\mathcal N_\eta(T)=\ker(T-\eta)$ and $\mathcal N_\eta(A^*)=\ker(A^*-\eta)$. Note also that $\overline T=A^*$ implies
$\overline{\mathcal N_\eta(T)}=\mathcal N_\eta(A^*)$. 
Moreover we set 
\begin{equation*}
\widehat{\mathcal N}_\eta(T) := \bigl\{ (f_\eta, \eta  f_\eta)^\top:f_\eta\in \mathcal N_\eta(T) \bigr\},\quad
 \widehat{\mathcal N}_\eta(A^*) := \bigl\{ (f_\eta, \eta  f_\eta)^\top:f_\eta\in \mathcal N_\eta(A^*) \bigr\},
\end{equation*}
hence we may write 
$T = A_0\dotplus\widehat{\mathcal N}_\eta(T)$ and $A^* = A_0\dotplus\widehat{\mathcal N}_\eta(A^*)$.
The orthogonal projection in $\mathcal H$ onto the defect subspace $\mathcal N_\eta(A^*)$ will be denoted by $P_\eta$.

In the next lemma a special boundary triple and quasi boundary triple are constructed. The restriction $\eta\in\mathbb R$ below is
for convenience only, an example of a similar 
ordinary boundary triple with $\eta\in\mathbb C\setminus\mathbb R$
can be found in, e.g. \cite{DeMa91} or the monographs \cite{GoGo91,S12}. 
\begin{Lem}\label{lem:BTonNeta}
  Assume that the deficiency indices of $A$ are equal and
  let $\mathcal G$ be a Hilbert space with $\dim\mathcal G=n_\pm(A)$. Let
  $A_0$ be a self-adjoint extension of $A$ in $\mathcal H$, assume that there exists $\eta\in \rho(A_0)\cap\mathbb R$
  and fix a unitary operator $\varphi:\mathcal N_\eta(A^*)\rightarrow\mathcal G$. Then the following statements hold. 
  \begin{enumerate}[\rm (i)] 
  \item The triple $\{\mathcal G,\Gamma_0,\Gamma_1\}$, where 
    \begin{equation*}
    \Gamma_0 f := \varphi f_\eta \quad\text{and}\quad \Gamma_1 f := \varphi P_\eta (A_0-\eta) f_0, 
    \end{equation*}
    and $f\in\dom A^*$ is decomposed in $f=f_0+f_\eta\in\dom A_0 + \mathcal N_\eta(A^*)$, 
    is an ordinary boundary triple for $A^*$ with $A_0=A^*\upharpoonright\ker\Gamma_0$.
  \item If $T$ is an operator such that $A_0\subset T$ and $\overline T=A^*$, then the triple 
    $\{\mathcal G,\Gamma_0^T,\Gamma_1^T\}$, where 
    \begin{equation*}
    \Gamma_0^T f := \varphi f_\eta \quad\text{and}\quad \Gamma_1^T f := \varphi P_\eta (A_0-\eta) f_0,
    \end{equation*}
    and $f\in\dom T$ is decomposed in $f=f_0+f_\eta\in\dom A_0 + \mathcal N_\eta(T)$, 
    is a quasi boundary triple for $T$ with $A_0=T \upharpoonright\ker\Gamma_0^T$ and $\ran \Gamma_1^T=\ran \Gamma_1 =\mathcal G$.
  \end{enumerate}
\end{Lem}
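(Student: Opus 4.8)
The plan is to prove part (i) by checking the three defining properties of Definition~\ref{qbt} directly in the special case $T=A^*$, and then to deduce part (ii) as a restriction via Proposition~\ref{pro:qbt_restriction}. Throughout I rely on the direct sum decompositions \eqref{decot}: for every $f\in\dom A^*$ the components $f_0\in\dom A_0$ and $f_\eta\in\mathcal N_\eta(A^*)$ are uniquely determined, so $\Gamma_0,\Gamma_1$ are well-defined linear maps, and the analogous statement for $\dom T$ shows that $\Gamma_0^T,\Gamma_1^T$ are well defined; since $T\subset A^*$ gives $\mathcal N_\eta(T)\subset\mathcal N_\eta(A^*)$ and the decomposition is unique, these latter maps are precisely the restrictions of $\Gamma_0,\Gamma_1$ to $\dom T$.

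For the abstract Green's identity \eqref{eqn:abstract_green} in (i) I would insert $A^*f=A_0f_0+\eta f_\eta$ and $A^*g=A_0g_0+\eta g_\eta$ into $(A^*f,g)_{\mathcal H}-(f,A^*g)_{\mathcal H}$. Using that $A_0$ is self-adjoint and $\eta\in\mathbb R$, the ``diagonal'' contributions cancel and only the cross terms survive, leaving $((A_0-\eta)f_0,g_\eta)_{\mathcal H}-(f_\eta,(A_0-\eta)g_0)_{\mathcal H}$. On the boundary side, since $\varphi$ is unitary and $P_\eta$ is the orthogonal projection onto $\mathcal N_\eta(A^*)$ (so that $P_\eta$ may be dropped when pairing against an element of $\mathcal N_\eta(A^*)$), the two products $(\Gamma_1f,\Gamma_0g)_{\mathcal G}$ and $(\Gamma_0f,\Gamma_1g)_{\mathcal G}$ reduce to exactly the same two terms, which establishes (i). Next, $\ker\Gamma_0=\{f:f_\eta=0\}=\dom A_0$, so $A^*\upharpoonright\ker\Gamma_0=A_0$ is self-adjoint by hypothesis, giving item~(iii). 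Finally, for surjectivity of $\Gamma=(\Gamma_0,\Gamma_1)^\top$ onto $\mathcal G\times\mathcal G$ I would, given $(x_0,x_1)$, set $f_\eta:=\varphi^{-1}x_0$ and, using $\eta\in\rho(A_0)$, put $f_0:=(A_0-\eta)^{-1}\varphi^{-1}x_1\in\dom A_0$; since $\varphi^{-1}x_1\in\mathcal N_\eta(A^*)$ is fixed by $P_\eta$, the element $f=f_0+f_\eta$ satisfies $\Gamma f=(x_0,x_1)$. This yields $\ran\Gamma=\mathcal G\times\mathcal G$, which in particular gives the density in item~(ii); hence $\{\mathcal G,\Gamma_0,\Gamma_1\}$ is a quasi boundary triple for $T=A^*$, i.e.\ an ordinary boundary triple, proving~(i).

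Part (ii) then follows quickly. Because $A_0\subset T$ and $\overline T=A^*$, Proposition~\ref{pro:qbt_restriction} applies to the ordinary boundary triple obtained in (i) and shows that its restriction $\{\mathcal G,\Gamma_0^T,\Gamma_1^T\}$ is a quasi boundary triple for $T$; the identity $T\upharpoonright\ker\Gamma_0^T=A_0$ is immediate from $\ker\Gamma_0^T=\dom A_0$. It remains to compute the ranges. From the surjectivity argument in (i) one has $\ran\Gamma_1=\varphi\bigl(P_\eta\,\ran(A_0-\eta)\bigr)=\varphi\bigl(\mathcal N_\eta(A^*)\bigr)=\mathcal G$, using that $A_0-\eta$ is onto $\mathcal H$, that $P_\eta$ maps onto $\mathcal N_\eta(A^*)$, and that $\varphi$ is unitary. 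Since $\dom A_0\subset\dom T$, every value of $\Gamma_1^T$ is already attained by choosing $f=f_0\in\dom A_0$ (so $f_\eta=0$), whence $\ran\Gamma_1^T=\ran\Gamma_1=\mathcal G$.

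I do not expect a serious obstacle: the entire statement reduces to the bookkeeping of the two-term boundary form and the bijectivity of $A_0-\eta$ on $\dom A_0$. The one point requiring a little care is the claim that $\Gamma_0^T,\Gamma_1^T$ genuinely coincide with the restrictions of $\Gamma_0,\Gamma_1$, so that Proposition~\ref{pro:qbt_restriction} is applicable; this is exactly where the uniqueness in \eqref{decot} together with the inclusion $\mathcal N_\eta(T)\subset\mathcal N_\eta(A^*)$ coming from $T\subset A^*$ is used.
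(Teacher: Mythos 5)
Your proposal is correct and follows essentially the same route as the paper: the same direct computation of Green's identity via the decomposition $f=f_0+f_\eta$ using $A_0=A_0^*$ and $\eta\in\mathbb R$, surjectivity of $\Gamma$ from $\ran(A_0-\eta)=\mathcal H$, and part (ii) via Proposition~\ref{pro:qbt_restriction}. The only difference is that you spell out details the paper leaves implicit (the explicit preimage of $(x_0,x_1)$, the identification of $\Gamma_j^T$ with restrictions of $\Gamma_j$, and the verification $\ran\Gamma_1^T=\ran\Gamma_1=\mathcal G$ via $\dom A_0\subset\dom T$), all of which are sound.
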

\begin{proof} (i)
  Let $f,g\in \dom A^*$ be decomposed in the form $f=f_0 +
  f_\eta$ and $g=g_0+g_\eta$ with $f_0,g_0 \in \dom A_0$ and $f_\eta,g_\eta \in \mathcal
  N_\eta(A^*)$. Making use of $A_0=A_0^*$ and $\eta\in\mathbb R$ a straightforward computation yields
  \begin{eqnarray*}
    (A^* f,\, g)_{\mathcal H} - (f,\, A^* g)_{\mathcal H} 
    &=&  ((A_0-\eta) f_0,\, g_{\eta})_{\mathcal H} 
    - (f_\eta,\, (A_0-\eta) g_0)_{\mathcal H} \\
    &=& (\varphi P_\eta (A_0-\eta) f_0,\varphi g_{\eta})_{\mathcal G} 
    - (\varphi f_\eta, \varphi P_\eta (A_0-\eta) g_0)_{\mathcal G}\\
    &=& (\Gamma_1 f,\Gamma_0 g)_{\mathcal G} 
    - (\Gamma_0 f,\Gamma_1 g)_{\mathcal G},
  \end{eqnarray*}
  i.e., the abstract Green's identity holds. Moreover, $\Gamma_0:\dom A^*\rightarrow\mathcal G$ is surjective
  and since  $\ran(A_0-\eta)=\mathcal H$ it follows that also
  $\Gamma:\dom A^*\rightarrow\mathcal G\times\mathcal G$ is surjective. This implies that $\{\mathcal G,\Gamma_0,\Gamma_1\}$
  is an ordinary boundary triple for $A$. It is obvious that $A_0=A^*\upharpoonright\ker\Gamma_0$ holds.
  
  (ii) follows from (i) and Proposition~\ref{pro:qbt_restriction}.
\end{proof}
\subsection{Weyl functions and $\gamma$-fields of quasi boundary triples}\label{sec:weyl}
In this subsection the notion and some properties of $\gamma$-fields and Weyl functions associated to quasi boundary triples are 
briefly reviewed. Furthermore, a simple but useful description of the range of the boundary mappings is given in terms of the Weyl function
in Proposition~\ref{pro:weyl2}. 

Let $\{\mathcal G,\Gamma_0,\Gamma_1\}$ be a quasi boundary triple for $T\subset A^*$ and
let $A_0=T\upharpoonright\ker\Gamma_0$. Note that by \eqref{decot} the restriction $\Gamma_0\upharpoonright\mathcal N_\lambda(T)$
is invertible for every $\lambda\in\rho(A_0)$.

\begin{Def}\label{def:weyl} 
The \emph{$\gamma$-field} and 
  the \emph{Weyl function} corresponding to the quasi boundary triple
  $\{\mathcal G,\Gamma_0,\Gamma_1\}$ are defined by
  \begin{equation*} 
  \lambda \mapsto \gamma(\lambda):=(\Gamma_0\upharpoonright\mathcal N_{\lambda}(T))^{-1} 
  \quad\text{and}\quad \lambda \mapsto M(\lambda) := \Gamma_1 \gamma(\lambda), \quad \lambda \in \rho(A_{0}).
  \end{equation*}
\end{Def}

It follows that for $\lambda\in\rho(A_0)$ the operator $\gamma(\lambda)$ is continuous from $\mathcal G$ to $\mathcal H$ 
with dense domain $\dom \gamma(\lambda) = \ran \Gamma_0$  and range $\ran \gamma(\lambda) = \mathcal N_\lambda(T)$,
the function $\lambda \mapsto \gamma(\lambda) g$ is holomorphic on $\rho(A_0)$ for every $g\in \ran \Gamma_0$, and
the relations 
\begin{equation}\label{eqn:gamma_field} 
  \gamma(\lambda) = \left(I + (\lambda - \mu) (A_0 - \lambda)^{-1}\right ) \gamma(\mu)\quad\text{and}\quad
  \gamma (\lambda)^* = \Gamma_1 (A_{0}-\bar \lambda)^{-1} 
\end{equation}
hold for all $\lambda,\mu\in\rho(A_0)$; cf.~\cite[Proposition~2.6]{BeLa07}.
Note that $\gamma (\lambda)^*:\mathcal H\rightarrow\mathcal G$ is continuous and 
that $(\ker\gamma(\lambda)^*)^\bot=\overline{\ran\gamma(\lambda)}=\mathcal N_\lambda(A^*)$
yields the orthogonal space decomposition
\begin{equation}\label{eqn:Hdec}
  \mathcal H = \ker \gamma(\lambda)^* \oplus \mathcal N_\lambda(A^*).
\end{equation}
For $\lambda\in \rho(A_0)$ the values $M(\lambda)$ of the Weyl function are operators in $\mathcal G$ with dense domain $\ran\Gamma_0$
and range contained in $\ran\Gamma_1$. 
If, in addition, $A_1=T\upharpoonright\ker\Gamma_1$ is self-adjoint in $\mathcal H$ then $M(\lambda)$
maps $\ran\Gamma_0$ onto $\ran\Gamma_1$ for all $\lambda \in \rho(A_0) \cap \rho(A_1)$. 
Furthermore, 
$M(\lambda) \Gamma_0 f_\lambda = \Gamma_1 f_\lambda$ holds for all $f_\lambda \in \mathcal N_\lambda (T)$ and this
implies the identity
\begin{equation}\label{gammaid}
  \Gamma_1 f = M(\lambda) \Gamma_0 f+ \Gamma_1 f_0, \qquad f=f_0+f_\lambda\in\dom A_0 \dotplus\mathcal N_\lambda(T).  
\end{equation}
We also mention that for $\lambda,\mu\in\rho(A_0)$ the Weyl function is connected with the $\gamma$-field via  
\begin{equation}\label{mg}
  M(\lambda)x - M(\mu)^*x = (\lambda -\bar \mu) \gamma (\mu)^* \gamma(\lambda)x,\qquad x\in\ran\Gamma_0,
\end{equation}
and, in particular, $M(\lambda)$ is a symmetric operator in $\mathcal G$ for $\lambda\in\mathbb R\cap\rho(A_0)$. It is important to note
that
\begin{equation}\label{dominv}
 \ran\Gamma_0=\dom M(\lambda)\subset \dom M(\mu)^*,\qquad\lambda,\mu\in\rho(A_0).
\end{equation}

The subspaces $\mathscr G_0$ and $\mathscr G_1$ of $\mathcal G$ in the next definition will play a fundamental role throughout
this paper.\footnote{We emphasize that $\mathscr G_0$ and $\mathscr G_1$ in Definition~\ref{def:g0g1} do, in general, not coincide with the
spaces $\mathcal G_0=\ran\Gamma_0$ and $\mathcal G_1=\ran\Gamma_1$; this notation was used in \cite{BeLa07,BeLa12}. The symbols $\mathcal G_0$ and 
$\mathcal G_1$ will not be used in the present paper.}
\begin{Def}\label{def:g0g1}  
  Let $\{\mathcal G,\Gamma_0,\Gamma_1\}$ be a quasi boundary triple for $T\subset A^*$. 
  Then we define the spaces 
  \begin{equation*}
    \mathscr G_0 := \ran \bigl(\Gamma_0 \upharpoonright \ker\Gamma_1\bigr)\qquad\text{and}\qquad 
    \mathscr G_1 := \ran \bigl(\Gamma_1 \upharpoonright \ker\Gamma_0\bigr).
  \end{equation*}
\end{Def}
Observe that for the spaces $\mathscr G_0$ and $\mathscr G_1$ in Definition~\ref{def:g0g1} we have $\mathscr G_0 \times \mathscr G_1 \subset \ran \Gamma$. 
Note also that the second identity in \eqref{eqn:gamma_field} implies 
\begin{equation}\label{xxx}
\ran\gamma(\lambda)^*=\mathscr G_1,\qquad  \lambda\in\rho(A_0).
\end{equation}
\begin{Pro}\label{pro:weyl2}
Let $\{\mathcal G,\Gamma_0,\Gamma_1\}$ be a quasi boundary triple for $T\subset A^*$ with $A_0=T\upharpoonright\ker\Gamma_0$ and 
Weyl function $M$, and let $\mathscr G_0$ and $\mathscr G_1$ be as in Definition~{\rm\ref{def:g0g1}}.
Then the following assertions hold for all $\lambda \in \rho(A_0)$.
\begin{enumerate}[\rm(i)]
\item $M(\lambda)$ maps $\mathscr G_0$ into $\mathscr G_1$ and if, in addition, $A_1=T\upharpoonright\ker\Gamma_1$ is
  self-adjoint, then $M(\lambda)\upharpoonright\mathscr G_0$ is a bijection onto $\mathscr G_1$ for 
  $\lambda \in \rho(A_0) \cap \rho(A_1)$,
\item the range of the boundary mapping $\Gamma=(\Gamma_0,\Gamma_1)^\top$ is
  \begin{equation}
    \ran \Gamma = \left\{ \begin{pmatrix} x \\ x' \end{pmatrix} \in \ran\Gamma_0 \times \ran \Gamma_1 
    : x' = M(\lambda) x + y,\, y \in \mathscr G_1 \right\} \label{eqn:weyl2_1}
  \end{equation}
  and, in particular, $\dom M(\lambda)^* \cap \mathscr G_1^\bot = \{0\}$. 
\end{enumerate}
\end{Pro}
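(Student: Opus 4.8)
The plan is to handle the two parts separately, reducing each to the decomposition identity~\eqref{gammaid} together with the mapping properties of $\gamma(\lambda)$ and $M(\lambda)$ recalled above.

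For (i), to see $M(\lambda)\mathscr G_0\subseteq\mathscr G_1$ I would take $x\in\mathscr G_0$, write $x=\Gamma_0 f$ with $f\in\ker\Gamma_1$, and decompose $f=f_0+f_\lambda\in\dom A_0\dotplus\mathcal N_\lambda(T)$, so that $\Gamma_0 f=\Gamma_0 f_\lambda=x$. Then \eqref{gammaid} gives $0=\Gamma_1 f=M(\lambda)x+\Gamma_1 f_0$, whence $M(\lambda)x=-\Gamma_1 f_0\in\mathscr G_1$ because $f_0\in\ker\Gamma_0$. Under the additional hypothesis that $A_1$ is self-adjoint, injectivity of $M(\lambda)\upharpoonright\mathscr G_0$ is immediate: if $M(\lambda)x=0$ with $x=\Gamma_0 f_\lambda$, $f_\lambda\in\mathcal N_\lambda(T)$, then $\Gamma_1 f_\lambda=M(\lambda)\Gamma_0 f_\lambda=0$, so $f_\lambda\in\ker\Gamma_1\cap\mathcal N_\lambda(T)=\ker(A_1-\lambda)=\{0\}$ for $\lambda\in\rho(A_1)$, and hence $x=0$. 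For surjectivity onto $\mathscr G_1$ I would invoke that $M(\lambda)$ maps $\ran\Gamma_0$ onto $\ran\Gamma_1$ for $\lambda\in\rho(A_0)\cap\rho(A_1)$: given $y\in\mathscr G_1\subseteq\ran\Gamma_1$, pick $x\in\ran\Gamma_0$ with $M(\lambda)x=y$, set $f_\lambda=\gamma(\lambda)x$, and choose $g_0\in\ker\Gamma_0$ with $\Gamma_1 g_0=y$. Then $h:=f_\lambda-g_0$ satisfies $\Gamma_1 h=0$ and $\Gamma_0 h=x$, so in fact $x=\Gamma_0 h\in\mathscr G_0$, upgrading the preimage into $\mathscr G_0$ as required.

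For (ii) I would verify the two inclusions of \eqref{eqn:weyl2_1} directly. If $(x,x')^\top=\Gamma f$ with $f=f_0+f_\lambda\in\dom A_0\dotplus\mathcal N_\lambda(T)$, then trivially $x\in\ran\Gamma_0$, $x'\in\ran\Gamma_1$, and \eqref{gammaid} yields $x'=M(\lambda)x+\Gamma_1 f_0$ with $\Gamma_1 f_0\in\mathscr G_1$, giving ``$\subseteq$''. Conversely, given $x\in\ran\Gamma_0$, $y\in\mathscr G_1$ and $x'=M(\lambda)x+y\in\ran\Gamma_1$, I would put $f_\lambda=\gamma(\lambda)x\in\mathcal N_\lambda(T)$ (so $\Gamma_0 f_\lambda=x$ and $\Gamma_1 f_\lambda=M(\lambda)x$), choose $f_0\in\ker\Gamma_0$ with $\Gamma_1 f_0=y$, and set $f=f_0+f_\lambda$; then $\Gamma_0 f=x$ and $\Gamma_1 f=M(\lambda)x+y=x'$, so $(x,x')^\top\in\ran\Gamma$.

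The final assertion is where the density condition (ii) of Definition~\ref{qbt} enters, and this is the step I expect to require the most care. Since $M(\lambda)$ maps $\ran\Gamma_0$ into $\ran\Gamma_1$ and $\mathscr G_1\subseteq\ran\Gamma_1$, the constraint $x'\in\ran\Gamma_1$ in \eqref{eqn:weyl2_1} is automatic, so $\ran\Gamma=\{(x,\,M(\lambda)x+y)^\top:x\in\ran\Gamma_0,\ y\in\mathscr G_1\}$. I would then compute $(\ran\Gamma)^\bot$: a pair $(u,v)^\top$ is orthogonal to $\ran\Gamma$ precisely when $(x,u)_{\mathcal G}+(M(\lambda)x,v)_{\mathcal G}+(y,v)_{\mathcal G}=0$ for all $x\in\ran\Gamma_0$ and all $y\in\mathscr G_1$. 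Taking $x=0$ forces $v\in\mathscr G_1^\bot$, and the surviving identity $(M(\lambda)x,v)_{\mathcal G}=(x,-u)_{\mathcal G}$ for all $x\in\dom M(\lambda)=\ran\Gamma_0$ says exactly that $v\in\dom M(\lambda)^*$ with $M(\lambda)^*v=-u$. As $\ran\Gamma$ is dense we have $(\ran\Gamma)^\bot=\{0\}$, so any $v\in\mathscr G_1^\bot\cap\dom M(\lambda)^*$ yields $(u,v)^\top=0$ and in particular $v=0$, which is the claim $\dom M(\lambda)^*\cap\mathscr G_1^\bot=\{0\}$. The only delicate point is the bookkeeping of domains when passing between $M(\lambda)$ and its Hilbert-space adjoint $M(\lambda)^*$, since $M(\lambda)$ is only densely defined in $\mathcal G$.
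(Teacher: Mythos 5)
Your proof is correct and follows essentially the same route as the paper's: both parts rest on the decomposition $f=f_0+f_\lambda\in\dom A_0\dotplus\mathcal N_\lambda(T)$ together with the identity \eqref{gammaid}, and the final claim is deduced from the representation \eqref{eqn:weyl2_1} combined with the density of $\ran\Gamma$ in $\mathcal G\times\mathcal G$. The only deviations are cosmetic: you prove injectivity of $M(\lambda)\upharpoonright\mathscr G_0$ directly via $\ker\Gamma_1\cap\mathcal N_\lambda(T)=\ker(A_1-\lambda)=\{0\}$ where the paper simply cites the known bijectivity of $M(\lambda):\ran\Gamma_0\to\ran\Gamma_1$, and you spell out the orthogonal-complement computation for $\dom M(\lambda)^*\cap\mathscr G_1^\bot=\{0\}$ that the paper leaves implicit.
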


\begin{proof}
  (i) We verify $M(\lambda)x\in  \mathscr G_1$ for $x\in \mathscr G_0$. By definition of $\mathscr G_0$ there exists 
  $f_1\in \ker \Gamma_1$ such that $\Gamma_0 f_1 = x$. 
  Together with $\Gamma_0 \gamma(\lambda) x = x$ we conclude $\gamma(\lambda) x -f_1 \in \ker \Gamma_0$ 
  and 
  \begin{equation*}
  M(\lambda) x= \Gamma_1\gamma(\lambda) x = \Gamma_1 (\gamma(\lambda) x -f_1) \in \mathscr G_1.
  \end{equation*}
  Assume now that $A_1$ is self-adjoint and let $\lambda \in \rho(A_0) \cap \rho(A_1)$. Since 
  $M(\lambda) : \ran \Gamma_0 \to \ran \Gamma_1$ is a bijection it suffices to check
  that $M(\lambda)\upharpoonright\mathscr G_0$ maps onto $\mathscr G_1$.
  For $y\in \mathscr G_1$ there exists $f_0\in \ker\Gamma_0$ with $\Gamma_1 f_0=y$ and $x\in\ran\Gamma_0$ with
  $M(\lambda)x=y$. Hence we obtain
  \begin{equation*}
  \Gamma_1f_0= y=M(\lambda) x = \Gamma_1 \gamma(\lambda) x 
  \end{equation*} 
  and therefore $ \gamma(\lambda) x - f_0 \in \ker \Gamma_1$ and $\Gamma_0 ( \gamma(\lambda) x - f_0) = x \in \mathscr G_0$.
  This completes the proof of item (i).

  \noindent (ii) We show first that $\ran\Gamma$ is contained in the right hand side of \eqref{eqn:weyl2_1}.
  Let $\hat x=( x,\,x')^\top \in \ran \Gamma$ and choose $f=f_0+f_\lambda \in \dom T=\dom A_0\dotplus\mathcal N_\lambda(T)$ 
  such that $\Gamma f= \hat x$. 
  From \eqref{gammaid} and $\Gamma_0 f=x$ we conclude  
  \begin{equation*}
  x' = \Gamma_1 f = M(\lambda) \Gamma_0 f + \Gamma_1 f_0 = M(\lambda) x + y,\quad\text{where}\quad 
  y :=\Gamma_1 f_0 \in \mathscr G_1,
  \end{equation*}
  and hence $\hat x$ belongs to the right hand side of \eqref{eqn:weyl2_1}.
  
  Conversely, let $x\in\ran\Gamma_0$ and $x' = M(\lambda) x + y$ with some $y\in \mathscr G_1$. 
  Then there exist $f_0 \in \ker \Gamma_0 $ with $\Gamma_1 f_0 = y$ 
  and $f_\lambda\in \mathcal N_\lambda(T)$ with $\Gamma_0 f_\lambda = x$.
  Setting $f:=f_0+f_\lambda \in \dom T$ we find $\Gamma_0 f = x$
  and from \eqref{gammaid} we obtain 
  \[
  x' =  M(\lambda) x + y = M(\lambda) \Gamma_0 f+ \Gamma_1 f_0 = \Gamma_1 f,
  \]
  that is,  $(x,x')^\top \in \ran\Gamma$ and the identity \eqref{eqn:weyl2_1} is proved. 
  
  The remaining assertion in (ii) follows from the representation \eqref{eqn:weyl2_1} and the 
  fact that $\ran\Gamma$ is dense in $\mathcal G\times\mathcal G$.
\end{proof}
Let again $\{\mathcal G,\Gamma_0,\Gamma_1\}$ be a quasi boundary triple for $T\subset A^*$ with $A_0=T\upharpoonright\ker\Gamma_0$ and 
Weyl function $M$. 
For $\lambda\in\rho(A_0)$ define the operators
\begin{equation}\label{def:ReImM}
\begin{split}
  \re M(\lambda) &:= \frac 12     \left(M(\lambda)+M(\lambda)^*\right),\quad \dom \bigl(\re M(\lambda)\bigr) =\ran\Gamma_0,\\
  \im M(\lambda) &:= \frac{1}{2i} \left(M(\lambda)-M(\lambda)^*\right),\quad \dom \bigl(\im M(\lambda)\bigr) =\ran\Gamma_0.
\end{split}
\end{equation}
Then $M(\lambda) = \re M(\lambda) + i \im M(\lambda)$ and it follows
from \eqref{mg} that
\begin{equation*}
\im M(\lambda)=\im \lambda\,\gamma(\lambda)^*\gamma(\lambda),\qquad\lambda\in\rho(A_0),
\end{equation*}
holds. 
Hence $\im M(\lambda)$ is a densely defined, invertible bounded operator in $\mathcal G$ 
with $\ran (\im M(\lambda)) \subset \mathscr G_1$; cf.~\eqref{eqn:gamma_field}.
Therefore we may rewrite Proposition~\ref{pro:weyl2}~(ii) in the form 
\begin{equation*}
  \ran \Gamma = \left\{ \begin{pmatrix} x \\ x' \end{pmatrix} \in \ran\Gamma_0 \times \ran \Gamma_1 
  : x' = \re M(\lambda) x + y,\, y \in \mathscr G_1 \right\}.
\end{equation*}
The continuous extension of $\im M(\lambda)$ onto $\mathcal G$ is given by the closure
\begin{equation}\label{eqn:closeImWeyl}
  \overline{\im M(\lambda)}=\im \lambda\,\gamma(\lambda)^*\overline{\gamma(\lambda)},\qquad\lambda\in\rho(A_0).
\end{equation}
It is important to note that for $\lambda\in\mathbb C\setminus\mathbb R$ we have
\begin{equation}\label{eqn:kerImWeyl}
  \ker\bigl(\overline{\im M(\lambda)}\bigr)= \ker \overline{\gamma(\lambda)}=\bigl(\ran\gamma(\lambda)^*\bigr)^\bot=\mathscr G_1^\bot,
\end{equation}
which may be nontrivial; cf.~Proposition~\ref{gehtschief}.
\subsection{Extensions of boundary mappings, $\gamma$-fields and Weyl functions}\label{2.3}
Let $\{\mathcal G,\Gamma_0,\Gamma_1\}$ be a quasi boundary triple for $T\subset A^*$. In this section we investigate the 
case where the space $\mathscr G_1=\ran(\Gamma_1\upharpoonright\ker\Gamma_0)$ 
in Definition~\ref{def:g0g1} is dense in $\mathcal G$. Under this assumption we show that 
the boundary map $\Gamma_0$ and the $\gamma$-field admit continuous extensions. If, in addition,   
$\mathscr G_0=\ran(\Gamma_0\upharpoonright\ker\Gamma_1)$ is dense in $\mathcal G$ and $A_1 = T\upharpoonright \ker \Gamma_1$ is self-adjoint in $\mathcal H$ 
then also $\Gamma_1$ and the Weyl function $M$ admit continuous extensions. We point out that in general
$\mathscr G_1$ (or $\mathscr G_0$) is not dense in $\mathcal G$, see Proposition~\ref{gehtschief} for a counterexample.

The next proposition is a variant of \cite[Proposition~6.3]{DeMa95} (see also \cite[Lemma 7.22]{DeHaMaSn12}) for quasi boundary triples and 
their Weyl functions. It was proved for generalized boundary triples in \cite{DeMa95}, where the additional assumption that 
$\mathscr G_1$ is dense in $\mathcal G$ is automatically satisfied; cf.~\eqref{eqn:kerImWeyl} and 
\cite[Lemma~6.1]{DeMa95}. In the following $\mathscr G_1'$ stands for the anti-dual space of $\mathscr G_1$. 
\begin{Pro}\label{pro:DeMa95}
  Let $\{\mathcal G, \Gamma_0, \Gamma_1\}$ be a quasi boundary triple for $T\subset A^*$ with 
  Weyl function $M$, set $\Lambda := \overline{\im M(i)}$ and assume, in addition, that  
  $\mathscr G_1$ is dense in $\mathcal G$.
  Then 
  \begin{equation*}
   \mathscr G_1=\ran \Lambda^{1/2}
  \end{equation*}
  and if $\mathscr G_1$ is equipped with the norm induced by the inner product
  \begin{equation}\label{eqn:G_1_innerproduct}
    (\Lambda^{-1/2} x , \Lambda^{-1/2} y)_{\mathcal G},\qquad x, y \in \mathscr G_1,
  \end{equation}
  then the following assertions hold.
  \begin{itemize}
  \item [{\rm (i)}] $\gamma(i)$ extends to an isometry $\widetilde\gamma(i)$ from $\mathscr G_1'$ onto $\mathcal N_i(A^*)$,
  \item [{\rm (ii)}] $\im M(i)$ extends to an isometry from $\mathscr G_1'$ onto $\mathscr G_1$. 
  \end{itemize}
\end{Pro}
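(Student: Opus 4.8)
The plan is to reduce all three assertions to the polar decomposition of the bounded operator $B:=\overline{\gamma(i)}:\mathcal G\to\mathcal H$. First I would record the basic structural facts. By \eqref{eqn:closeImWeyl} we have $\Lambda=\overline{\im M(i)}=\gamma(i)^*\,\overline{\gamma(i)}=B^*B$, and \eqref{eqn:kerImWeyl} together with the density of $\mathscr G_1$ gives $\ker\Lambda=\mathscr G_1^\bot=\{0\}$; hence $B$ is injective, while $\overline{\ran B}=\overline{\ran\gamma(i)}=\mathcal N_i(A^*)$ by \eqref{eqn:Hdec}. Thus $\Lambda$ is a bounded, nonnegative, injective self-adjoint operator, so $\Lambda^{1/2}$ is injective with dense range, $\Lambda^{-1/2}$ is well defined on $\ran\Lambda^{1/2}$, and the inner product \eqref{eqn:G_1_innerproduct} is meaningful. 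Writing the polar decomposition $B=U\Lambda^{1/2}$ (so that $|B|=(B^*B)^{1/2}=\Lambda^{1/2}$), the injectivity of $B$ together with $\overline{\ran B}=\mathcal N_i(A^*)$ upgrades the partial isometry $U$ to a genuine unitary $U:\mathcal G\to\mathcal N_i(A^*)$, and accordingly $B^*=\Lambda^{1/2}U^*$.

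For the identity $\mathscr G_1=\ran\Lambda^{1/2}$ I would invoke \eqref{xxx}: $\mathscr G_1=\ran\gamma(i)^*=\ran B^*=\ran(\Lambda^{1/2}U^*)=\Lambda^{1/2}(\ran U^*)=\ran\Lambda^{1/2}$, where the last step uses that $U^*$ maps $\mathcal H$ onto $\mathcal G$. At the same time I would isolate the two unitaries on which the isometry claims rest: by the very definition of the norm in \eqref{eqn:G_1_innerproduct}, $\Lambda^{1/2}:\mathcal G\to\mathscr G_1$ is unitary, and consequently $B^*\!\upharpoonright\mathcal N_i(A^*)=\Lambda^{1/2}U^*:\mathcal N_i(A^*)\to\mathscr G_1$ is unitary as a composition of unitaries.

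Next I would set up the rigging. The embedding $\mathscr G_1\hookrightarrow\mathcal G$ is dense and continuous, since $\|x\|_{\mathcal G}=\|\Lambda^{1/2}\Lambda^{-1/2}x\|_{\mathcal G}\le\|\Lambda^{1/2}\|\,\|x\|_{\mathscr G_1}$; identifying $\mathcal G$ with its anti-dual yields a Gelfand triple $\mathscr G_1\hookrightarrow\mathcal G\hookrightarrow\mathscr G_1'$ with a dense continuous embedding $\iota:\mathcal G\to\mathscr G_1'$, $\langle\iota h,x\rangle=(h,x)_{\mathcal G}$. The decisive computation is the value of the dual norm: substituting $x=\Lambda^{1/2}g$, for which $\|x\|_{\mathscr G_1}=\|g\|_{\mathcal G}$ and $(h,x)_{\mathcal G}=(\Lambda^{1/2}h,g)_{\mathcal G}$, one obtains $\|\iota h\|_{\mathscr G_1'}=\sup_{g\neq 0}|(\Lambda^{1/2}h,g)_{\mathcal G}|/\|g\|_{\mathcal G}=\|\Lambda^{1/2}h\|_{\mathcal G}$.

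Items (i) and (ii) then become isometric-extension statements on the dense subspace $\iota(\ran\Gamma_0)\subset\mathscr G_1'$. For (i), since $U$ is isometric, $\|Bh\|_{\mathcal H}=\|\Lambda^{1/2}h\|_{\mathcal G}=\|\iota h\|_{\mathscr G_1'}$, so $\gamma(i)$ is isometric into $\mathcal N_i(A^*)$ and extends by density and completeness to an isometry $\widetilde\gamma(i):\mathscr G_1'\to\mathcal N_i(A^*)$; it is onto because $\ran\widetilde\gamma(i)$ is closed and contains the dense set $\ran\gamma(i)=\mathcal N_i(T)$. For (ii), on $\ran\Gamma_0$ one has $\im M(i)x=\Lambda x=B^*Bx$ and $\|\Lambda x\|_{\mathscr G_1}=\|\Lambda^{1/2}x\|_{\mathcal G}=\|\iota x\|_{\mathscr G_1'}$, so the same argument extends $\im M(i)$ to an isometry onto $\mathscr G_1$; equivalently this extension equals $(B^*\!\upharpoonright\mathcal N_i(A^*))\circ\widetilde\gamma(i)$, a composition of the two unitaries found above. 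I expect the main obstacle to be precisely the handling of the non-closed range of $B$: it is the density of $\mathscr G_1$ (equivalently injectivity of $B$) that turns the partial isometry in the polar decomposition into a unitary and thereby simultaneously yields $\ran B^*=\ran\Lambda^{1/2}$ and the two isometry statements; the correct identification $\|\iota h\|_{\mathscr G_1'}=\|\Lambda^{1/2}h\|_{\mathcal G}$ of the anti-dual norm is the other point requiring care, after which everything reduces to routine density and completeness.
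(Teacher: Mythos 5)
Your proof is correct, and it takes a genuinely more self-contained route than the paper's. The paper builds the same Gelfand triple from the norms $\|\Lambda^{\mp 1/2}\cdot\|_{\mathcal G}$, but it outsources the actual verification of both isometry statements to \cite[Proposition~6.3]{DeMa95}, and it runs the logic in the opposite direction: it first treats $\mathscr G:=\ran\Lambda^{1/2}$ as an abstract space, obtains (by citation) the isometric extensions $\widetilde\gamma(i):\mathscr G'\to\mathcal N_i(A^*)$ and $\widetilde\Lambda=\gamma(i)^*\widetilde\gamma(i):\mathscr G'\to\mathscr G$, and only then concludes $\mathscr G=\ran\gamma(i)^*=\mathscr G_1$ from \eqref{xxx}. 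You invert this order and supply the omitted verification: the polar decomposition $\overline{\gamma(i)}=U\Lambda^{1/2}$, with the partial isometry $U$ upgraded to a unitary from $\mathcal G$ onto $\mathcal N_i(A^*)$ by injectivity of $\Lambda$ (i.e.\ density of $\mathscr G_1$, via \eqref{eqn:kerImWeyl}) and \eqref{eqn:Hdec}, gives the range identity up front, $\mathscr G_1=\ran\gamma(i)^*=\ran\bigl(\Lambda^{1/2}U^*\bigr)=\ran\Lambda^{1/2}$, and then both assertions (i) and (ii) become compositions of the explicit unitaries $U$, $U^*\upharpoonright\mathcal N_i(A^*)$ and $\Lambda^{1/2}:\mathcal G\to\mathscr G_1$, once the anti-dual norm is identified as $\|\iota h\|_{\mathscr G_1'}=\|\Lambda^{1/2}h\|_{\mathcal G}$ (your substitution $x=\Lambda^{1/2}g$ is exactly the right computation, and it matches the paper's description of $\mathscr G'$ as the completion of $\mathcal G$ in that norm). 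What each approach buys: the paper's version is shorter by citation and stays aligned with the generalized-boundary-triple source; yours is self-contained, makes the two surjectivity claims transparent (ranges of isometries defined on complete spaces are closed, combined with density of $\mathcal N_i(T)$ in $\mathcal N_i(A^*)$ and of $\iota(\ran\Gamma_0)$ in $\mathscr G_1'$), and isolates the single operator-theoretic fact, $\ran B^*=\ran (B^*B)^{1/2}$, on which the identity $\mathscr G_1=\ran\Lambda^{1/2}$ really rests.
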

\begin{proof}
Since the space $\mathscr G_1$ is dense in $\mathcal G$ the bounded self-adjoint operator 
$\Lambda = \overline{\im M(i)} = \gamma(i)^*\overline{\gamma(i)}$ 
is injective and non-negative; cf.~\eqref{eqn:closeImWeyl} and
\eqref{eqn:kerImWeyl}. Hence $\ran \Lambda$ and $\ran \Lambda^{1/2}$ are dense in $\mathcal G$. As in the 
proof of \cite[Proposition~6.3]{DeMa95} we equip $\mathscr G := \ran \Lambda^{1/2}$ with the inner product 
\begin{equation*}
(\Lambda^{-1/2} x , \Lambda^{-1/2} y )_{\mathcal G}, \qquad x, y\in \mathscr G.
\end{equation*}
Then $\mathscr G$ is a Hilbert space which is densely embedded in $\mathcal G$ and hence gives rise to
a Gelfand triple $\mathscr G \hookrightarrow\mathcal G\hookrightarrow\mathscr G'$,
where $\mathscr G'$ is the completion of $\mathcal G$ equipped with the inner product 
$
(\Lambda^{1/2} x , \Lambda^{1/2} y )_{\mathcal G}$, $x, y\in \mathcal G$.
As in \cite[Proposition~6.3]{DeMa95} one verifies that the mapping $\gamma(i)$ admits a continuation to an isometry  
$\widetilde\gamma(i)$ from $\mathscr G'$ onto $\mathcal N_i(A^*)$ and the mapping
$\im M(i)$ admits a continuation to an isometry $\widetilde \Lambda$ from $\mathscr G'$ onto $\mathscr G$ 
with $\Lambda \subset \widetilde \Lambda = \gamma(i)^* \widetilde \gamma(i)$. This implies
$\mathscr G = \ran \gamma(i)^* = \mathscr G_1$ by \eqref{xxx} and assertions (i) and (ii) follow.
\end{proof}
The next proposition contains a simple but far-reaching observation: If $\mathscr G_1$ is dense in $\mathcal G$ and $\mathscr G_1$ is equipped
with a Hilbert or Banach space norm such that $\Gamma_1 (A_0-\bar \lambda)^{-1}:\mathcal H\to \mathscr G_1$ is continuous
then the boundary map $\Gamma_0$ can be extended by continuity onto $\dom A^*$. Although Proposition~\ref{pro:DeMa95} provides
a possible norm on $\mathscr G_1$  it is essential for later applications to allow other norms
which are a priori not connected with the Weyl function.
\begin{Pro}\label{pro:Gamma_0_extension}
Let $\{\mathcal G,\Gamma_0,\Gamma_1\}$ be a quasi boundary triple for $T\subset A^*$ with 
$A_0 = T\upharpoonright\ker\Gamma_0$ and assume, in addition, that $\mathscr G_1$ is dense in $\mathcal G$.
Then for any norm $\|\cdot\|_{\mathscr G_1}$ such that 
\begin{enumerate}[\rm (i)]
\item $(\mathscr G_1,\|\cdot \|_{\mathscr G_1})$ is a reflexive Banach space continuously embedded in $\mathcal G$ and
\item the operator 
  \begin{equation*}
  \Gamma_1 (A_0-\bar \lambda)^{-1}:\mathcal H\to \mathscr G_1
  \end{equation*}
  is continuous for some, and hence for all, $\lambda \in \rho(A_0)$,
\end{enumerate}
hold
the boundary mapping $\Gamma_0$ admits a unique surjective, continuous extension
\begin{equation*}\label{wtg0}
  \widetilde \Gamma_0 :(\dom A^*,\Vert\cdot\Vert_{A^*})\to \mathscr G_1',
\end{equation*}
where $\mathscr G_1'$ is the anti-dual space of $\mathscr G_1$.
Moreover, the norm $|\!|\!| \cdot |\!|\!|_{\mathscr G_1}$ induced by the inner product 
\eqref{eqn:G_1_innerproduct} 
is equivalent to any norm $\|\cdot \|_{\mathscr G_1}$ on $\mathscr G_1$ with the properties {\rm (i)-(ii)}.
\end{Pro}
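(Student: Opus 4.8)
The plan is to establish continuity of $\Gamma_0$ as a map into $\mathscr G_1'$ and then extend by density, working throughout in the Gelfand triple $\mathscr G_1 \hookrightarrow \mathcal G \hookrightarrow \mathscr G_1'$ furnished by (i), under which an element $v\in\mathcal G$ acts on $\mathscr G_1$ via $\xi\mapsto(v,\xi)_{\mathcal G}$; in particular $\Gamma_0 f\in\mathcal G$ is automatically an element of $\mathscr G_1'$. First I would fix $\lambda\in\rho(A_0)$ and record from \eqref{eqn:gamma_field} and \eqref{xxx} that $\gamma(\lambda)^*=\Gamma_1(A_0-\bar\lambda)^{-1}$ with $\ran\gamma(\lambda)^*=\mathscr G_1$, so assumption (ii) says precisely that $\gamma(\lambda)^*:\mathcal H\to\mathscr G_1$ is continuous; this passes from one $\lambda$ to all $\lambda$ by taking adjoints in the first relation of \eqref{eqn:gamma_field}. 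The central computation is the pairing formula: for $f\in\dom T$ and any $\xi=\gamma(\lambda)^* h\in\mathscr G_1$, the adjoint relation together with $\gamma(\lambda)\Gamma_0 f=f_\lambda$ (the $\mathcal N_\lambda(T)$-component of $f$) gives $(\Gamma_0 f,\xi)_{\mathcal G}=(\gamma(\lambda)\Gamma_0 f,h)_{\mathcal H}=(f_\lambda,h)_{\mathcal H}$.

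The crux, and the step I expect to be the main obstacle, is to show that $\gamma(\lambda)^*$ restricted to $\mathcal N_\lambda(A^*)=(\ker\gamma(\lambda)^*)^\perp$ is a Banach-space isomorphism onto $\mathscr G_1$: this is exactly where (i) and (ii) must combine. It is injective by the orthogonal decomposition \eqref{eqn:Hdec}, surjective onto $\ran\gamma(\lambda)^*=\mathscr G_1$, and continuous by (ii); since $\mathcal N_\lambda(A^*)$ is a closed subspace of $\mathcal H$ and $\mathscr G_1$ is complete by (i), the bounded inverse theorem yields a constant $C$ with $\|h\|_{\mathcal H}\le C\|\xi\|_{\mathscr G_1}$ for the unique $h\in\mathcal N_\lambda(A^*)$ representing a given $\xi$. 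Combining this with the pairing formula and the graph-norm continuity of the skew projection $f\mapsto f_\lambda=f-(A_0-\lambda)^{-1}(T-\lambda)f$ onto the defect part, I obtain $|(\Gamma_0 f,\xi)_{\mathcal G}|=|(f_\lambda,h)_{\mathcal H}|\le C\|f_\lambda\|_{\mathcal H}\|\xi\|_{\mathscr G_1}\le C'\|f\|_{A^*}\|\xi\|_{\mathscr G_1}$, hence $\|\Gamma_0 f\|_{\mathscr G_1'}\le C'\|f\|_{A^*}$ on $\dom T$. Because $\overline T=A^*$, the domain $\dom T$ is dense in $(\dom A^*,\|\cdot\|_{A^*})$, so $\Gamma_0$ extends uniquely by continuity to $\widetilde\Gamma_0:\dom A^*\to\mathscr G_1'$; passing to the limit in the pairing formula gives $\langle\widetilde\Gamma_0 f,\gamma(\lambda)^* h\rangle_{\mathscr G_1',\mathscr G_1}=(f_\lambda,h)_{\mathcal H}$ for all $f\in\dom A^*$ and $h\in\mathcal N_\lambda(A^*)$, where now $f_\lambda$ denotes the $\mathcal N_\lambda(A^*)$-component of $f$.

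For surjectivity I would, given $\psi\in\mathscr G_1'$, consider the functional $h\mapsto\langle\psi,\gamma(\lambda)^* h\rangle$ on the Hilbert space $\mathcal N_\lambda(A^*)$, which is anti-linear and bounded by (ii), represent it by Riesz as $(f,\cdot)_{\mathcal H}$ with $f\in\mathcal N_\lambda(A^*)\subset\dom A^*$, and conclude from the extended pairing formula that $\widetilde\Gamma_0 f=\psi$, since the elements $\gamma(\lambda)^* h$ exhaust $\mathscr G_1$. Uniqueness of the extension is automatic, being the unique continuous extension from a dense subspace.

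Finally, for the norm equivalence I would first verify that the inner-product norm $|\!|\!| \cdot |\!|\!|_{\mathscr G_1}$ from \eqref{eqn:G_1_innerproduct} itself satisfies (i)–(ii): it is a Hilbert, hence reflexive, norm, continuously and densely embedded in $\mathcal G$ because $\Lambda$ is bounded, and (ii) holds because the polar decomposition $\overline{\gamma(i)}=U\Lambda^{1/2}$ gives $|\!|\!|\gamma(i)^* h|\!|\!|_{\mathscr G_1}=\|U^* h\|_{\mathcal G}\le\|h\|_{\mathcal H}$. Then for any norm $\|\cdot\|_{\mathscr G_1}$ with properties (i)–(ii), both $(\mathscr G_1,\|\cdot\|_{\mathscr G_1})$ and $(\mathscr G_1,|\!|\!| \cdot |\!|\!|_{\mathscr G_1})$ are Banach spaces on the same underlying set, each continuously embedded in the Hausdorff space $\mathcal G$; hence the identity map between them has closed graph, and the closed graph theorem applied in both directions yields equivalence of the two norms.
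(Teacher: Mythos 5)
Your proof is correct, but its mechanics differ from the paper's in both halves, and the differences are worth recording. The common core is the observation that $S:=\Gamma_1(A_0-\bar\lambda)^{-1}=\gamma(\lambda)^*$ maps $\mathcal N_\lambda(A^*)$ bijectively onto $\mathscr G_1$, hence isomorphically by the bounded inverse theorem (the paper asserts this isomorphism directly ``by (ii)''; you make the use of completeness from (i) explicit). From there the paper passes to the Banach-space adjoint $S':\mathscr G_1'\to\mathcal H$, identifies $\ran S'=\mathcal N_\lambda(A^*)$ via the closed range theorem and $S'\upharpoonright\ran\Gamma_0=\gamma(\lambda)$, and simply \emph{defines} $\widetilde\Gamma_0 f:=(S')^{-1}f_\lambda$; with this explicit formula the extension property, continuity and surjectivity are read off at once. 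You instead prove the a priori bound $\|\Gamma_0 f\|_{\mathscr G_1'}\leq C'\|f\|_{A^*}$ on $\dom T$ through the pairing identity $(\Gamma_0 f,\gamma(\lambda)^*h)_{\mathcal G}=(f_\lambda,h)_{\mathcal H}$, extend by density (legitimate, since $\overline T=A^*$ means $\dom T$ is graph-norm dense in $\dom A^*$), and then must supply surjectivity separately, which your Riesz-representation argument in $\mathcal N_\lambda(A^*)$ does correctly; your limiting pairing formula shows your $\widetilde\Gamma_0$ coincides with the paper's $(S')^{-1}f_\lambda$. The trade-off: the paper's construction is more concrete and is reused verbatim later (e.g.\ in Lemma~\ref{lem:exgamma} and Definition~\ref{def:extweyl}), while yours isolates the estimate that actually drives the extension. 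For the norm equivalence the routes genuinely diverge: the paper proves the one-sided bound $|\!|\!|x|\!|\!|_{\mathscr G_1}\leq c'\|x\|_{\mathscr G_1}$ using the isometry from Proposition~\ref{pro:DeMa95} and then invokes (implicitly) the bounded inverse theorem, whereas your two-sided closed-graph argument uses only that both norms are complete and continuously embedded in the Hausdorff space $\mathcal G$ -- that is, property (i) alone, with (ii) never entering. This is more elementary and slightly stronger than what is claimed; your direct verification via the polar decomposition $\overline{\gamma(i)}=U\Lambda^{1/2}$ that the canonical norm from \eqref{eqn:G_1_innerproduct} itself satisfies (i)--(ii) is also a self-contained substitute for the citation of Proposition~\ref{pro:DeMa95}.
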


\begin{proof}
  Fix some $\lambda\in\rho(A_0)$ and define $S :=\Gamma_1 (A_{0}-\bar \lambda)^{-1}=\gamma(\lambda)^*$.
  As $\ker S=(\ran \gamma(\lambda))^\bot=\mathcal N_\lambda(A^*)^\bot$ the restriction of $S$ onto $\mathcal N_\lambda(A^*)$ is 
  an isomorphism from $\mathcal N_\lambda(A^*)$ onto $\mathscr G_1$ by (ii). Hence 
  the adjoint operator $S':\mathscr G_1'\rightarrow \mathcal H$ is bounded, invertible and by the closed range theorem 
  $\ran S'=\mathcal N_\lambda(A^*)$.
  The inverse $(S')^{-1}$ is regarded as an isomorphism from 
  $\mathcal N_\lambda(A^*)$ onto $\mathscr G_1'$ in the sequel.
  For $x \in \ran \Gamma_0\subset\mathscr G_1'$ and $h\in \mathcal H$ it follows from 
  \begin{equation*}
    ( S' x, h)_{\mathcal H} 
    = \langle x , S h \rangle_{\mathscr G_1'\times \mathscr G_1} 
  =(x, S h)_{\mathcal G} = (x, \Gamma_1 (A_{0}-\bar \lambda)^{-1} h)_{\mathcal G}= (\gamma(\lambda) x, h)_{\mathcal H},
  \end{equation*} 
  that $S'\upharpoonright\ran\Gamma_0=\gamma(\lambda)$. We define the mapping 
  \[
  \widetilde \Gamma_0 : \dom A^* \to \mathscr G_1',\qquad f \mapsto\widetilde \Gamma_0 f = (S')^{-1} f_\lambda,
  \] where 
  $f=f_0+f_\lambda\in\dom A_0+\mathcal N_\lambda(A^*)=\dom A^*$. 
  For $f \in \dom T$ decomposed in the form $f=f_0+f_\lambda$ with $f_0\in\dom A_{0}$ 
  and $f_\lambda \in \mathcal N_\lambda(T)$ we have
  \begin{equation*}
  \widetilde \Gamma_0 f 
  = (S')^{-1} f_\lambda 
  = (S')^{-1} \gamma(\lambda) \Gamma_0 f_\lambda 
  = (S')^{-1} S' \Gamma_0 f_\lambda 
  = \Gamma_0 f_\lambda 
  = \Gamma_0 f,
  \end{equation*}
  and hence $\widetilde \Gamma_0$ is an extension of $\Gamma_0$. 
  It remains to check that $\widetilde \Gamma_0$ is continuous. For this let $f=f_0+f_\lambda\in\dom A^*$ and note that
  $f_\lambda=f- (A_{0}-\lambda)^{-1} (A^*-\lambda) f$ holds. Since $(S')^{-1}:\mathcal N_\lambda(A^*)\rightarrow\mathscr G_1'$ is bounded we find
  \begin{equation*}
    \begin{split}
      \| \widetilde \Gamma_0 f \|_{\mathscr G_1'} 
      &= \|  (S')^{-1} f_\lambda \|_{\mathscr G_1'} 
      \leq \|  (S')^{-1}\|  \bigl(\| f \|_{ \mathcal H} + \| (A_{0}-\lambda)^{-1} (A^*-\lambda) f  \|_{\mathcal H}\bigr) \\
      &\leq c  \| f \|_{A^*}
    \end{split}
  \end{equation*}
  with some constant $c>0$.
 
  Let $|\!|\!| \cdot |\!|\!|_{\mathscr G_1}$ be the norm induced by the inner product \eqref{eqn:G_1_innerproduct}
  and let $\|\cdot \|_{\mathscr G_1}$ be an arbitrary norm on $\mathscr G_1$  such that 
  $(\mathscr G_1,\, \|\cdot \|_{\mathscr G_1})$ is a reflexive Banach space densely embedded in $\mathcal G$
  and $\gamma(i)^* =  \Gamma_1 (A_0 + i)^{-1}$ is continuous from $\mathcal H$ to ($\mathscr G_1, \| \cdot \|_{\mathscr G_1})$. 
  Recall that $\ker\gamma(i)^*=\mathcal N_i(A^*)^\bot$; cf.~\eqref{eqn:Hdec}. 
  It follows from Proposition~\ref{pro:DeMa95} that $\gamma(i)^*$ is an isometry from $\mathcal N_i(A^*)$ onto 
  $(\mathscr G_1,|\!|\!| \cdot |\!|\!|_{\mathscr G_1})$ and hence $(\gamma(i)^*\upharpoonright \mathcal N_i(A^*))^{-1}$ is an isometry from 
  $(\mathscr G_1,|\!|\!| \cdot |\!|\!|_{\mathscr G_1})$ onto $\mathcal N_i(A^*)$. Therefore we obtain 
  \[
  |\!|\!| x |\!|\!|_{\mathscr G_1} = \| (\gamma(i)^*\upharpoonright \mathcal N_i(A^*))^{-1} x \|_{\mathcal H} \leq c' \| x \|_{\mathscr G_1}
  \]
  with $c'>0$ for all $x\in \mathscr G_1$. 
  Hence $I : (\mathscr G_1, \| \cdot \|_{\mathscr G_1}) \to (\mathscr G_1, |\!|\!| \cdot |\!|\!|_{\mathscr G_1})$ 
  is continuous and this implies the norm equivalence $|\!|\!|\cdot|\!|\!|_{\mathscr G_1} \sim \|\cdot\|_{\mathscr G_1}$.
\end{proof}
If $\{\mathcal G,\Gamma_0,\Gamma_1\}$ is a quasi boundary triple for $T\subset A^*$ with Weyl function $M$ and the additional property that 
$A_1 = T\upharpoonright\ker\Gamma_1$
is self-adjoint, then the triple $\{\mathcal G, -\Gamma_1, \Gamma_0\}$ is also a quasi boundary triple for $T\subset A^*$ with Weyl 
function $\lambda \mapsto -M(\lambda)^{-1},\, \lambda\in\rho(A_1)$. This fact together with Proposition~\ref{pro:Gamma_0_extension} 
implies the following statement.
\begin{Cor}\label{cor:Gamma_1_extension} 
Let $\{\mathcal G, \Gamma_0, \Gamma_1\}$ be a quasi boundary triple for $T\subset A^*$ and assume, 
in addition, that $A_1 = T\upharpoonright\ker\Gamma_1$ is self-adjoint
  in $\mathcal H$ and $\mathscr G_0$ is dense in $\mathcal G$.
Then for any norm $\|\cdot\|_{\mathscr G_0}$ such that 
\begin{itemize}
\item [{\rm (i)}] $(\mathscr G_0,\|\cdot \|_{\mathscr G_0})$ is a reflexive Banach space continuously embedded in $\mathcal G$ and
\item [{\rm (ii)}] the operator 
  \begin{equation*}
  \Gamma_0 (A_1-\bar \lambda)^{-1}:\mathcal H\to \mathscr G_0
  \end{equation*}
  is continuous for some, and hence for all, $\lambda \in \rho(A_1)$,
\end{itemize}
hold the boundary mapping $\Gamma_1$ admits a unique surjective, continuous extension
  \begin{equation*}
    \widetilde \Gamma_1 :(\dom A^*,\Vert\cdot\Vert_{A^*})\to \mathscr G_0',
  \end{equation*}
where $\mathscr G_0'$ is the anti-dual space of $\mathscr G_0$.
\end{Cor}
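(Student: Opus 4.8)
The plan is to derive Corollary~\ref{cor:Gamma_1_extension} from Proposition~\ref{pro:Gamma_0_extension} by applying the latter to a transformed quasi boundary triple. The key observation is already stated in the paragraph preceding the corollary: if $A_1 = T\upharpoonright\ker\Gamma_1$ is self-adjoint, then the triple $\{\mathcal G,-\Gamma_1,\Gamma_0\}$ is again a quasi boundary triple for $T\subset A^*$, now with its ``$\Gamma_0$-kernel operator'' equal to $A_1$. First I would verify that this transformed triple satisfies the hypotheses of Proposition~\ref{pro:Gamma_0_extension}. Writing $\widehat\Gamma_0 := -\Gamma_1$ and $\widehat\Gamma_1 := \Gamma_0$, the role of $A_0$ is now played by $\widehat A_0 = T\upharpoonright\ker\widehat\Gamma_0 = T\upharpoonright\ker\Gamma_1 = A_1$, which is self-adjoint by assumption. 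The space $\mathscr G_1$ for the new triple is $\ran(\widehat\Gamma_1\upharpoonright\ker\widehat\Gamma_0) = \ran(\Gamma_0\upharpoonright\ker\Gamma_1) = \mathscr G_0$, which is dense in $\mathcal G$ by hypothesis. Thus the density condition required by Proposition~\ref{pro:Gamma_0_extension} holds.

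Next I would translate the two norm conditions. Condition (i) of the corollary says exactly that $(\mathscr G_0,\|\cdot\|_{\mathscr G_0})$ is a reflexive Banach space continuously embedded in $\mathcal G$, which matches condition (i) of Proposition~\ref{pro:Gamma_0_extension} for the new triple (whose relevant range space is $\mathscr G_0$). For condition (ii), the proposition requires that $\widehat\Gamma_1(\widehat A_0 - \bar\lambda)^{-1} = \Gamma_0(A_1-\bar\lambda)^{-1}:\mathcal H\to\mathscr G_0$ be continuous for some (hence all) $\lambda\in\rho(\widehat A_0)=\rho(A_1)$, which is precisely condition (ii) of the corollary. Hence all hypotheses of Proposition~\ref{pro:Gamma_0_extension} are met by the transformed triple.

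Applying Proposition~\ref{pro:Gamma_0_extension} to $\{\mathcal G,-\Gamma_1,\Gamma_0\}$ then yields a unique surjective continuous extension of $\widehat\Gamma_0 = -\Gamma_1$ to a map $(\dom A^*,\|\cdot\|_{A^*})\to\mathscr G_0'$. Multiplying by $-1$, which is a homeomorphism of $\mathscr G_0'$, gives the desired unique surjective continuous extension $\widetilde\Gamma_1:(\dom A^*,\|\cdot\|_{A^*})\to\mathscr G_0'$ of $\Gamma_1$ itself. The norm-equivalence conclusion of Proposition~\ref{pro:Gamma_0_extension} carries over verbatim, showing that $\|\cdot\|_{\mathscr G_0}$ is equivalent to the intrinsic norm induced by $\overline{\im(-M(i)^{-1})}$ on $\mathscr G_0$.

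The main obstacle—and the only point requiring genuine care rather than bookkeeping—is justifying that $\{\mathcal G,-\Gamma_1,\Gamma_0\}$ is a bona fide quasi boundary triple whose Weyl function is $\lambda\mapsto -M(\lambda)^{-1}$, and in particular that its associated operator $\widehat A_0$ coincides with $A_1$. This rests on the self-adjointness of $A_1$ (needed for axiom (iii) of Definition~\ref{qbt}) and on the symmetry of Green's identity \eqref{eqn:abstract_green} under swapping $\Gamma_0\leftrightarrow\Gamma_1$ with a sign change (which makes axiom (i) hold); axiom (ii) on dense range is invariant under the coordinate swap. Since this fact is asserted in the text immediately before the corollary, I would cite it directly and keep the proof to the short verification that the hypotheses transfer correctly, rather than re-proving the transformation from scratch.
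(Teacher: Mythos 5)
Your proposal is correct and follows essentially the same route as the paper: the text immediately preceding the corollary notes that $\{\mathcal G,-\Gamma_1,\Gamma_0\}$ is a quasi boundary triple with Weyl function $\lambda\mapsto-M(\lambda)^{-1}$ and associated self-adjoint operator $A_1$, and the corollary is obtained exactly by applying Proposition~\ref{pro:Gamma_0_extension} to this swapped triple, just as you do. Your explicit bookkeeping (identifying $\widehat A_0=A_1$, $\widehat{\mathscr G}_1=\mathscr G_0$, and removing the sign via the homeomorphism $-1$ of $\mathscr G_0'$) simply spells out what the paper leaves implicit.
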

We note that in the situation of the above corollary it follows that the closure of $\im(-M(i)^{-1})$ is an invertible bounded 
operator defined on $\mathcal G$. Making use of Proposition~\ref{pro:DeMa95} for the quasi boundary triple 
$\{\mathcal G, -\Gamma_1, \Gamma_0\}$
and setting $\Sigma:=\overline{\im(-M(i)^{-1})}$ we then conclude that the norm $|\!|\!| \cdot |\!|\!|_{\mathscr G_0}$ 
induced by the inner product 
\[
(\Sigma^{-1/2} x,\Sigma^{-1/2} y)_{\mathcal G},\qquad x,y\in\mathscr G_0,
\]
is equivalent to any norm $\|\cdot \|_{\mathscr G_0}$ on $\mathscr G_0$ which satisfies {\rm (i)-(ii)} in Corollary~\ref{cor:Gamma_1_extension}.

The next theorem is strongly inspired by regularisation techniques used in extension theory of symmetric partial differential
operators; cf.~\cite{Gr68,Vi63}. It will be shown that a quasi boundary triple $\{\mathcal G, \Gamma_0, \Gamma_1\}$ with 
the additional property that $\mathscr G_1$ is dense in $\mathcal G$ can be transformed and extended to an ordinary boundary triple. 
Such a type of transform appears also in \cite{BeLa12,BrGrWo09} and in a more abstract form in \cite{DeHaMaSn12}, see also \cite{W12,W13}. 
Here we discuss only a situation which is relevant in applications, namely we assume that the spectrum of 
the self-adjoint operator $A_0=T\upharpoonright\ker\Gamma_0$ does not cover the whole real line.
The more general case is left to the reader; cf. Remark~\ref{rem:techn}.
Recall that for the Gelfand triple 
$\mathscr G_1\hookrightarrow\mathcal G\hookrightarrow\mathscr G_1'$ there exist isometric isomorphisms 
$\iota_+ : \mathscr G_1 \to \mathcal G$ and $\iota_- : \mathscr G_1' \to \mathcal G$ such that
\begin{equation}\label{iotas}
  ( \iota_- x',\, \iota_+ x)_{\mathcal G} =  \langle x',\, x  \rangle_{\mathscr G_1' \times \mathscr G_1} 
  \quad\text{for all}\quad x\in \mathscr G_1,\, x'\in \mathscr G_1'.
\end{equation}
Here and in the following $\mathscr G_1$ is equipped with some
norm $\| \cdot\|_{\mathscr G_1}$ such that {\rm(i)} and {\rm(ii)} in Proposition~\ref{pro:Gamma_0_extension} hold. 
Recall that according to Proposition~\ref{pro:DeMa95} such a norm always exists (if $\mathscr G_1$ is dense in $\mathcal G$) and that all such norms are equivalent by 
Proposition~\ref{pro:Gamma_0_extension}.

\begin{Thm}\label{thm:regularisation}
  Let $\{\mathcal G, \Gamma_0, \Gamma_1\}$ be a quasi boundary triple for $T\subset A^*$ with $A_0 = A^*\upharpoonright \ker\Gamma_0$, 
  assume that there exists $\eta \in \rho(A_0)\cap\mathbb R$ and that  $\mathscr G_1$ is dense in $\mathcal G$.
  Then the triple $\{\mathcal G, \Upsilon_0, \Upsilon_1\}$ with boundary mappings 
  $\Upsilon_0, \Upsilon_1 : \dom A^* \to \mathcal G$ given by
  \begin{equation*}
  \Upsilon_0 f := \iota_- \widetilde \Gamma_0 f, \quad \Upsilon_1 f := \iota_+ \Gamma_1 f_0, 
\quad f=f_0+f_\eta \in \dom A_{0} \dotplus \mathcal N_\eta(A^*),
  \end{equation*}
   is an ordinary boundary triple for $A^*$ with 
   \begin{equation*} 
   A^* \upharpoonright \ker \Upsilon_0 = A_0 \quad\text{and}\quad 
   A^* \upharpoonright \ker \Upsilon_1 = A \dotplus \widehat{\mathcal N}_\eta(A^*).
   \end{equation*}
\end{Thm}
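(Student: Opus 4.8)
The plan is to verify the three axioms of an ordinary boundary triple for $\{\mathcal G, \Upsilon_0, \Upsilon_1\}$ directly from Definition~\ref{qbt}, exploiting the Gelfand triple identity \eqref{iotas} to convert the anti-dual pairing $\langle \widetilde\Gamma_0 f, \Gamma_1 g_0\rangle$ into an $\mathcal G$-inner product. First I would establish that both maps are well-defined on all of $\dom A^* = \dom A_0 \dotplus \mathcal N_\eta(A^*)$: $\Upsilon_0 = \iota_-\widetilde\Gamma_0$ makes sense because Proposition~\ref{pro:Gamma_0_extension} furnishes the continuous surjective extension $\widetilde\Gamma_0 : \dom A^* \to \mathscr G_1'$ (using the hypothesis that $\mathscr G_1$ is dense and $\eta \in \rho(A_0)\cap\mathbb R$ so that a suitable norm exists), while $\Upsilon_1 f = \iota_+\Gamma_1 f_0$ is well-defined since the component $f_0 \in \dom A_0 = \dom(T\upharpoonright\ker\Gamma_0)$ always lies in $\dom\Gamma_1$ and $\Gamma_1 f_0 \in \mathscr G_1$ by definition of $\mathscr G_1$, so $\iota_+\Gamma_1 f_0 \in \mathcal G$.

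The heart of the proof is Green's identity \eqref{absg}. For $f = f_0 + f_\eta$ and $g = g_0 + g_\eta$ in $\dom A_0 \dotplus \mathcal N_\eta(A^*)$, I would compute $(A^* f, g)_{\mathcal H} - (f, A^* g)_{\mathcal H}$ exactly as in the proof of Lemma~\ref{lem:BTonNeta}(i): using $A_0 = A_0^*$ and $\eta \in \mathbb R$, the cross terms collapse to $((A_0 - \eta)f_0, g_\eta)_{\mathcal H} - (f_\eta, (A_0 - \eta)g_0)_{\mathcal H}$. The task is then to recognize this as $(\Upsilon_1 f, \Upsilon_0 g)_{\mathcal G} - (\Upsilon_0 f, \Upsilon_1 g)_{\mathcal G}$. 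Applying \eqref{iotas} gives $(\Upsilon_1 f, \Upsilon_0 g)_{\mathcal G} = (\iota_+\Gamma_1 f_0, \iota_-\widetilde\Gamma_0 g)_{\mathcal G} = \overline{\langle \widetilde\Gamma_0 g, \Gamma_1 f_0\rangle}_{\mathscr G_1'\times\mathscr G_1}$, so I would need the key pairing formula
\begin{equation*}
  \langle \widetilde\Gamma_0 g,\, \Gamma_1 f_0 \rangle_{\mathscr G_1'\times\mathscr G_1} = (g_\eta,\, (A_0-\eta)f_0)_{\mathcal H}, \qquad g = g_0 + g_\eta.
\end{equation*}
This is exactly where the construction of $\widetilde\Gamma_0$ pays off: from the proof of Proposition~\ref{pro:Gamma_0_extension} one has $\widetilde\Gamma_0 g = (S')^{-1}g_\eta$ with $S = \Gamma_1(A_0 - \bar\eta)^{-1} = \gamma(\eta)^*$ (here $\bar\eta = \eta$), together with the duality $(S' x', h)_{\mathcal H} = \langle x', Sh\rangle_{\mathscr G_1'\times\mathscr G_1}$. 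Feeding $x' = \widetilde\Gamma_0 g$ and choosing $h$ so that $(A_0 - \eta)^{-1}h = f_0$, i.e. $h = (A_0 - \eta)f_0$, yields $\langle \widetilde\Gamma_0 g, \Gamma_1 f_0\rangle = (S'\widetilde\Gamma_0 g, h)_{\mathcal H} = (g_\eta, (A_0 - \eta)f_0)_{\mathcal H}$, as desired. Substituting back reproduces the two surviving cross terms and closes Green's identity.

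Next I would identify the kernels. Since $\iota_-$ is an isomorphism, $\ker\Upsilon_0 = \ker\widetilde\Gamma_0$, and because $\widetilde\Gamma_0$ extends $\Gamma_0$ with $\widetilde\Gamma_0 f = (S')^{-1}f_\eta$ depending only on the $\mathcal N_\eta(A^*)$-component, its kernel is precisely $\dom A_0$; hence $A^*\upharpoonright\ker\Upsilon_0 = A_0$, which also delivers the self-adjointness axiom (iii) automatically. Similarly $\ker\Upsilon_1 = \ker(\iota_+\Gamma_1(\cdot)_0)$ consists of those $f = f_0 + f_\eta$ with $\Gamma_1 f_0 = 0$; since $f_0 \in \dom A_0 = \ker\Gamma_0$ and $\Gamma_0 f_0 = 0$, the condition $\Gamma_1 f_0 = 0$ forces $f_0 \in \ker\Gamma = \dom A$, giving $A^*\upharpoonright\ker\Upsilon_1 = A \dotplus \widehat{\mathcal N}_\eta(A^*)$. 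Finally, to conclude that the triple is \emph{ordinary} rather than merely quasi, I would invoke the characterization recorded after Definition~\ref{qbt} (and Proposition~\ref{pro:rate}): it suffices to show $\ran(\Upsilon_0,\Upsilon_1)^\top = \mathcal G\times\mathcal G$. Surjectivity of $\Upsilon_0 = \iota_-\widetilde\Gamma_0$ onto $\mathcal G$ is immediate from surjectivity of $\widetilde\Gamma_0$ onto $\mathscr G_1'$; and for a prescribed value of $\Upsilon_1$ one solves $\iota_+\Gamma_1 f_0 = $ target using surjectivity of $\Gamma_1\upharpoonright\ker\Gamma_0$ onto $\mathscr G_1$ (the definition of $\mathscr G_1$) independently of the choice of $f_\eta$, so the two components decouple. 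I expect the main obstacle to be the careful bookkeeping in the pairing formula above — keeping track of anti-linearity in the duality $\langle\cdot,\cdot\rangle_{\mathscr G_1'\times\mathscr G_1}$ and of complex conjugates introduced by $\iota_\pm$ — since a sign or conjugation slip there would break Green's identity.
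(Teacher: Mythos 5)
Your proposal is correct, and it takes a genuinely different route from the paper's proof. The paper never verifies the ordinary boundary triple axioms directly on $\dom A^*$: instead it shows that the \emph{restricted} triple $\{\mathcal G,\Upsilon_0^T,\Upsilon_1^T\}$ on $\dom T$ is a quasi boundary triple whose boundary map is continuous with respect to the graph norm of $A^*$ — Green's identity there is reduced to the original triple's identity via $\Upsilon_1^T f=\iota_+\bigl(\Gamma_1 f-M(\eta)\Gamma_0 f\bigr)$ from \eqref{gammaid} together with the symmetry of $M(\eta)$ for real $\eta$, and the range is only shown to be \emph{dense} — and then invokes Proposition~\ref{pro:qbt_restriction2} to pass to the unique continuous extension, which is the ordinary boundary triple. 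You instead work on all of $\dom A^*$ from the start: Green's identity is obtained from the cross-term computation of Lemma~\ref{lem:BTonNeta}~(i) combined with your pairing formula $\langle\widetilde\Gamma_0 g,\Gamma_1 f_0\rangle_{\mathscr G_1'\times\mathscr G_1}=(g_\eta,(A_0-\eta)f_0)_{\mathcal H}$, which is indeed valid: with $S=\gamma(\eta)^*$ and $\widetilde\Gamma_0 g=(S')^{-1}g_\eta$ one has $\langle\widetilde\Gamma_0 g, Sh\rangle=(S'\widetilde\Gamma_0 g,h)_{\mathcal H}=(g_\eta,h)_{\mathcal H}$, and the choice $h=(A_0-\eta)f_0$ gives $Sh=\Gamma_1 f_0$. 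Your surjectivity argument is also sound and in fact sharper than what the paper proves at the corresponding stage: since $\Upsilon_0 f$ depends only on $f_\eta$ (through the bijection $\widetilde\Gamma_0\upharpoonright\mathcal N_\eta(A^*):\mathcal N_\eta(A^*)\to\mathscr G_1'$) and $\Upsilon_1 f$ only on $f_0$ (through $\ran(\Gamma_1\upharpoonright\ker\Gamma_0)=\mathscr G_1$), the two target components can be prescribed independently, giving $\ran(\Upsilon_0,\Upsilon_1)^\top=\mathcal G\times\mathcal G$ outright; together with Green's identity and the self-adjointness of $A^*\upharpoonright\ker\Upsilon_0=A_0$ this makes $\{\mathcal G,\Upsilon_0,\Upsilon_1\}$ a quasi boundary triple for $T=A^*$, hence ordinary by Definition~\ref{qbt}, and your kernel identifications agree with the paper's. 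The trade-off: your argument is more self-contained (no appeal to Proposition~\ref{pro:qbt_restriction2}, surjectivity instead of density plus continuity) and makes the analogy with Lemma~\ref{lem:BTonNeta} transparent, but it reaches inside the \emph{proof} of Proposition~\ref{pro:Gamma_0_extension} for the explicit formula $\widetilde\Gamma_0=(S')^{-1}(\cdot)_\eta$, whereas the paper's route uses only the \emph{statements} of Propositions~\ref{pro:qbt_restriction2} and~\ref{pro:Gamma_0_extension} and keeps every computation on $\dom T$, where only the original maps $\Gamma_0,\Gamma_1$ and the Weyl function appear.
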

\begin{proof}
  We verify that the restriction $\{\mathcal G,\Upsilon_0^T,\Upsilon_1^T\}$, 
  \begin{equation*}
  \Upsilon_0^T f = \iota_- \Gamma_0 f, \quad \Upsilon_1^T f = \iota_+ \Gamma_1 f_0, \quad f=f_0+f_\eta \in \dom A_{0} \dotplus \mathcal N_\eta(T),
  \end{equation*}
 of the triple $\{\mathcal G,\Upsilon_0,\Upsilon_1\}$ on $T$ 
  is a quasi boundary triple for $T\subset A^*$, 
  such that the boundary mapping $\Upsilon^T = (\Upsilon_0^T,\Upsilon_1^T)^\top : \dom T \to \mathcal G \times \mathcal G$ is continuous with
  respect to the graph norm of $A^*$. Then Proposition~\ref{pro:qbt_restriction2} implies that $\{\mathcal G, \Upsilon_0, \Upsilon_1\}$ 
  is an ordinary boundary triple for $A^*$.
  
  Note first that $\ker \Upsilon_0^T=\ker \Gamma_0$ holds. Thus $T\upharpoonright \ker\Upsilon_0^T$ coincides with the self-adjoint 
  linear operator $A_0$ in $\mathcal H$ and (iii) in Definition~\ref{qbt} holds.
  In order to check Green's identity observe that for all $f\in\dom T$ the identity 
  $\Upsilon^T_1 f = \iota_+ (\Gamma_1 f - M(\eta) \Gamma_0 f)$ holds by \eqref{gammaid}. 
  Here $M$ is the Weyl function of the quasi boundary triple
  $\{\mathcal G, \Gamma_0, \Gamma_1\}$ and since by assumption $\eta\in\mathbb R\cap\rho(A_0)$ 
  the operator $M(\eta)$ is symmetric in $\mathcal G$; cf.~\eqref{mg}.
  Making use of \eqref{iotas} and the fact that $\langle\cdot,\cdot\rangle_{\mathscr G_1 \times \mathscr G_1'}$ is the continuous extension of the 
  scalar product in $\mathcal G$ we compute for all $f,\, g\in \dom T$
  \begin{eqnarray*}
     && (\Upsilon^T_1 f ,\, \Upsilon^T_0 g )_{\mathcal G} - ( \Upsilon^T_0 f,\, \Upsilon^T_1 g )_{\mathcal G} \\
    && \quad\qquad = \bigl\langle \Gamma_1 f - M(\eta) \Gamma_0 f , \Gamma_0 g\bigr\rangle_{\mathscr G_1 \times \mathscr G_1'} - 
               \bigl\langle  \Gamma_0 f, \Gamma_1 g - M(\eta) \Gamma_0 g\bigr\rangle_{\mathscr G_1' \times \mathscr G_1} \\ 
   && \quad\qquad = \bigl( \Gamma_1 f - M(\eta) \Gamma_0 f , \Gamma_0 g\bigr)_{\mathcal G}  - 
               \bigl(  \Gamma_0 f, \Gamma_1 g - M(\eta) \Gamma_0 g\bigr)_{\mathcal G} \\
    && \quad\qquad = ( \Gamma_1 f,\Gamma_0 g )_{\mathcal G} 
    -   ( \Gamma_0 f,\Gamma_1 g )_{\mathcal G} \\
    && \quad \qquad = ( T f ,\, g )_{\mathcal H} - ( f ,\, T g )_{\mathcal H}.
  \end{eqnarray*}
  Now we verify that $\ran \Upsilon^T$ is dense in $\mathcal G \times \mathcal G$. 
  For this let $\hat x = (x,\, x')^\top\in \mathcal G \times \mathcal G$. Then there exists $\xi' \in \mathscr G_1$ 
  such that $\iota_+ \xi' = x'$ 
  and $f_0 \in \ker \Gamma_0 = \dom A_0$ such that $\Gamma_1  f_0 = \xi'$. Note that $\ran \Upsilon_0^T$ is dense in $\mathcal G$ since
  $\ran\Gamma_0$ is dense in $\mathcal G$. Hence we find
  a sequence $(f_n)\subset \mathcal N_\eta(T)$ such that $\Upsilon_0^T f_n\rightarrow x$, $n\rightarrow\infty$. It follows from
  $\Upsilon_0^Tf_0=0$ and the definition  of $\Upsilon_1^T$ that 
  \begin{equation*}
  \Upsilon^T ( f_0 + f_n ) 
  = \begin{pmatrix} \Upsilon^T_0 ( f_0 + f_n ) \\ \Upsilon^T_1 ( f_0 + f_n ) \end{pmatrix} 
  = \begin{pmatrix} \Upsilon^T_0 f_n \\ \iota_+\Gamma_1 f_0 \end{pmatrix} 
  = \begin{pmatrix}  \Upsilon^T_0 f_n \\ x' \end{pmatrix} 
  \end{equation*}
  tends to $\hat x$ for $n\rightarrow\infty$. Hence (ii) in Definition~\ref{qbt} holds and it follows that 
  $\{\mathcal G, \Upsilon_0^T,\Upsilon_1^T\}$
  is a quasi boundary triple.

  Now we have to check that $\Upsilon_0^T,\Upsilon_1^T:\dom T\rightarrow\mathcal G$ are continuous with respect to the graph norm.
  It follows from Proposition~\ref{pro:Gamma_0_extension} that this is even true for $\Upsilon_0=\iota_-\widetilde\Gamma_0$,
  and hence also for the restriction $\Upsilon^T_0$. 
  For $f=f_0+f_\eta\in\dom T$ with $f_0 \in \dom A_{0}$ 
  and $f_\eta \in \mathcal N_\eta (T)$ we have
  \begin{equation*}
    \Upsilon^T_1 f 
    = \iota_+  \Gamma_1 f_0 
    =  \iota_+ \Gamma_1 (A_0-\eta)^{-1} (T -\eta) f 
  \end{equation*}
  and together with Proposition~\ref{pro:Gamma_0_extension}~(ii) we conclude that $\Upsilon^T_1$ is also continuous 
  with respect to the graph norm.
  
  It remains to check that $\ker \Upsilon_1 = \dom A \dotplus  \mathcal N_\eta(A^*)$. 
  For the inclusion ``$\subset$'' let $f \in \ker \Upsilon_1$ with $f= f_0 + f_\eta \in \dom A_0 \dotplus\mathcal N_\eta(A^*)$.
  Since $\Gamma_1 f_0=0$ we find
  $f_0\in\dom A_0\cap\ker\Gamma_1=\dom A$ and hence $f\in\dom A\dotplus\mathcal N_\eta(A^*)$. 
  The inclusion ``$\supset$'' follows immediately from $\dom A\subset\ker\Gamma_1$ and $\Gamma_1 f_\eta=0$ for 
  $f_\eta\in\mathcal N_\eta(A^*)$. 
\end{proof}

\begin{Rem}\label{rem:techn}
  We note that the assumption $\eta\in\mathbb R$ in Theorem~\ref{thm:regularisation} can be dropped.
  In fact, if $\eta\in\mathbb C\setminus\mathbb R$
  replace $M(\eta)$ and $\mathcal N_\eta(A^*)$ by $\re M(\eta)$ (see \eqref{def:ReImM}) and
  \begin{equation*}
  \mathcal Q_\eta(A^*):=\{f_\eta+f_{\bar\eta}:f\in\dom A^*\},
  \end{equation*}
  respectively. Here $f=f_{0\eta}+f_\eta=f_{0\bar\eta}+f_{\bar\eta}\in\dom A^*$ with $f_{0\eta},f_{0\bar\eta}\in\dom A_0$ and
  $f_\eta\in\mathcal N_\eta(A^*)$, $f_{\bar\eta}\in\mathcal N_{\bar\eta}(A^*)$.
  Instead of \eqref{gammaid} use the following formula 
  \begin{equation*}
  \Gamma_1 f_0 = \Gamma_1 f - \re M(\eta) \Gamma_0 f, \quad f=f_0+\frac 12(f_{\eta}+f_{\bar\eta})\in \dom A_0 \dotplus \mathcal Q_\eta(A^*),
  \end{equation*}
  when verifying Green's identity in the proof of Theorem~\ref{thm:regularisation}.
\end{Rem}
With the help of the extensions $\widetilde\Gamma_0$ and $\widetilde \Gamma_1$ of the boundary mappings 
$\Gamma_0$ and $\Gamma_1$, respectively, also the $\gamma$-field and Weyl function can be extended by continuity.
Observe that by Theorem~\ref{thm:regularisation} we have $\ker\widetilde\Gamma_0=\ker\Upsilon_0=\dom A_0$ and hence
$\widetilde \Gamma_0 \upharpoonright \mathcal N_\lambda(A^*)$, $\lambda\in\rho(A_0)$, is invertible.

\begin{Def}\label{def:extweyl}
  Let $\{\mathcal G, \Gamma_0, \Gamma_1\}$ be a quasi boundary triple for $T\subset A^*$ with $\gamma$-field $\gamma$, Weyl function $M$ and 
  $A_j = T\upharpoonright\ker\Gamma_j$, $j=0,1$. 
  \begin{enumerate}[\rm (i)] 
  \item Assume that $\mathscr G_1$ is dense in $\mathcal G$ and let 
    $\widetilde \Gamma_0 : \dom A^* \to \mathscr G_1'$ be the continuous extension of $\Gamma_0$ from 
    Proposition~\ref{pro:Gamma_0_extension}. Then the extended $\gamma$-field $\widetilde \gamma$ corresponding to the 
    quasi boundary triple $\{\mathcal G, \Gamma_0, \Gamma_1\}$ is defined by
    \begin{equation*}
    \lambda \mapsto \widetilde \gamma(\lambda) 
    := \bigl(\widetilde \Gamma_0 \upharpoonright \mathcal N_\lambda(A^*)\bigr)^{-1} : \mathscr G_1' \to \mathcal H, \qquad \lambda \in  \rho(A_0).
    \end{equation*}
  \item Assume that $\mathscr G_0$ and $\mathscr G_1$ are dense in $\mathcal G$, that $A_1$ is self-adjoint in $\mathcal H$, and 
    let 
    $\widetilde \Gamma_1 : \dom A^* \to \mathscr G_0'$ be the continuous extension of $\Gamma_1$ from Corollary~\ref{cor:Gamma_1_extension}.
    Then the extended Weyl function $\widetilde M$ corresponding to the 
    quasi boundary triple $\{\mathcal G, \Gamma_0, \Gamma_1\}$ is defined by
    \begin{equation*}
    \lambda \mapsto \widetilde M(\lambda) := \widetilde \Gamma_1 \widetilde \gamma(\lambda) : \mathscr G_1' \to \mathscr G_0', \qquad
    \lambda \in  \rho(A_0).
    \end{equation*}
  \end{enumerate}
\end{Def}
We mention that the values of the extended $\gamma$-field $\widetilde \gamma$ 
are bounded linear operators from $\mathscr G_1'$ to $\mathcal H$,
where $\mathscr G_1$ is equipped with a norm such that {\rm(i)} and {\rm(ii)} in Proposition~\ref{pro:Gamma_0_extension} hold.
If also $\mathscr G_0$ is equipped with a norm such that {\rm(i)} and {\rm(ii)} in Corollary~\ref{cor:Gamma_1_extension} hold
then the values of the extended Weyl function $\widetilde M$ are 
bounded linear operators from $\mathscr G_1'$ to $\mathscr G_0'$. Therefore the adjoints
\begin{equation*}
\widetilde \gamma(\lambda)' : \mathcal H \to \mathscr G_1 \quad\text{and}\quad  \widetilde M(\lambda)' : \mathscr G_0 \to \mathscr G_1
\end{equation*}
are continuous for all $\lambda \in \rho(A_0)$. Moreover we obtain the simple identity 
\begin{equation}\label{eqn:exweylid}
  \widetilde M(\lambda) \widetilde \Gamma_0 f_\lambda = \widetilde\Gamma_1 f_\lambda\quad\text{for all}
  \quad f_\lambda \in \mathcal N_\lambda(A^*),\, \lambda \in \rho(A_0).
\end{equation}
In the next two lemmas some basic, but important, facts about the extended boundary mappings, 
the extended $\gamma$-field and the extended Weyl function are summarized. As above it is assumed that $\mathscr G_1$ is dense in $\mathcal G$ and 
that $\mathscr G_1$ is equipped 
with a norm such that {\rm(i)} and {\rm(ii)} in Proposition~{\rm\ref{pro:Gamma_0_extension}} hold. 
\begin{Lem}\label{lem:exgamma}
  Let $\{\mathcal G, \Gamma_0, \Gamma_1\}$ be a quasi boundary triple for $T\subset A^*$ with $\gamma$-field $\gamma$, and  
  $A_0 = T\upharpoonright\ker\Gamma_0$ such that $\rho(A_0)\cap\mathbb R\not=\emptyset$.
  Assume that $\mathscr G_1$ is dense in $\mathcal G$. Then the following statements hold.
  \begin{enumerate}[\rm (i)]
  \item $\ker \widetilde \Gamma_0 = \ker \Gamma_0 = \dom A_0$,
  \item $\widetilde \gamma(\lambda)$  is an isomorphism from $\mathscr G_1'$ onto $\mathcal N_{\lambda}(A^*) \subset \mathcal H$ 
        for all $\lambda\in \rho(A_0)$,
  \item $\widetilde \gamma (\lambda)'= \Gamma_1 (A_{0}-\bar \lambda)^{-1} : \mathcal H \to \mathscr G_1$ is continuous and 
        surjective for all $\lambda\in \rho(A_0)$,
  \item the identity 
    \begin{equation*}
    \widetilde \gamma(\lambda) = \left( I + (\lambda - \mu) (A_0 - \lambda)^{-1}\right ) \widetilde \gamma(\mu)
    \end{equation*}
    holds for all $\lambda,\mu \in \rho(A_0)$.
  \end{enumerate}
\end{Lem}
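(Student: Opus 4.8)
The plan is to verify the four assertions about the extended $\gamma$-field by leveraging the extension $\widetilde\Gamma_0:\dom A^*\to\mathscr G_1'$ from Proposition~\ref{pro:Gamma_0_extension} together with the direct sum decomposition $\dom A^*=\dom A_0\dotplus\mathcal N_\lambda(A^*)$ from \eqref{decot}. Assertion (i) is essentially already recorded in the discussion preceding Definition~\ref{def:extweyl}: since $\widetilde\Gamma_0$ is the continuous extension of $\Gamma_0$ and, by Theorem~\ref{thm:regularisation}, $\ker\widetilde\Gamma_0=\ker\Upsilon_0=\dom A_0$, I would simply invoke Theorem~\ref{thm:regularisation} to conclude $\ker\widetilde\Gamma_0=\ker\Gamma_0=\dom A_0$; the hypothesis $\rho(A_0)\cap\mathbb R\neq\emptyset$ is exactly what is needed to apply that theorem.

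For assertion (ii), once (i) holds, the restriction $\widetilde\Gamma_0\upharpoonright\mathcal N_\lambda(A^*)$ is injective for every $\lambda\in\rho(A_0)$ because $\mathcal N_\lambda(A^*)$ is a complement of $\dom A_0=\ker\widetilde\Gamma_0$ in $\dom A^*$. Surjectivity onto $\mathscr G_1'$ follows from the surjectivity of $\widetilde\Gamma_0$ established in Proposition~\ref{pro:Gamma_0_extension}, combined with the decomposition: any $x\in\mathscr G_1'$ equals $\widetilde\Gamma_0 f$ for some $f\in\dom A^*$, and writing $f=f_0+f_\lambda$ gives $\widetilde\Gamma_0 f_\lambda=\widetilde\Gamma_0 f=x$. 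Hence $\widetilde\gamma(\lambda)=(\widetilde\Gamma_0\upharpoonright\mathcal N_\lambda(A^*))^{-1}$ is a well-defined bijection from $\mathscr G_1'$ onto $\mathcal N_\lambda(A^*)$; boundedness in both directions follows from the open mapping theorem together with the continuity of $\widetilde\Gamma_0$ with respect to $\|\cdot\|_{A^*}$.

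For assertion (iii), I would compute the adjoint directly from the defining pairing. The key identity is that $\widetilde\gamma(\lambda)$ extends $\gamma(\lambda)$, so for $x\in\ran\Gamma_0\subset\mathscr G_1'$ we have $\widetilde\gamma(\lambda)x=\gamma(\lambda)x$. Using the duality relation \eqref{iotas} and the second formula in \eqref{eqn:gamma_field}, namely $\gamma(\lambda)^*=\Gamma_1(A_0-\bar\lambda)^{-1}$, I would check that for $h\in\mathcal H$ and $x\in\ran\Gamma_0$,
\begin{equation*}
  \langle x,\widetilde\gamma(\lambda)'h\rangle_{\mathscr G_1'\times\mathscr G_1}
  =(\widetilde\gamma(\lambda)x,h)_{\mathcal H}
  =(\gamma(\lambda)x,h)_{\mathcal H}
  =(x,\gamma(\lambda)^*h)_{\mathcal G}
  =(x,\Gamma_1(A_0-\bar\lambda)^{-1}h)_{\mathcal G},
\end{equation*}
which identifies $\widetilde\gamma(\lambda)'=\Gamma_1(A_0-\bar\lambda)^{-1}=\gamma(\lambda)^*$ as a map into $\mathscr G_1$. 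Continuity into $\mathscr G_1$ is precisely hypothesis (ii) of Proposition~\ref{pro:Gamma_0_extension}, and surjectivity onto $\mathscr G_1$ follows from \eqref{xxx}, since $\ran\gamma(\lambda)^*=\mathscr G_1$. The main obstacle here is density: the displayed pairing identity holds only on the dense subspace $\ran\Gamma_0$ of $\mathscr G_1'$, so I must argue by continuity that it extends to all of $\mathscr G_1'$, which is justified because both sides are continuous in $x$ with respect to $\|\cdot\|_{\mathscr G_1'}$.

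Finally, assertion (iv) is the extension of the resolvent identity in \eqref{eqn:gamma_field} from $\gamma$ to $\widetilde\gamma$. The cleanest route is again a density argument: the identity $\gamma(\lambda)=(I+(\lambda-\mu)(A_0-\lambda)^{-1})\gamma(\mu)$ holds on $\ran\Gamma_0$, both $\widetilde\gamma(\lambda)$ and $\widetilde\gamma(\mu)$ are bounded from $\mathscr G_1'$ to $\mathcal H$ by (ii), and $(I+(\lambda-\mu)(A_0-\lambda)^{-1})$ is a bounded operator on $\mathcal H$. Since $\ran\Gamma_0$ is dense in $\mathscr G_1'$ and both sides of the asserted identity are continuous operators from $\mathscr G_1'$ to $\mathcal H$ agreeing on this dense set, they coincide on all of $\mathscr G_1'$. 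I expect the density argument to be the recurring technical point across (iii) and (iv), but it is routine given the continuity statements already secured in parts (ii) and (iii).
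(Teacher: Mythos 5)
Your proof is correct, and it splits naturally into two halves when compared with the paper. For parts (i) and (ii) you follow the paper's route exactly: (i) is obtained from Theorem~\ref{thm:regularisation}, and (ii) from the continuity and surjectivity of $\widetilde\Gamma_0$ in Proposition~\ref{pro:Gamma_0_extension} combined with the decomposition \eqref{decot} (the paper leaves the open mapping theorem implicit; you state it). For (iii) and (iv), however, you take a genuinely different route. The paper works through the regularized ordinary boundary triple $\{\mathcal G,\Upsilon_0,\Upsilon_1\}$ of Theorem~\ref{thm:regularisation}: writing $\beta$ for its $\gamma$-field, it establishes $\beta(\lambda)=\widetilde\gamma(\lambda)\iota_-^{-1}$, reads (iv) off from \eqref{eqn:gamma_field} applied to $\beta$, and gets (iii) from $\iota_+\widetilde\gamma(\lambda)'=\beta(\lambda)^*=\Upsilon_1(A_0-\bar\lambda)^{-1}=\iota_+\Gamma_1(A_0-\bar\lambda)^{-1}$. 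You instead extend the identities for the original $\gamma$-field by continuity: both sides of each claimed identity are bounded on $\mathscr G_1'$, they agree on $\ran\Gamma_0$ because $\widetilde\gamma(\lambda)$ extends $\gamma(\lambda)$, and $\ran\Gamma_0$ is dense in $\mathscr G_1'$; surjectivity in (iii) then comes directly from \eqref{xxx}. This avoids the isometries $\iota_\pm$ and the intermediate object $\beta$ altogether, at the price of two small facts you should state explicitly rather than assert: that $\widetilde\gamma(\lambda)\supset\gamma(\lambda)$ (immediate, since $\widetilde\Gamma_0\supset\Gamma_0$ and $\mathcal N_\lambda(T)\subset\mathcal N_\lambda(A^*)$), and that $\ran\Gamma_0$ is dense in $\mathscr G_1'$ (it is dense in $\mathcal G$ in the norm of $\mathcal G$, and $\mathcal G$ is continuously and densely embedded in $\mathscr G_1'$). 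What the paper's detour buys is the pair of relations \eqref{weihnachten}, which are reused later (e.g.\ in the proof of Corollary~\ref{cor:krein}); what your argument buys is self-containedness, making clear that (iii) and (iv) require nothing beyond Proposition~\ref{pro:Gamma_0_extension}, \eqref{eqn:gamma_field}, and \eqref{xxx}.
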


\begin{proof}
  Let $\{\mathcal G, \Upsilon_0, \Upsilon_1\}$ be the ordinary boundary triple for $A^*$ from Theorem~\ref{thm:regularisation} 
  and denote the corresponding
  $\gamma$-field with $\beta$.
  Then according to Theorem~\ref{thm:regularisation} statement (i) follows from 
  \begin{equation*}
  \ker \Gamma_0 = \dom A_0 = \ker \Upsilon_0 = \ker \iota_- \widetilde\Gamma_0 = \ker \widetilde \Gamma_0,
  \end{equation*}
  see the text before Definition~\ref{def:extweyl}.
  From Proposition~\ref{pro:Gamma_0_extension} 
  we obtain that $\widetilde \Gamma_0 : (\dom A^*,\, \|\cdot\|_{A^*}) \to \mathscr G_1'$ is continuous and surjective with
  $\ker \widetilde\Gamma_0 = \dom A_0$; cf.~(i). Hence $\widetilde \Gamma_0 : \mathcal N_{\lambda}(A^*) \to \mathscr G_1'$ 
  is bijective and continuous and this implies (ii). 
  The identity
  \begin{equation*}
  \beta(\lambda) = \left(I + (\lambda - \mu) (A_0 - \lambda)^{-1}\right ) \beta(\mu),\qquad \lambda,\mu \in \rho(A_0),
  \end{equation*}
  (see \eqref{eqn:gamma_field}) together with the straightforward computation
  \begin{equation*}
  \beta(\lambda) 
  = (\Upsilon_0 \upharpoonright \mathcal N_\lambda (A^*) )^{-1} 
  = (\iota_- \widetilde\Gamma_0 \upharpoonright \mathcal N_\lambda (A^*) )^{-1} 
  = \widetilde\gamma(\lambda) \iota_-^{-1}
  \end{equation*} 
  implies (iv). To proof statement (iii) we only have to show that the identity 
  $\widetilde \gamma (\lambda)'= \Gamma_1 (A_{0}-\bar \lambda)^{-1}$
  holds.
  With $f\in \mathcal H$ and $x\in \mathcal G$ it follows from
  \begin{eqnarray*}
    ( \beta(\lambda)^* f,\, x)_{\mathcal G} 
    &=& ( f, \beta(\lambda) x)_{\mathcal H} 
     =  ( f, \widetilde \gamma(\lambda) \iota_-^{-1} x)_{\mathcal H} \\
    &=& \langle  \widetilde \gamma(\lambda)' f, \iota_-^{-1} x \rangle_{\mathscr G_1\times \mathscr G_1'} 
     =  ( \iota_+ \widetilde \gamma(\lambda)' f,\iota_- \iota_-^{-1} x)_{\mathcal G} \\
    &=& ( \iota_+ \widetilde \gamma(\lambda)' f, x)_{\mathcal G} 
  \end{eqnarray*}
  that $\iota_+ \widetilde \gamma(\lambda)' = \beta(\lambda)^* = \Upsilon_1 (A_0-\bar\lambda)^{-1} = \iota_+ \Gamma_1 (A_0-\bar\lambda)^{-1}$. 
  Hence we obtain statement (iii).
\end{proof}
\begin{Lem}\label{lem:exweyl}
  Let the assumption be as in Lemma~{\rm\ref{lem:exgamma}} and 
  assume, in addition, that $\mathscr G_0$ is dense in $\mathcal G$ and that $A_1=T\upharpoonright\ker\Gamma_1$ is self-adjoint in $\mathcal H$
  such that $\rho(A_1)\cap\mathbb R\not=\emptyset$. 
  Moreover, equip $\mathscr G_0$ with a norm which satisfies {\rm (i)-(ii)} in Corollary~{\rm\ref{cor:Gamma_1_extension}}.
  Then the following statements hold for all $\lambda \in \rho(A_0)$.
  \begin{enumerate}[\rm(i)]
  \item $\ker \widetilde\Gamma_1 = \ker \Gamma_1 = \dom A_1$,
  \item $\widetilde\Gamma_1 f = \widetilde M(\lambda) \widetilde\Gamma_0 f+ \Gamma_1 f_0$ 
    for all $f=f_0+f_\lambda\in\dom A_0 \dotplus\mathcal N_\lambda(A^*)$,
  \item $\widetilde  M(\lambda)' x = M(\lambda)^* x=M(\bar\lambda)x$ for all $x\in \mathscr G_0$,
  \item if, in addition, $\lambda \in \rho(A_1)$ then $\widetilde M(\lambda) : \mathscr G_1' \to \mathscr G_0'$ and 
        $M(\lambda) \upharpoonright \mathscr G_0 :  \mathscr G_0 \to \mathscr G_1$ are isomorphisms,
  \item the range of the boundary mapping $\widetilde \Gamma$ is given by
    \begin{equation*}
      \ran \widetilde \Gamma = \left\{ \begin{pmatrix} x \\ x' \end{pmatrix} \in \mathscr G_1' \times \mathscr G_0' 
      : x' = \widetilde M(\lambda) x + y,\, y \in \mathscr G_1 \right\}.
    \end{equation*} 
  \end{enumerate}
\end{Lem}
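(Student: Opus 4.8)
The plan is to establish the five assertions in the order given, exploiting the symmetry between $\Gamma_0$ and $\Gamma_1$ and reusing what has already been proved in Lemma~\ref{lem:exgamma} for the reversed triple.

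For (i) I would observe that the standing hypotheses ($A_1$ self-adjoint, $\rho(A_1)\cap\mathbb R\neq\emptyset$, $\mathscr G_0$ dense in $\mathcal G$) are precisely those under which Lemma~\ref{lem:exgamma} applies to the reversed quasi boundary triple $\{\mathcal G,-\Gamma_1,\Gamma_0\}$, whose associated self-adjoint operator is $A_1$ and whose relevant range space is $\mathscr G_0$. Since the extension $\widetilde\Gamma_1$ of Corollary~\ref{cor:Gamma_1_extension} is exactly the one furnished by Proposition~\ref{pro:Gamma_0_extension} for this reversed triple, Lemma~\ref{lem:exgamma}~(i) yields $\ker\widetilde\Gamma_1=\ker\Gamma_1=\dom A_1$. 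For (ii) I would decompose $f=f_0+f_\lambda\in\dom A_0\dotplus\mathcal N_\lambda(A^*)$; since $f_0\in\dom A_0=\ker\widetilde\Gamma_0$ by Lemma~\ref{lem:exgamma}~(i) we have $\widetilde\Gamma_0 f=\widetilde\Gamma_0 f_\lambda$, and the defining identity \eqref{eqn:exweylid} gives $\widetilde M(\lambda)\widetilde\Gamma_0 f=\widetilde\Gamma_1 f_\lambda$. As $\widetilde\Gamma_1$ extends $\Gamma_1$ and $f_0\in\dom A_0\subset\dom T$, one obtains $\widetilde\Gamma_1 f=\widetilde\Gamma_1 f_\lambda+\Gamma_1 f_0=\widetilde M(\lambda)\widetilde\Gamma_0 f+\Gamma_1 f_0$, which is (ii).

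The technical heart is (iii), which I would derive from the unextended Green's identity by a density argument. Taking $f_\lambda=\gamma(\lambda)w_0\in\mathcal N_\lambda(T)$ with $w_0\in\ran\Gamma_0$ and $g_{\bar\lambda}=\gamma(\bar\lambda)x\in\mathcal N_{\bar\lambda}(T)$ with $x\in\mathscr G_0$, the left-hand side of \eqref{eqn:abstract_green} vanishes (because $Tf_\lambda=\lambda f_\lambda$ and $Tg_{\bar\lambda}=\bar\lambda g_{\bar\lambda}$), so the identity collapses to $(M(\lambda)w_0,x)_{\mathcal G}=(w_0,M(\bar\lambda)x)_{\mathcal G}$. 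Here $M(\bar\lambda)x\in\mathscr G_1$ by Proposition~\ref{pro:weyl2}~(i), so both sides read as the duality pairings $\langle M(\lambda)w_0,x\rangle_{\mathscr G_0'\times\mathscr G_0}$ and $\langle w_0,M(\bar\lambda)x\rangle_{\mathscr G_1'\times\mathscr G_1}$. Since $\widetilde M(\lambda)$ coincides with $M(\lambda)$ on $\ran\Gamma_0$, and $\ran\Gamma_0$ is dense in $\mathscr G_1'$, I would pass to the limit $w_0\to w\in\mathscr G_1'$ using continuity of $\widetilde M(\lambda)$ and of the two pairings to get $\langle\widetilde M(\lambda)w,x\rangle_{\mathscr G_0'\times\mathscr G_0}=\langle w,M(\bar\lambda)x\rangle_{\mathscr G_1'\times\mathscr G_1}$ for all $w$; comparison with the definition of the adjoint gives $\widetilde M(\lambda)'x=M(\bar\lambda)x$, and \eqref{mg} with the substitution $(\lambda,\mu)\mapsto(\bar\lambda,\lambda)$ gives $M(\bar\lambda)x=M(\lambda)^*x$. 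The main obstacle is exactly the bookkeeping in this step: one must keep the three different pairings straight, verify the identifications $\widetilde M(\lambda)\!\upharpoonright\!\ran\Gamma_0=M(\lambda)$ and the density of $\ran\Gamma_0$ in $\mathscr G_1'$, and check that the $\mathcal G$-inner product restricts correctly to the dual pairings.

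Finally, (iv) is a consequence of (iii): it identifies $\widetilde M(\lambda)'$ with $M(\bar\lambda)\!\upharpoonright\!\mathscr G_0$, which is continuous and, by Proposition~\ref{pro:weyl2}~(i) with $\bar\lambda\in\rho(A_0)\cap\rho(A_1)$, a bijection from $\mathscr G_0$ onto $\mathscr G_1$; the bounded inverse theorem for the reflexive Banach spaces $\mathscr G_0,\mathscr G_1$ makes it an isomorphism, whence $\widetilde M(\lambda)=(\widetilde M(\lambda)')'$ is an isomorphism $\mathscr G_1'\to\mathscr G_0'$ by reflexivity, and $M(\lambda)\!\upharpoonright\!\mathscr G_0=\widetilde M(\bar\lambda)'$ is an isomorphism by the same argument. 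For (v) I would mirror the proof of Proposition~\ref{pro:weyl2}~(ii): the inclusion ``$\subset$'' is immediate from (ii) with $y:=\Gamma_1 f_0\in\mathscr G_1$, and for ``$\supset$'', given $x\in\mathscr G_1'$ and $y\in\mathscr G_1$, I would set $f_\lambda:=\widetilde\gamma(\lambda)x$ (so $\widetilde\Gamma_0 f_\lambda=x$ by Lemma~\ref{lem:exgamma}~(ii)) and pick $f_0\in\ker\Gamma_0$ with $\Gamma_1 f_0=y$; then $f:=f_0+f_\lambda$ satisfies $\widetilde\Gamma_0 f=x$ and, by (ii), $\widetilde\Gamma_1 f=\widetilde M(\lambda)x+y=x'$.
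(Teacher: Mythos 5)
Your proof is correct, and on items (i), (ii) and (v) it coincides with the paper's own argument (reversed triple $\{\mathcal G,-\Gamma_1,\Gamma_0\}$ for (i), the identity \eqref{eqn:exweylid} together with $\ker\widetilde\Gamma_0=\dom A_0$ for (ii), and the argument of Proposition~\ref{pro:weyl2}~(ii) for (v)). Where you genuinely diverge is in (iii) and (iv). For (iii) the paper never returns to Green's identity: it notes $\mathscr G_0\subset\ran\Gamma_0=\dom M(\lambda)\subset\dom M(\lambda)^*$ by \eqref{dominv} and, for $x\in\mathscr G_0$ and $y\in\ran\Gamma_0$, writes the chain $(M(\lambda)^*x,y)_{\mathcal G}=(x,M(\lambda)y)_{\mathcal G}=\langle x,\widetilde M(\lambda)y\rangle_{\mathscr G_0\times\mathscr G_0'}=\langle\widetilde M(\lambda)'x,y\rangle_{\mathscr G_1\times\mathscr G_1'}=(\widetilde M(\lambda)'x,y)_{\mathcal G}$, after which only the density of $\ran\Gamma_0$ in $\mathcal G$ is needed; your route re-derives the underlying symmetry relation from \eqref{eqn:abstract_green} (essentially re-proving the special case of \eqref{mg} you need) and requires the slightly stronger observation that $\ran\Gamma_0$ is dense in $\mathscr G_1'$ --- which does hold, since $\ran\Gamma_0$ is dense in $\mathcal G$ and $\mathcal G$ is dense in $\mathscr G_1'$, but deserves to be spelled out as you indicate. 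For the first assertion of (iv) the paper argues geometrically: $\widetilde M(\lambda)=\widetilde\Gamma_1\widetilde\gamma(\lambda)$, where $\widetilde\gamma(\lambda):\mathscr G_1'\to\mathcal N_\lambda(A^*)$ is an isomorphism by Lemma~\ref{lem:exgamma}~(ii), and $\widetilde\Gamma_1\upharpoonright\mathcal N_\lambda(A^*)$ is a bijection onto $\mathscr G_0'$ because $\dom A^*=\dom A_1\dotplus\mathcal N_\lambda(A^*)$ for $\lambda\in\rho(A_1)$, $\ker\widetilde\Gamma_1=\dom A_1$ by (i), and $\widetilde\Gamma_1$ is surjective by Corollary~\ref{cor:Gamma_1_extension}. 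Your duality argument --- identify $\widetilde M(\lambda)'$ with $M(\bar\lambda)\upharpoonright\mathscr G_0$ via (iii), obtain bijectivity onto $\mathscr G_1$ from Proposition~\ref{pro:weyl2}~(i), upgrade to an isomorphism by the bounded inverse theorem, and pass back through reflexivity via $(\widetilde M(\lambda)')'=\widetilde M(\lambda)$ --- is a sound alternative that stays entirely on the boundary-space side and never touches the defect space $\mathcal N_\lambda(A^*)$, at the price of invoking the bidual identification; the paper's version is shorter because it simply reuses the machinery already assembled in Lemma~\ref{lem:exgamma} and Corollary~\ref{cor:Gamma_1_extension}.
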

\begin{proof}
  Statement (i) follows in the same way as in Lemma~\ref{lem:exgamma} and from 
  the fact that $\{\mathcal G,-\Gamma_1,\Gamma_0\}$ is a quasi boundary triple for $T\subset A^*$.

  The identity \eqref{eqn:exweylid} together with $f=f_0+f_\lambda\in\dom A_0 \dotplus\mathcal N_\lambda(A^*)$ yields the identity
  \begin{equation*}
  \widetilde\Gamma_1 f = \widetilde\Gamma_1 f_0 + \widetilde\Gamma_1 f_\lambda 
  = \Gamma_1 f_0 + \widetilde M(\lambda) \widetilde \Gamma_0 f_\lambda
  =  \Gamma_1 f_0 + \widetilde M(\lambda) \widetilde \Gamma_0 f,
  \end{equation*}
  therefore (ii) holds; cf.~\eqref{gammaid}. In order to verify (iii) note first that
  according to \eqref{dominv} we have $\mathscr G_0 \subset \ran \Gamma_0 = \dom M(\lambda) = \dom M(\bar\lambda) \subset \dom M(\lambda)^*$.
  For $x\in \mathscr G_0$ and $y\in \ran \Gamma_0 \subset \mathcal G \subset \mathscr G_j'$, $j=0,1$, we compute 
  \begin{eqnarray*}
  ( M(\lambda)^* x,\, y)_{\mathcal G} 
    &=& ( x,\, M(\lambda) y)_{\mathcal G} 
     = \langle x,\, \widetilde M(\lambda) y \rangle_{\mathscr G_0 \times \mathscr G_0'} \\
    &=& \langle \widetilde M(\lambda)' x,\,  y \rangle_{\mathscr G_1 \times \mathscr G_1'} 
     = ( \widetilde M(\lambda)' x,\, y)_{\mathcal G}.
   \end{eqnarray*}
  As $\ran\Gamma_0$ is dense in $\mathcal G$ this implies  $ M(\lambda)^* x = \widetilde M(\lambda)' x$ and $M(\bar\lambda)x=M(\lambda)^*x$ 
  holds by \eqref{mg}-\eqref{dominv}. 

  By Lemma~\ref{lem:exgamma}~(ii) the operator $\widetilde \gamma(\lambda)$ is an isomorphism from $\mathscr G_1'$ onto $\mathcal N_{\lambda}(A^*)$. 
  Since $A_1$ is self-adjoint in $\mathcal H$ we have $\dom A^* = \dom A_1 \dotplus \mathcal N_\lambda(A^*)$ for 
  $\lambda\in\rho(A_1)$. Therefore the first assertion in (iv) follows from (i) and Corollary~\ref{cor:Gamma_1_extension}. 
  The second assertion in (iv) is a consequence of (iii).
  Finally, statement (v) follows from (ii) in the same way as in the proof of Proposition~\ref{pro:weyl2}~(ii). 
\end{proof}

Since $\ker\Gamma_1=\ker\widetilde\Gamma_1$ and $\ker\Gamma_0=\ker\widetilde\Gamma_0$ hold by Lemma~\ref{lem:exweyl}~(i) and Lemma~\ref{lem:exgamma}~(i)
we conclude that the spaces $\mathscr G_0$ and $\mathscr G_1$ in Definition~\ref{def:g0g1} remain the same for the extended
boundary mappings, i.e.,
\begin{eqnarray*}
&&\mathscr G_0 = \ran \bigl(\Gamma_0 \upharpoonright \ker \Gamma_1\bigr) = \ran \bigl(\widetilde \Gamma_0 \upharpoonright \ker \widetilde \Gamma_1\bigr),\\
&&\mathscr G_1 = \ran \bigl(\Gamma_1 \upharpoonright \ker \Gamma_0\bigr) = \ran \bigl(\widetilde \Gamma_1 \upharpoonright \ker \widetilde \Gamma_0\bigr).
\end{eqnarray*}
For later purposes we also note that for a quasi boundary triple $\{\mathcal G, \Gamma_0, \Gamma_1\}$ as in 
Lemma~\ref{lem:exgamma} and \ref{lem:exweyl}, with $\gamma$-field $\gamma$, Weyl function $M$, their extensions
$\widetilde\gamma(\lambda):\mathscr G_1'\rightarrow\mathcal H$ and $\widetilde M(\lambda):\mathscr G_1'\rightarrow\mathscr G_0'$,
and the corresponding ordinary boundary triple $\{\mathcal G, \Upsilon_0, \Upsilon_1\}$ from Theorem~\ref{thm:regularisation} with 
$\gamma$-field $\beta$, Weyl function $\mathcal M$ the following relations hold:
\begin{equation}\label{weihnachten}
 \beta(\lambda)=\widetilde\gamma(\lambda)\iota_-^{-1}\quad\text{and}\quad
 \mathcal M(\lambda) = \iota_+ ( \widetilde M(\lambda) - \widetilde M(\eta) ) \iota_-^{-1},\quad\lambda\in\rho(A_0).
\end{equation}
In fact, the identity $\beta(\lambda)=\widetilde\gamma(\lambda)\iota_-^{-1}$ was already shown in the proof of Lemma~\ref{lem:exgamma}
and the second relation in \eqref{weihnachten} is a direct consequence of the definition of the Weyl function $\mathcal M$, Lemma~\ref{lem:exweyl}~(ii), and the
particular form of the ordinary boundary triple  $\{\mathcal G, \Upsilon_0, \Upsilon_1\}$ in Theorem~\ref{thm:regularisation}. In fact, for
$f_\lambda\in\mathcal N_\lambda(A^*)$ decomposed in the form $f_\lambda=f_0+f_\eta$ with $f_0\in\dom A_0$, $f_\eta\in\mathcal N_\eta(A^*)$, one has
\begin{equation*}
 \begin{split}
 \iota_+\bigl(\widetilde M(\lambda)-\widetilde M(\eta)\bigr)\iota_-^{-1}\Upsilon_0 f_\lambda
 &=\iota_+\bigl(\widetilde M(\lambda)-\widetilde M(\eta)\bigr)\widetilde\Gamma_0 f_\lambda\\
 &=\iota_+ \bigl(\widetilde\Gamma_1 f_\lambda - \widetilde M(\eta)\widetilde\Gamma_0 f_\lambda\bigr)\\
 &=\iota_+\Gamma_1 f_0=\Upsilon_1 f_\lambda.
 \end{split}
\end{equation*}

\subsection{A counterexample}\label{sec:counter}

In this supplementary subsection we show that the assumption $\mathscr G_1^\bot=\{0\}$, which is essential for  Proposition~\ref{pro:DeMa95}, 
Proposition~\ref{pro:Gamma_0_extension}, Corollary~\ref{cor:Gamma_1_extension} and Theorem~\ref{thm:regularisation}, is not satisfied
automatically. For this we construct a quasi boundary triple $\{\mathscr H,\Upsilon_0 ,\Upsilon_1 \}$ with the property
$\mathscr G_1^\bot\not=\{0\}$ as a transform of the quasi boundary triple in Lemma~\ref{lem:BTonNeta}~(ii). 

%
\begin{Pro}\label{gehtschief}
  Let $\{\mathcal N_\eta(A^*),\Gamma_0^T,\Gamma_1^T\}$ be the quasi boundary triple for $T\subset A^*$ 
  from Lemma~{\rm\ref{lem:BTonNeta}~(ii)} with $\varphi =I$, $\mathcal G = \mathcal N_\eta(A^*)$, and 
  let $\mathscr H$ be an auxiliary Hilbert space. Choose a densely defined, bounded operator $\gamma:\mathscr H\rightarrow \mathcal N_\eta(A^*)$
  such that
  \begin{equation*}
  \ker\gamma=\{0\},\quad \ran\gamma=\mathcal N_\eta(T)\quad\text{and}\quad \ker\overline\gamma\not=\{0\},
  \end{equation*}
  and let $M$ be an (unbounded) self-adjoint operator in $\mathscr H$ defined on $\dom\gamma$.
  Then $\{\mathscr H,\Upsilon_0 ,\Upsilon_1 \}$, where
  \begin{equation*}
  \Upsilon_0 f := \gamma^{-1} \Gamma_0^T f,\quad 
  \Upsilon_1 f := \gamma^* \Gamma_1^T f +  M \gamma^{-1} \Gamma_0^T f,\qquad f\in\dom T,
  \end{equation*}
  is a quasi boundary triple for $T\subset A^*$ such that $A_0=T\upharpoonright\ker\Upsilon_0$,
  \begin{equation*}
  \mathscr G_1=\ran(\Upsilon_1\upharpoonright\ker\Upsilon_0)=\ran\gamma^*\quad\text{and}\quad\mathscr G_1^\bot=\ker\overline\gamma\not=\{0\}.
  \end{equation*}
  In particular, if $M(\cdot)$ is the Weyl function corresponding to 
  the quasi boundary triple $\{\mathscr H,\Upsilon_0 ,\Upsilon_1 \}$ then we have 
  $M(\eta)=M$ and $\overline{\im M(\lambda)}$ is not invertible for any $\lambda\in\mathbb C\setminus\mathbb R$. 
\end{Pro}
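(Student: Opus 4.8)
The plan is to verify the three conditions of Definition~\ref{qbt} for $\{\mathscr H,\Upsilon_0,\Upsilon_1\}$ directly, transporting the corresponding properties of the quasi boundary triple $\{\mathcal N_\eta(A^*),\Gamma_0^T,\Gamma_1^T\}$ from Lemma~\ref{lem:BTonNeta}~(ii) through $\gamma$, $\gamma^*$ and $M$. Since $\ker\gamma=\{0\}$ and $\ran\gamma=\mathcal N_\eta(T)=\ran\Gamma_0^T$, the map $\gamma^{-1}:\mathcal N_\eta(T)\to\dom\gamma$ is a well-defined bijection, so $x:=\gamma^{-1}\Gamma_0^T f$ always lies in $\dom\gamma=\dom M$ and the formulas for $\Upsilon_0,\Upsilon_1$ make sense. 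Because $\gamma^{-1}$ is injective, $\ker\Upsilon_0=\ker\Gamma_0^T=\dom A_0$, so condition~(iii) holds: $T\upharpoonright\ker\Upsilon_0=A_0$ is self-adjoint. For Green's identity I would substitute the definitions into $(\Upsilon_1 f,\Upsilon_0 g)_{\mathscr H}-(\Upsilon_0 f,\Upsilon_1 g)_{\mathscr H}$; writing $x=\gamma^{-1}\Gamma_0^T f$ and $y=\gamma^{-1}\Gamma_0^T g\in\dom M$, the terms $(Mx,y)_{\mathscr H}$ and $(x,My)_{\mathscr H}$ cancel by self-adjointness of $M$, while the remaining terms simplify via $(\gamma^*\Gamma_1^T f,y)_{\mathscr H}=(\Gamma_1^T f,\gamma y)_{\mathcal N_\eta(A^*)}=(\Gamma_1^T f,\Gamma_0^T g)_{\mathcal N_\eta(A^*)}$ and its symmetric counterpart to the right-hand side of the original Green's identity, which equals $(Tf,g)_{\mathcal H}-(f,Tg)_{\mathcal H}$.

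The main obstacle is condition~(ii), the density of $\ran\Upsilon$. Here the naive approach fails, because a short computation (using that $f_0\in\dom A_0$ and $f_\eta\in\mathcal N_\eta(T)$ vary independently and that $P_\eta(A_0-\eta)$ maps $\dom A_0$ onto $\mathcal N_\eta(A^*)$) gives $\ran\Upsilon=\{(x,\,Mx+w):x\in\dom\gamma,\ w\in\ran\gamma^*\}$, and $\ran\gamma^*$ is \emph{not} dense in $\mathscr H$ precisely because $(\ran\gamma^*)^\bot=\ker\overline\gamma\neq\{0\}$. Instead I would show $(\ran\Upsilon)^\bot=\{0\}$ in $\mathscr H\times\mathscr H$. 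Let $(u,v)$ satisfy $(x,u)_{\mathscr H}+(Mx+w,v)_{\mathscr H}=0$ for all $x\in\dom\gamma$ and $w\in\ran\gamma^*$. Taking $x=0$ forces $v\perp\ran\gamma^*$, i.e. $v\in\ker\overline\gamma$, and the remaining identity $(Mx,v)_{\mathscr H}=-(x,u)_{\mathscr H}$, valid for all $x\in\dom M$, shows $v\in\dom M^*=\dom M$ with $Mv=-u$. Since $\ker\gamma=\{0\}$ forces $\ker\overline\gamma\cap\dom\gamma=\{0\}$ and $\dom M=\dom\gamma$, we conclude $v=0$ and hence $u=0$. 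This establishes that $\{\mathscr H,\Upsilon_0,\Upsilon_1\}$ is a quasi boundary triple with $A_0=T\upharpoonright\ker\Upsilon_0$.

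It then remains to read off $\mathscr G_1$ and the Weyl function. On $\ker\Upsilon_0=\dom A_0$ one has $\Upsilon_1=\gamma^*\Gamma_1^T$, and $\Gamma_1^T$ maps $\dom A_0$ onto $\mathcal N_\eta(A^*)$, so $\mathscr G_1=\ran\gamma^*$ and therefore $\mathscr G_1^\bot=(\ran\gamma^*)^\bot=\ker\overline\gamma\neq\{0\}$. For the final assertions I would first note that the $\gamma$-field of $\{\mathscr H,\Upsilon_0,\Upsilon_1\}$ at $\eta$ is $\gamma$ itself: for $x\in\dom\gamma$ the element $\gamma x\in\mathcal N_\eta(T)$ satisfies $\Upsilon_0(\gamma x)=x$, whence, by Definition~\ref{def:weyl}, $M(\eta)x=\Upsilon_1(\gamma x)=M\gamma^{-1}\Gamma_0^T(\gamma x)=Mx$, that is, $M(\eta)=M$. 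Finally, applying \eqref{eqn:kerImWeyl} to this triple gives $\ker\bigl(\overline{\im M(\lambda)}\bigr)=\mathscr G_1^\bot=\ker\overline\gamma\neq\{0\}$ for every $\lambda\in\mathbb C\setminus\mathbb R$, so $\overline{\im M(\lambda)}$ is not injective and hence not invertible.
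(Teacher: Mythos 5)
Your proposal is correct and follows essentially the same route as the paper: Green's identity by substitution with the $M$-terms cancelling, identification of $\ker\Upsilon_0=\ker\Gamma_0^T$, the explicit computation $\ran\Upsilon=\{(x,\,Mx+w)^\top:x\in\dom\gamma,\,w\in\ran\gamma^*\}$ followed by showing its orthogonal complement is trivial, then $\mathscr G_1=\ran\gamma^*$, $M(\eta)=M$, and the non-invertibility of $\overline{\im M(\lambda)}$ via \eqref{eqn:kerImWeyl}. The one small difference is in the density step, where you take $x=0$ to force $v\in(\ran\gamma^*)^\bot=\ker\overline\gamma$ at the outset; this neatly bypasses the paper's case distinction on whether $z'\in\ker\overline\gamma$ and its approximating-sequence contradiction argument, but the overall structure is the same.
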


\begin{proof} We verify that $\{\mathscr H,\Upsilon_0,\Upsilon_1\}$ is a quasi boundary triple for $T\subset A^*$. 
  Since $M$ is self-adjoint in $\mathscr H$ and $\{\mathcal N_\eta(A^*),\Gamma_0^T,\Gamma_1^T\}$ is a quasi boundary triple we have
  \begin{equation*}
    \begin{split}
      &(\Upsilon_1 f,\, \Upsilon_0 g)_{\mathscr H} - (\Upsilon_0 f,\, \Upsilon_1 g)_{\mathscr H}  \\
      &\qquad\qquad = (\gamma^* \Gamma_1^T f,\, \gamma^{-1} \Gamma_0^T g)_{\mathscr H} - (\gamma^{-1} \Gamma_0^T f,\, \gamma^* \Gamma_1^T g)_{\mathscr H}\\
      &\qquad\qquad= ( \Gamma_1^T f,\, \gamma \gamma^{-1} \Gamma_0^T g)_{\mathcal N_\eta(A^*)} - (\gamma \gamma^{-1} \Gamma_0^T f,\Gamma_1^T g)_{\mathcal N_\eta(A^*)}\\ 
      &\qquad\qquad= ( \Gamma_1^T f,\Gamma_0^T g)_{\mathcal N_\eta(A^*)} - (\Gamma_0^T f,\Gamma_1^T g)_{\mathcal N_\eta(A^*)}\\
      &\qquad\qquad= ( T f,\, g)_{\mathcal H} - (f,\, T g)_{\mathcal H}
    \end{split}
  \end{equation*}
  for all $f,\, g\in \dom T$, and hence the abstract Green's identity holds. Observe that 
  \begin{equation*}
  A_0=T\upharpoonright\ker\Gamma_0^T=T\upharpoonright\ker\Upsilon_0
  \end{equation*}
  holds since by assumption $\gamma$ is a bijection from $\dom\gamma$ onto $\mathcal N_\eta(T)$.
  
  Next it will be shown that the range of $\Upsilon := (\Upsilon_0, \Upsilon_1)^\top$ is dense in $\mathscr H\times \mathscr H$.
  Since $\gamma^{-1}$ is a bijection from $\mathcal N_\eta(T)$ onto $\dom\gamma$ we have
  \begin{eqnarray*}
    \ran \Upsilon 
    &=& \left\{ \begin{pmatrix} \gamma^{-1} \Gamma_0^T f \\ 
      \gamma^* \Gamma_1^T f +  M \gamma^{-1} \Gamma_0^T f \end{pmatrix} : f \in \dom T \right\}  \\
    &=& \left\{ \begin{pmatrix} \gamma^{-1} f_\eta \\ 
      \gamma^* \Gamma_1^T f_0 +  M \gamma^{-1} f_\eta \end{pmatrix} : f = f_0 + f_\eta \in \ker \Upsilon_0 \dotplus \mathcal N_\eta(T)  \right\}  \\
    &=& \left\{ \begin{pmatrix} x \\ 
      y +  M x \end{pmatrix}  : x\in \dom \gamma,\, y\in \ran \gamma^* \right\}.
  \end{eqnarray*}
  Here we have used in the last step that $\ran\Gamma_1^T=\mathcal N_\eta(A^*)$ by Lemma~\ref{lem:BTonNeta}~(ii).
  Suppose that $(z,\, z')\in (\ran \Upsilon)^\bot$. Then 
  \begin{equation}\label{eqn:nonhalfextqbt}
    (z,\, x)_{\mathscr H} + (z',\, y)_{\mathscr H}+ (z',\, M x)_{\mathscr H} = 0 
  \end{equation}
  for all $x\in \dom \gamma$ and all $y\in \ran \gamma^*$. We note that if $z'=0$ then $z=0$ as $\dom\gamma$ is dense in $\mathscr H$. 
  Assume first that $z' \in \ker \overline\gamma = (\ran\gamma^*)^{\bot}$. Then $(z',\, y)_{\mathscr H} = 0$, $y\in\ran\gamma^*$, 
  and \eqref{eqn:nonhalfextqbt} yields
  \begin{equation*}
  (z',\, M x)_{\mathscr H} = (-z,\, x)_{\mathscr H}
  \end{equation*}
  for all $x\in \dom M$. As $M$ is self-adjoint we conclude $z'\in \dom M = \dom \gamma$ and from $\ker\gamma=\{0\}$ we find $z'=0$. 
  Assume now that $z'\not\in\ker\overline\gamma= (\ran\gamma^*)^{\bot}$. Then there exists $y\in\ran\gamma^*$ such that $(-z',\, y)_{\mathscr H}\not=0$.
  Since $\dom M$ is dense in $\mathscr H$ there exists a sequence $(z_n')\subset\dom M$ such that $z_n' \to z'$  for $n\to\infty$.
  From \eqref{eqn:nonhalfextqbt} we then obtain 
  \begin{equation*}
    \begin{split}
      \lim_{n\to\infty} (M z_n' + z,\, x)_{\mathscr H}&=(z,\, x)_{\mathscr H} + \lim_{n\to\infty} (z_n',\, M x)_{\mathscr H}\\
      &=(z,\, x)_{\mathscr H} + (z',\, M x)_{\mathscr H}=(-z',\, y)_{\mathscr H}\not=0
    \end{split}
  \end{equation*}
  for all $x\in \dom M$, a contradiction. 
  We conclude $z'=z=0$ and hence $\ran \Upsilon$ is dense in $\mathscr H\times \mathscr H$.
  
  Since $\ker\Upsilon_0=\ker\Gamma_0^T$ and $\ran(\Gamma_1^T\upharpoonright\ker\Gamma_0^T)=\mathcal N_\eta(A^*)$ we have 
\[
\mathscr G_1=\ran(\Upsilon_1\upharpoonright\ker\Upsilon_0)=\ran\bigl(\gamma^*\Gamma_1^T\upharpoonright\ker\Gamma_0^T\bigr)=\ran\gamma^*
\]
and therefore $\mathscr G_1^\bot=\ker\overline\gamma\not=\{0\}$ by assumption. Finally, if $M(\cdot)$ is the Weyl function 
corresponding to the quasi boundary triple $\{\mathscr H,\Upsilon_0 ,\Upsilon_1 \}$ then it follows from $\Gamma_1^T f_\eta=0$, 
$f_\eta\in\mathcal N_\eta(T)$, and
$M \Upsilon_0 f_\eta= M\gamma^{-1}\Gamma_0^T f_\eta= \Upsilon_1 f_\eta$
that $M(\eta)=M$ holds. The fact that $\overline{\text{\rm Im}\, M(\lambda)}$ is not invertible for $\lambda\in\mathbb C\setminus\mathbb R$
is immediate from \eqref{eqn:kerImWeyl}.
\end{proof}
%
%
%
\section{Extensions of symmetric operators}\label{sec:extension}

The main objective of this section is to parameterize the extensions of a symmetric operator $A$ with the help of
a quasi boundary triple $\{\mathcal G, \Gamma_0, \Gamma_1\}$ for $T\subset A^*$. In contrast to ordinary boundary triples
there is no immediate direct connection between the properties of the extensions 
\begin{equation}\label{eqn:Atheta}
  A_\vartheta=T\upharpoonright\bigl\{ f\in \dom T : \Gamma f \in \vartheta \bigr\}
\end{equation}
and the properties of the corresponding parameters $\vartheta$ in $\mathcal G\times\mathcal G$, as, e.g.\ self-adjointness. 
The key idea in Theorem~\ref{thm:main1} and Theorem~\ref{thm:main2} is to mimic a regularization procedure 
which is used in the investigation of elliptic 
differential operators and goes back to \cite{Gr68,Vi63}, see also \cite{BeLa12,BrGrWo09,DeHaMaSn12,GeMi11,Ma10,Po08,PoRa09}. 
This also leads to an abstract complete description of the extensions $A_\vartheta\subset A^*$ via the extended boundary mappings
$\widetilde\Gamma_0$ and $\widetilde\Gamma_1$ in Theorem~\ref{thm:main3}. The general results are illustrated with 
various examples and sufficient conditions on the parameters
to imply self-adjointness, as well as a variant of Kre\u{\i}n's formula is discussed.

\subsection{Parameterization of extensions with quasi boundary triples}\label{sec:parameterization}
Let in the following $A$ be a closed, densely defined, symmetric operator in the Hilbert space $\mathcal H$ with equal, in general, 
infinite deficiency indices.
In the first theorem in this subsection we recall one of the key features of ordinary boundary triples 
$\{\mathcal G,\Gamma_0,\Gamma_1\}$ for $A^*$: A complete description and 
parameterization of the extensions $A_\Theta$ of $A$ given by
\begin{equation*}
A_\Theta :=A^*\upharpoonright\bigl\{ f\in \dom A^* : \Gamma f \in \Theta \bigr\}
\end{equation*}
and their properties in terms of linear relations $\Theta$ 
in the boundary space $\mathcal G$, see, e.g.\ \cite{DeMa91,DeMa95,GoGo91}.
\begin{Thm}\label{thm:boundary_triple}
   Let $\{\mathcal G,\Gamma_0,\Gamma_1\}$ be an ordinary boundary triple for $A^*$. Then the mapping 
   \footnote{Here and in the following the expression $\Gamma_1-\Theta\Gamma_0$ is understood in the sense of linear relations if $\Theta$ is a linear relation,
   that is, $\Theta\Gamma_0$ is the product of the relation $\Theta$ with (the graph of the mapping) $\Gamma_0$ and the sum of $\Gamma_1$ and
   $-\Theta\Gamma_0$ is in sense of linear relations; cf. Appendix for the defintions.}
   \begin{equation*}
   \Theta \mapsto A_\Theta=A^*\upharpoonright\bigl\{ f\in \dom A^* : \Gamma f \in \Theta \bigr\} 
   = A^*\upharpoonright\ker(\Gamma_1-\Theta\Gamma_0)
   \end{equation*}
   establishes a bijective correspondence between the set of closed linear relations $\Theta$ in $\mathcal G$ and the set of closed
   extensions $A_\Theta\subset A^*$ of $A$. 
   Furthermore, 
   \begin{equation*}
   A_{\Theta^*}=A_\Theta^*
   \end{equation*}
   and the operator $A_\Theta$
   is symmetric (self-adjoint, (maximal) dissipative, (maximal) accumulative) in $\mathcal H$
   if and only if the closed linear relation $\Theta$ 
   is symmetric (self-adjoint, (maximal) dissipative, (maximal) accumulative, respectively) in $\mathcal G$.
\end{Thm}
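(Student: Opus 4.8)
The plan is to extract everything from three facts about an ordinary boundary triple that are already available in the excerpt: the map $\Gamma=(\Gamma_0,\Gamma_1)^\top:\dom A^*\to\mathcal G\times\mathcal G$ is continuous with respect to the graph norm of $A^*$, it is \emph{onto} (since $\ran\Gamma=\mathcal G\times\mathcal G$ for an ordinary boundary triple), and $\ker\Gamma=\dom A$. First I would establish the bijection. Equipping $\dom A^*$ with the graph norm makes it a Hilbert space (as $A^*$ is closed), so $\Gamma$ is a bounded surjection with closed kernel $\dom A$; by the open mapping theorem the induced map on $\dom A^*/\dom A$ is a topological isomorphism onto $\mathcal G\times\mathcal G$. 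Since $\dom A_\Theta=\Gamma^{-1}(\Theta)$ and $\Gamma$ is onto, one has $\Gamma(\dom A_\Theta)=\Theta\cap\ran\Gamma=\Theta$, which gives injectivity; conversely, for a closed extension $B$ with $A\subset B\subset A^*$ the space $\dom B$ is closed in the graph norm and $\Theta:=\Gamma(\dom B)$ is a closed relation with $A_\Theta=B$, giving surjectivity. The same computation $A_{\Theta_1}\subset A_{\Theta_2}\Leftrightarrow\Theta_1\subset\Theta_2$ records that the correspondence is inclusion-preserving in both directions, which I will use repeatedly.

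Next I would prove the adjoint formula $A_\Theta^*=A_{\Theta^*}$, using only the abstract Green's identity \eqref{eqn:abstract_green} (now valid on all of $\dom A^*$). For the inclusion $A_{\Theta^*}\subset A_\Theta^*$, take $f\in\dom A_\Theta$ and $g\in\dom A_{\Theta^*}$: inserting $\Gamma f\in\Theta$ and $\Gamma g\in\Theta^*$ into Green's identity, the defining orthogonality property of the adjoint relation $\Theta^*$ (see the Appendix) makes the boundary term $(\Gamma_1 f,\Gamma_0 g)_{\mathcal G}-(\Gamma_0 f,\Gamma_1 g)_{\mathcal G}$ vanish, so $(A^*f,g)_{\mathcal H}=(f,A^*g)_{\mathcal H}$ and hence $g\in\dom A_\Theta^*$ with $A_\Theta^*g=A^*g$. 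For the reverse inclusion, let $g\in\dom A_\Theta^*$; since $A\subset A_\Theta$, testing against $f\in\dom A$ first forces $g\in\dom A^*$ with $A^*g=A_\Theta^*g$. Then for every $f\in\dom A_\Theta$ the left-hand side of Green's identity vanishes, so $(\Gamma_1 f,\Gamma_0 g)_{\mathcal G}=(\Gamma_0 f,\Gamma_1 g)_{\mathcal G}$; as $f$ ranges over $\dom A_\Theta$ the pair $(\Gamma_0 f,\Gamma_1 f)$ runs through \emph{all} of $\Theta$ (again because $\Gamma(\dom A_\Theta)=\Theta$), and this is precisely the statement $\Gamma g\in\Theta^*$, i.e.\ $g\in\dom A_{\Theta^*}$.

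The spectral-type properties then follow formally. Symmetry: $A_\Theta$ is symmetric iff $A_\Theta\subset A_\Theta^*=A_{\Theta^*}$, which by monotonicity of the correspondence is equivalent to $\Theta\subset\Theta^*$. Self-adjointness is the equality case $A_\Theta=A_{\Theta^*}\Leftrightarrow\Theta=\Theta^*$. For the dissipative and accumulative cases I would put $g=f$ in Green's identity to get
\[
(A^* f, f)_{\mathcal H} - (f, A^* f)_{\mathcal H} = (\Gamma_1 f, \Gamma_0 f)_{\mathcal G} - (\Gamma_0 f, \Gamma_1 f)_{\mathcal G},
\]
that is $\im (A^*f,f)_{\mathcal H}=\im(\Gamma_1 f,\Gamma_0 f)_{\mathcal G}$ for all $f\in\dom T$; restricting to $f\in\dom A_\Theta$ and using that $\{(\Gamma_0 f,\Gamma_1 f):f\in\dom A_\Theta\}=\Theta$, the sign condition defining dissipativity (respectively accumulativity) of $A_\Theta$ is literally the same condition on $\Theta$. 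The maximal versions require no new computation: the correspondence $\Theta\mapsto A_\Theta$ is an inclusion-preserving bijection that preserves (co)dissipativity in both directions, so a (co)dissipative extension is maximal within the class of (co)dissipative extensions exactly when its parameter $\Theta$ is maximal.

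I expect the main obstacle to be the nontrivial inclusion $A_\Theta^*\subset A_{\Theta^*}$: reconstructing the membership $\Gamma g\in\Theta^*$ from the vanishing boundary pairing crucially needs that $\Gamma$ maps $\dom A_\Theta$ \emph{onto} $\Theta$, so that the derived identity holds for every $(\Gamma_0 f,\Gamma_1 f)\in\Theta$ rather than on a mere subset; the remaining care is purely bookkeeping for the multivalued parts of the linear relations $\Theta$ and $\Theta^*$.
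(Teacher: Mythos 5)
The paper does not actually prove Theorem~\ref{thm:boundary_triple}: it is quoted as a classical result with the reference \cite{DeMa91,DeMa95,GoGo91}, so your argument has to stand on its own. Most of it does. The open-mapping/quotient argument correctly yields the inclusion-preserving bijection between closed relations and closed extensions $A\subset A_\Theta\subset A^*$, with the key identity $\Gamma(\dom A_\Theta)=\Theta$ resting on surjectivity of $\Gamma$; the two inclusions for $A_\Theta^*=A_{\Theta^*}$ are exactly right (including the remark that $A\subset A_\Theta$ forces $A_\Theta^*\subset A^*$); and symmetry, self-adjointness, and plain dissipativity/accumulativity then follow from Green's identity with $g=f$ together with the definition of $\Theta^*$, as you describe.

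The genuine gap is the \emph{maximal} dissipative/accumulative equivalence, which you dismiss as requiring ``no new computation''. Maximality of $A_\Theta$ means: no proper dissipative extension among \emph{all} operators (or relations) in $\mathcal H$ --- not merely among closed extensions sandwiched between $A$ and $A^*$; likewise maximality of $\Theta$ refers to all dissipative relations in $\mathcal G$, not only closed ones. Your bijection sees only the sandwiched, closed objects, so by itself it proves the wrong statement (``maximal within the lattice $A\subset\cdot\subset A^*$''). Two supplements are needed. First, the routine one: closures of dissipative relations/operators are dissipative, so non-closed competitors may be replaced by closed ones without losing properness over a closed object. Second, and crucially: every dissipative extension $B$ of the densely defined symmetric operator $A$ is automatically a restriction of $A^*$ (and has trivial multivalued part), so that a hypothetical proper dissipative extension of $A_\Theta$ really is of the form $A_{\Theta'}$ and can be fed back through the correspondence. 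This is a classical lemma going back to Phillips (see also \cite{GoGo91}); a short proof: for $(u,v)\in B$, dissipativity of the subspace $\mathrm{graph}(A)+\mathrm{span}\{(u,v)\}$ applied to the pairs $(sf+\alpha u,\, sAf+\alpha v)$, $f\in\dom A$, $s\in\mathbb R$, $\alpha\in\mathbb C$, gives an expression affine in $s$ whose linear coefficient must vanish, and evaluating at $\alpha=1$ and $\alpha=i$ yields $(Af,u)_{\mathcal H}=(f,v)_{\mathcal H}$ for all $f\in\dom A$, i.e.\ $u\in\dom A^*$ and $A^*u=v$. With these two ingredients inserted, your maximality argument closes; without them it is incomplete.
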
 

Not surprisingly Theorem~\ref{thm:boundary_triple} does not hold for quasi boundary triples 
$\{\mathcal G,\Gamma_0,\Gamma_1\}$, see, e.g.\ \cite[Proposition~4.11]{BeLa07} for a counterexample. 
In particular,  
$\vartheta=\{0\}\times\mathscr G_1 \subset\ran \Gamma$ (see Definition~\ref{def:g0g1} and Proposition~\ref{pro:weyl2}~(ii)) 
is symmetric and not self-adjoint in $\mathcal G$ but the corresponding extension $A_\vartheta$ of $A$ in \eqref{eqn:Atheta} 
coincides with the self-adjoint operator $A_0=T\upharpoonright\ker\Gamma_0$ in $\mathcal H$. 
Note that for a quasi boundary triple $\{\mathcal G,\Gamma_0,\Gamma_1\}$ the range of the boundary map $\Gamma=(\Gamma_0,\Gamma_1)^\top$ is
only dense in $\mathcal G\times\mathcal G$, so that for a linear relation $\vartheta$ in $\mathcal G$ only the 
part $\vartheta\cap\ran\Gamma$ can be ``detected'' by the boundary maps. 
However, even for a self-adjoint linear relation $\vartheta\subset\ran\Gamma$
the corresponding extension $A_\vartheta$ of $A$ in \eqref{eqn:Atheta} is in general not self-adjoint, see Example~\ref{cex:1}.
Nevertheless, the following weaker statement is a direct consequence of the abstract Green's identity \eqref{eqn:abstract_green}; 
cf.~\cite[Proposition 2.4]{BeLa07}.
\begin{Lem}\label{lem:symmetric}
  Let $\{\mathcal G,\Gamma_0,\Gamma_1\}$ be a quasi boundary triple 
  for $T\subset A^*$.
  Then the mapping 
  \begin{equation*}
  \vartheta\mapsto A_\vartheta=T\upharpoonright\bigl\{ f\in \dom T : \Gamma f \in \vartheta \bigr\} 
  \end{equation*}
  establishes a bijective correspondence between the set of symmetric linear relations $\vartheta\subset\ran\Gamma$ in $\mathcal G$ and the set of 
  symmetric extensions  $A_\vartheta\subset T$ of $A$ in $\mathcal H$.
\end{Lem}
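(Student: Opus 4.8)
The plan is to run everything off the abstract Green's identity \eqref{eqn:abstract_green} together with the identity $\ker\Gamma=\dom A$ recorded after Definition~\ref{qbt}. Throughout I write $\Gamma f=(\Gamma_0 f,\Gamma_1 f)^\top$ and recall that a linear relation $\vartheta$ in $\mathcal G$ is symmetric exactly when $(x',y)_{\mathcal G}=(x,y')_{\mathcal G}$ for all $(x,x')^\top,(y,y')^\top\in\vartheta$. First I would check that the map is well defined. Starting from a symmetric $\vartheta\subset\ran\Gamma$, the inclusion $(0,0)^\top\in\vartheta$ together with $\ker\Gamma=\dom A$ forces $A\subset A_\vartheta\subset T$. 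For $f,g\in\dom A_\vartheta$ one has $\Gamma f,\Gamma g\in\vartheta$, and symmetry of $\vartheta$ gives $(\Gamma_1 f,\Gamma_0 g)_{\mathcal G}=(\Gamma_0 f,\Gamma_1 g)_{\mathcal G}$; substituting this into \eqref{eqn:abstract_green} yields $(Tf,g)_{\mathcal H}=(f,Tg)_{\mathcal H}$, so $A_\vartheta$ is indeed a symmetric extension of $A$.

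For injectivity I would observe that $\dom A_\vartheta=\{f\in\dom T:\Gamma f\in\vartheta\}$, and that the hypothesis $\vartheta\subset\ran\Gamma$ forces $\Gamma(\dom A_\vartheta)=\vartheta\cap\ran\Gamma=\vartheta$. Thus $\vartheta$ can be recovered from $A_\vartheta$ as $\Gamma(\dom A_\vartheta)$, and two different parameters in $\ran\Gamma$ must produce different extensions.

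For surjectivity, given a symmetric extension $\widetilde A$ with $A\subset\widetilde A\subset T$, I would set $\vartheta:=\Gamma(\dom\widetilde A)\subset\ran\Gamma$ and read the Green computation backwards: symmetry of $\widetilde A$ gives $(Tf,g)_{\mathcal H}-(f,Tg)_{\mathcal H}=0$ for all $f,g\in\dom\widetilde A$, hence $(\Gamma_1 f,\Gamma_0 g)_{\mathcal G}=(\Gamma_0 f,\Gamma_1 g)_{\mathcal G}$ by \eqref{eqn:abstract_green}, so $\vartheta$ is symmetric. It then remains to verify $\dom A_\vartheta=\dom\widetilde A$. The inclusion $\dom\widetilde A\subset\dom A_\vartheta$ is immediate, and for the reverse, if $\Gamma f\in\vartheta=\Gamma(\dom\widetilde A)$ then $\Gamma f=\Gamma h$ for some $h\in\dom\widetilde A$, whence $f-h\in\ker\Gamma=\dom A\subset\dom\widetilde A$ and $f=h+(f-h)\in\dom\widetilde A$.

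I expect the only genuinely delicate point to be this last domain recovery in the surjectivity step: it is precisely here that $\ker\Gamma=\dom A$ is indispensable, since it is what forces two elements of $\dom T$ with identical boundary data to differ only by an element of $\dom A\subset\dom\widetilde A$. The hypothesis $\vartheta\subset\ran\Gamma$ plays the complementary role of guaranteeing $\Gamma(\Gamma^{-1}(\vartheta))=\vartheta$, reflecting the fact that only the portion of a relation visible to the boundary maps enters the correspondence. Everything else is a direct, symmetric application of Green's identity.
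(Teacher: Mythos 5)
Your proof is correct and follows exactly the route the paper indicates: the lemma is stated there as a direct consequence of the abstract Green's identity \eqref{eqn:abstract_green} (with details deferred to \cite[Proposition~2.4]{BeLa07}), and your argument — Green's identity in both directions, the recovery $\vartheta=\Gamma(\dom A_\vartheta)=\vartheta\cap\ran\Gamma$, and the use of $\ker\Gamma=\dom A$ to identify $\dom A_\vartheta$ with $\dom\widetilde A$ in the surjectivity step — is precisely that argument, carefully written out.
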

We also mention that for a quasi boundary triple $\{\mathcal G,\Gamma_0,\Gamma_1\}$ and linear relations 
$\theta \subset \vartheta \subset\ran \Gamma$ one has $A_{\theta}\subset A_{\vartheta} \subset T$; cf.~\eqref{eqn:Atheta}.

In the next theorem we make use of a different
type of parameterization to characterize the restrictions of $T$ with the help of a quasi boundary triple. 
The idea of the proof is to relate the given quasi boundary triple 
$\{\mathcal G,\Gamma_0,\Gamma_1\}$ to the quasi boundary triple in Lemma~\ref{lem:BTonNeta}~(ii) 
and to transform the parameters accordingly.
We also point out that in contrast to most of the results in Section~\ref{2.3} here it is not assumed that the space
$\mathscr G_1=\ran(\Gamma_1\upharpoonright\ker\Gamma_0)$ 
is dense in $\mathcal G$.
\begin{Thm}\label{thm:main1}
  Let $\{\mathcal G, \Gamma_0, \Gamma_1\}$ be a quasi boundary triple for $T\subset A^*$ 
  with $\gamma$-field $\gamma$ and Weyl function $M$. 
  Assume that for $A_0 = T \upharpoonright\ker\Gamma_0$ there exists $\eta \in \rho(A_0)\cap\mathbb R$ 
  and fix a unitary operator $\varphi:\mathcal N_\eta(A^*)\rightarrow\mathcal G$.
  Then the mapping 
  \begin{equation*}
    \Theta\mapsto A_\vartheta = T\upharpoonright\bigl\{ f\in \dom T : \Gamma f \in \vartheta \bigr\}
    \quad\text{with}\quad \vartheta = \gamma(\eta)^* \varphi^* \Theta \varphi  \gamma(\eta) + M(\eta)
  \end{equation*}
  establishes a bijective correspondence between all
  closed (symmetric, self-adjoint, (maximal) dissipative, (maximal) accumulative) 
  linear relations $\Theta$ in $\mathcal G$ 
  with $\dom \Theta \subset \ran (\varphi\upharpoonright \mathcal N_\eta (T))$ and 
  all closed (symmetric, self-adjoint, (maximal) dissipative, (maximal) accumulative, respectively)
  extensions $A_\vartheta\subset T$ of $A$ in $\mathcal H$.
\end{Thm}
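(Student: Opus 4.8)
The plan is to reduce the statement to the special ordinary boundary triple constructed in Lemma~\ref{lem:BTonNeta}. Denote by $\{\mathcal G,\widehat\Gamma_0,\widehat\Gamma_1\}$ the quasi boundary triple from Lemma~\ref{lem:BTonNeta}(ii), that is, $\widehat\Gamma_0 f=\varphi f_\eta$ and $\widehat\Gamma_1 f=\varphi P_\eta(A_0-\eta)f_0$ for $f=f_0+f_\eta\in\dom A_0\dotplus\mathcal N_\eta(T)$; this triple is the restriction to $T$ of the ordinary boundary triple $\{\mathcal G,\Phi_0,\Phi_1\}$ for $A^*$ from Lemma~\ref{lem:BTonNeta}(i). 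For the latter, Theorem~\ref{thm:boundary_triple} already provides a bijective, property-preserving correspondence $\Theta\mapsto A_\Theta=A^*\upharpoonright\ker(\Phi_1-\Theta\Phi_0)$ between all closed relations $\Theta$ in $\mathcal G$ and all closed extensions of $A$ inside $A^*$. First I would record that the side condition $\dom\Theta\subset\ran(\varphi\upharpoonright\mathcal N_\eta(T))$ is exactly the algebraic translation of $A_\Theta\subset T$: since $\Phi_0 f=\varphi f_\eta$ and $\dom T=\dom A_0\dotplus\mathcal N_\eta(T)$, an element $f\in\dom A^*$ lies in $\dom T$ if and only if $\Phi_0 f\in\ran(\varphi\upharpoonright\mathcal N_\eta(T))$, while surjectivity of $\Phi$ identifies $\{\Phi_0 f:f\in\dom A_\Theta\}$ with $\dom\Theta$. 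Hence $\Theta\mapsto A_\Theta$ restricts to a bijective, property-preserving correspondence between the closed relations $\Theta$ with $\dom\Theta\subset\ran(\varphi\upharpoonright\mathcal N_\eta(T))$ and the closed extensions of $A$ contained in $T$, and in this case $A_\Theta=T\upharpoonright\{f\in\dom T:\widehat\Gamma f\in\Theta\}$.

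It then remains to identify $A_\Theta$ with the operator $A_\vartheta$ built from the given triple and the transformed parameter $\vartheta=\gamma(\eta)^*\varphi^*\Theta\varphi\gamma(\eta)+M(\eta)$; once $A_\vartheta=A_\Theta$ is established, all claimed properties follow verbatim from Theorem~\ref{thm:boundary_triple}. For this I would compute, on $\dom T$, the link between the two sets of boundary maps. Using that $\gamma(\eta)$ inverts $\Gamma_0\upharpoonright\mathcal N_\eta(T)$ one gets $\widehat\Gamma_0 f=\varphi\gamma(\eta)\Gamma_0 f$. For the second components I would insert $f_0\in\dom A_0$ and $g_\eta\in\mathcal N_\eta(T)$ into the abstract Green's identity \eqref{eqn:abstract_green} for the given triple; using $\Gamma_0 f_0=0$, $Tg_\eta=\eta g_\eta$ and $\eta\in\mathbb R$ this yields $((A_0-\eta)f_0,\gamma(\eta)x)_{\mathcal H}=(\Gamma_1 f_0,x)_{\mathcal G}$ for all $x\in\ran\Gamma_0$, and hence $\Gamma_1 f_0=\gamma(\eta)^*(A_0-\eta)f_0=\gamma(\eta)^*P_\eta(A_0-\eta)f_0$, the last equality because $(I-P_\eta)(A_0-\eta)f_0\in\mathcal N_\eta(A^*)^\bot=\ker\gamma(\eta)^*$. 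Together with \eqref{gammaid} this expresses $\widehat\Gamma_1 f=\varphi P_\eta(A_0-\eta)f_0$ and $\Gamma_1 f-M(\eta)\Gamma_0 f=\Gamma_1 f_0=\gamma(\eta)^*P_\eta(A_0-\eta)f_0$ through one and the same vector $w:=P_\eta(A_0-\eta)f_0\in\mathcal N_\eta(A^*)$.

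With these two formulas in hand the final step is pure linear-relation bookkeeping. Unravelling the definition of $\vartheta$, the condition $\Gamma f\in\vartheta$ reads $(\Gamma_0 f,\Gamma_1 f-M(\eta)\Gamma_0 f)\in\gamma(\eta)^*\varphi^*\Theta\varphi\gamma(\eta)$, i.e.\ there is $(a,b)\in\Theta$ with $a=\varphi\gamma(\eta)\Gamma_0 f=\widehat\Gamma_0 f$ and $\gamma(\eta)^*w=\gamma(\eta)^*\varphi^*b$. Taking $(a,b)=(\widehat\Gamma_0 f,\widehat\Gamma_1 f)$ and using $\varphi^*\varphi=I$ shows $\widehat\Gamma f\in\Theta\Rightarrow\Gamma f\in\vartheta$. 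The converse is the only genuinely delicate point: from $\gamma(\eta)^*(w-\varphi^*b)=0$ one must cancel $\gamma(\eta)^*$, which is injective only on $\mathcal N_\eta(A^*)$; this is saved by observing that both $w$ and $\varphi^*b$ lie in $\mathcal N_\eta(A^*)$, so $w-\varphi^*b\in\mathcal N_\eta(A^*)\cap\ker\gamma(\eta)^*=\mathcal N_\eta(A^*)\cap\mathcal N_\eta(A^*)^\bot=\{0\}$, whence $b=\varphi w=\widehat\Gamma_1 f$ and $\widehat\Gamma f\in\Theta$. This establishes $\{f\in\dom T:\Gamma f\in\vartheta\}=\{f\in\dom T:\widehat\Gamma f\in\Theta\}$, so $A_\vartheta=A_\Theta$, and the theorem follows from the correspondence recorded in the first step. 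I expect this converse cancellation, together with the careful handling of the domain condition $\dom\Theta\subset\ran(\varphi\upharpoonright\mathcal N_\eta(T))$ throughout, to be the main obstacles.
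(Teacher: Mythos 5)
Your proof is correct and follows essentially the same route as the paper: both reduce the statement to Theorem~\ref{thm:boundary_triple} applied to the ordinary boundary triple of Lemma~\ref{lem:BTonNeta}, translate the condition $\Gamma f \in \vartheta$ into $(\varphi f_\eta,\, \varphi P_\eta(A_0-\eta)f_0)^\top \in \Theta$ via the identities $\Gamma_1 f_0 = \gamma(\eta)^* P_\eta(A_0-\eta)f_0$ and \eqref{gammaid} together with the injectivity of $\gamma(\eta)^*$ restricted to $\mathcal N_\eta(A^*)$, and interpret the side condition $\dom \Theta \subset \ran (\varphi\upharpoonright \mathcal N_\eta (T))$ as exactly the requirement $A_\Theta \subset T$. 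The only differences are organizational: you isolate the domain-condition bookkeeping up front and re-derive $\Gamma_1 f_0 = \gamma(\eta)^*(A_0-\eta)f_0$ from Green's identity, where the paper simply cites \eqref{eqn:gamma_field}.
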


\begin{proof}
  Let $\Theta$ be a linear relation in $\mathcal G$ 
  and decompose $f\in \dom T$ in  $f=f_0+ f_\eta$, where $f_0 \in \dom A_0$ 
  and $f_\eta \in \mathcal N_\eta(T)$.
  Then $\Gamma f \in \gamma(\eta)^* \varphi^* \Theta \varphi \gamma(\eta) + M(\eta)$ is equivalent to 
  \begin{equation*}
    \Gamma_1 f =  \gamma(\eta)^*  \varphi^* x + M(\eta) \Gamma_0 f \quad \text{with} \quad
    \begin{pmatrix}\varphi \gamma(\eta) \Gamma_0 f \\ x\end{pmatrix} \in \Theta, 
  \end{equation*}
  and by \eqref{gammaid} this can be rewritten as 
  \begin{equation}\label{abcd}
    \Gamma_1 f_0=\gamma(\eta)^* \varphi^*  x  \quad \text{with}\quad 
    \begin{pmatrix} \varphi f_\eta \\ x\end{pmatrix} \in \Theta.
  \end{equation}
  Denote the orthogonal projection in $\mathcal H$ onto $\mathcal N_\eta(A^*)$ by $P_\eta$. 
  Making use of \eqref{eqn:gamma_field} and \eqref{eqn:Hdec} we find
  \begin{equation*}
  \Gamma_1 f_0 = \gamma(\eta)^* (A_0-\eta) f_0 = \gamma(\eta)^* P_\eta(A_0-\eta) f_0
  \end{equation*}
  and as $\gamma(\eta)^*\upharpoonright\mathcal N_\eta(A^*)$ is invertible we conclude together with \eqref{abcd}
  \begin{equation}\label{equiv}
  \Gamma f \in \gamma(\eta)^* \varphi^* \Theta \varphi \gamma(\eta) + M(\eta) \quad \text{if and only if} \quad 
  \begin{pmatrix}\varphi f_\eta \\ \varphi P_\eta  (A_0-\eta)f_0 \end{pmatrix}\in \Theta
  \end{equation}
  for all $f=f_0+f_\eta\in\dom T$.

  According to Proposition~\ref{pro:qbt_restriction} and Lemma~\ref{lem:BTonNeta} the quasi boundary 
  triple $\{\mathcal G,f\mapsto \varphi f_\eta,f\mapsto \varphi P_\eta  (A_0-\eta)f_0 \}$ is the restriction of the 
  ordinary boundary
  triple $\{\mathcal G,f\mapsto \varphi f_\eta,f\mapsto \varphi P_\eta  (A_0-\eta)f_0 \}$ for $A^*$. Now the statement is a consequence of
  Theorem~\ref{thm:boundary_triple}. In fact, if e.g.\ $\Theta$ is self-adjoint in $\mathcal G$ 
  with $\dom \Theta \subset \ran (\varphi\upharpoonright \mathcal N_\eta (T))$, then by Theorem~\ref{thm:boundary_triple} the operator 
\begin{equation}\label{schoeneformel}  
  A^*\upharpoonright\left\{f_0+f_\eta=\dom A_0\dotplus\mathcal N_\eta(A^*) : \begin{pmatrix}\varphi f_\eta \\ 
    \varphi P_\eta  (A_0-\eta)f_0 \end{pmatrix}\in \Theta\right\}
\end{equation}
  is a self-adjoint restriction of $A^*$ in $\mathcal H$. 
  As $\dom \Theta \subset \ran (\varphi\upharpoonright \mathcal N_\eta (T))$ we conclude that the domain of the operator in 
  \eqref{schoeneformel}
  is contained in $\dom T$. 
  Hence by \eqref{equiv} the operator in \eqref{schoeneformel} can be written as
  \begin{equation}\label{schoeneformel2}
    A_\vartheta = T\upharpoonright\bigl\{ f\in \dom T : \Gamma f \in \vartheta \bigr\}
    \quad\text{with}\quad \vartheta = \gamma(\eta)^* \varphi^* \Theta \varphi  \gamma(\eta) + M(\eta)
  \end{equation}
  and $A_\vartheta$ is a self-adjoint operator in $\mathcal H$. 
Conversely, by Theorem~\ref{thm:boundary_triple} for any self-adjoint extension $A_\vartheta$ 
  of $A$ which is contained in $T$ there exists a self-adjoint relation $\Theta$ in $\mathcal G$ such that $A_\vartheta$ can be 
  written in the form \eqref{schoeneformel}, 
  where $\mathcal N_\eta(A^*)$ can be replaced by $\mathcal N_\eta(T)$. 
  Therefore $\dom \Theta \subset \ran (\varphi\upharpoonright \mathcal N_\eta (T))$ and
  together with \eqref{equiv} we conclude that $A_\vartheta$ can be written in the form \eqref{schoeneformel2}.
\end{proof}
The next theorem is of similar flavor as Theorem~\ref{thm:main1} but more explicit and relevant for elliptic boundary value
problems; cf.~Section~\ref{sec:applications}. 
Under the additional assumption that the space $\mathscr G_1=\ran(\Gamma_1\upharpoonright\ker\Gamma_0)$ in Definition~\ref{def:g0g1} 
is dense in $\mathcal G$ 
a more natural parameterization of the extensions is found. 
Here we will again make use of the Gelfand triple 
$\mathscr G_1\hookrightarrow\mathcal G\hookrightarrow\mathscr G_1'$ and the corresponding
isometric isomorphisms $\iota_+$ and $\iota_-$ in \eqref{iotas}. 
We also note that after suitable modifications the assumption $\eta\in\mathbb R$ can be dropped, see Remark~\ref{rem:techn}.
\begin{Thm}\label{thm:main2}
  Let $\{\mathcal G,\Gamma_0,\Gamma_1\}$ be a quasi boundary triple for $T\subset A^*$ with $A_0 = T \upharpoonright \ker\Gamma_0$
  and Weyl function $M$. Assume that there exists $\eta \in \rho(A_0)\cap\mathbb R$ and that  $\mathscr G_1$ is dense in $\mathcal G$.
  Then the mapping 
  \begin{equation*}
    \Theta\mapsto A_\vartheta = T\upharpoonright\bigl\{ f\in \dom T : \Gamma f \in \vartheta \bigr\}
    \quad\text{with}\quad \vartheta = \iota_+^{-1} \Theta \iota_- + M(\eta) 
  \end{equation*}
  establishes a bijective correspondence between all
  closed (symmetric,  self-adjoint, (maximal) dissipative, (maximal) accumulative) 
  linear relations $\Theta$ in $\mathcal G$ 
  with $\dom \Theta \subset \ran \iota_- \Gamma_0$ and 
  all closed (symmetric, self-adjoint, (maximal) dissipative, (maximal) accumulative, respectively) extensions $A_\vartheta\subset T$ of $A$ 
  in $\mathcal H$.
\end{Thm}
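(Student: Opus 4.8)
The plan is to reduce Theorem~\ref{thm:main2} to the already-established Theorem~\ref{thm:main1} by identifying the abstract parametrization via $\varphi$ and $\gamma(\eta)^*$ with the more concrete parametrization via the Gelfand-triple isomorphisms $\iota_\pm$. The key observation is that both theorems parametrize exactly the same family of extensions $A_\vartheta$, merely using different ``coordinates'' on the boundary space; so it suffices to show that the two prescriptions for $\vartheta$ agree after a suitable change of parameter $\Theta$, and that this change is an isomorphism preserving the relevant classes (closed, symmetric, self-adjoint, dissipative, accumulative).

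First I would fix the unitary $\varphi:\mathcal N_\eta(A^*)\to\mathcal G$ used in Theorem~\ref{thm:main1} and compare the two expressions. In Theorem~\ref{thm:main1} one has $\vartheta=\gamma(\eta)^*\varphi^*\,\Theta_1\,\varphi\gamma(\eta)+M(\eta)$ with $\dom\Theta_1\subset\ran(\varphi\upharpoonright\mathcal N_\eta(T))$, whereas here $\vartheta=\iota_+^{-1}\Theta\iota_-+M(\eta)$ with $\dom\Theta\subset\ran\iota_-\Gamma_0$. Since $M(\eta)$ appears identically in both, the task is to match the homogeneous parts. Using the density of $\mathscr G_1$, Proposition~\ref{pro:Gamma_0_extension} gives the extension $\widetilde\Gamma_0:\dom A^*\to\mathscr G_1'$, and by Proposition~\ref{pro:DeMa95}(i) together with the identity $\gamma(\eta)^*=\widetilde\gamma(\eta)'$ from Lemma~\ref{lem:exgamma}(iii), the maps $\gamma(\eta)^*\varphi^*$ and $\iota_+^{-1}$ differ only by a fixed isomorphism between $\mathcal G$ and $\mathscr G_1'$ (respectively $\mathscr G_1$). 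Concretely I would define $\Phi:\mathcal G\to\mathscr G_1'$ by relating $\varphi\gamma(\eta)$ on $\mathcal N_\eta(T)$ to $\iota_-\Gamma_0$ on the same subspace, and check that setting $\Theta:=\Phi\,\Theta_1\,\Phi^{-1}$ (in the sense of linear relations) transforms the first formula into the second. Because $\Phi$ is a bounded bijection with bounded inverse and the pairing~\eqref{iotas} is built to make $\iota_+^{-1}$ the adjoint-type partner of $\iota_-$, this transformation is a bijection on closed linear relations that preserves symmetry, self-adjointness, and the dissipativity/accumulativity classes.

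The technical heart is the domain-condition bookkeeping and the verification that the algebraic class of $\Theta$ is genuinely preserved, not merely $\Theta_1$. I would argue that $\dom\Theta\subset\ran\iota_-\Gamma_0$ corresponds exactly to $\dom\Theta_1\subset\ran(\varphi\upharpoonright\mathcal N_\eta(T))$ under $\Phi$, since $\Phi$ carries $\ran(\varphi\upharpoonright\mathcal N_\eta(T))$ onto $\ran\iota_-\Gamma_0$; here one uses that $\Gamma_0 f_\eta$ ranges over $\ran\Gamma_0$ as $f_\eta$ ranges over $\mathcal N_\eta(T)$, together with surjectivity of $\iota_-$. For the preservation of self-adjointness (and the dissipative/accumulative variants) the point is that $\Phi^{-1}=(\Phi')$ up to the duality pairing~\eqref{iotas}, so that the congruence $\Theta_1\mapsto\Phi\Theta_1\Phi^{-1}$ is realized through the natural $\mathscr G_1\text{--}\mathscr G_1'$ duality and hence respects the adjoint relation $\Theta\mapsto\Theta^*$. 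This is where I would lean on the relation-calculus facts (products and sums of relations, and the behavior of adjoints under such congruences) recorded in the Appendix.

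The main obstacle I anticipate is handling everything at the level of \emph{linear relations} rather than operators: the products $\gamma(\eta)^*\varphi^*\Theta\varphi\gamma(\eta)$ and $\iota_+^{-1}\Theta\iota_-$ must be interpreted carefully so that domains, multivalued parts, and adjoints behave correctly, and one must confirm that the asserted correspondence is a genuine bijection (both well-defined and surjective onto the stated class of extensions). A secondary subtlety is that $M(\eta)$ is generally an unbounded symmetric operator in $\mathcal G$ with $\dom M(\eta)=\ran\Gamma_0$, so adding it to a relation requires the translation invariance of the extension map (which follows from~\eqref{gammaid}); I would invoke this to shift by $M(\eta)$ freely. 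Once the change of parameter $\Phi$ and its compatibility with~\eqref{iotas} are pinned down, the result follows by direct appeal to Theorem~\ref{thm:main1}, so the essential work is the clean identification of $\Phi$ and the verification that conjugation by it is class-preserving on closed linear relations.
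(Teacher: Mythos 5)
Your overall strategy --- deducing Theorem~\ref{thm:main2} from Theorem~\ref{thm:main1} by a change of parameter --- is viable in principle, but the transformation you write down has the wrong form, and this is not cosmetic. If you match the two boundary conditions as in the proofs of both theorems, the condition $\Gamma f\in\gamma(\eta)^*\varphi^*\Theta_1\varphi\gamma(\eta)+M(\eta)$ is equivalent to $\bigl(\varphi f_\eta,\,\varphi P_\eta(A_0-\eta)f_0\bigr)^\top\in\Theta_1$, while $\Gamma f\in\iota_+^{-1}\Theta\iota_-+M(\eta)$ is equivalent to $\bigl(\iota_-\widetilde\Gamma_0 f,\,\iota_+\Gamma_1 f_0\bigr)^\top\in\Theta$. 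Hence the \emph{first} components of the parameter transform by $V:=\iota_-\widetilde\Gamma_0\varphi^*$ and the \emph{second} components by $U:=\iota_+\gamma(\eta)^*\varphi^*$ (using $\Gamma_1 f_0=\gamma(\eta)^*P_\eta(A_0-\eta)f_0$). These are two genuinely different maps: $U$ is bounded on $\mathcal G$, whereas $V$ is unbounded in the $\mathcal G$-norm and becomes an isomorphism only when $\mathscr G_1'$ carries its dual norm --- which is exactly where Proposition~\ref{pro:Gamma_0_extension} and Lemma~\ref{lem:exgamma}, and hence the density of $\mathscr G_1$, enter. So your claim that ``$\Phi$ is a bounded bijection with bounded inverse'' is false for the map you define unless you first install these topologies, and, more importantly, the formula $\Theta:=\Phi\,\Theta_1\,\Phi^{-1}$ cannot be correct: a similarity by a single map transforms both components of a relation by the same operator, preserves self-adjointness only for unitary $\Phi$, and does not even typecheck here ($\Phi\Theta_1\Phi^{-1}\subset\mathscr G_1'\times\mathscr G_1'$, while $\iota_+^{-1}\Theta\iota_-\subset\mathscr G_1'\times\mathscr G_1$). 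The correct statement, which your ``adjoint-type partner'' remark gestures at but never pins down, is the \emph{congruence} $\Theta=U\Theta_1V^{-1}=(V^{-1})^*\Theta_1V^{-1}$, justified by the identity $(Ux',Vy)_{\mathcal G}=(x',y)_{\mathcal G}$, which one verifies from \eqref{iotas} together with $\gamma(\eta)\Gamma_0 f_\eta=f_\eta$ on $\mathcal N_\eta(T)$; only with this congruence structure (and $V$ an isomorphism of $\mathcal G$) do closedness, symmetry, self-adjointness and the maximality classes carry over, and only after checking that the domain condition forces $\Theta_1\varphi\widetilde\gamma(\eta)=\Theta_1\varphi\gamma(\eta)$ do the two prescriptions for $\vartheta$ define the same extension.

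For comparison, the paper sidesteps the parameter transformation entirely: it keeps the \emph{same} relation $\Theta$ and changes the boundary maps instead. Using \eqref{gammaid} and $\Gamma_0 f=\widetilde\Gamma_0 f$ on $\dom T$, the condition $\Gamma f\in\iota_+^{-1}\Theta\iota_-+M(\eta)$ is rewritten as $\bigl(\iota_-\widetilde\Gamma_0 f,\,\iota_+\Gamma_1 f_0\bigr)^\top\in\Theta$, and Theorem~\ref{thm:regularisation} states that $f\mapsto\iota_-\widetilde\Gamma_0 f$, $f\mapsto\iota_+\Gamma_1 f_0$ form an \emph{ordinary} boundary triple for $A^*$; then Theorem~\ref{thm:boundary_triple} applies verbatim, with the domain condition $\dom\Theta\subset\ran\iota_-\Gamma_0$ guaranteeing that the resulting extensions lie in $T$, exactly as in the proof of Theorem~\ref{thm:main1}. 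In effect, the duality computation you would need for $U=(V^*)^{-1}$ is the Green's identity computation already carried out once and for all in the proof of Theorem~\ref{thm:regularisation}; your route would redo that work, relation by relation, in a setting where the single-map conjugation you propose breaks down.
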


\begin{proof}
  Let $\Theta$ be a linear relation in $\mathcal G$ 
  and decompose $f\in \dom T$ in the form $f=f_0 + f_\eta$ with $f_0 \in \dom A_0$ 
  and $f_\eta \in \mathcal N_\eta(T)$.
  Then $\Gamma f \in \iota_+^{-1} \Theta \iota_- + M(\eta)$ if and only if 
  \begin{equation}\label{eqn:main2-1}
    \Gamma_1 f =  \iota_+^{-1} x + M(\eta) \Gamma_0 f \quad \text{with} \quad
    \begin{pmatrix}\iota_- \Gamma_0 f \\ x\end{pmatrix} \in \Theta. 
  \end{equation}
  Equation \eqref{gammaid} implies $\Gamma_1 f - M(\eta) \Gamma_0 f = \Gamma_1 f_0$ and since $f\in \dom T$ we have  
  $\Gamma_0 f = \widetilde \Gamma_0 f$, where $\widetilde\Gamma_0$ is the continuous extension of $\Gamma_0$ to $\dom A^*$ 
  from Proposition~\ref{pro:Gamma_0_extension}. 
  Hence \eqref{eqn:main2-1} is equivalent to
  \begin{equation}
    \begin{pmatrix}\iota_- \widetilde \Gamma_0 f \\ \iota_+ \Gamma_1 f_0\end{pmatrix}\in \Theta.
  \end{equation}
  According to Theorem~\ref{thm:regularisation} the triple 
  $\{\mathcal G,\,  f\mapsto \iota_-\widetilde \Gamma_0 f ,\, f\mapsto  \iota_+ \Gamma_1 f_0 \}$ is an ordinary boundary triple for $A^*$.
  Now the statement follows from Theorem~\ref{thm:boundary_triple} in the same form as in the proof
  of Theorem~\ref{thm:main1}. 
\end{proof}

\begin{Cor}\label{cor:main2} 
  Let the assumptions be as in Theorem~\ref{thm:main2} and let 
  $\vartheta$ be a linear relation in $\mathcal G$.
  Then the extension $A_{\vartheta}$ of $A$ in $\mathcal H$ given by
  \begin{equation}\label{eqn:Avartheta}
  A_{\vartheta} = T\upharpoonright \bigl\{ f\in \dom T : \Gamma f \in \vartheta \bigr\}
  \end{equation}
  is closed (symmetric,  self-adjoint, (maximal) dissipative, (maximal) accumulative) in $\mathcal H$ 
  if and only if the linear relation 
  \begin{equation*} 
  \Theta = \iota_+ (\vartheta - M(\eta) ) \iota_-^{-1}\quad\text{with} \quad \dom \Theta \subset \ran \iota_- \Gamma_0
  \end{equation*}
  is closed (symmetric,  self-adjoint, (maximal) dissipative, (maximal) accumulative) in $\mathcal G$.
\end{Cor}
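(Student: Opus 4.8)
The plan is to read Corollary~\ref{cor:main2} as the inverse reformulation of Theorem~\ref{thm:main2} and to deduce it directly from that theorem. The forward correspondence in Theorem~\ref{thm:main2} is $\Theta \mapsto \vartheta = \iota_+^{-1}\Theta\iota_- + M(\eta)$, defined on closed (symmetric, \dots) relations $\Theta$ with $\dom\Theta \subset \ran\iota_-\Gamma_0$. Since $\iota_+$ and $\iota_-$ are isometric isomorphisms between $\mathscr G_1$, $\mathscr G_1'$ and $\mathcal G$, multiplication by them and by their inverses is invertible on the level of linear relations, while the shift by the (for real $\eta$, symmetric) operator $M(\eta)$ can be undone by subtracting $M(\eta)$. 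Solving $\vartheta = \iota_+^{-1}\Theta\iota_- + M(\eta)$ for $\Theta$ therefore yields exactly $\Theta = \iota_+(\vartheta - M(\eta))\iota_-^{-1}$, the relation appearing in the statement.

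First I would fix $\eta \in \rho(A_0)\cap\mathbb R$ and, for an arbitrary linear relation $\vartheta$ in $\mathcal G$, set $\Theta := \iota_+(\vartheta - M(\eta))\iota_-^{-1}$. The domain condition $\dom\Theta \subset \ran\iota_-\Gamma_0$ should come out automatically: since $\dom M(\eta) = \ran\Gamma_0$ by \eqref{dominv}, the relation $\vartheta - M(\eta)$ has domain contained in $\ran\Gamma_0$, so that $\dom\Theta = \iota_-\bigl(\dom(\vartheta-M(\eta))\bigr) \subset \iota_-(\ran\Gamma_0) = \ran\iota_-\Gamma_0$. I would record this explicitly so that $\Theta$ is admissible as a parameter in Theorem~\ref{thm:main2}.

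Next I would verify that $\Theta$ is genuinely mapped back to the relation governing $A_\vartheta$. The key point is that $A_\vartheta = T\upharpoonright\{f\in\dom T : \Gamma f\in\vartheta\}$ depends only on $\vartheta\cap\ran\Gamma$, because $\Gamma f\in\ran\Gamma$ for every $f\in\dom T$. By Proposition~\ref{pro:weyl2}~(ii) a pair $(x,x')$ lies in $\ran\Gamma$ precisely when $x\in\ran\Gamma_0$ and $x' - M(\eta)x \in \mathscr G_1$, which is exactly the condition under which $\iota_+$ may be applied to $x' - M(\eta)x$. Hence forming $\iota_+^{-1}\Theta\iota_- + M(\eta)$ recovers $\vartheta\cap\ran\Gamma$, so the extension assigned to $\Theta$ by Theorem~\ref{thm:main2} coincides with $A_\vartheta$. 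Invoking the bijective, property-preserving correspondence of Theorem~\ref{thm:main2} then gives at once that $A_\vartheta$ is closed (symmetric, self-adjoint, (maximal) dissipative, (maximal) accumulative) in $\mathcal H$ if and only if $\Theta$ has the corresponding property in $\mathcal G$.

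I expect the only real obstacle to be the bookkeeping of the linear-relation arithmetic: one must make sure that the products $\iota_+(\cdots)\iota_-^{-1}$ and $\iota_+^{-1}(\cdots)\iota_-$ together with the shift by the unbounded symmetric operator $M(\eta)$ really are mutually inverse operations in the sense of linear relations (as defined in the Appendix), and that no spurious multivalued part is created or lost when passing between $\vartheta$ and $\Theta$. Once this is checked, the corollary is an immediate consequence of Theorem~\ref{thm:main2} with no further analysis required.
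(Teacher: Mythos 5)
Your proposal is correct and takes essentially the same route as the paper's proof: both directions reduce to Theorem~\ref{thm:main2} via the round-trip relation arithmetic, with Proposition~\ref{pro:weyl2}~(ii) identifying $\iota_+^{-1}\Theta\iota_-+M(\eta)$ with $\vartheta\cap\ran\Gamma$ (so that $A_\vartheta=A_{\vartheta\cap\ran\Gamma}$) and the inclusion $\dom M(\eta)=\ran\Gamma_0$ from \eqref{dominv} yielding the domain condition $\dom\Theta\subset\ran\iota_-\Gamma_0$. The paper merely writes the two implications out separately, solving $\theta=\iota_+^{-1}\Theta\iota_-+M(\eta)$ for $\Theta$ in the forward direction and using $\theta=\vartheta\cap\ran\Gamma$ in both, which is precisely the mutual-inverse bookkeeping you flag at the end of your plan.
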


\begin{proof}
  ($\Rightarrow$) Assume that $A_\vartheta$ in \eqref{eqn:Avartheta} 
  is a closed (symmetric, self-adjoint, (maximal) dissipative, (maximal) accumulative) operator in $\mathcal H$.
  According to Theorem~\ref{thm:main2} 
  there exists a closed (symmetric,  self-adjoint, (maximal) dissipative, (maximal) accumulative, respectively) 
  linear relation 
  $\Theta$ in $\mathcal G$ with $\dom \Theta \subset \ran \iota_- \Gamma_0$
  and
  \begin{equation}\label{eqn:vartheta}
    A_\vartheta = A_\theta = T\upharpoonright\bigl\{f\in\dom T:\Gamma f\in\theta\bigr\}\quad\text{with}\quad 
    \theta = \iota_+^{-1} \Theta \iota_- + M(\eta).
  \end{equation}
  From $\iota_+^{-1} \Theta \iota_- \subset \ran \Gamma_0 \times \mathscr G_1$ and Proposition~\ref{pro:weyl2}~(ii) we conclude 
  $\theta \subset \ran \Gamma$. Furthermore, we have $\theta = \vartheta\cap \ran \Gamma$, (see the text below Lemma~\ref{lem:symmetric}).
  Solving equation \eqref{eqn:vartheta} leads to the identity
  \begin{equation*}
  \Theta = \iota_+ (\theta -  M(\eta) ) \iota_-^{-1} = \iota_+ (\vartheta -  M(\eta) ) \iota_-^{-1}.
  \end{equation*}
  ($\Leftarrow$) Let $\Theta = \iota_+ (\vartheta -  M(\eta) ) \iota_-^{-1}$ with $\dom \Theta \subset \ran \iota_- \Gamma_0$
  be a closed (symmetric,  self-adjoint, (maximal) dissipative, (maximal) accumulative) linear relation in $\mathcal G$.
  From $\vartheta -  M(\eta)=\iota_+^{-1}\Theta\iota_- \subset \ran\Gamma_0\times\mathscr G_1$ and Proposition~\ref{pro:weyl2}~(ii)  
  we obtain $\theta = \iota_+^{-1} \Theta \iota_- + M(\eta)$ with $\theta=\vartheta\cap\ran\Gamma$. 
  According to Theorem~\ref{thm:main2} the extension $A_\theta = A_\vartheta$ given by \eqref{eqn:Avartheta} 
  is closed (symmetric,  self-adjoint, (maximal) dissipative, (maximal) accumulative) in $\mathcal H$.
\end{proof}
We recall that a symmetric linear relation $\Theta$ in $\mathcal G$ with $\ran \Theta = \mathcal G$ is self-adjoint 
in $\mathcal G$ with $0\in\rho(\Theta)$. This together with  Corollary~\ref{cor:main2} yields the following example.
\begin{Exm}\label{exm:assumption_vartheta}
  Let the assumptions be as in Corollary~\ref{cor:main2} and
  let $\vartheta$ be a symmetric linear relation in $\mathcal G$ such that $\ran (\vartheta - M(\eta) ) = \mathscr G_1$. Then
  \begin{equation*}
  A_{\vartheta} = T\upharpoonright \bigl\{ f\in \dom T : \Gamma f \in \vartheta \bigr\}
  \end{equation*}
  is a self-adjoint extension of $A$ in $\mathcal H$. 
\end{Exm}
In the next result the assumptions on the quasi boundary triple are strengthened further such that both boundary maps $\Gamma_0$
and $\Gamma_1$ extend by continuity to $\dom A^*$. In that case one obtains a description of all extensions $A_\vartheta\subset A^*$
which is very similar to the parameterization in Theorem~\ref{thm:main2}. 
The additional abstract regularity result will turn out
to be useful when considering the regularity of solutions of elliptic boundary value problems in Section~\ref{sec:applications}.
\begin{Thm}\label{thm:main3} 
  Let the assumptions be as in Theorem~{\rm\ref{thm:main2}} 
  and assume, in addition, that $A_1 = T \upharpoonright \ker\Gamma_1$
  is self-adjoint in $\mathcal H$, $\eta \in \rho(A_0)\cap\rho(A_1)\cap \mathbb R$, and that $\mathscr G_0$ dense in $\mathcal G$.
  Let $\widetilde M$ be the extension of the Weyl function $M$ from Definition~{\rm\ref{def:extweyl}~(ii)}. 
  Then the mapping 
  \begin{equation*}
    \Theta\mapsto \widetilde A_\vartheta = A^*\upharpoonright\bigl\{ f\in \dom A^* : \widetilde \Gamma f \in \vartheta \bigr\}
    \quad\text{with}\quad \vartheta = \iota_+^{-1} \Theta \iota_- + \widetilde M(\eta)
  \end{equation*}
  establishes a bijective correspondence between all
  closed (symmetric,  self-adjoint, (maximal) dissipative, (maximal) accumulative) 
  linear relations $\Theta$ in $\mathcal G$ and 
  all closed (symmetric,  self-adjoint, (maximal) dissipative, (maximal) accumulative, respectively) extensions 
  $\widetilde A_\vartheta\subset A^*$ of $A$ in $\mathcal H$.
  
  Moreover, the following abstract regularity result holds: If $\Theta$ is a linear relation in $\mathcal G$ and 
  $S$ is an operator in $\mathcal H$ such that $T \subset S \subset A^*$ then
  \begin{equation*}
  \dom \Theta \subset \ran \bigl(\iota_{-} \widetilde \Gamma_0\upharpoonright {\dom S}\bigr) \quad \text{implies} \quad 
  \dom \widetilde A_{\vartheta} \subset \dom S.
  \end{equation*} 
\end{Thm}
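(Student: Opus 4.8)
The plan is to reduce the entire statement to the ordinary boundary triple $\{\mathcal G,\Upsilon_0,\Upsilon_1\}$ supplied by Theorem~\ref{thm:regularisation} and then to invoke the complete parameterization for ordinary boundary triples in Theorem~\ref{thm:boundary_triple}, following the scheme of the proof of Theorem~\ref{thm:main2}. The essential difference from Theorem~\ref{thm:main2} is that, since now both $\Gamma_0$ and $\Gamma_1$ extend continuously to all of $\dom A^*$ (via $\widetilde\Gamma_0$, $\widetilde\Gamma_1$ and the extended Weyl function $\widetilde M$), the resulting extensions are restrictions of the full maximal operator $A^*$ rather than of $T$. Recall that $\Upsilon_0 f = \iota_-\widetilde\Gamma_0 f$ and $\Upsilon_1 f = \iota_+\Gamma_1 f_0$ for $f = f_0 + f_\eta \in \dom A_0 \dotplus \mathcal N_\eta(A^*)$. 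The heart of the matter is the equivalence of boundary conditions
\[
  \widetilde\Gamma f \in \vartheta \quad\Longleftrightarrow\quad \Upsilon f \in \Theta, \qquad f \in \dom A^*,
\]
with $\vartheta = \iota_+^{-1}\Theta\iota_- + \widetilde M(\eta)$. Granting this, $\widetilde A_\vartheta$ coincides with the extension $A^*\upharpoonright\ker(\Upsilon_1 - \Theta\Upsilon_0)$ attached to the ordinary boundary triple, and Theorem~\ref{thm:boundary_triple} delivers the bijective correspondence together with the preservation of closedness, symmetry, self-adjointness, and the (maximal) dissipative and accumulative properties.

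First I would establish the displayed equivalence. Unwinding the relation-algebraic definition of $\vartheta$ (using the conventions of the footnote to Theorem~\ref{thm:boundary_triple}), the condition $\widetilde\Gamma f \in \vartheta$ means that there is some $y$ with $(\iota_-\widetilde\Gamma_0 f,\, y) \in \Theta$ and $\widetilde\Gamma_1 f = \iota_+^{-1} y + \widetilde M(\eta)\widetilde\Gamma_0 f$, where the operator $\widetilde M(\eta)$ (defined on all of $\mathscr G_1'$ by Lemma~\ref{lem:exweyl}~(iv)) contributes the single value $\widetilde M(\eta)\widetilde\Gamma_0 f$. The decisive step is the extended identity of Lemma~\ref{lem:exweyl}~(ii) at $\lambda = \eta \in \rho(A_0)$, namely $\widetilde\Gamma_1 f = \widetilde M(\eta)\widetilde\Gamma_0 f + \Gamma_1 f_0$. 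Substituting it collapses the condition to $\Gamma_1 f_0 = \iota_+^{-1} y$, that is $y = \iota_+\Gamma_1 f_0 = \Upsilon_1 f$, together with $(\iota_-\widetilde\Gamma_0 f,\, y) = (\Upsilon_0 f,\, \Upsilon_1 f) \in \Theta$. This is precisely $\Upsilon f \in \Theta$, and since every step is an equivalence the assertion is proved in both directions.

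For the regularity claim I would argue directly. Let $f \in \dom\widetilde A_\vartheta$, so $\widetilde\Gamma f \in \vartheta$ and hence $\Upsilon f \in \Theta$ by the equivalence just established; in particular $\iota_-\widetilde\Gamma_0 f = \Upsilon_0 f \in \dom\Theta$. The hypothesis $\dom\Theta \subset \ran\bigl(\iota_-\widetilde\Gamma_0\upharpoonright\dom S\bigr)$ then yields some $g \in \dom S$ with $\iota_-\widetilde\Gamma_0 g = \iota_-\widetilde\Gamma_0 f$; as $\iota_-$ is an isomorphism this gives $\widetilde\Gamma_0(f - g) = 0$, i.e.\ $f - g \in \ker\widetilde\Gamma_0 = \dom A_0$ by Lemma~\ref{lem:exgamma}~(i). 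Since $\dom A_0 \subset \dom T \subset \dom S$, we conclude $f = g + (f - g) \in \dom S$, whence $\dom\widetilde A_\vartheta \subset \dom S$.

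I expect the only genuinely delicate points to be bookkeeping ones: tracking the four spaces $\mathscr G_0,\mathscr G_1,\mathscr G_0',\mathscr G_1'$ through the isometric isomorphisms $\iota_\pm$, and checking that the rewriting of $\vartheta$ remains correct when $\Theta$ is a proper linear relation rather than an operator (so that multivalued parts and domains must be handled with the relation calculus of the Appendix). The analytic substance is entirely carried by Theorem~\ref{thm:regularisation} and the extended identity in Lemma~\ref{lem:exweyl}~(ii); once the equivalence of boundary conditions is in place, both the parameterization and the regularity statement follow formally.
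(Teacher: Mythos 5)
Your proposal is correct and takes essentially the same route as the paper: the first part mirrors the proof of Theorem~\ref{thm:main2}, with \eqref{gammaid} replaced by the extended identity of Lemma~\ref{lem:exweyl}~(ii) to obtain the equivalence $\widetilde\Gamma f \in \vartheta \Leftrightarrow \Upsilon f \in \Theta$, after which Theorem~\ref{thm:regularisation} and Theorem~\ref{thm:boundary_triple} finish the parameterization. Your regularity argument (pass to $g\in\dom S$ with the same $\widetilde\Gamma_0$-image, note $f-g\in\ker\widetilde\Gamma_0=\dom A_0\subset\dom S$) is exactly the paper's proof.
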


\begin{proof}
  The proof of the first part is very similar to the proof of Theorem~\ref{thm:main2} and will not be repeated here. 
  We show the abstract regularity result. Let $\Theta$ and $S$ be as in the theorem and assume that $\dom\Theta$ is contained
  in the range of the map $\iota_{-} \widetilde \Gamma_0\upharpoonright {\dom S}$. Let 
  \begin{equation*}
  \widetilde A_\vartheta =A^*\upharpoonright \bigl\{ f\in \dom A^* : \widetilde \Gamma f\in \iota_+^{-1} \Theta \iota_- + \widetilde M(\eta)\bigr\}
  \end{equation*} 
  be the corresponding extension and let $f\in \dom \widetilde A_\vartheta$. 
  As $\widetilde \Gamma f \in  \iota_+^{-1} \Theta \iota_- + \widetilde M(\eta)$ 
  we have  $\iota_- \widetilde \Gamma_0 f \in \dom \Theta$. 
  Since $\dom \Theta \subset \ran (\iota_{-} \widetilde \Gamma_0\upharpoonright {\dom S})$ there exists an element $g\in \dom S$ such that 
  $\iota_- \widetilde \Gamma_0 f=\iota_- \widetilde \Gamma_0 g$ holds. Hence we conclude 
  $f-g\in \ker\widetilde \Gamma_0 = \dom A_0 \subset \dom S$, so that $f = g + (f-g) \in \dom S$.   
\end{proof}
The next corollary is a counterpart of Corollary~\ref{cor:main2} and can be proved in the same way using 
Lemma~\ref{lem:exweyl}~(v) instead of Proposition~\ref{pro:weyl2}~(ii).

\begin{Cor}\label{cor:main3}
 Let the assumptions be as in Theorem~{\rm\ref{thm:main3}} and let $\vartheta$ be a linear relation in
 $\mathscr G_1'\times \mathscr G_0'$.
 Then the extension $\widetilde A_{\vartheta}$ of $A$ in $\mathcal H$ given by
 \begin{equation*}
 \widetilde A_{\vartheta} = A^* \upharpoonright \bigl\{ f\in \dom A^* : \widetilde \Gamma f \in \vartheta \bigr\}
 \end{equation*}
 is closed (symmetric,  self-adjoint, (maximal) dissipative, (maximal) accumulative) in $\mathcal H$ 
 if and only if the linear relation 
 \begin{equation*} 
 \Theta = \iota_+ (\vartheta - \widetilde M(\eta) ) \iota_-^{-1} 
 \end{equation*}
 is closed (symmetric,  self-adjoint, (maximal) dissipative, (maximal) accumulative) in $\mathcal G$. 
\end{Cor}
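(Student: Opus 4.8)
My plan is to follow the proof of Corollary~\ref{cor:main2} essentially verbatim, replacing the non-extended data $\Gamma$, $M(\eta)$ and the range formula of Proposition~\ref{pro:weyl2}~(ii) by the extended data $\widetilde\Gamma$, $\widetilde M(\eta)$ and the range formula of Lemma~\ref{lem:exweyl}~(v), and invoking the parameterization of Theorem~\ref{thm:main3} in place of Theorem~\ref{thm:main2}. The decisive structural difference is that, under the hypotheses of Theorem~\ref{thm:main3}, the extended Weyl value $\widetilde M(\eta):\mathscr G_1'\to\mathscr G_0'$ is a \emph{bounded, everywhere defined} operator (Definition~\ref{def:extweyl}~(ii) and the remarks following it). Hence forming $\vartheta-\widetilde M(\eta)$ and $\iota_+^{-1}\Theta\iota_-+\widetilde M(\eta)$ introduces no restriction on the first component, which is precisely why the side condition $\dom\Theta\subset\ran\iota_-\Gamma_0$ appearing in Corollary~\ref{cor:main2} is absent here.

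For the implication ``$\Rightarrow$'' I would assume $\widetilde A_\vartheta$ is closed (symmetric, self-adjoint, (maximal) dissipative, (maximal) accumulative). Since $\widetilde\Gamma f$ always lies in $\ran\widetilde\Gamma$ we have $\widetilde A_\vartheta=\widetilde A_{\vartheta\cap\ran\widetilde\Gamma}$, and the bijection in Theorem~\ref{thm:main3} produces a closed (symmetric, $\dots$) relation $\Theta$ in $\mathcal G$ with $\widetilde A_\vartheta=\widetilde A_\theta$, where $\theta=\iota_+^{-1}\Theta\iota_-+\widetilde M(\eta)$. Because $\iota_-^{-1}$ takes values in $\mathscr G_1'$ and $\iota_+^{-1}$ takes values in $\mathscr G_1$, one has $\iota_+^{-1}\Theta\iota_-\subset\mathscr G_1'\times\mathscr G_1$, so Lemma~\ref{lem:exweyl}~(v) (with $\lambda=\eta$) gives $\theta\subset\ran\widetilde\Gamma$; injectivity of $\mu\mapsto\widetilde A_\mu$ on relations contained in $\ran\widetilde\Gamma$ then forces $\theta=\vartheta\cap\ran\widetilde\Gamma$. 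Solving the defining relation for $\Theta$ yields $\Theta=\iota_+(\theta-\widetilde M(\eta))\iota_-^{-1}=\iota_+(\vartheta-\widetilde M(\eta))\iota_-^{-1}$, as asserted.

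For ``$\Leftarrow$'' I would start from a closed (symmetric, $\dots$) relation $\Theta=\iota_+(\vartheta-\widetilde M(\eta))\iota_-^{-1}$; unravelling the products shows that $\iota_+^{-1}\Theta\iota_-$ consists exactly of those pairs $(x,x'-\widetilde M(\eta)x)$ with $(x,x')\in\vartheta$ and $x'-\widetilde M(\eta)x\in\mathscr G_1$, so in particular $\iota_+^{-1}\Theta\iota_-\subset\mathscr G_1'\times\mathscr G_1$. Setting $\theta:=\iota_+^{-1}\Theta\iota_-+\widetilde M(\eta)$, Lemma~\ref{lem:exweyl}~(v) identifies the constraint $x'-\widetilde M(\eta)x\in\mathscr G_1$ with $(x,x')\in\ran\widetilde\Gamma$, so $\theta\subset\ran\widetilde\Gamma$ and $\theta=\vartheta\cap\ran\widetilde\Gamma$, whence $\widetilde A_\vartheta=\widetilde A_\theta$. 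Theorem~\ref{thm:main3} then transfers the prescribed property of $\Theta$ to $\widetilde A_\theta=\widetilde A_\vartheta$.

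The computations are routine once the extended objects are in hand; the only point that genuinely needs care is the identity $\theta=\vartheta\cap\ran\widetilde\Gamma$. The subtlety is that the relation products $\iota_+(\cdot)$ and $\iota_-^{-1}(\cdot)$ are defined only where their arguments lie in the domains $\mathscr G_1$ and $\mathcal G$, so they automatically cut $\vartheta$ down to its part in $\ran\widetilde\Gamma$, exactly as described by Lemma~\ref{lem:exweyl}~(v). I expect this bookkeeping with linear relations, rather than any analytic estimate, to be the main (and only minor) obstacle, and it is handled just as in Corollary~\ref{cor:main2} together with the remark following Lemma~\ref{lem:symmetric}.
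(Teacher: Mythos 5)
Your proposal is correct and coincides with the paper's own argument: the paper proves this corollary exactly by repeating the proof of Corollary~\ref{cor:main2} with Theorem~\ref{thm:main3}, the extended objects $\widetilde\Gamma$, $\widetilde M(\eta)$, and Lemma~\ref{lem:exweyl}~(v) in place of Theorem~\ref{thm:main2}, $\Gamma$, $M(\eta)$, and Proposition~\ref{pro:weyl2}~(ii). Your additional remarks on the identity $\theta=\vartheta\cap\ran\widetilde\Gamma$ and on why the domain condition $\dom\Theta\subset\ran\iota_-\Gamma_0$ disappears are exactly the bookkeeping the paper leaves implicit.
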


A simple application of Theorem~\ref{thm:main3} is discussed in the next example.
\begin{Exm}\label{exm:kreinneumann}
  Set $\Theta=0$ in Theorem~\ref{thm:main3}. Then $\vartheta=\widetilde M(\eta)$ and it follows that  
  \begin{equation*}
  \widetilde A_\vartheta 
  = A^*\upharpoonright \bigl\{ f\in \dom A^* : \widetilde M(\eta) \widetilde \Gamma_0 f = \widetilde \Gamma_1 f \bigr\}
  \end{equation*}
  is a self-adjoint extension of $A$ in $\mathcal H$. 
  From Lemma~\ref{lem:exweyl}~(ii) we obtain that the condition 
  $\widetilde M(\eta) \widetilde \Gamma_0 f = \widetilde \Gamma_1 f$
  is equivalent to $\Gamma_1 f_0 =0$, where $f=f_0+f_\eta\in\dom A_0\dotplus\mathcal N_\eta(A^*)$. 
  This implies that $\widetilde A_\vartheta = A\dotplus\widehat{\mathcal N}_\eta(A^*)$, which coincides with the Kre\u{\i}n-von Neumann extension 
  if $A$ is uniformly positive and $\eta=0$.
\end{Exm}

\subsection{Sufficient conditions for self-adjointness and a variant of Kre\u{\i}n's formula}\label{sec:krein}

In this subsection we provide different sufficient conditions for the parameter $\vartheta$ in $\mathcal G\times\mathcal G$ 
such that the corresponding extension 
\begin{equation*}
  A_\vartheta =T\upharpoonright \bigl\{ f\in \dom T : \Gamma f\in \vartheta \bigr\},\quad 
  \vartheta= \iota_+^{-1} \Theta \iota_- + M(\eta),
\end{equation*}
in Theorem~\ref{thm:main2} becomes self-adjoint in $\mathcal H$; cf. 
\cite[Theorem~4.8]{BeLa07}, \cite[Theorem~3.11]{BeLaLo11} and, e.g.\ Example~\ref{exm:assumption_vartheta}. 
In Proposition~\ref{pro:assumptionpara} below
we will make use standard perturbation results as the Kato-Rellich theorem, thus we will restrict ourselves to 
operators $\vartheta$ instead of relations.
Recall also the following notions from perturbation theory: If $\mathfrak M$ is a linear operator acting between two Banach spaces
then a sequence $(x_k)_{k\in\mathbb N}\subset\dom \mathfrak M$ is called {\it $\mathfrak M$-bounded} if $(x_k)_{k\in\mathbb N}$ 
is bounded with respect to the 
graph norm of $\mathfrak M$. 
A linear operator $\theta$ is said to {\it relatively compact} with respect to $\mathfrak M$ if $\dom  \mathfrak M\subset \dom \theta$ 
and $\theta$ maps $\mathfrak M$-bounded sequences into sequences which have convergent subsequences.

\begin{Pro}\label{pro:assumptionpara}
  Let $\{\mathcal G,\Gamma_0,\Gamma_1\}$ be a quasi boundary triple for $T\subset A^*$ with $A_j = T \upharpoonright \ker\Gamma_j$, $j=0,1$,
  and Weyl function $M$, and assume that $A_1$ is self-adjoint in $\mathcal H$ and that there exists $\eta \in \rho(A_0)\cap \rho(A_1) \cap\mathbb R$.
  Furthermore, suppose that
  $\mathscr G_0$ and $\mathscr G_1$ are dense in $\mathcal G$ and equip $\mathscr G_0$ and $\mathscr G_1$ with norms which 
  satisfy {\rm (i)-(ii)}
  in Corollary~{\rm \ref{cor:Gamma_1_extension}} and Proposition~{\rm \ref{pro:Gamma_0_extension}}, respectively.  
  
  If $\vartheta$ is a symmetric operator in $\mathcal G$ such that 
  \begin{equation}\label{thetatheta}
  \mathscr G_0 \subset \dom \vartheta\quad\text{and}\quad \ran \vartheta\upharpoonright \mathscr G_0 \subset \mathscr G_1,
  \end{equation}
  and one of the followings conditions {\rm (i)-(iii)} hold,
\begin{enumerate}[\rm (i)]
  \item $\vartheta$ regarded as an operator from $\mathscr G_0$ to $\mathscr G_1$ is compact,
  \item $\vartheta$ regarded as an operator from $\mathscr G_0$ to $\mathscr G_1$ is relatively compact with respect 
    to $M(\eta)$ regarded as an operator from $\mathscr G_0$ to $\mathscr G_1$,
  \item there exist $c_1>0$ and $c_2\in[0,\, 1)$ such that 
    \begin{equation*}
      \| \vartheta x\|_{\mathscr G_1} 
      \leq c_1 \| x\|_{\mathscr G_1'} + c_2 \| M(\eta) x\|_{\mathscr G_1},\qquad x\in \mathscr G_0, 
    \end{equation*}
  \end{enumerate}
  then $A_{\vartheta} = T\upharpoonright \{ f\in \dom T : \Gamma f \in \vartheta \}$
  is self-adjoint in $\mathcal H$.
\end{Pro}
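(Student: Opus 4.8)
The plan is to use Corollary~\ref{cor:main2} to translate self-adjointness of $A_\vartheta$ into self-adjointness of an operator $\Theta$ in $\mathcal G$, and then to write $\Theta$ as an explicit self-adjoint operator plus a symmetric perturbation to which the Kato--Rellich theorem applies. By Corollary~\ref{cor:main2}, $A_\vartheta$ is self-adjoint in $\mathcal H$ if and only if $\Theta := \iota_+(\vartheta - M(\eta))\iota_-^{-1}$ is self-adjoint in $\mathcal G$. Assumption \eqref{thetatheta} and Proposition~\ref{pro:weyl2}~(i) ensure that both $\vartheta$ and $M(\eta)$ map $\mathscr G_0$ into $\mathscr G_1$, so $\Theta$ is a well-defined operator with $\dom\Theta = \iota_-(\mathscr G_0)$; since $\mathscr G_0\subset\ran\Gamma_0$ this is contained in $\ran\iota_-\Gamma_0$, as required in Corollary~\ref{cor:main2}. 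I would then split $\Theta = \Theta_1 + V$, where $\Theta_1 := -\iota_+ M(\eta)\iota_-^{-1}$ is the parameter corresponding to the self-adjoint extension $A_1 = T\upharpoonright\ker\Gamma_1$ (the case $\vartheta=0$) and $V := \iota_+\vartheta\iota_-^{-1}$; both are defined on the common domain $\iota_-(\mathscr G_0)$.

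Next I would check that $\Theta_1$ is self-adjoint and $V$ symmetric. Using the duality \eqref{iotas} between $\iota_+$ and $\iota_-$ together with the symmetry of $M(\eta)$ for $\eta\in\mathbb R\cap\rho(A_0)$ (cf.~\eqref{mg}), a direct computation shows that $\Theta_1$ is symmetric; by Lemma~\ref{lem:exweyl}~(iv) the operator $M(\eta)\upharpoonright\mathscr G_0:\mathscr G_0\to\mathscr G_1$ is a bijection, and since $\iota_\pm$ are isometric isomorphisms it follows that $\Theta_1$ maps $\iota_-(\mathscr G_0)$ bijectively onto $\mathcal G$. A symmetric operator with range all of $\mathcal G$ is self-adjoint with $0\in\rho(\Theta_1)$ (as recalled before Example~\ref{exm:assumption_vartheta}). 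The same duality argument, now using the symmetry of $\vartheta$ in $\mathcal G$ and $\ran(\vartheta\upharpoonright\mathscr G_0)\subset\mathscr G_1$, gives that $V$ is symmetric on $\iota_-(\mathscr G_0)$.

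The heart of the proof is the transfer of the hypotheses (i)--(iii) from the $(\mathscr G_0,\mathscr G_1)$-picture to the $\mathcal G$-picture. For $x=\iota_-\xi$ with $\xi\in\mathscr G_0$ the isometry of $\iota_\pm$ yields
\[
\|x\|_{\mathcal G}=\|\xi\|_{\mathscr G_1'},\qquad \|Vx\|_{\mathcal G}=\|\vartheta\xi\|_{\mathscr G_1},\qquad \|\Theta_1 x\|_{\mathcal G}=\|M(\eta)\xi\|_{\mathscr G_1}.
\]
Under (iii) this turns the assumed estimate directly into $\|Vx\|_{\mathcal G}\le c_1\|x\|_{\mathcal G}+c_2\|\Theta_1 x\|_{\mathcal G}$ with $c_2\in[0,1)$, so the Kato--Rellich theorem shows $\Theta=\Theta_1+V$ is self-adjoint on $\dom\Theta_1$. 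For (i) and (ii) I would note that, because $M(\eta):\mathscr G_0\to\mathscr G_1$ is an isomorphism (Lemma~\ref{lem:exweyl}~(iv)), both the $M(\eta)$-graph norm and the $\Theta_1$-graph norm are equivalent to $\|\cdot\|_{\mathscr G_0}$; hence $M(\eta)$-boundedness coincides with $\mathscr G_0$-boundedness and conditions (i) and (ii) are equivalent, each amounting to $V$ being $\Theta_1$-compact. A relatively compact symmetric perturbation of a self-adjoint operator has relative bound $0$, so Kato--Rellich again yields self-adjointness of $\Theta$, and Corollary~\ref{cor:main2} completes the proof.

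The step I expect to be the most delicate is precisely this last norm bookkeeping: the isomorphism property of $M(\eta)$ in Lemma~\ref{lem:exweyl}~(iv) is what makes all the relevant graph norms equivalent to $\|\cdot\|_{\mathscr G_0}$, and without it the relative boundedness and relative compactness assumptions would not transfer between the two pictures. Care is also needed to confirm that $\dom\Theta$, $\dom\Theta_1$ and $\dom V$ all coincide with $\iota_-(\mathscr G_0)$, so that the perturbation $\Theta_1+V$ is formed on a single domain and the Kato--Rellich conclusion applies verbatim.
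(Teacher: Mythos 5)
Your strategy coincides with the paper's own proof: reduce via Corollary~\ref{cor:main2} to self-adjointness of $\Theta$ in $\mathcal G$, split off $\Theta_1=-\iota_+M(\eta)\iota_-^{-1}$ as a symmetric surjective (hence self-adjoint) operator using Lemma~\ref{lem:exweyl}~(iv), and absorb $V=\iota_+\vartheta\iota_-^{-1}$ by Kato--Rellich under (iii) and by relative compactness under (i)/(ii); your extra observation that (i) and (ii) are in fact equivalent (because the $M(\eta)$-graph norm on $\mathscr G_0$ is equivalent to $\|\cdot\|_{\mathscr G_0}$) is correct, where the paper only uses the trivial direction that (i) implies (ii). The one genuine gap is the domain identity you assert, flag at the end, and never prove: $\dom\Theta=\iota_-(\mathscr G_0)$. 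The hypotheses \eqref{thetatheta} give only $\mathscr G_0\subset\dom\vartheta$, so $\dom(\vartheta-M(\eta))=\dom\vartheta\cap\ran\Gamma_0$ may be strictly larger than $\mathscr G_0$, and $\dom\Theta$ consists of all $\iota_-x$ with $x$ in that set for which the \emph{difference} $(\vartheta-M(\eta))x$ happens to lie in $\mathscr G_1$. For each summand separately the domain comes out right (indeed $M(\eta)x\in\mathscr G_1$ forces $x\in\mathscr G_0$, since $M(\eta)$ is injective on $\ran\Gamma_0$ and maps $\mathscr G_0$ onto $\mathscr G_1$), but a cancellation with $\vartheta x\notin\mathscr G_1$ and $M(\eta)x\notin\mathscr G_1$ is not excluded a priori. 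Consequently, what your Kato--Rellich argument proves is self-adjointness of the restriction $\Theta\upharpoonright\iota_-(\mathscr G_0)=\Theta_1+V$, whereas Corollary~\ref{cor:main2} requires self-adjointness of $\Theta$ on its full natural domain.

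The gap is closable in either of two short ways, and the paper chooses the first. The paper replaces $\vartheta$ by $\theta:=\vartheta\upharpoonright\mathscr G_0$ at the outset; then $\Theta=\iota_+(\theta-M(\eta))\iota_-^{-1}$ has domain exactly $\iota_-\mathscr G_0$, your perturbation argument shows $A_\theta$ is self-adjoint, and since $A_\vartheta\supset A_\theta$ is symmetric in $\mathcal H$ by Lemma~\ref{lem:symmetric} (this uses only the symmetry of $\vartheta$), the symmetric extension $A_\vartheta$ of the self-adjoint $A_\theta$ must equal it. Alternatively, staying inside your setup: using the duality \eqref{iotas} and the fact that the pairing $\langle\cdot,\cdot\rangle_{\mathscr G_1'\times\mathscr G_1}$ restricts to the $\mathcal G$-inner product on elements of $\mathcal G$, one checks that $\Theta$ is symmetric on its \emph{entire} natural domain, because $\vartheta$ and $M(\eta)$ are symmetric in $\mathcal G$; a symmetric extension of the self-adjoint operator $\Theta_1+V$ coincides with it, so the domain identity you wanted holds a posteriori. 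With either one-paragraph addition your proof is complete and is, in substance, the proof given in the paper.
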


\begin{proof} 
  Note first that condition (i) is a special case of condition (ii). Hence it suffices to prove the proposition under conditions (ii) or (iii). 
  By \eqref{thetatheta} the restriction $\theta:=\vartheta\upharpoonright\mathscr G_0$ maps into $\mathscr G_1$ and the corresponding extensions
  of $A$ in $\mathcal H$ satisfy $A_\theta\subset A_\vartheta$. 
  We show below that (ii) or (iii) imply the self-adjointness of $A_\theta$ 
  and hence, as $A_\vartheta$ is symmetric by Lemma~\ref{lem:symmetric}, the self-adjointness of $A_\vartheta$. 

  By Corollary~\ref{cor:main2} the operator $A_{\theta} = T\upharpoonright \{ f\in \dom T : \Gamma f \in\theta\}$ 
  is self-adjoint in $\mathcal H$ 
  if and only if $\Theta = \iota_+ (\theta - M(\eta) ) \iota_-^{-1}$ is self-adjoint in $\mathcal G$. 
  Since $\vartheta$ is assumed to be a symmetric operator the same holds for $\theta$, $\iota_+ \theta \iota_-^{-1}$ and $\Theta$.
  From Lemma~\ref{lem:exweyl}~(iv) we obtain that $\mathfrak M := M(\eta)\upharpoonright\mathscr G_0$ is an isomorphism onto $\mathscr G_1$.
  Thus the symmetric operator $-\iota_+ \mathfrak M \iota_-^{-1}$ defined on $\iota_- \mathscr G_0$ is surjective and hence
  self-adjoint in  $\mathcal G$. Therefore 
  \begin{equation}\label{grossthetawunderbar}
  \Theta = \iota_+ (\theta -  \mathfrak M ) \iota_-^{-1} 
  = -  \iota_+  \mathfrak M \iota_-^{-1} + \iota_+ \theta \iota_-^{-1} 
  \end{equation}
  can be regarded as an additive symmetric perturbation of the self-adjoint operator $-  \iota_+  \mathfrak M \iota_-^{-1}$, and the assertion of the
  proposition holds if we show that $\Theta$ is self-adjoint in $\mathcal G$.
  
  Assume first that condition (ii) holds, that is, $\theta$ is relatively compact with respect to $\mathfrak M$, and hence also 
  with respect to $-\mathfrak M$. 
  Making use of the fact that
  $\iota_+ : \mathscr G_1 \to \mathcal G$ and $\iota_- :\mathscr G_1' \to \mathcal G$
  are isometric isomorphisms it is not difficult to verify that $\iota_+ \theta \iota_-^{-1}$ is relatively compact with respect to 
  $-  \iota_+  \mathfrak M \iota_-^{-1}$ in $\mathcal G$. Hence by well known perturbation results the operator
  $\Theta$ in \eqref{grossthetawunderbar} is self-adjoint in $\mathcal G$, see, e.g.\ \cite[Theorem~9.14]{We00}.
  
  Suppose now that (iii) holds and set $\xi=\iota_- x$ for $x\in\mathscr G_0$. Then
   \begin{equation*}
    \| \iota_+ \theta \iota_-^{-1} \xi \|_{\mathcal G} = \|  \theta x\|_{\mathscr G_1} \leq 
   c_1 \| x\|_{\mathscr G_1'} + c_2 \| \mathfrak M x\|_{\mathscr G_1} =
   c_1 \| \xi \|_{\mathcal G} + c_2 \| \iota_+  \mathfrak M \iota_-^{-1} \xi \|_{\mathcal G} 
  \end{equation*}
  shows that the symmetric operator $\iota_+ \theta \iota_-^{-1}$ is $\mathcal \iota_+ \mathfrak M\iota_-^{-1}$-bounded with
  a relative bound $c_2<1$. Hence the Kato-Rellich theorem \cite[Theorem~X.12]{ReSi75} implies that $\Theta$ in \eqref{grossthetawunderbar} is a self-adjoint 
  operator in $\mathcal G$.
 \end{proof}
The next proposition is of the same flavor as Proposition~\ref{pro:assumptionpara}. It can be proved similarly with the help of a
variant of the Kato-Rellich theorem due to W\"ust; cf.~\cite[Theorem~X.14]{ReSi75} and \cite{W71}. 

\begin{Pro}\label{pro:assumptionpara2}
  Let the assumptions be as in Proposition~{\rm\ref{pro:assumptionpara}} and assume 
  that there exists $c > 0$ such that 
  \begin{equation*}
    \| \vartheta x\|_{\mathscr G_1} 
    \leq c \| x\|_{\mathscr G_1'} + \| M(\eta) x\|_{\mathscr G_1},\qquad x\in \mathscr G_0.
  \end{equation*}
  Then $A_{\vartheta} = T\upharpoonright \{ f\in \dom T : \Gamma f \in \vartheta \}$ is
  essentially self-adjoint in $\mathcal H$.
\end{Pro}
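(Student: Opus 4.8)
The plan is to follow the proof of Proposition~\ref{pro:assumptionpara} verbatim up to the perturbation step, then replace the Kato--Rellich theorem by W\"ust's variant and, finally, transfer the resulting \emph{essential} self-adjointness back from the boundary parameter to $A_\vartheta$.

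First I would pass to the restricted parameter $\theta:=\vartheta\upharpoonright\mathscr G_0$. By \eqref{thetatheta} this is a symmetric operator mapping $\mathscr G_0$ into $\mathscr G_1$, and the associated extensions satisfy $A_\theta\subset A_\vartheta$; here $A_\vartheta$ is symmetric by Lemma~\ref{lem:symmetric}, since only $\vartheta\cap\ran\Gamma$ is detected by $\Gamma$. It then suffices to show that $A_\theta$ is essentially self-adjoint: if $\overline{A_\theta}$ is self-adjoint, then
\[
\overline{A_\theta}\subset\overline{A_\vartheta}\subset A_\vartheta^{*}\subset A_\theta^{*}=\overline{A_\theta}
\]
collapses to a chain of equalities, whence $\overline{A_\vartheta}=A_\vartheta^{*}$, i.e.\ $A_\vartheta$ is essentially self-adjoint.

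Next I would realise $A_\theta$ through the regularised ordinary boundary triple $\{\mathcal G,\Upsilon_0,\Upsilon_1\}$ of Theorem~\ref{thm:regularisation}. Exactly as in the proof of Theorem~\ref{thm:main2}, since $\dom\Theta=\iota_-\mathscr G_0\subset\ran\iota_-\Gamma_0$ the operator $A_\theta$ coincides with the extension $A_\Theta$ attached by Theorem~\ref{thm:boundary_triple} to the symmetric operator $\Theta=\iota_+(\theta-M(\eta))\iota_-^{-1}$ in $\mathcal G$. By Lemma~\ref{lem:exweyl}~(iv) the map $\mathfrak M:=M(\eta)\upharpoonright\mathscr G_0$ is an isomorphism of $\mathscr G_0$ onto $\mathscr G_1$, so $-\iota_+\mathfrak M\iota_-^{-1}$ is a surjective symmetric, hence self-adjoint, operator in $\mathcal G$ with domain $\iota_-\mathscr G_0$, and $\Theta=-\iota_+\mathfrak M\iota_-^{-1}+\iota_+\theta\iota_-^{-1}$. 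Putting $\xi=\iota_- x$ and using that $\iota_\pm$ are isometric isomorphisms, the hypothesis becomes
\[
\|\iota_+\theta\iota_-^{-1}\xi\|_{\mathcal G}=\|\vartheta x\|_{\mathscr G_1}\leq c\,\|x\|_{\mathscr G_1'}+\|M(\eta)x\|_{\mathscr G_1}=c\,\|\xi\|_{\mathcal G}+\|\iota_+\mathfrak M\iota_-^{-1}\xi\|_{\mathcal G},
\]
i.e.\ the relative bound equals $1$. Thus W\"ust's variant of the Kato--Rellich theorem (\cite[Theorem~X.14]{ReSi75}, \cite{W71}) shows that $\Theta$ is essentially self-adjoint in $\mathcal G$.

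The main obstacle is the last step: propagating essential self-adjointness from $\Theta$ to $A_\Theta=A_\theta$, since Theorem~\ref{thm:boundary_triple} and Corollary~\ref{cor:main2} are phrased for fixed closed parameters and do not, a priori, handle closures. I would therefore verify that for the ordinary boundary triple $\{\mathcal G,\Upsilon_0,\Upsilon_1\}$ the parameterisation intertwines closures. A direct computation with Green's identity \eqref{absg}---using that $A\subset A_\Theta$ forces $A_\Theta^{*}\subset A^{*}$, and that $\{\Upsilon f:f\in\dom A_\Theta\}=\Theta$ by surjectivity of $\Upsilon$---gives $A_\Theta^{*}=A_{\Theta^{*}}$ for the symmetric, not necessarily closed, operator $\Theta$. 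Applying Theorem~\ref{thm:boundary_triple} to the closed relation $\Theta^{*}$ then yields
\[
\overline{A_\Theta}=A_\Theta^{**}=(A_{\Theta^{*}})^{*}=A_{\Theta^{**}}=A_{\overline\Theta},
\]
and since $\overline\Theta$ is self-adjoint, $A_{\overline\Theta}=\overline{A_\theta}$ is self-adjoint by Theorem~\ref{thm:boundary_triple}. Hence $A_\theta$ is essentially self-adjoint, and by the reduction of the first paragraph so is $A_\vartheta$.
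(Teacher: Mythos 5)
Your proof is correct and follows exactly the route the paper intends: the paper's own ``proof'' is just the remark that one argues as in Proposition~\ref{pro:assumptionpara} with the Kato--Rellich theorem replaced by W\"ust's theorem, which is precisely the content of your first three paragraphs (reduction to $\theta=\vartheta\upharpoonright\mathscr G_0$, passage to $\Theta=-\iota_+\mathfrak M\iota_-^{-1}+\iota_+\theta\iota_-^{-1}$ in the regularised triple, relative bound $1$). Your final paragraph, which establishes $\overline{A_\Theta}=A_{\overline\Theta}$ via the identity $A_\Theta^{*}=A_{\Theta^{*}}$ for not necessarily closed parameters so that essential self-adjointness transfers from $\Theta$ to $A_\theta$, correctly fills in the one step that the paper's terse reference leaves implicit (Theorem~\ref{thm:boundary_triple} and Corollary~\ref{cor:main2} as stated cover only the closed case).
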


\begin{Exm}\label{exm:kreinneumann2}
  Let $\vartheta$ be a symmetric operator in $\mathcal G$ with $\mathscr G_0 \subset \dom \vartheta$, such that
  $\vartheta$ is continuous from $(\mathscr G_0,\, \|\cdot\|_{\mathscr G_1'})$ to $\mathscr G_1$.
  Then condition (iii) in Proposition~\ref{pro:assumptionpara} is satisfied with $c_2=0$ and hence the
  extension $A_\vartheta$ of $A$ is self-adjoint in $\mathcal H$.
  
  Now consider $\vartheta := M(\eta) \upharpoonright \mathscr G_0$ as an operator from $\mathscr G_0$ to $\mathscr G_1$.
  Then Proposition~\ref{pro:assumptionpara2} implies that 
  $A_\vartheta$ is essentially self-adjoint in $\mathcal H$. In fact, as in Example~\ref{exm:kreinneumann} one verifies 
  $A_\vartheta = A\dotplus\widehat{\mathcal N}_\eta(T)$, which is a proper restriction of 
  $\widetilde A_\vartheta = A\dotplus\widehat{\mathcal N}_\eta(A^*)$ from Example~\ref{exm:kreinneumann}.
\end{Exm}
For completeness we provide  a version of Kre\u{\i}n's formula for quasi boundary triples in Corollary~\ref{cor:krein} 
which can be viewed as a direct consequence of
Kre\u{\i}n's formula for the ordinary boundary triple in Theorem~\ref{thm:regularisation}. A similar
type of resolvent formula can also be found in \cite[Theorem 7.26]{DeHaMaSn12} for generalized boundary triples.
For the convenience of the reader we first recall Kre\u{\i}n's formula for ordinary boundary triples, see, e.g.\ \cite{DeMa91}.
For the definition of the point, continuous and residual spectrum of a closed linear relation we refer the reader to the appendix.

\begin{Thm}\label{pro:krein}
  Let $\{\mathcal G,\Gamma_0,\Gamma_1\}$ be an ordinary boundary triple for $A^*$ with $\gamma$-field $\gamma$ and Weyl function $M$ and 
  $A_0 = A^* \upharpoonright \ker \Gamma_0$, 
  let $\Theta$ be a closed linear relation in $\mathcal G$ and let
  $A_\Theta$ be the corresponding closed extension in Theorem~\ref{thm:boundary_triple}.
  Then for all $\lambda\in \rho(A_0)$ the following assertions {\rm(i)}-{\rm(iv)} hold.
  \begin{enumerate}[\rm(i)]
  \item $\lambda \in \sigma_p(A_\Theta)$ if and only if $0\in \sigma_p(\Theta-M(\lambda))$, in this case
    \[
    \ker (A_\Theta -\lambda) = \gamma(\lambda) \ker(\Theta - M(\lambda)),
    \]
  \item $\lambda \in \sigma_c(A_\Theta)$ if and only if $0\in \sigma_c(\Theta-M(\lambda))$,
  \item $\lambda \in \sigma_r(A_\Theta)$ if and only if $0\in \sigma_r(\Theta-M(\lambda))$,
  \item $\lambda \in \rho(A_\Theta)$ if and only if $0\in \rho(\Theta-M(\lambda))$ and the formula
    \begin{equation*}
      (A_\Theta - \lambda)^{-1} = (A_0 - \lambda)^{-1} + \gamma(\lambda) \bigl( \Theta - M(\lambda) \bigr)^{-1} \gamma(\bar \lambda)^*
    \end{equation*}
    holds for all $\lambda \in \rho(A_0) \cap \rho(A_\Theta)$.
  \end{enumerate}
\end{Thm}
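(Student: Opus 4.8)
The plan is to reduce all four statements to two ``master relations'' that transport the spectral data of $\Theta-M(\lambda)$ at $0$ to those of $A_\Theta$ at $\lambda$, and then to verify the resolvent formula by a direct computation. Throughout, fix $\lambda\in\rho(A_0)$ and use the decomposition $\dom A^*=\dom A_0\dotplus\mathcal N_\lambda(A^*)$ from \eqref{decot}. For $f=f_0+f_\lambda$ in this decomposition one has $\Gamma_0 f=\Gamma_0 f_\lambda$ and, by \eqref{gammaid}, $\Gamma_1 f=M(\lambda)\Gamma_0 f+\Gamma_1 f_0$; hence $\Gamma f\in\Theta$ is equivalent to $(\Gamma_0 f_\lambda,\Gamma_1 f_0)\in\Theta-M(\lambda)$. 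Since $A_\Theta=A^*\upharpoonright\ker(\Gamma_1-\Theta\Gamma_0)$ and $(A_\Theta-\lambda)f=(A_0-\lambda)f_0$, this single reformulation is the common source of (i)--(iv).

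For (i) I would argue as follows. If $f\in\ker(A_\Theta-\lambda)$, then $f_0=0$, so $f=f_\lambda\in\mathcal N_\lambda(A^*)$ and the reformulation reads $(\Gamma_0 f,0)\in\Theta-M(\lambda)$, i.e.\ $\Gamma_0 f\in\ker(\Theta-M(\lambda))$. Conversely, any $x\in\ker(\Theta-M(\lambda))$ yields $f=\gamma(\lambda)x\in\ker(A_\Theta-\lambda)$, because $\Gamma_0\gamma(\lambda)x=x$ and $\Gamma_1\gamma(\lambda)x=M(\lambda)x$. As $\gamma(\lambda)$ is injective with inverse $\Gamma_0\upharpoonright\mathcal N_\lambda(A^*)$, this gives the bijection $\ker(A_\Theta-\lambda)=\gamma(\lambda)\ker(\Theta-M(\lambda))$ and hence the equivalence in (i).

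For the range I would establish the second master relation
\[
  \ran(A_\Theta-\lambda)=\bigl(\gamma(\bar\lambda)^*\bigr)^{-1}\bigl[\ran(\Theta-M(\lambda))\bigr].
\]
Indeed, $h\in\ran(A_\Theta-\lambda)$ iff $f_0=(A_0-\lambda)^{-1}h$ and there is $f_\lambda\in\mathcal N_\lambda(A^*)$ with $(\Gamma_0 f_\lambda,\Gamma_1 f_0)\in\Theta-M(\lambda)$; using $\Gamma_1 f_0=\Gamma_1(A_0-\lambda)^{-1}h=\gamma(\bar\lambda)^*h$ (see \eqref{eqn:gamma_field}) together with the surjectivity of $\Gamma_0\upharpoonright\mathcal N_\lambda(A^*)$ onto $\mathcal G$ (ordinary boundary triple), this is possible exactly when $\gamma(\bar\lambda)^*h\in\ran(\Theta-M(\lambda))$. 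Now $\gamma(\bar\lambda)^*\colon\mathcal H\to\mathcal G$ is bounded and its restriction to $\mathcal N_{\bar\lambda}(A^*)$ is a topological isomorphism onto $\mathcal G$, so passing to preimages preserves equality to the full space, density of the range, and closedness of the range. Combined with the kernel relation from (i), this yields $0\in\sigma_c(\Theta-M(\lambda))\Leftrightarrow\lambda\in\sigma_c(A_\Theta)$, $0\in\sigma_r(\Theta-M(\lambda))\Leftrightarrow\lambda\in\sigma_r(A_\Theta)$, and the bijectivity half of (iv).

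Finally, for the resolvent formula I would verify directly that, whenever $0\in\rho(\Theta-M(\lambda))$, the operator $R:=(A_0-\lambda)^{-1}+\gamma(\lambda)(\Theta-M(\lambda))^{-1}\gamma(\bar\lambda)^*$ maps into $\dom A_\Theta$ and satisfies $(A_\Theta-\lambda)R=I$: writing $g=Rh$ and $w=(\Theta-M(\lambda))^{-1}\gamma(\bar\lambda)^*h$ one computes $\Gamma_0 g=w$ and $\Gamma_1 g=\gamma(\bar\lambda)^*h+M(\lambda)w$, so that $(w,\gamma(\bar\lambda)^*h)\in\Theta-M(\lambda)$ gives $(\Gamma_0 g,\Gamma_1 g)\in\Theta$, whence $g\in\dom A_\Theta$ and $(A^*-\lambda)g=h$. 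Together with the injectivity from (i) this shows $R=(A_\Theta-\lambda)^{-1}$ and $\lambda\in\rho(A_\Theta)$, while the converse follows from the kernel and range relations and the closed graph theorem, since both $A_\Theta$ and $\Theta-M(\lambda)$ are closed. The main delicate point throughout is the bookkeeping of the relation arithmetic in $\Theta-M(\lambda)$ and the behaviour of preimages under $\gamma(\bar\lambda)^*$; once the surjectivity of $\Gamma_0\upharpoonright\mathcal N_\lambda(A^*)$ and the isomorphism property of $\gamma(\bar\lambda)^*\upharpoonright\mathcal N_{\bar\lambda}(A^*)$ are secured, the four cases are read off uniformly.
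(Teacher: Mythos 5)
Your proposal is correct, and there is nothing in the paper to check it against: the paper states Theorem~\ref{pro:krein} as a known result, recalled ``for the convenience of the reader'' with a citation to \cite{DeMa91}, and gives no proof of its own. Your argument is the standard one from the literature, and it is complete: the reduction of $\Gamma f\in\Theta$ to $(\Gamma_0 f_\lambda,\Gamma_1 f_0)\in\Theta-M(\lambda)$ via the decomposition $\dom A^*=\dom A_0\dotplus\mathcal N_\lambda(A^*)$ and the identity \eqref{gammaid} is exactly right, and the two ``master relations'' $\ker(A_\Theta-\lambda)=\gamma(\lambda)\ker(\Theta-M(\lambda))$ and $\ran(A_\Theta-\lambda)=\bigl(\gamma(\bar\lambda)^*\bigr)^{-1}\bigl[\ran(\Theta-M(\lambda))\bigr]$ do transport kernel triviality, range density and range surjectivity in both directions, since $\gamma(\bar\lambda)^*$ is bounded, surjective (by the closed range theorem applied to the boundedly invertible $\gamma(\bar\lambda)$) and restricts to an isomorphism of $\mathcal N_{\bar\lambda}(A^*)$ onto $\mathcal G$. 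Two small points you use implicitly and could state explicitly: $\Theta-M(\lambda)$ is closed because $\Theta$ is closed and $M(\lambda)$ is bounded and everywhere defined (this is what licenses the closed graph theorem on the boundary side), and in the converse direction of (iv) the equivalence ``kernel trivial and range full $\Leftrightarrow$ resolvent point'' for the closed relation $\Theta-M(\lambda)$ uses precisely this closedness together with the definition of $\rho(\cdot)$ for relations given in the appendix. With those remarks added, your proof stands as a self-contained replacement for the external reference.
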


The next corollary contains a variant of Kre\u{\i}n's formula for quasi boundary triples; cf.~\cite[Theorem 2.8]{BeLa07},
\cite[Theorem 3.6]{BeLaLo11}, and \cite[Theorem 6.16]{BeLa12} for other versions of Kre\u{\i}n's formula for the resolvent difference
of canonical extensions in the quasi boundary triple framework.

\begin{Cor}\label{cor:krein}
  Let $\{\mathcal G,\Gamma_0,\Gamma_1\}$ be a quasi boundary triple for $T\subset A^*$ 
  with $\gamma$-field $\gamma$, Weyl function $M$, $A_j = T \upharpoonright {\ker\Gamma_j}$, $j=0,1$, 
  such that $A_1$ is self-adjoint in $\mathcal H$, there exists $\eta \in \rho(A_0)\cap\mathbb R$ and $\mathscr G_0,\, \mathscr G_1$ 
  are dense in $\mathcal G$.  
  Moreover let $\vartheta\subset \mathscr G_1' \times \mathscr G_0'$ be a linear relation in $\ran \widetilde \Gamma$ such that the extension 
  \begin{equation*}
  \widetilde A_{\vartheta} = A^* \upharpoonright \bigl\{ f\in \dom A^* : \widetilde \Gamma f \in \vartheta \bigr\}
  \end{equation*}
  is closed in $\mathcal H$. 
  Then for all $\lambda\in \rho(A_0)$ the following assertions {\rm(i)}-{\rm(iv)} hold.
  \begin{enumerate}[\rm(i)]
  \item $\lambda \in \sigma_p(\widetilde A_\vartheta)$ 
    if and only if $0\in \sigma_p(\iota_{+} (\vartheta - \widetilde M(\lambda) ) \iota_{-}^{-1})$, in this case
    \[
    \ker (\widetilde A_\vartheta -\lambda) = \widetilde \gamma(\lambda) \ker(\vartheta - \widetilde M(\lambda)),
    \]
  \item $\lambda \in \sigma_c(\widetilde A_\vartheta)$ if and only if $0\in \sigma_c(\iota_{+} ( \vartheta - \widetilde M(\lambda) ) \iota_{-}^{-1})$,
  \item $\lambda \in \sigma_r(\widetilde A_\vartheta)$ if and only if $0\in \sigma_r(\iota_{+} ( \vartheta - \widetilde M(\lambda) ) \iota_{-}^{-1})$,
  \item $\lambda \in \rho(\widetilde A_\vartheta)$ if and only if $0\in \rho( \iota_{+} ( \vartheta - \widetilde M(\lambda) ) \iota_{-}^{-1})$ and 
  \begin{equation*}
    ( \widetilde A_\vartheta -\lambda )^{-1} 
    = ( A_{0}-\lambda )^{-1} 
    + \widetilde \gamma(\lambda)  \bigl(\vartheta-\widetilde M(\lambda)\bigr)^{-1} \widetilde \gamma(\bar\lambda)'
  \end{equation*}
  holds for all $\lambda\in\rho(\widetilde A_\vartheta)\cap\rho(A_0)$.
  \end{enumerate}
\end{Cor}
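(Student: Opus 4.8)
The plan is to reduce the whole statement to the ordinary boundary triple $\{\mathcal G,\Upsilon_0,\Upsilon_1\}$ constructed in Theorem~\ref{thm:regularisation} and to invoke the classical Kre\u{\i}n formula recorded in Theorem~\ref{pro:krein}. All hypotheses of Theorem~\ref{thm:regularisation} and of Lemma~\ref{lem:exweyl} are in force, so the extended objects $\widetilde\gamma$, $\widetilde M$, $\widetilde\Gamma_0$, $\widetilde\Gamma_1$ are available, and the relations \eqref{weihnachten} link them to the $\gamma$-field $\beta$ and Weyl function $\mathcal M$ of $\{\mathcal G,\Upsilon_0,\Upsilon_1\}$, namely $\beta(\lambda)=\widetilde\gamma(\lambda)\iota_-^{-1}$ and $\mathcal M(\lambda)=\iota_+(\widetilde M(\lambda)-\widetilde M(\eta))\iota_-^{-1}$.

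First I would identify the extension. Put $\Theta:=\iota_+(\vartheta-\widetilde M(\eta))\iota_-^{-1}$, the closed relation in $\mathcal G$ associated with $\widetilde A_\vartheta$ by Corollary~\ref{cor:main3}. Using Lemma~\ref{lem:exweyl}~(ii), which gives $\widetilde\Gamma_1 f-\widetilde M(\eta)\widetilde\Gamma_0 f=\Gamma_1 f_0=\iota_+^{-1}\Upsilon_1 f$, together with $\Upsilon_0 f=\iota_-\widetilde\Gamma_0 f$, one checks that the condition $\widetilde\Gamma f\in\vartheta=\iota_+^{-1}\Theta\iota_-+\widetilde M(\eta)$ is equivalent to $(\Upsilon_0 f,\Upsilon_1 f)^\top\in\Theta$. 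Hence $\widetilde A_\vartheta=A_\Theta$, where $A_\Theta$ is the extension produced by the ordinary boundary triple $\{\mathcal G,\Upsilon_0,\Upsilon_1\}$ and the parameter $\Theta$ in the sense of Theorem~\ref{thm:boundary_triple}.

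Next I would translate the parameter-dependent quantities. Subtracting the two identities in \eqref{weihnachten} yields the telescoping identity $\Theta-\mathcal M(\lambda)=\iota_+\bigl(\vartheta-\widetilde M(\lambda)\bigr)\iota_-^{-1}$, valid as relations for every $\lambda\in\rho(A_0)$ since $\widetilde M(\eta)$ is an everywhere-defined bounded operator. As $\iota_+$ is injective and $\iota_-$ is a bijection, $\Theta-\mathcal M(\lambda)$ and $\iota_+(\vartheta-\widetilde M(\lambda))\iota_-^{-1}$ are literally the same relation, so the spectral alternatives of Theorem~\ref{pro:krein}~(i)--(iv) for $A_\Theta$ transfer verbatim to $\widetilde A_\vartheta$ and give assertions (i)--(iv). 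In particular, for (i) the identity $\ker(\iota_+(\vartheta-\widetilde M(\lambda))\iota_-^{-1})=\iota_-\ker(\vartheta-\widetilde M(\lambda))$ combined with $\beta(\lambda)=\widetilde\gamma(\lambda)\iota_-^{-1}$ turns $\ker(A_\Theta-\lambda)=\beta(\lambda)\ker(\Theta-\mathcal M(\lambda))$ into $\ker(\widetilde A_\vartheta-\lambda)=\widetilde\gamma(\lambda)\ker(\vartheta-\widetilde M(\lambda))$.

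Finally, for the resolvent formula in (iv) I would insert $\beta(\lambda)=\widetilde\gamma(\lambda)\iota_-^{-1}$, the relation $\beta(\bar\lambda)^*=\iota_+\widetilde\gamma(\bar\lambda)'$ established inside the proof of Lemma~\ref{lem:exgamma}, and $(\Theta-\mathcal M(\lambda))^{-1}=\iota_-(\vartheta-\widetilde M(\lambda))^{-1}\iota_+^{-1}$ into the formula of Theorem~\ref{pro:krein}~(iv); the four factors $\iota_-^{-1},\iota_-,\iota_+^{-1},\iota_+$ cancel pairwise and leave $(A_0-\lambda)^{-1}+\widetilde\gamma(\lambda)(\vartheta-\widetilde M(\lambda))^{-1}\widetilde\gamma(\bar\lambda)'$, as claimed. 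The only genuinely delicate point is the bookkeeping of the isometric isomorphisms $\iota_\pm$ of the Gelfand triple together with the distinction between the Hilbert-space adjoint $\beta(\lambda)^*$ and the dual operator $\widetilde\gamma(\lambda)'$; once the identity $\beta(\lambda)^*=\iota_+\widetilde\gamma(\lambda)'$ is in hand, the remainder is routine relation algebra.
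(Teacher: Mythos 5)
Your proposal is correct and follows essentially the same route as the paper's own proof: reduce to the ordinary boundary triple $\{\mathcal G,\Upsilon_0,\Upsilon_1\}$ of Theorem~\ref{thm:regularisation}, identify $\widetilde A_\vartheta=A_\Theta$ with $\Theta=\iota_+(\vartheta-\widetilde M(\eta))\iota_-^{-1}$ via Corollary~\ref{cor:main3}, use \eqref{weihnachten} to get $\Theta-\mathcal M(\lambda)=\iota_+(\vartheta-\widetilde M(\lambda))\iota_-^{-1}$ and $\beta(\lambda)(\Theta-\mathcal M(\lambda))^{-1}\beta(\bar\lambda)^*=\widetilde\gamma(\lambda)(\vartheta-\widetilde M(\lambda))^{-1}\widetilde\gamma(\bar\lambda)'$, and then invoke Theorem~\ref{pro:krein}. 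Your extra bookkeeping (the explicit equivalence $\widetilde\Gamma f\in\vartheta\Leftrightarrow\Upsilon f\in\Theta$ via Lemma~\ref{lem:exweyl}~(ii), and the identity $\beta(\bar\lambda)^*=\iota_+\widetilde\gamma(\bar\lambda)'$ from the proof of Lemma~\ref{lem:exgamma}) matches what the paper either states or leaves implicit.
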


\begin{proof}
  Let $\{\mathcal G,\Upsilon_0,\Upsilon_1\}$ be the ordinary boundary triple for $A^*$ 
  in Theorem~\ref{thm:regularisation} with $A_0=A^*\upharpoonright \ker \Upsilon_0$, $\gamma$-field $\beta$ and 
  corresponding Weyl function $\mathcal M$ in \eqref{weihnachten}.
  By assumption we have $\vartheta\subset\ran\widetilde\Gamma$.
  According to Corollary~\ref{cor:main3} the linear relation $\Theta=\iota_+ (\vartheta - \widetilde M(\eta) ) \iota_-^{-1}$ is closed in $\mathcal G$ 
  and it follows that $\widetilde A_\vartheta$ and
  \begin{equation*}
  A_\Theta = A^* \upharpoonright \bigl\{ f\in \dom A^* : \Upsilon f \in \Theta \bigr\} 
  \end{equation*}
  coincide. Since $\mathcal M(\lambda) = \iota_+ ( \widetilde M(\lambda) - \widetilde M(\eta) ) \iota_-^{-1}$ by \eqref{weihnachten} we obtain the
  identity
  $\Theta - \mathcal M(\lambda) =  \iota_{+} ( \vartheta - \widetilde M(\lambda) ) \iota_{-}^{-1}$ and from 
  $\beta (\lambda) = \widetilde \gamma(\lambda)\iota_-^{-1}$ and $ \beta ( \bar\lambda )^* = \iota_+ \widetilde \gamma(\bar \lambda)'$ we then conclude
  \begin{equation}\label{stoerterm}
  \beta (\lambda) \bigl( \Theta - \mathcal M (\lambda)\bigr)^{-1} \beta ( \bar\lambda )^* 
  = \widetilde \gamma(\lambda)  \bigl( \vartheta- \widetilde M(\lambda) \bigr)^{-1}\widetilde \gamma(\bar \lambda)'.
  \end{equation}
  Now the assertions follow from Theorem~\ref{pro:krein}, $\widetilde A_\vartheta=A_\Theta$ and \eqref{stoerterm}. 
  Note that $( \vartheta- \widetilde M(\lambda) )^{-1} \subset \mathscr G_1 \times \mathscr G_1'$ in \eqref{stoerterm} since 
  $\vartheta- \widetilde M(\lambda) \subset \mathscr G_1' \times \mathscr G_1$ by
  Lemma~\ref{lem:exweyl}~(v). 
\end{proof}
%
%
\section{Applications to elliptic boundary value problems}\label{sec:applications}
In this section the abstract theory from Section~\ref{sec:qbt} and Section~\ref{sec:extension} 
is applied to elliptic differential operators. 
In Section~\ref{sec:laplace} we first study the Laplacian on 
bounded Lipschitz-, quasi-convex and $C^{1,r}$-domains with $r\in (\frac 12,1]$.
Then we investigate $2\,m$th order elliptic differential operators on bounded smooth domains in Section~\ref{sec:elliptic} and
second order elliptic differential operators on domains with compact boundary in 
Section~\ref{sec:ellipticsecond}. 

Throughout this section let $\Omega \subset \mathbb R^n$, $n\geq 2$, be a domain with boundary $\partial\Omega$ 
(which is at least Lipschitz). In Section~\ref{sec:laplace} and Section~\ref{sec:elliptic} the domain $\Omega$ is assumed to be
bounded, in Section~\ref{sec:ellipticsecond} the domain $\Omega$ may be unbounded as well but its boundary $\partial\Omega$ is assumed to be compact.
We denote by $H^s(\Omega)$ the Sobolev spaces of order $s\in \mathbb R$ on $\Omega$ and by
$H^s(\partial\Omega)$ the Sobolev spaces on $\partial\Omega$ of order $s$ (with at least $s\in [-1,\,1]$ in the Lipschitz case).  
By $H^s_0 (\Omega)$ we denote the closure of $C_0^\infty(\Omega)$ in $H^s(\Omega)$, $s\geq 0$, and
with $C^\infty(\overline\Omega)$ the functions in $C_0^\infty(\mathbb R^n)$ restricted to $\Omega$;
see, e.g.\ \cite[Chapter~3]{ML00}.
\subsection{A description of all self-adjoint extensions of the Laplacian on bounded Lipschitz domains}\label{sec:laplace}
In this subsection we give a complete description of the self-adjoint extensions of the Laplacian $-\Delta = -\sum_{j=1}^n \partial_j^2$ on 
a bounded Lipschitz domain $\Omega$ in terms of linear operators and relations $\Theta$ in 
$L^2(\partial\Omega)$ with the help of Theorem~\ref{thm:main3}. 
This description extends the one by Gesztesy and Mitrea in \cite{GeMi11}, 
where the class of so-called quasi-convex domains was treated; cf.~\cite[Definition~8.9]{GeMi11}. 
In addition we introduce Hilbert spaces $\mathscr G_0$ and $\mathscr G_1$ 
such that the Dirichlet- and Neumann trace operator admit continuous and 
surjective extensions from the maximal domain of the Laplacian onto the anti-dual spaces $\mathscr G_1'$ and $\mathscr G_0'$ respectively.

Let $\Omega\subset \mathbb R^n$, $n\geq 2$, be a bounded Lipschitz domain.
For $s\geq 0$ we define the Hilbert spaces
\begin{equation*}
  H^s_\Delta(\Omega) := \bigl\{f\in H^s(\Omega) : \Delta f \in L^2(\Omega)\bigr\}
\end{equation*}
equipped with the norms induced by  
\begin{equation*}
  (f,\, g)_{ H^s_\Delta(\Omega)}:=(f,\, g)_{H^s(\Omega)}+(\Delta f,\,\Delta g)_{L^2(\Omega)},\quad f,\, g\in H^s_\Delta(\Omega).
\end{equation*}
Note that for $s\geq 2$ the spaces $H^s_\Delta(\Omega)$ coincide with $H^s(\Omega)$. 
Define the minimal and maximal realization of the Laplacian in $L^2(\Omega)$  by 
\[
\Delta_{\min} := - \Delta \upharpoonright H^2_0(\Omega)\quad\text{and}\quad  \Delta_{\max} := -\Delta \upharpoonright H^0_\Delta(\Omega),
\]
respectively, and let $A:=\Delta_{\min}$. It follows from the Poincar\'e inequality that the norm induced by 
$H^0_\Delta(\Omega)$ is equivalent to the $H^2$-norm on $H^2_0(\Omega)$.
Hence a usual distribution type argument yields
\begin{equation*}
A = \Delta_{\min} =\Delta_{\max}^* \quad \text{and} \quad  A^* = \Delta_{\min}^* = \Delta_{\max};
\end{equation*}
cf. \cite[VI.~\textsection~29]{Tr80}. We mention that $A$ is a closed, densely defined, symmetric operator in $L^2(\Omega)$ with equal 
infinite deficiency indices.
Let $\mathfrak n=(\mathfrak n_1,\,\mathfrak n_2,\,\dots,\,\mathfrak n_n)^\top$ 
be the unit vector field pointing out of $\Omega$, which exists almost everywhere, see, e.g. \cite{ML00,Wl87}. 
The Dirichlet and Neumann trace operator $\tau_D$ and $\tau_N$ defined by
\begin{equation*}
  \tau_D f := f \upharpoonright_{\partial\Omega}, \quad \tau_N f :=  
  \mathfrak n \cdot \nabla f \upharpoonright_{\partial\Omega}, \quad f\in  C^\infty(\overline\Omega),
\end{equation*}
admit continuous extensions to operators 
\begin{equation}\label{eqn:trace2}
  \tau_D : H^s_\Delta(\Omega) \to H^{s-1/2}(\partial\Omega)\quad\text{and}\quad \tau_N : H^s_\Delta(\Omega)\to H^{s-3/2}(\partial\Omega)
\end{equation} 
for all $s\in [\frac 12,\, \frac 32]$. In particular, according to \cite[Lemma~3.1 and Lemma~3.2]{GeMi11} the extensions 
$\tau_D$ and $\tau_N$ in \eqref{eqn:trace2} 
are both surjective if $s=\frac 12$ and $s=\frac 32$.

In the next theorem we define a quasi boundary triple for the Laplacian  
\begin{equation}\label{eqn:DeltaT}
  T := -\Delta \upharpoonright H^{3/2}_\Delta(\Omega) = A^* \upharpoonright H^{3/2}_\Delta(\Omega) \subset \Delta_{\max}
\end{equation}
on the bounded Lipschitz domain $\Omega$ with $\Gamma_0$ and $\Gamma_1$ as the natural trace maps. 
In this setting it turns out that the spaces $\mathscr G_0$ and $\mathscr G_1$ from Definition~\ref{def:g0g1} are dense in $L^2(\partial\Omega)$,
the $\gamma$-field coincides with a family of Poisson operators and the values of the Weyl function are  
Dirichlet-to-Neumann maps (up to a minus sign). 
\begin{Thm}\label{thm:LipBLtriple}
  Let $\Omega$ be a bounded Lipschitz domain, let $T$ be as in \eqref{eqn:DeltaT} and let
  \begin{equation*}
  \Gamma_0, \Gamma_1 : H^{3/2}_\Delta(\Omega) \to L^2(\partial\Omega), \qquad \Gamma_0 f := \tau_D f, \quad \Gamma_1 f := -\tau_N f.
  \end{equation*}  
  Then $\{L^2(\partial\Omega),\Gamma_0,\Gamma_1\}$ is a quasi boundary triple for $T \subset A^* = \Delta_{\max}$ 
  such that the minimal realization $A=\Delta_{\min}$ coincides with $T\upharpoonright \ker\Gamma$ and the following statements hold.
  \begin{enumerate}[\rm(i)]
  \item The Dirichlet realization $\Delta_{D}$ and Neumann realization $\Delta_{N}$ correspond to $\ker \Gamma_0$ and $\ker \Gamma_1$,
\begin{equation}\label{eqn:LipBLtriple1}
\begin{split}
  \Delta_{D}&:= T\upharpoonright \ker \Gamma_0=\Delta_{\max} \upharpoonright \bigl\{ f\in H^{3/2}_\Delta(\Omega) : \tau_D f = 0\bigr\}, \\
  \Delta_{N}&:= T\upharpoonright \ker \Gamma_1=\Delta_{\max} \upharpoonright \bigl\{ f\in H^{3/2}_\Delta(\Omega) : \tau_N f = 0\bigr\},
  \end{split}
\end{equation}
    respectively, and both operators
    are self-adjoint in $L^2(\Omega)$.
  \item The spaces 
    \begin{equation*} 
      \mathscr G_0 = \ran (\Gamma_0 \upharpoonright \ker \Gamma_1)\quad\text{and}\quad  
      \mathscr G_1 = \ran (\Gamma_1 \upharpoonright \ker \Gamma_0)
    \end{equation*}
    are dense in $L^2(\partial\Omega)$.
  \item The values $\gamma(\lambda): L^2(\partial\Omega)\supset H^1(\partial\Omega)\to L^2(\Omega)$ of the $\gamma$-field are given by 
    \[
    \gamma(\lambda) \varphi = f, \qquad \varphi\in H^1(\partial\Omega), \quad \lambda\in\rho(\Delta_{D}), 
    \]
    where $f\in L^2(\Omega)$ is the unique solution of the boundary value problem 
    \begin{equation}\label{eqn:laplacePDE}
        (-\Delta -\lambda) f = 0, \qquad
        \tau_D f = \varphi.
    \end{equation}
  \item The values $M (\lambda): L^2(\partial\Omega)\supset H^1(\partial\Omega)\to L^2(\partial\Omega)$ of the Weyl function  
    are Dirichlet-to-Neumann maps given by 
    \begin{equation*} 
      M(\lambda) \varphi = -\tau_N f,  \qquad \varphi\in H^1(\partial\Omega), \quad \lambda\in\rho(\Delta_{D}),
    \end{equation*}
    where $f=\gamma(\lambda)\varphi$ is the unique solution of \eqref{eqn:laplacePDE}.
    The operators $M(\lambda)$ are bounded from $H^1(\partial\Omega)$ to $L^2(\partial\Omega)$
    and if, in addition, $\lambda\in\rho(\Delta_{N})$ then the Neumann-to-Dirichlet map 
    $M(\lambda)^{-1}$ is a compact operator in $L^2(\partial\Omega)$. 
\end{enumerate}
\end{Thm}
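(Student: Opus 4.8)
The plan is to verify first that $\{L^2(\partial\Omega),\Gamma_0,\Gamma_1\}$ satisfies the three axioms of Definition~\ref{qbt} and then to read off assertions (i)--(iv) from the construction. The abstract Green's identity \eqref{eqn:abstract_green} is here the classical second Green formula $(-\Delta f,g)_{L^2(\Omega)}-(f,-\Delta g)_{L^2(\Omega)}=(-\tau_N f,\tau_D g)_{L^2(\partial\Omega)}-(\tau_D f,-\tau_N g)_{L^2(\partial\Omega)}$, which holds for $f,g\in C^\infty(\overline\Omega)$ and extends to all $f,g\in H^{3/2}_\Delta(\Omega)$: by \eqref{eqn:trace2} with $s=\tfrac 32$ the maps $\tau_D:H^{3/2}_\Delta(\Omega)\to H^1(\partial\Omega)$ and $\tau_N:H^{3/2}_\Delta(\Omega)\to L^2(\partial\Omega)$ are continuous, so all four boundary pairings are continuous on $H^{3/2}_\Delta(\Omega)$, and $C^\infty(\overline\Omega)$ is dense in $H^{3/2}_\Delta(\Omega)$ for its graph norm. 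Since $C^\infty(\overline\Omega)$ is even a core for $\Delta_{\max}$, we already have $\overline T=A^*=\Delta_{\max}$, so the remaining axioms are the self-adjointness of $A_0=T\upharpoonright\ker\Gamma_0$ and the density of $\ran\Gamma$.

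For the self-adjointness I would invoke the $H^{3/2}$-regularity of the Dirichlet and Neumann Laplacians on bounded Lipschitz domains, $\dom\Delta_D\subset H^{3/2}(\Omega)$ and $\dom\Delta_N\subset H^{3/2}(\Omega)$; cf.~\cite{JeKe95,GeMi11}. Together with $\Delta f\in L^2(\Omega)$ this gives $\dom\Delta_D\subset\{f\in H^{3/2}_\Delta(\Omega):\tau_D f=0\}=\ker\Gamma_0$ and likewise $\dom\Delta_N\subset\ker\Gamma_1$, so that $\Delta_D\subset T\upharpoonright\ker\Gamma_0$ and $\Delta_N\subset T\upharpoonright\ker\Gamma_1$. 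By Green's identity both $T\upharpoonright\ker\Gamma_0$ and $T\upharpoonright\ker\Gamma_1$ are symmetric, and since a self-adjoint operator admits no proper symmetric extension we conclude $T\upharpoonright\ker\Gamma_0=\Delta_D$ and $T\upharpoonright\ker\Gamma_1=\Delta_N$, both self-adjoint. This proves axiom~(iii), the identities \eqref{eqn:LipBLtriple1}, and assertion~(i), and also $\ker\Gamma=\dom A=\dom\Delta_{\min}$.

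The density of $\mathscr G_0$ and $\mathscr G_1$ in (ii) I would obtain from a duality argument based on Green's identity, the trace surjectivity of \cite{GeMi11}, and the self-adjointness just established; this density simultaneously settles axiom~(ii) of Definition~\ref{qbt}, since $\mathscr G_0\times\mathscr G_1\subset\ran\Gamma$ always holds. To see that $\mathscr G_1=-\tau_N(\dom\Delta_D)$ is dense, suppose $\psi\in L^2(\partial\Omega)$ is orthogonal to it. Using surjectivity of $\tau_D:H^{1/2}_\Delta(\Omega)\to L^2(\partial\Omega)$ (the case $s=\tfrac 12$ in \eqref{eqn:trace2}) choose $v\in H^{1/2}_\Delta(\Omega)$ with $\tau_D v=\psi$; the version of Green's identity valid for $f\in H^{3/2}_\Delta(\Omega)$ and $v\in H^{1/2}_\Delta(\Omega)$ then gives, for $f\in\dom\Delta_D$, the relation $(\Delta f,v)_{L^2(\Omega)}-(f,\Delta v)_{L^2(\Omega)}=(\tau_N f,\psi)_{L^2(\partial\Omega)}=0$, whence $v$ lies in the domain of the adjoint of the self-adjoint operator $\Delta_D$, so $v\in\dom\Delta_D$ and $\psi=\tau_D v=0$. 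For $\mathscr G_0=\tau_D(\dom\Delta_N)$ one argues symmetrically, realizing an orthogonal $\chi\in L^2(\partial\Omega)$ as $\tau_N v$ with $v\in H^{3/2}_\Delta(\Omega)$ via surjectivity of $\tau_N$ at $s=\tfrac 32$ and then using self-adjointness of $\Delta_N$ and the ordinary Green identity.

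Assertions (iii) and (iv) follow by unwinding Definition~\ref{def:weyl}: since $\ran\Gamma_0=\ran\tau_D=H^1(\partial\Omega)$, the domain of $\gamma(\lambda)$ is $H^1(\partial\Omega)$, and $\gamma(\lambda)\varphi=(\Gamma_0\upharpoonright\mathcal N_\lambda(T))^{-1}\varphi$ is exactly the unique $H^{3/2}_\Delta(\Omega)$-solution of \eqref{eqn:laplacePDE}, while $M(\lambda)\varphi=\Gamma_1\gamma(\lambda)\varphi=-\tau_N f$. Boundedness of $M(\lambda):H^1(\partial\Omega)\to L^2(\partial\Omega)$ follows from the elliptic estimate $\gamma(\lambda):H^1(\partial\Omega)\to H^{3/2}_\Delta(\Omega)$ combined with continuity of $\tau_N$ at $s=\tfrac 32$. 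For the final claim, let $\lambda\in\rho(\Delta_D)\cap\rho(\Delta_N)$; by Proposition~\ref{pro:weyl2}~(i) the map $M(\lambda)\upharpoonright\mathscr G_0$ is a bijection onto $\mathscr G_1$, and its inverse, the Neumann-to-Dirichlet map, extends to all of $L^2(\partial\Omega)$ because for every $\psi\in L^2(\partial\Omega)$ there is a unique $f\in\mathcal N_\lambda(T)$ with $-\tau_N f=\psi$ (split $g\in H^{3/2}_\Delta(\Omega)$ with $-\tau_N g=\psi$ along $\dom\Delta_N\dotplus\mathcal N_\lambda(T)$). Regularity of the Neumann problem ($L^2$ Neumann data yields an $H^{3/2}(\Omega)$-solution) shows $M(\lambda)^{-1}:L^2(\partial\Omega)\to H^1(\partial\Omega)$ is bounded, and composing with the compact Rellich embedding $H^1(\partial\Omega)\hookrightarrow L^2(\partial\Omega)$ gives compactness of $M(\lambda)^{-1}$ in $L^2(\partial\Omega)$. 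The main obstacle throughout is not the abstract bookkeeping but the reliance on the sharp boundary regularity on Lipschitz domains (the $H^{3/2}$-regularity of Jerison--Kenig and the trace surjectivity of Gesztesy--Mitrea): without these inputs the spaces $\ker\Gamma_0,\ker\Gamma_1$ need not exhaust $\dom\Delta_D,\dom\Delta_N$ and the whole construction collapses.
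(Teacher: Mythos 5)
Your proof is correct, and most of it coincides with the paper's own argument: the extension of Green's identity by density of $C^\infty(\overline\Omega)$ in $H^{3/2}_\Delta(\Omega)$, the identification $\overline T=\Delta_{\max}$, the self-adjointness of $\Delta_D,\Delta_N$ via $H^{3/2}$-regularity (the paper simply cites \cite{GeMi08,JeKe81,JeKe95} where you argue that a self-adjoint operator admits no proper symmetric extension -- same content), and the duality argument for density of $\mathscr G_0$ are all essentially identical. The genuine divergence is the density of $\mathscr G_1$: the paper cites \cite{GeMi11} (Lemma~6.3 and Corollary~6.5), namely that the subspace $\tau_N\bigl(\{f\in H^2(\Omega):\tau_D f=0\}\bigr)$ of $\mathscr G_1$ is already dense in $L^2(\partial\Omega)$, whereas you mirror the $\mathscr G_0$ duality argument. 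That mirroring cannot be run inside the triple (since $\ran\Gamma_0=H^1(\partial\Omega)$ is a proper subspace of $L^2(\partial\Omega)$), so you step outside it, using surjectivity of $\tau_D:H^{1/2}_\Delta(\Omega)\to L^2(\partial\Omega)$ from \eqref{eqn:trace2} together with a mixed-regularity Green identity pairing $H^{3/2}_\Delta(\Omega)$ against $H^{1/2}_\Delta(\Omega)$, in which $\langle \tau_D f,\tau_N v\rangle$ is an $H^1\times H^{-1}$ duality. This works, but that identity is \emph{not} the one established in your first step; it must itself be justified by continuity (the maps in \eqref{eqn:trace2} and density of $C^\infty(\overline\Omega)$ in $H^{1/2}_\Delta(\Omega)$, cf.~\cite{CoDa98}), so state and prove it explicitly. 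What your route buys is a self-contained, symmetric treatment of both densities that makes the deep Lipschitz inputs (Jerison--Kenig regularity, trace surjectivity) visible; what the paper's citations buy is brevity, outsourcing those points as well as the boundedness of $M(\lambda)$ and compactness of $M(\lambda)^{-1}$, which you instead reconstruct by elliptic regularity, a closed-graph argument and the compact embedding $H^1(\partial\Omega)\hookrightarrow L^2(\partial\Omega)$ -- note that the closed-graph step needs $M(\lambda)^{-1}$ to be closed (e.g.\ bounded) as an operator in $L^2(\partial\Omega)$, which for real $\lambda$ follows from Hellinger--Toeplitz applied to the everywhere-defined symmetric operator $M(\lambda)^{-1}$; add this detail. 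One small correction: $\ker\Gamma=\dom\Delta_{\min}$ is not a consequence of assertion (i) as your wording suggests, but of the abstract identity $\ker\Gamma=\dom A$ valid in every quasi boundary triple (\cite{BeLa07}, recalled after Definition~\ref{qbt}), which is also how the paper concludes it.
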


\begin{proof}
  We check that $\{L^2(\partial\Omega),\Gamma_0,\Gamma_1\}$ is a quasi boundary triple for $T\subset A^*$.
  From \cite[Theorem~2.6 and~2.10, Lemma~3.4 and~4.8]{GeMi08} 
  we obtain that the Dirichlet and Neumann Laplacian in \eqref{eqn:LipBLtriple1} are both self-adjoint in $L^2(\Omega)$; for the
  $H^{3/2}$-regularity of the Dirichlet domain see also \cite{JeKe81, JeKe95}. In particular, item (iii) in Definition~\ref{qbt} is valid
  and assertion (i) of the theorem holds.
 
  The fact that $\ran\Gamma$ is dense in $L^2(\partial\Omega)\times L^2(\partial\Omega)$ will follow below when we verify 
  assertion (ii) of the theorem. 
  For the moment we note that item (ii) in Definition~\ref{qbt} holds.

  The continuity of the trace maps $\tau_D, \, \tau_N : H^{3/2}_\Delta(\Omega) \to L^2(\partial\Omega)$ and the fact that
  $C^\infty(\overline\Omega)$ is dense in $H^{3/2}_\Delta(\Omega)$ (see \cite[Lemme~3]{CoDa98}) yield
  Green's identity 
  \begin{eqnarray*}
  (T f,g)_{L^2(\Omega)} - (f, T g)_{L^2(\Omega)} &=& (-\Delta f,g)_{L^2(\Omega)} - (f, -\Delta g)_{L^2(\Omega)} \\
    &=& (-\tau_N f, \tau_D g)_{L^2(\partial\Omega)} - (\tau_D f,-\tau_N g)_{L^2(\partial\Omega)}\\
    &=& (\Gamma_1 f,\Gamma_0 g)_{L^2(\partial\Omega)} - (\Gamma_0 f,\Gamma_1 g)_{L^2(\partial\Omega)} 
  \end{eqnarray*}
  for all $f,\, g\in H^{3/2}_\Delta(\Omega)$, that is, condition (i) in Definition~\ref{qbt} holds.

  Furthermore, as $C^\infty(\overline\Omega)$ is dense in $H^0_\Delta(\Omega)=\dom A^*$ it follows that
  $\overline T=A^*=\Delta_{\max}$ holds. 
  Therefore $\{L^2(\partial\Omega),\Gamma_0,\Gamma_1\}$ is a quasi boundary triple for $T$. 
  Hence we also obtain $T\upharpoonright \ker \Gamma = A = \Delta_{\min}$ from the fact that $\ker \Gamma = \dom A$ holds in every quasi boundary triple.
   
  Next we verify assertion (ii) (which also implies property (ii) in the definition of a quasi boundary triple). 
  Recall that $\ran \Gamma_1 = L^2(\partial\Omega)$ by \eqref{eqn:trace2} and suppose that
  $h\,\bot\, \mathscr G_0$. Choose $f\in\dom\Gamma_1$ such that $h=\Gamma_1 f$. Then for all 
  $g\in\ker\Gamma_1=\dom \Delta_{N}$ Green's identity yields
  \begin{eqnarray*}
  0&=& (h,\Gamma_0 g)_{L^2(\partial\Omega)}=(\Gamma_1 f,\Gamma_0 g)_{L^2(\partial\Omega)} - (\Gamma_0 f,\Gamma_1 g)_{L^2(\partial\Omega)}\\
   &=& (T f,g)_{L^2(\Omega)} - (f, \Delta_{N} g)_{L^2(\Omega)}
  \end{eqnarray*}
  and since $\Delta_{N}$ is selfadjoint by (i) we obtain $f\in\dom \Delta_{N}=\ker\Gamma_1$ and hence $h=\Gamma_1 f=0$, that is, $\mathscr G_0$
  is dense in $L^2(\partial\Omega)$.
  The fact that $\mathscr G_1$ is dense in $L^2(\partial\Omega)$ follows from 
  \cite[Lemma~6.3 and Corollary~6.5]{GeMi11}
  since the subspace $\ran(\tau_N \upharpoonright \{ f\in H^2(\Omega) : \tau_D f = 0 \})$ 
  of $\mathscr G_1$ is dense in $L^2(\partial\Omega)$. This shows assertion (ii). Since 
  $\mathscr G_0\times\mathscr G_1\subset \ran \Gamma$ also $\ran\Gamma$ is dense in 
  $L^2(\partial\Omega)\times L^2(\partial\Omega)$ as noted above.

  Most of the assertions in (iii) and (iv) are immediate consequences of the definition of the $\gamma$-field and the Weyl function corresponding to the 
  quasi boundary triple $\{L^2(\partial\Omega),\Gamma_0,\Gamma_1\}$. For the boundedness of $M(\lambda)$ regarded as an operator
from $H^1(\partial\Omega)$ into $L^2(\partial\Omega)$ and the compactness of $M(\lambda)^{-1}$ as an operator in $L^2(\partial\Omega)$ we
refer to \cite[Theorem~3.7 and Remark~3.8]{GeMi08}. 
\end{proof}
Let $\{L^2(\partial\Omega),\Gamma_0,\Gamma_1\}$ be the quasi boundary triple for $T \subset A^* =\Delta_{\max}$ 
from Theorem~\ref{thm:LipBLtriple} with Weyl function $M$. 
Equip the spaces 
$\mathscr G_0$ and 
$\mathscr G_1$ with the norms induced by
\begin{equation}\label{eqn:norms}
  \begin{split}
    (\varphi,\psi)_{\mathscr G_0} &:= (\Sigma^{-1/2} \varphi, \Sigma^{-1/2} \psi)_{L^2(\partial\Omega)}, \qquad
    \Sigma = \im(- M(i)^{-1}), \\
    (\varphi,\psi)_{\mathscr G_1} &:= (\Lambda^{-1/2} \varphi, \Lambda^{-1/2} \psi)_{L^2(\partial\Omega)},\qquad \Lambda= \overline{\im M(i)};  
  \end{split}
\end{equation} 
cf.~Section~\ref{2.3}.
As an immediate consequence of Proposition~\ref{pro:Gamma_0_extension} and Corollary~\ref{cor:Gamma_1_extension}, 
see also Definition~\ref{def:extweyl}, Lemma~\ref{lem:exgamma} and Lemma~\ref{lem:exweyl}, we obtain 
a trace theorem for the Dirichlet and Neumann trace operator on the maximal domain of the Laplacian.
\begin{Cor}\label{cor:exttau}
  Let $\Omega$ be a bounded Lipschitz domain. Then the following statements hold.
  \begin{enumerate}[\rm(i)]
  \item The Dirichlet trace operator $\tau_D$ and Neumann trace operator $\tau_N$ can be extended by continuity to surjective 
    mappings 
    \begin{equation*}
      \widetilde \tau_D : H^0_\Delta(\Omega) \to \mathscr G_1'\quad\text{and}\quad
      \widetilde \tau_N : H^0_\Delta(\Omega) \to \mathscr G_0'
    \end{equation*}
    such that $\ker \widetilde \tau_D = \ker \tau_D = \dom \Delta_{D}$ and 
      $\ker \widetilde \tau_N = \ker \tau_N = \dom \Delta_{N}$.
  \item For all $\lambda\in\rho(\Delta_{D})$ the values of the $\gamma$-field $\gamma$ from Theorem~{\rm\ref{thm:LipBLtriple}} admit continuous extensions 
    \begin{equation*}
      \widetilde \gamma(\lambda) : \mathscr G_1' \to L^2(\partial\Omega), \quad \varphi \mapsto \widetilde \gamma(\lambda) \varphi = f
    \end{equation*}
    where $f\in L^2(\Omega)$ is the unique solution of \eqref{eqn:laplacePDE}. 
    In particular, the space $\mathscr G_1'$ is maximal in the sense that whenever $f\in L^2(\Omega)$ is a solution 
    of the Dirichlet problem \eqref{eqn:laplacePDE} then the boundary value $\varphi$ belongs to $\mathscr G_1'$.
  \item For all $\lambda\in\rho(\Delta_{D})$ the values $M(\lambda)$ of the Weyl function $M$ from Theorem~{\rm\ref{thm:LipBLtriple}} admit continuous extensions 
    \begin{equation*}
      \widetilde M(\lambda) : \mathscr G_1' \to \mathscr G_0', \quad \varphi \mapsto \widetilde M(\lambda) \varphi = -\widetilde\tau_N f,
      \quad \lambda\in\rho(\Delta_{D}),
    \end{equation*}
    where $f=\widetilde\gamma(\lambda) \varphi$ is the unique solution of \eqref{eqn:laplacePDE}.
  \end{enumerate}
  \end{Cor}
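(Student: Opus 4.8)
The plan is to read the three assertions as direct specializations of the abstract extension machinery of Section~\ref{2.3} to the concrete Laplacian triple of Theorem~\ref{thm:LipBLtriple}, so the bulk of the work is checking hypotheses and translating abstract boundary maps into trace operators. First I would record that the standing assumptions are all in force: by Theorem~\ref{thm:LipBLtriple}~(i) the operators $A_0=\Delta_{D}$ and $A_1=\Delta_{N}$ are self-adjoint, by Theorem~\ref{thm:LipBLtriple}~(ii) both $\mathscr G_0$ and $\mathscr G_1$ are dense in $\mathcal G=L^2(\partial\Omega)$, and since $\Omega$ is bounded the Dirichlet and Neumann Laplacians have discrete spectra, so $\rho(\Delta_{D})\cap\mathbb R$ and $\rho(\Delta_{N})\cap\mathbb R$ are nonempty. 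The decisive observation is that the norms fixed in \eqref{eqn:norms} are exactly the inner products built from $\Sigma=\im(-M(i)^{-1})$ and $\Lambda=\overline{\im M(i)}$ occurring in Proposition~\ref{pro:DeMa95} and in the remark after Corollary~\ref{cor:Gamma_1_extension}; hence those results already guarantee that $\mathscr G_0,\mathscr G_1$ are Hilbert spaces densely and continuously embedded in $L^2(\partial\Omega)$ for which $\gamma(i)^*=\Gamma_1(\Delta_{D}+i)^{-1}$ and its $\Gamma_0$-counterpart are continuous. Thus conditions (i)--(ii) of Proposition~\ref{pro:Gamma_0_extension} and of Corollary~\ref{cor:Gamma_1_extension} hold without extra computation.

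For assertion (i) I would apply Proposition~\ref{pro:Gamma_0_extension} to obtain the unique surjective continuous extension $\widetilde\Gamma_0:H^0_\Delta(\Omega)\to\mathscr G_1'$ of $\Gamma_0=\tau_D$ and set $\widetilde\tau_D:=\widetilde\Gamma_0$; Lemma~\ref{lem:exgamma}~(i) then gives $\ker\widetilde\tau_D=\ker\tau_D=\dom\Delta_{D}$. Symmetrically, Corollary~\ref{cor:Gamma_1_extension} furnishes the surjective continuous extension $\widetilde\Gamma_1:H^0_\Delta(\Omega)\to\mathscr G_0'$ of $\Gamma_1=-\tau_N$; putting $\widetilde\tau_N:=-\widetilde\Gamma_1$ and using Lemma~\ref{lem:exweyl}~(i) yields $\ker\widetilde\tau_N=\ker\tau_N=\dom\Delta_{N}$. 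Throughout I use $\dom A^*=\dom\Delta_{\max}=H^0_\Delta(\Omega)$ from Theorem~\ref{thm:LipBLtriple}.

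Assertions (ii) and (iii) are then the concrete readings of Definition~\ref{def:extweyl} and Lemma~\ref{lem:exgamma}~(ii). For (ii) the extended $\gamma$-field is $\widetilde\gamma(\lambda)=(\widetilde\Gamma_0\upharpoonright\mathcal N_\lambda(A^*))^{-1}$, an isomorphism of $\mathscr G_1'$ onto $\mathcal N_\lambda(A^*)=\ker(\Delta_{\max}-\lambda)$; unwinding the inverse, $\widetilde\gamma(\lambda)\varphi=f$ is precisely the unique $f$ with $(-\Delta-\lambda)f=0$ and $\widetilde\tau_D f=\varphi$, i.e.\ the solution of \eqref{eqn:laplacePDE}. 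The maximality claim is then immediate: any $f\in L^2(\Omega)$ solving \eqref{eqn:laplacePDE} lies in $\mathcal N_\lambda(A^*)\subset\dom A^*$, so its Dirichlet datum $\widetilde\tau_D f$ automatically belongs to $\mathscr G_1'=\ran(\widetilde\Gamma_0\upharpoonright\mathcal N_\lambda(A^*))$. For (iii), Definition~\ref{def:extweyl}~(ii) gives $\widetilde M(\lambda)=\widetilde\Gamma_1\widetilde\gamma(\lambda):\mathscr G_1'\to\mathscr G_0'$, and because $\widetilde\Gamma_1=-\widetilde\tau_N$ one reads off $\widetilde M(\lambda)\varphi=\widetilde\Gamma_1 f=-\widetilde\tau_N f$ with $f=\widetilde\gamma(\lambda)\varphi$.

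The only point demanding genuine care, rather than being a real obstacle, is confirming that the explicit norms in \eqref{eqn:norms} satisfy conditions (i)--(ii) of the abstract extension results; but this is already supplied by Proposition~\ref{pro:DeMa95} and the remark following Corollary~\ref{cor:Gamma_1_extension}. Once the dictionary $\Gamma_0=\tau_D$, $\Gamma_1=-\tau_N$ is in place, the corollary is indeed, as asserted, an immediate consequence of the abstract theory.
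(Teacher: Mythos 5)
Your proposal is correct and follows exactly the route the paper itself takes: the paper states this corollary without a separate proof, as an immediate consequence of Proposition~\ref{pro:Gamma_0_extension}, Corollary~\ref{cor:Gamma_1_extension}, Definition~\ref{def:extweyl}, Lemma~\ref{lem:exgamma} and Lemma~\ref{lem:exweyl}, applied to the triple of Theorem~\ref{thm:LipBLtriple} with $\mathscr G_0,\mathscr G_1$ carrying the norms \eqref{eqn:norms}. Your verification of the hypotheses (self-adjointness of $\Delta_D,\Delta_N$, density of $\mathscr G_0,\mathscr G_1$, real resolvent points, and the admissibility of the norms via Proposition~\ref{pro:DeMa95} and the remark after Corollary~\ref{cor:Gamma_1_extension}) and the translation $\Gamma_0=\tau_D$, $\Gamma_1=-\tau_N$ supply precisely the details the paper leaves implicit.
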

Applying Theorem~\ref{thm:regularisation} to the quasi boundary triple 
$\{L^2(\partial\Omega),\Gamma_0,\Gamma_1\}$
from Theorem~{\rm\ref{thm:LipBLtriple}} we get a Lipschitz domain version of the 
ordinary boundary triple for the Laplacian as it appears already in the smooth case in \cite{Gr68}, 
see also, e.g.\ \cite{Be10,BeLa12,BrGrWo09,Ma10}. 
Recall that there exist isometric isomorphisms $\iota_+ : \mathscr G_1 \to L^2(\partial\Omega)$, 
$\iota_- : \mathscr G_1' \to L^2(\partial\Omega)$ such that $( \iota_- x',\, \iota_+ x)_{L^2(\partial\Omega)} =  
\langle x',\, x  \rangle_{\mathscr G_1' \times \mathscr G_1}$; cf. \eqref{iotas}. 
\begin{Cor}\label{cor:Grubbtriplelipschitz}
   Let $\eta\in \rho(\Delta_D)\cap \mathbb R$ and let
   $\Upsilon_0, \Upsilon_1 : H^0_{\Delta}(\Omega) \to L^2(\partial\Omega)$ be given by
  \begin{equation*} 
    \Upsilon_0 f := \iota_- \widetilde \tau_D f, \quad \Upsilon_1 f := -\iota_+ \tau_N f_D,
    \quad f=f_D+f_\eta\in\dom \Delta_D\dotplus\mathcal N_\eta(A^*).
  \end{equation*}
  Then $\{L^2(\partial\Omega),\Upsilon_0,\Upsilon_1\}$ is an ordinary boundary triple for $A^*=\Delta_{\max}$ with
  $A^*\upharpoonright\ker\Upsilon_0=\Delta_D$ and
  \[
  A^*\upharpoonright\ker\Upsilon_1=\Delta_{\min}\dot + \bigl\{(f_\eta,\eta f_\eta)^\top: 
  -\Delta f_\eta=\eta f_\eta,\,f_\eta\in H^0_\Delta(\Omega)\bigr\}.
  \]
\end{Cor}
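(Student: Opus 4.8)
The plan is to derive the statement as a direct specialization of Theorem~\ref{thm:regularisation} to the quasi boundary triple $\{L^2(\partial\Omega),\Gamma_0,\Gamma_1\}$ constructed in Theorem~\ref{thm:LipBLtriple}, in which $\Gamma_0=\tau_D$, $\Gamma_1=-\tau_N$, $A_0=T\upharpoonright\ker\Gamma_0=\Delta_D$, $A=\Delta_{\min}$ and $A^*=\Delta_{\max}$. Accordingly, the first step is to confirm that all hypotheses of Theorem~\ref{thm:regularisation} are in force. That $\{L^2(\partial\Omega),\Gamma_0,\Gamma_1\}$ is a quasi boundary triple for $T\subset A^*$, that $\Delta_D=A^*\upharpoonright\ker\Gamma_0$ is self-adjoint, and that $\mathscr G_1$ is dense in $L^2(\partial\Omega)$ are exactly the contents of Theorem~\ref{thm:LipBLtriple} and its item~(ii). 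The only genuinely new point to record is the existence of a real $\eta\in\rho(\Delta_D)\cap\mathbb R$; this is immediate since $\Delta_D$ is non-negative (indeed uniformly positive on a bounded domain), so every $\eta<0$ lies in its resolvent set, and by assumption such an $\eta$ is fixed.

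Next I would fix the admissible norm on $\mathscr G_1$. By Proposition~\ref{pro:DeMa95} the inner product in \eqref{eqn:norms} induced by $\Lambda=\overline{\im M(i)}$ turns $\mathscr G_1$ into a Hilbert space satisfying conditions~(i)--(ii) of Proposition~\ref{pro:Gamma_0_extension}; this is the norm underlying the Gelfand triple $\mathscr G_1\hookrightarrow L^2(\partial\Omega)\hookrightarrow\mathscr G_1'$ and the isometric isomorphisms $\iota_+,\iota_-$ in \eqref{iotas} used in the formulation of the corollary. With this choice Theorem~\ref{thm:regularisation} applies and yields that $\{L^2(\partial\Omega),\Upsilon_0,\Upsilon_1\}$ with $\Upsilon_0f=\iota_-\widetilde\Gamma_0f$ and $\Upsilon_1f=\iota_+\Gamma_1f_0$, $f=f_0+f_\eta\in\dom\Delta_D\dotplus\mathcal N_\eta(A^*)$, is an ordinary boundary triple for $A^*=\Delta_{\max}$ with $A^*\upharpoonright\ker\Upsilon_0=\Delta_D$ and $A^*\upharpoonright\ker\Upsilon_1=\Delta_{\min}\dotplus\widehat{\mathcal N}_\eta(A^*)$.

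The remaining step is purely a translation of notation. By Corollary~\ref{cor:exttau}~(i) the continuous extension $\widetilde\Gamma_0$ of $\Gamma_0=\tau_D$ to $H^0_\Delta(\Omega)=\dom A^*$ is precisely $\widetilde\tau_D$, so $\Upsilon_0f=\iota_-\widetilde\tau_D f$ as claimed. Writing the decomposition as $f=f_D+f_\eta$ with $f_D=f_0\in\dom\Delta_D$ and using $\Gamma_1=-\tau_N$, the second boundary map becomes $\Upsilon_1f=\iota_+\Gamma_1f_D=-\iota_+\tau_N f_D$, where $\tau_Nf_D$ is well defined in $L^2(\partial\Omega)$ because $\dom\Delta_D\subset H^{3/2}_\Delta(\Omega)$ and $\tau_N$ maps $H^{3/2}_\Delta(\Omega)$ continuously into $L^2(\partial\Omega)$ by \eqref{eqn:trace2}. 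Finally, since $\mathcal N_\eta(A^*)=\ker(\Delta_{\max}-\eta)=\{f_\eta\in H^0_\Delta(\Omega):-\Delta f_\eta=\eta f_\eta\}$, the abstract defect space $\widehat{\mathcal N}_\eta(A^*)=\{(f_\eta,\eta f_\eta)^\top:f_\eta\in\mathcal N_\eta(A^*)\}$ coincides with the set written in the corollary, which identifies $A^*\upharpoonright\ker\Upsilon_1$ with $\Delta_{\min}\dotplus\{(f_\eta,\eta f_\eta)^\top:-\Delta f_\eta=\eta f_\eta,\,f_\eta\in H^0_\Delta(\Omega)\}$.

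Since the substantive analytic content is entirely absorbed into Theorem~\ref{thm:LipBLtriple} and Theorem~\ref{thm:regularisation}, no serious obstacle arises; the only points requiring a word of justification are the existence of the real spectral point $\eta$ (semiboundedness of $\Delta_D$) and the identification $\widetilde\Gamma_0=\widetilde\tau_D$ furnished by Corollary~\ref{cor:exttau}.
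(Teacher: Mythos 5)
Your proposal is correct and follows exactly the route the paper intends: the paper gives no separate proof of Corollary~\ref{cor:Grubbtriplelipschitz} beyond the remark that it results from applying Theorem~\ref{thm:regularisation} to the quasi boundary triple of Theorem~\ref{thm:LipBLtriple}, equipped with the norms \eqref{eqn:norms}, and your write-up is precisely that application with the hypotheses checked and the notational identifications ($\widetilde\Gamma_0=\widetilde\tau_D$ via Corollary~\ref{cor:exttau}, $\Gamma_1 f_D=-\tau_N f_D\in L^2(\partial\Omega)$ via \eqref{eqn:trace2}, and $\widehat{\mathcal N}_\eta(A^*)$ as the $L^2$-eigenspace graph) made explicit.
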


In the present setting Theorem~\ref{thm:main3} can be applied 
to the quasi boundary triple from Theorem~\ref{thm:LipBLtriple}. This
yields a description of all self-adjoint extensions $\Delta_\vartheta \subset \Delta_{\max}$ of 
the minimal Laplacian $\Delta_{\min}$ in $L^2(\Omega)$ on bounded Lipschitz domains.

\begin{Cor}\label{cor:mainlaplace}
  Let $\Omega$ be a bounded Lipschitz domain, 
  $\mathscr G_0$, $\mathscr G_1$ be as in Theorem~{\rm\ref{thm:LipBLtriple}}, 
  $\eta\in\mathbb R\cap\rho(\Delta_D)\cap \rho(\Delta_N)$ and 
  $\widetilde M(\eta) : \mathscr G_1' \to \mathscr G_0'$ be the extended Dirichlet-to-Neumann map.
  Then the mapping
  \begin{equation*}
    \Theta\mapsto \Delta_\vartheta = \Delta_{\max}\upharpoonright\bigl\{ f\in H^0_\Delta(\Omega) : 
    \vartheta\widetilde \tau_D f +\widetilde\tau_N f= 0 \bigr\},
    \quad \vartheta = \iota_+^{-1} \Theta \iota_- + \widetilde M(\eta),
  \end{equation*}
  establishes a bijective correspondence between all
  closed (symmetric,  self-adjoint, (maximal) dissipative, (maximal) accumulative) 
  linear relations $\Theta$ in $L^2(\partial\Omega)$ and 
  all closed (symmetric,  self-adjoint, (maximal) dissipative, (maximal) accumulative, respectively) extensions 
  $\Delta_\vartheta \subset A^* =\Delta_{\max}$ of 
  $A = \Delta_{\min}$ in $L^2(\Omega)$.
  Moreover, the following regularity result holds: 
  If $\Delta_s$ is an extension of $T$ in \eqref{eqn:DeltaT} such that $\Delta_s \subset A^* =\Delta_{\max}$ 
  then
  \begin{equation}\label{eqn:mainlaplace2}
  \dom \Theta \subset \ran \bigl( \iota_- \widetilde\tau_D \upharpoonright \dom \Delta_s \bigr) \quad \text{implies} \quad 
  \dom \Delta_\vartheta \subset \dom \Delta_s.
  \end{equation}
\end{Cor}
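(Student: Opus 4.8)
The plan is to derive Corollary~\ref{cor:mainlaplace} as the verbatim specialization of the abstract Theorem~\ref{thm:main3} to the concrete quasi boundary triple $\{L^2(\partial\Omega),\Gamma_0,\Gamma_1\}$ supplied by Theorem~\ref{thm:LipBLtriple}. First I would check that every hypothesis of Theorem~\ref{thm:main3} is already available: Theorem~\ref{thm:LipBLtriple} guarantees that $\{L^2(\partial\Omega),\Gamma_0,\Gamma_1\}$ is a quasi boundary triple for $T\subset A^*=\Delta_{\max}$ with $A_0=\Delta_D$ and $A_1=\Delta_N$ both self-adjoint, and that the spaces $\mathscr G_0$ and $\mathscr G_1$ are dense in $L^2(\partial\Omega)$; the remaining requirement $\eta\in\mathbb R\cap\rho(\Delta_D)\cap\rho(\Delta_N)$ is the standing assumption of the corollary. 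Hence the full set of hypotheses of Theorem~\ref{thm:main2} and of the strengthened Theorem~\ref{thm:main3} is met, and the abstract bijective correspondence $\Theta\mapsto\widetilde A_\vartheta$ with $\vartheta=\iota_+^{-1}\Theta\iota_-+\widetilde M(\eta)$ applies.

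The next step would be the translation into the concrete trace notation. By Corollary~\ref{cor:exttau} the continuous extensions of the boundary maps are $\widetilde\Gamma_0=\widetilde\tau_D$ and $\widetilde\Gamma_1=-\widetilde\tau_N$, and the extended Weyl function $\widetilde M$ is exactly the extended Dirichlet-to-Neumann map occurring in the corollary. Writing out the abstract membership condition $\widetilde\Gamma f\in\vartheta$ as $(\widetilde\Gamma_0 f,\widetilde\Gamma_1 f)^\top\in\vartheta$, that is $(\widetilde\tau_D f,-\widetilde\tau_N f)^\top\in\vartheta$, and reading it in the sense of linear relations (as in the footnote to Theorem~\ref{thm:boundary_triple}) yields precisely the boundary condition $\vartheta\widetilde\tau_D f+\widetilde\tau_N f=0$. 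With this dictionary $\widetilde A_\vartheta$ becomes $\Delta_\vartheta$, and the parameterization of all closed (symmetric, self-adjoint, (maximal) dissipative/accumulative) extensions $\Delta_\vartheta\subset\Delta_{\max}$ of $\Delta_{\min}$ in terms of the corresponding relations $\Theta$ in $L^2(\partial\Omega)$ follows directly from Theorem~\ref{thm:main3}.

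Finally, I would obtain the regularity statement \eqref{eqn:mainlaplace2} by specializing the abstract regularity assertion of Theorem~\ref{thm:main3} to $S=\Delta_s$, any operator with $T\subset\Delta_s\subset A^*=\Delta_{\max}$. Since $\iota_-\widetilde\Gamma_0=\iota_-\widetilde\tau_D$, the abstract hypothesis $\dom\Theta\subset\ran(\iota_-\widetilde\Gamma_0\upharpoonright\dom S)$ becomes $\dom\Theta\subset\ran(\iota_-\widetilde\tau_D\upharpoonright\dom\Delta_s)$ and the abstract conclusion $\dom\widetilde A_\vartheta\subset\dom S$ becomes $\dom\Delta_\vartheta\subset\dom\Delta_s$, which is exactly \eqref{eqn:mainlaplace2}. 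The proof is thus chiefly a matter of verifying the hypotheses and bookkeeping; the only point demanding care is the sign coming from $\Gamma_1=-\tau_N$ when rewriting $\widetilde\Gamma f\in\vartheta$ as a boundary condition, together with the correct interpretation of this condition in the sense of linear relations whenever $\vartheta$ fails to be (the graph of) an operator.
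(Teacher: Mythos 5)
Your proposal is correct and is essentially the paper's own argument: the paper states this corollary without a separate proof, as the direct specialization of Theorem~\ref{thm:main3} to the quasi boundary triple of Theorem~\ref{thm:LipBLtriple}, with the dictionary $\widetilde\Gamma_0=\widetilde\tau_D$, $\widetilde\Gamma_1=-\widetilde\tau_N$ from Corollary~\ref{cor:exttau} and $S=\Delta_s$ in the regularity statement, exactly as you describe. Your explicit verification of the hypotheses (self-adjointness of $\Delta_D$, $\Delta_N$, density of $\mathscr G_0$, $\mathscr G_1$) and your attention to the sign convention $\Gamma_1=-\tau_N$ and to the relation-theoretic reading of the boundary condition are precisely the bookkeeping the paper leaves implicit.
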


We note that the abstract propositions from Section~\ref{sec:krein} can be applied to 
the quasi boundary triple $\{L^2(\partial\Omega),\Gamma_0,\Gamma_1\}$, see also Section~\ref{sec:ellipticsecond}. We leave the formulations
to the reader and state only a version of Kre\u{\i}n's formula as in Corollary~\ref{cor:krein}.

\begin{Cor}\label{cor:laplacekrein}
  Let $\Omega$ be a bounded Lipschitz domain, $\widetilde\gamma(\lambda): \mathscr G_1' \to L^2(\Omega)$ 
  and $\widetilde M(\lambda): \mathscr G_1' \to \mathscr G_0'$ be 
  the extended $\gamma$-field and Dirichlet-to-Neumann map from Corollary~{\rm\ref{cor:exttau}}.
  Let $\vartheta\subset \mathscr G_1' \times \mathscr G_0'$ be a linear relation in $\ran (\widetilde \tau_D, -\widetilde \tau_N)$ 
  such that 
  \begin{equation*}
    \Delta_{\vartheta} = \Delta_{\max} \upharpoonright \bigl\{ f\in H^0_{\Delta}(\Omega) : \vartheta \widetilde \tau_D f + \widetilde \tau_N f = 0 \bigr\}
  \end{equation*}
  is closed in $L^2(\Omega)$.
  Then for all $\lambda\in \rho(\Delta_{D})$ the following assertions {\rm(i)}-{\rm(iv)} hold.
  \begin{enumerate}[\rm(i)]
  \item $\lambda \in \sigma_p(\Delta_{\vartheta})$ 
    if and only if $0\in \sigma_p(\iota_{+} ( \vartheta -\widetilde M(\lambda) ) \iota_{-}^{-1})$, in this case
    \begin{equation*}
    \ker (\Delta_{\vartheta}-\lambda) = \widetilde\gamma(\lambda) \ker(\vartheta -\widetilde M(\lambda));
    \end{equation*}
  \item $\lambda \in \sigma_c(\Delta_{\vartheta})$ if and only if $0\in \sigma_c(\iota_{+} ( \vartheta-\widetilde M(\lambda)  ) \iota_{-}^{-1})$;
  \item $\lambda \in \sigma_r(\Delta_{\vartheta})$ if and only if $0\in \sigma_r(\iota_{+} ( \vartheta-\widetilde M(\lambda)  ) \iota_{-}^{-1})$;
  \item $\lambda \in \rho(\Delta_{\vartheta})$ if and only if $0\in \rho( \iota_{+} ( \vartheta-\widetilde M(\lambda) ) \iota_{-}^{-1})$ and 
  \begin{equation*}
    ( \Delta_{\vartheta} -\lambda )^{-1} 
    = (\Delta_{D} -\lambda )^{-1} 
    + \widetilde \gamma(\lambda)  \bigl(\vartheta-\widetilde M(\lambda)\bigr)^{-1} \widetilde \gamma(\bar\lambda)'
  \end{equation*}
  holds for all $\lambda\in\rho(\Delta_{\vartheta})\cap\rho(\Delta_{D})$.
  \end{enumerate}
\end{Cor}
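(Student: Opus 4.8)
The plan is to recognize Corollary~\ref{cor:laplacekrein} as the specialization of the abstract Kre\u{\i}n formula in Corollary~\ref{cor:krein} to the concrete quasi boundary triple $\{L^2(\partial\Omega),\Gamma_0,\Gamma_1\}$ from Theorem~\ref{thm:LipBLtriple}. Accordingly, the first step is to set up the dictionary between the abstract and the concrete objects: $\mathcal G = L^2(\partial\Omega)$, $A^* = \Delta_{\max}$, $A_0 = \Delta_D = T\upharpoonright\ker\Gamma_0$, $A_1 = \Delta_N = T\upharpoonright\ker\Gamma_1$; the $\gamma$-field is the family of Poisson operators from Theorem~\ref{thm:LipBLtriple}~(iii) with continuous extension $\widetilde\gamma(\lambda):\mathscr G_1'\to L^2(\Omega)$ from Corollary~\ref{cor:exttau}~(ii); and the Weyl function is the Dirichlet-to-Neumann family from Theorem~\ref{thm:LipBLtriple}~(iv) with extension $\widetilde M(\lambda):\mathscr G_1'\to\mathscr G_0'$ from Corollary~\ref{cor:exttau}~(iii). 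Under this dictionary the extended boundary maps are $\widetilde\Gamma_0 = \widetilde\tau_D$ and $\widetilde\Gamma_1 = -\widetilde\tau_N$, so that $\widetilde\Gamma f = (\widetilde\tau_D f,\,-\widetilde\tau_N f)^\top$.

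Next I would verify that all hypotheses of Corollary~\ref{cor:krein} are in force. The operator $A_1 = \Delta_N$ is self-adjoint by Theorem~\ref{thm:LipBLtriple}~(i); the spaces $\mathscr G_0$ and $\mathscr G_1$ are dense in $L^2(\partial\Omega)$ by Theorem~\ref{thm:LipBLtriple}~(ii); and since $\Delta_D$ is a self-adjoint operator which is bounded below, the set $\rho(\Delta_D)\cap\mathbb R$ is nonempty, so the requirement $\eta\in\rho(A_0)\cap\mathbb R$ can be met. This places us precisely in the setting of Corollary~\ref{cor:krein}, whose proof in turn relies on the existence of the extended objects $\widetilde\gamma$, $\widetilde M$, $\widetilde\tau_D$, $\widetilde\tau_N$ furnished by Corollary~\ref{cor:exttau}.

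The third step is to match the boundary parametrization. For a linear relation $\vartheta\subset\mathscr G_1'\times\mathscr G_0'$ the abstract extension in Corollary~\ref{cor:krein} is $\widetilde A_\vartheta = A^*\upharpoonright\{f\in\dom A^*:\widetilde\Gamma f\in\vartheta\}$; inserting $\widetilde\Gamma f = (\widetilde\tau_D f,\,-\widetilde\tau_N f)^\top$, the inclusion $\widetilde\Gamma f\in\vartheta$ is equivalent, in the sense of linear relations, to $\vartheta\,\widetilde\tau_D f + \widetilde\tau_N f = 0$. Hence $\widetilde A_\vartheta$ coincides with the operator $\Delta_\vartheta$ in the statement, and the standing assumption $\vartheta\subset\ran(\widetilde\tau_D,-\widetilde\tau_N) = \ran\widetilde\Gamma$ together with the closedness of $\Delta_\vartheta$ are exactly the hypotheses on $\vartheta$ demanded by Corollary~\ref{cor:krein}.

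With these identifications the four assertions are a verbatim transcription of Corollary~\ref{cor:krein}~(i)--(iv): the spectral characterizations follow upon replacing $\widetilde A_\vartheta$, $\widetilde\gamma(\lambda)$, $\widetilde M(\lambda)$ and $A_0$ by $\Delta_\vartheta$, the Poisson operator $\widetilde\gamma(\lambda)$, the extended Dirichlet-to-Neumann map $\widetilde M(\lambda)$, and $\Delta_D$, and the resolvent identity in (iv) is the corresponding specialization. There is no genuine obstacle in this argument; the only points requiring care are the consistent bookkeeping of the minus sign built into $\Gamma_1 = -\tau_N$ (so that $\widetilde M(\lambda)$ really is the \emph{negative} Neumann trace of the Poisson solution, in agreement with Corollary~\ref{cor:exttau}~(iii)) and the translation of the abstract inclusion $\widetilde\Gamma f\in\vartheta$ into the boundary condition $\vartheta\,\widetilde\tau_D f + \widetilde\tau_N f = 0$.
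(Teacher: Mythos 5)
Your proposal is correct and follows exactly the route the paper intends: Corollary~\ref{cor:laplacekrein} is stated in the paper without a separate proof, as the specialization of the abstract Kre\u{\i}n formula in Corollary~\ref{cor:krein} to the quasi boundary triple $\{L^2(\partial\Omega),\Gamma_0,\Gamma_1\}$ of Theorem~\ref{thm:LipBLtriple}, whose hypotheses (self-adjointness of $\Delta_N$, density of $\mathscr G_0$ and $\mathscr G_1$, existence of $\eta\in\rho(\Delta_D)\cap\mathbb R$) you verify correctly. Your identification $\widetilde\Gamma_0=\widetilde\tau_D$, $\widetilde\Gamma_1=-\widetilde\tau_N$ and the translation of $\widetilde\Gamma f\in\vartheta$ into $\vartheta\,\widetilde\tau_D f+\widetilde\tau_N f=0$ supply precisely the bookkeeping the paper leaves implicit.
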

 
In the following we slightly improve Lemma~\ref{lem:symmetric} by using the fact 
that $\ker \tau_N = \ker \widetilde \tau_N = \dom \Delta_N$.

\begin{Lem}\label{lem:symmetric4}
  Let $\Omega$ be a bounded Lipschitz domain and let $\vartheta$ be a linear relation in $L^2(\partial\Omega)$. 
  Then 
  \begin{equation*}
    \Delta_{\vartheta} := \Delta_{\max} \upharpoonright  
    \bigl\{ f\in H^0_{\Delta}(\Omega) : \vartheta \widetilde \tau_D f + \widetilde \tau_N f = 0 \bigr\}
  \end{equation*}
  has regularity $\dom \Delta_\vartheta \subset H^{3/2}_{\Delta}(\Omega)$. Moreover, $\Delta_\vartheta$ is symmetric in $L^2(\Omega)$
  if and only if $\vartheta$ is symmetric $L^2(\partial\Omega)$.
\end{Lem}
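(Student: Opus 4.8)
The plan is to prove the two assertions in turn, the regularity first, because once $\dom\Delta_\vartheta\subset H^{3/2}_\Delta(\Omega)$ is established the boundary condition involves only the genuine traces $\tau_D=\widetilde\tau_D$ and $\tau_N=\widetilde\tau_N$ on $H^{3/2}_\Delta(\Omega)=\dom T$, so that $\Delta_\vartheta$ becomes one of the extensions $A_\vartheta$ governed by the quasi boundary triple $\{L^2(\partial\Omega),\Gamma_0,\Gamma_1\}$ of Theorem~\ref{thm:LipBLtriple}, with $\Gamma_0=\tau_D$ and $\Gamma_1=-\tau_N$. Here I read the boundary condition $\vartheta\widetilde\tau_D f+\widetilde\tau_N f=0$ as $(\widetilde\tau_D f,-\widetilde\tau_N f)\in\vartheta$; in particular, membership in $\vartheta\subset L^2(\partial\Omega)\times L^2(\partial\Omega)$ forces $\widetilde\tau_N f\in L^2(\partial\Omega)$.

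For the regularity I would use precisely the identity $\ker\tau_N=\ker\widetilde\tau_N=\dom\Delta_N$ emphasized before the lemma, together with the transposed quasi boundary triple $\{L^2(\partial\Omega),\tau_N,\tau_D\}$, which is a quasi boundary triple because $\Delta_N$ is self-adjoint. Since $\Delta_N$ is self-adjoint and nonnegative, I may fix $\mu\in\rho(\Delta_N)\cap\mathbb R$ and use the decomposition $\dom A^*=\dom\Delta_N\dotplus\mathcal N_\mu(A^*)$ from \eqref{decot}. Given $f\in\dom\Delta_\vartheta$ I write $f=g_N+g_\mu$ accordingly; from $\widetilde\tau_N g_N=0$ and $\widetilde\tau_N f\in L^2(\partial\Omega)$ I get $\widetilde\tau_N g_\mu=\widetilde\tau_N f\in L^2(\partial\Omega)$. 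The $\gamma$-field $\gamma'(\mu)=(\tau_N\upharpoonright\mathcal N_\mu(T))^{-1}$ of the transposed triple is defined on all of $\ran(\tau_N\upharpoonright H^{3/2}_\Delta(\Omega))=L^2(\partial\Omega)$ by the surjectivity in \eqref{eqn:trace2}, so $h:=\gamma'(\mu)\widetilde\tau_N g_\mu$ lies in $\mathcal N_\mu(T)\subset H^{3/2}_\Delta(\Omega)$ and satisfies $\tau_N h=\widetilde\tau_N g_\mu$. Then $g_\mu-h\in\ker\widetilde\tau_N\cap\mathcal N_\mu(A^*)=\dom\Delta_N\cap\mathcal N_\mu(A^*)=\{0\}$ since $\mu\in\rho(\Delta_N)$, whence $g_\mu=h\in H^{3/2}_\Delta(\Omega)$. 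As $g_N\in\dom\Delta_N\subset H^{3/2}_\Delta(\Omega)$ by the very definition \eqref{eqn:LipBLtriple1}, this yields $f\in H^{3/2}_\Delta(\Omega)$.

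With the regularity available, Green's identity \eqref{eqn:abstract_green} holds on $\dom\Delta_\vartheta\subset H^{3/2}_\Delta(\Omega)$, giving for all $f,g\in\dom\Delta_\vartheta$ the relation $(\Delta_\vartheta f,g)_{L^2(\Omega)}-(f,\Delta_\vartheta g)_{L^2(\Omega)}=(\Gamma_1 f,\Gamma_0 g)_{L^2(\partial\Omega)}-(\Gamma_0 f,\Gamma_1 g)_{L^2(\partial\Omega)}$, where $(\Gamma_0 f,\Gamma_1 f)$ and $(\Gamma_0 g,\Gamma_1 g)$ belong to $\vartheta$. If $\vartheta$ is symmetric the right-hand side vanishes identically, so $\Delta_\vartheta$ is symmetric; equivalently, $\vartheta\cap\ran\Gamma$ is then a symmetric relation and Lemma~\ref{lem:symmetric} applies to $\Delta_\vartheta=A_{\vartheta\cap\ran\Gamma}$. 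This settles the implication that $\vartheta$ symmetric forces $\Delta_\vartheta$ symmetric.

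The converse is the step I expect to be the main obstacle. Symmetry of $\Delta_\vartheta$ only yields, via the same Green identity, that the set of realized boundary data $\{(\Gamma_0 f,\Gamma_1 f):f\in\dom\Delta_\vartheta\}=\vartheta\cap\ran\Gamma$ is symmetric, and to conclude symmetry of the full relation $\vartheta$ one must know that every pair of $\vartheta$ is actually attained as $(\widetilde\tau_D f,-\widetilde\tau_N f)$ for some $f\in\dom\Delta_\vartheta$. Here the regularity re-enters: a pair $(\varphi,\psi)\in\vartheta$ can be realized only if $\varphi\in\ran\Gamma_0=H^1(\partial\Omega)$, and combining this with the range description of $\widetilde\Gamma$ in Lemma~\ref{lem:exweyl}(v) one checks $\ran\widetilde\Gamma\cap(L^2(\partial\Omega)\times L^2(\partial\Omega))=\ran\Gamma$, so that the pairs of $\vartheta$ governing $\Delta_\vartheta$ are exactly those in $\vartheta\cap\ran\Gamma$. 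The delicate point, which I would isolate carefully, is thus to argue that for the relations $\vartheta$ under consideration this realizable part already determines symmetry of $\vartheta$, equivalently that $\vartheta$ is detected by the extended boundary map in the sense $\vartheta\subset\ran\widetilde\Gamma$; it is exactly here that the surjectivity of $\widetilde\tau_D$ and $\widetilde\tau_N$ from Corollary~\ref{cor:exttau} and the identification above must be invoked to close the converse.
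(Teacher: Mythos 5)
Your regularity argument and the implication ``$\vartheta$ symmetric $\Rightarrow\Delta_\vartheta$ symmetric'' are correct, and they rest on the same two facts the paper uses: the surjectivity of $\tau_N:H^{3/2}_\Delta(\Omega)\to L^2(\partial\Omega)$ from \eqref{eqn:trace2} and the identity $\ker\widetilde\tau_N=\ker\tau_N=\dom\Delta_N$. The paper's regularity step is, however, a two-line argument: pick any $g\in H^{3/2}_\Delta(\Omega)$ with $\tau_N g=\widetilde\tau_N f$ and observe $f-g\in\ker\widetilde\tau_N=\dom\Delta_N\subset H^{3/2}_\Delta(\Omega)$. Your detour through the decomposition $\dom A^*=\dom\Delta_N\dotplus\mathcal N_\mu(A^*)$ and the $\gamma$-field of the transposed triple proves the same thing with more machinery; it is sound, just unnecessarily long.

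The genuine gap is the converse implication: your final paragraph diagnoses the difficulty but never resolves it --- it ends by announcing that the surjectivity of $\widetilde\tau_D$, $\widetilde\tau_N$ ``must be invoked to close the converse'', which is a plan, not a proof. Worse, the plan cannot succeed in the generality of the lemma. By your own (correct) identification $\ran\widetilde\Gamma\cap\bigl(L^2(\partial\Omega)\times L^2(\partial\Omega)\bigr)=\ran\Gamma$, the inclusion $\vartheta\subset\ran\widetilde\Gamma$ you propose to establish is the same as $\vartheta\subset\ran\Gamma$ and would force $\dom\vartheta\subset\ran\Gamma_0=H^1(\partial\Omega)$, which an arbitrary linear relation in $L^2(\partial\Omega)$ need not satisfy. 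In fact, for such relations the ``only if'' cannot be recovered at all: take $\varphi\in L^2(\partial\Omega)\setminus H^1(\partial\Omega)$ and $\vartheta=\{(c\varphi,\,ic\varphi):c\in\mathbb C\}$, which is not symmetric; your own regularity result shows that every $f\in\dom\Delta_\vartheta$ satisfies $\widetilde\tau_D f=\tau_D f\in H^1(\partial\Omega)$, whence $\vartheta\cap\ran\Gamma=\{0\}$ and $\Delta_\vartheta=\Delta_{\min}$, which is symmetric. So what Green's identity (equivalently Lemma~\ref{lem:symmetric}) actually delivers is: $\Delta_\vartheta$ is symmetric if and only if $\vartheta\cap\ran\Gamma$ is symmetric. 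The paper's own proof finishes exactly at this point: after the regularity step it rewrites $\Delta_\vartheta=T\upharpoonright\{f\in\dom T:\Gamma f\in\vartheta\}$ for the quasi boundary triple of Theorem~\ref{thm:LipBLtriple} and cites Lemma~\ref{lem:symmetric}, passing from symmetry of the parameter seen by $\Gamma$, i.e.\ of $\vartheta\cap\ran\Gamma$, to symmetry of $\vartheta$ without further comment. If your goal is to reproduce the paper's proof, you should do the same after your regularity step; your scruple about pairs of $\vartheta$ that are never realized as boundary values is well founded, but pursuing it leads to a counterexample to the literal ``only if'' (or to a weaker, corrected statement), not to a proof of it --- and as it stands your write-up proves only one of the two claimed implications.
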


\begin{proof}
  For $f\in\dom \Delta_\vartheta$ we have $\vartheta \widetilde \tau_D f=-\widetilde \tau_N f\in L^2(\partial\Omega)$ as $\vartheta$ is
  assumed to be a linear relation in $L^2(\partial\Omega)$. By \eqref{eqn:trace2}
  there exists $g\in H^{3/2}_\Delta(\Omega)$ such that $\tau_N g =\widetilde \tau_N f $ and
  hence 
  \[
  f-g \in  \ker \widetilde \tau_N = \ker \tau_N = \dom \Delta_N \subset H^{3/2}_\Delta(\Omega).
  \] Therefore 
  $f=(f-g)+g\in H^{3/2}_\Delta(\Omega)$ and $\dom \Delta_\vartheta \subset H^{3/2}_{\Delta}(\Omega)$ holds. In particular, we have
  \begin{equation}\label{eqn:symmetric9}
    \Delta_{\vartheta} = \Delta_{\max} \upharpoonright  
    \bigl\{ f\in H^{3/2}_{\Delta}(\Omega) : \vartheta\Gamma_0 f - \Gamma_1 f=0  \bigr\},
  \end{equation}
  where $\{L^2(\partial\Omega),\Gamma_0,\Gamma_1\}$ is the quasi boundary triple from Theorem~\ref{thm:LipBLtriple}.
  Then by Lemma~\ref{lem:symmetric} $\Delta_\vartheta$ is symmetric in $L^2(\Omega)$
  if and only if $\vartheta$ is symmetric $L^2(\partial\Omega)$.
\end{proof}

The next theorem is a slightly improved Lipschitz domain version of \cite[Theorem~4.8]{BeLa07}, see also \cite[Theorem~6.21]{BeLa12}.

\begin{Thm}\label{eqn:boundedtheta}
  Let $\Omega$ be a bounded Lipschitz domain and let $\vartheta$ be a bounded self-adjoint operator in $L^2(\partial\Omega)$. 
  Then 
  \begin{equation}\label{eqn:extselfad}
    \Delta_\vartheta := \Delta_{\max} \upharpoonright \bigl\{ f\in H^0_\Delta(\Omega) : \vartheta \widetilde \tau_D f + \widetilde \tau_N f =0  \bigr\}
  \end{equation}
  is a self-adjoint operator in $L^2(\Omega)$ with compact resolvent, semibounded from below and  
  regularity $\dom \Delta_\vartheta \subset H^{3/2}_{\Delta}(\Omega)$.
\end{Thm}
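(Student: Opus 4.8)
The plan is to read off the regularity and symmetry assertions directly from Lemma~\ref{lem:symmetric4} and then to treat self-adjointness, semiboundedness and compactness of the resolvent separately. Since $\vartheta$ is a bounded self-adjoint operator it is in particular a symmetric linear relation in $L^2(\partial\Omega)$, so Lemma~\ref{lem:symmetric4} already yields $\dom\Delta_\vartheta\subset H^{3/2}_\Delta(\Omega)$, the symmetry of $\Delta_\vartheta$, and the representation
\begin{equation*}
  \Delta_\vartheta = T\upharpoonright\bigl\{f\in H^{3/2}_\Delta(\Omega) : \Gamma_1 f = \vartheta\Gamma_0 f\bigr\},
\end{equation*}
where $\{L^2(\partial\Omega),\Gamma_0,\Gamma_1\}$ is the quasi boundary triple from Theorem~\ref{thm:LipBLtriple} with $\Gamma_0=\tau_D$, $\Gamma_1=-\tau_N$, $\gamma$-field $\gamma$ and Weyl function $M$. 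Thus $\Delta_\vartheta$ is the extension attached to the graph of $\vartheta$, and it remains to establish the three analytic properties.

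For self-adjointness I would show that $A_\vartheta-\lambda$ is bijective for every $\lambda\in\mathbb C\setminus\mathbb R$; since $A_\vartheta$ is already symmetric this forces self-adjointness. Fix such a $\lambda$; then $\lambda\in\rho(\Delta_D)\cap\rho(\Delta_N)$, so by Theorem~\ref{thm:LipBLtriple}~(iv) the map $M(\lambda):H^1(\partial\Omega)\to L^2(\partial\Omega)$ is a bijection whose inverse, the Neumann-to-Dirichlet map $M(\lambda)^{-1}$, is compact in $L^2(\partial\Omega)$. For $f\in\mathcal N_\lambda(T)$ the boundary condition $\Gamma_1 f=\vartheta\Gamma_0 f$ reads $(\vartheta-M(\lambda))\Gamma_0 f=0$, and pairing with $\Gamma_0 f$ while using $\im M(\lambda)=\im\lambda\,\gamma(\lambda)^*\gamma(\lambda)$ and the self-adjointness of $\vartheta$ gives $\im\lambda\,\|\gamma(\lambda)\Gamma_0 f\|^2=0$, hence $f=0$; this is injectivity. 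For surjectivity, given $h\in L^2(\Omega)$ I would seek $f=(\Delta_D-\lambda)^{-1}h+\gamma(\lambda)\varphi$ with $\varphi\in H^1(\partial\Omega)$, which lands in $\dom\Delta_\vartheta$ precisely when
\begin{equation*}
  (\vartheta-M(\lambda))\varphi = \gamma(\bar\lambda)^* h,
\end{equation*}
by \eqref{gammaid}, \eqref{eqn:gamma_field} and \eqref{xxx}. Applying $-M(\lambda)^{-1}$ rewrites this as $(I-M(\lambda)^{-1}\vartheta)\varphi=-M(\lambda)^{-1}\gamma(\bar\lambda)^* h$ in $L^2(\partial\Omega)$; since $M(\lambda)^{-1}\vartheta$ is compact, $I-M(\lambda)^{-1}\vartheta$ is Fredholm of index zero, and its injectivity (the same pairing computation, now with the solution in place of $\Gamma_0 f$) makes it boundedly invertible. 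The resulting $\varphi=M(\lambda)^{-1}(\vartheta\varphi-\gamma(\bar\lambda)^*h)$ automatically lies in $\ran M(\lambda)^{-1}=H^1(\partial\Omega)$, so the construction closes up. This Fredholm step, which controls the unbounded $M(\lambda)$ through its compact inverse, is the main obstacle.

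For semiboundedness I would exploit $\dom\Delta_\vartheta\subset H^{3/2}_\Delta(\Omega)\subset H^1(\Omega)$, so that Green's identity applies: for $f\in\dom\Delta_\vartheta$, with $\tau_D f,\tau_N f\in L^2(\partial\Omega)$ and $\tau_N f=-\vartheta\tau_D f$,
\begin{equation*}
  (\Delta_\vartheta f, f)_{L^2(\Omega)} = \|\nabla f\|_{L^2(\Omega)}^2 + (\vartheta\tau_D f,\tau_D f)_{L^2(\partial\Omega)}.
\end{equation*}
The trace inequality $\|\tau_D f\|_{L^2(\partial\Omega)}^2\le\varepsilon\|\nabla f\|_{L^2(\Omega)}^2+C_\varepsilon\|f\|_{L^2(\Omega)}^2$ on the bounded Lipschitz domain, together with the boundedness of $\vartheta$, absorbs the boundary term into $\tfrac12\|\nabla f\|^2$ and leaves $(\Delta_\vartheta f,f)\ge -C\|f\|_{L^2(\Omega)}^2$, so $\Delta_\vartheta$ is semibounded from below. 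Finally, compactness of the resolvent follows from the regularity: by the closed graph theorem the inclusion of $(\dom\Delta_\vartheta,\|\cdot\|_{\Delta_\vartheta})$ into $H^{3/2}_\Delta(\Omega)$, and hence into $H^{3/2}(\Omega)$, is continuous, while $H^{3/2}(\Omega)\hookrightarrow L^2(\Omega)$ is compact by the Rellich--Kondrachov theorem; thus bounded subsets of $\dom\Delta_\vartheta$ are precompact in $L^2(\Omega)$ and $(\Delta_\vartheta-\lambda)^{-1}$ is compact.
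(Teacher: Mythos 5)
Your proposal is correct, and its skeleton coincides with the paper's: both start from Lemma~\ref{lem:symmetric4} to get the $H^{3/2}_\Delta(\Omega)$ regularity, the symmetry, and the re-expression of $\Delta_\vartheta$ through the quasi boundary triple of Theorem~\ref{thm:LipBLtriple}, and your semiboundedness argument (Green's identity plus the $\varepsilon$-trace inequality, absorbing the boundary term using the boundedness of $\vartheta$) is essentially verbatim the paper's. Where you genuinely diverge is in the other two properties. For self-adjointness the paper simply cites an external result (Theorem~6.21 of the quasi-boundary-triple paper of Behrndt--Langer), whose hypothesis is exactly the compactness of the Neumann-to-Dirichlet maps $M(\lambda)^{-1}$ recorded in Theorem~\ref{thm:LipBLtriple}~(iv); you instead inline a proof: injectivity of $\Delta_\vartheta-\lambda$ from $\im M(\lambda)=\im\lambda\,\gamma(\lambda)^*\gamma(\lambda)$ and the realness of $(\vartheta\varphi,\varphi)$, and surjectivity via the Kre\u{\i}n ansatz $f=(\Delta_D-\lambda)^{-1}h+\gamma(\lambda)\varphi$, reduced through $M(\lambda)^{-1}$ to the equation $(I-M(\lambda)^{-1}\vartheta)\varphi=-M(\lambda)^{-1}\gamma(\bar\lambda)^*h$, solved by Fredholm theory for compact perturbations of the identity, with the bootstrap $\varphi\in\ran M(\lambda)^{-1}=H^1(\partial\Omega)$ closing the loop. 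For compactness of the resolvent the paper again cites an external theorem applied to the swapped triple $\{L^2(\partial\Omega),\Gamma_1,-\Gamma_0\}$ with parameter $\Theta=-\vartheta^{-1}$ (an inversion that is delicate when $\vartheta$ is not injective), whereas you derive it from the already-established regularity: closed graph theorem for the embedding of $(\dom\Delta_\vartheta,\Vert\cdot\Vert_{\Delta_\vartheta})$ into $H^{3/2}(\Omega)$, then Rellich--Kondrachov. Your route buys self-containedness and, in the compactness step, a more elementary and more robust argument (it works for any self-adjoint realization whose domain sits in some $H^s(\Omega)$, $s>0$); the paper's route buys brevity by outsourcing both steps to the established quasi-boundary-triple machinery.
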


\begin{proof}
  It follows from Lemma~\ref{lem:symmetric4} that $\dom \Delta_\vartheta \subset H^{3/2}_{\Delta}(\Omega)$ holds and hence $\Delta_\vartheta$ is given by
  \eqref{eqn:symmetric9}, where $\{L^2(\partial\Omega),\Gamma_0,\Gamma_1\}$ is the quasi boundary triple for $T \subset \Delta_{\max}$ 
  from Theorem~\ref{thm:LipBLtriple} with Weyl function $M$. According to Theorem~\ref{thm:LipBLtriple}~(iv) the Neumann-to-Dirichlet maps
  $M(\lambda)^{-1}$, $\lambda\in\rho(\Delta_D)\cap\rho(\Delta_N)$, are compact operators in $L^2(\partial\Omega)$, and hence
  \cite[Theorem~6.21]{BeLa12} implies that $\Delta_\vartheta$ is a self-adjoint operator in $L^2(\Omega)$. The compactness of the resolvent 
  of $\Delta_\vartheta$ follows from \cite[Theorem~4.8]{BeLa07} applied to the quasi boundary triple $\{L^2(\partial\Omega),\Gamma_1,-\Gamma_0\}$ 
  and the parameter $\Theta=-\vartheta^{-1}$.
 
  It remains to show that $\Delta_\vartheta$ is semibounded from below. If $\vartheta=0$ this is obviously true. 
  Suppose $\vartheta\not= 0$, let $0<\varepsilon\leq 1/\|\vartheta \|$ and choose 
  $c_\varepsilon >0$ such that
  \begin{equation*}
    \|\tau_D g\|^2_{L^2(\partial\Omega)} \leq \varepsilon \|\nabla g\|_{L^2(\Omega)^n}^2 + c_\varepsilon \| g\|_{L^2(\Omega)}^2, \quad g\in H^1(\Omega);
  \end{equation*}
  see, e.g. \cite[Lemma~4.2]{GeMi09}. For $f\in \dom \Delta_\vartheta$ Green's identity together with 
  $-\tau_N f = \vartheta \tau_D f$ (see \eqref{eqn:extselfad}) implies
  \begin{equation*}
    \begin{split}
      (\Delta_\vartheta f, \,f)_{L^2(\Omega)} &= \|\nabla f\|_{L^2(\Omega)^n}^2 + (\vartheta  \tau_D f,\, \tau_D f)_{L^2(\partial\Omega)}\\
      &\geq \|\nabla f\|_{L^2(\Omega)^n}^2  - \|\vartheta \| \, \|\tau_D f\|^2_{L^2(\partial\Omega)}\\
      &\geq \|\nabla f\|_{L^2(\Omega)^n}^2  - \varepsilon\|\vartheta \| \|\nabla f\|_{L^2(\Omega)^n}^2 - c_\varepsilon \|\vartheta \|\| f\|_{L^2(\Omega)}^2\\
      &\geq -c_\varepsilon \|\vartheta \| \, \| f\|^2_{L^2(\partial\Omega)}.
    \end{split}
  \end{equation*}
\end{proof}

In the next corollary we formulate a version of Theorem~\ref{eqn:boundedtheta} for Robin boundary conditions.

\begin{Cor}
  Let $\Omega$ be a bounded Lipschitz domain and let $\alpha\in L^\infty(\partial\Omega)$ be 
  a real function on $\partial \Omega$. 
  Then
  \begin{equation}\label{eqn:robin}
    \Delta_\alpha := \Delta_{\max} \upharpoonright \bigl\{ f\in H^0_\Delta(\Omega) : 
    \alpha\cdot \widetilde \tau_D f + \widetilde \tau_N f = 0 \bigr\}
  \end{equation}
  is self-adjoint operator in $L^2(\Omega)$ with compact resolvent, semibounded from below and  
  regularity $\dom \Delta_\alpha \subset H^{3/2}_{\Delta}(\Omega)$. 
  In \eqref{eqn:robin} the multiplication with $\alpha$ is understood as an operator in $L^2(\partial\Omega)$. 
\end{Cor}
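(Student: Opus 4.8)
The plan is to recognize this corollary as the special case of Theorem~\ref{eqn:boundedtheta} in which the bounded self-adjoint operator $\vartheta$ is taken to be multiplication by $\alpha$ on $L^2(\partial\Omega)$. Essentially all of the analytic work has thus already been carried out in Theorem~\ref{eqn:boundedtheta}, and the only point that remains is to verify that this multiplication operator is indeed bounded and self-adjoint in $L^2(\partial\Omega)$.

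First I would check boundedness. For $\varphi\in L^2(\partial\Omega)$ the pointwise bound $|\alpha\varphi|\leq\|\alpha\|_{L^\infty(\partial\Omega)}|\varphi|$ holds almost everywhere on $\partial\Omega$, so that $\alpha\varphi\in L^2(\partial\Omega)$ with
\begin{equation*}
\|\alpha\varphi\|_{L^2(\partial\Omega)}\leq\|\alpha\|_{L^\infty(\partial\Omega)}\|\varphi\|_{L^2(\partial\Omega)}.
\end{equation*}
Hence $\vartheta:\varphi\mapsto\alpha\varphi$ is a bounded, everywhere defined operator in $L^2(\partial\Omega)$ with $\|\vartheta\|\leq\|\alpha\|_{L^\infty(\partial\Omega)}$. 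Next I would use that $\alpha$ is real-valued, so that $\overline{\alpha}=\alpha$ almost everywhere on $\partial\Omega$; consequently, for all $\varphi,\psi\in L^2(\partial\Omega)$,
\begin{equation*}
(\vartheta\varphi,\psi)_{L^2(\partial\Omega)}=(\alpha\varphi,\psi)_{L^2(\partial\Omega)}=(\varphi,\alpha\psi)_{L^2(\partial\Omega)}=(\varphi,\vartheta\psi)_{L^2(\partial\Omega)}.
\end{equation*}
Together with the fact that $\vartheta$ is bounded and defined on all of $L^2(\partial\Omega)$ this shows $\vartheta=\vartheta^*$, i.e.\ $\vartheta$ is a bounded self-adjoint operator in $L^2(\partial\Omega)$.

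With this choice of $\vartheta$ the boundary condition $\vartheta\widetilde\tau_D f+\widetilde\tau_N f=0$ coincides with $\alpha\cdot\widetilde\tau_D f+\widetilde\tau_N f=0$, so that $\Delta_\alpha=\Delta_\vartheta$ in the notation of Theorem~\ref{eqn:boundedtheta}. Applying that theorem then yields at once that $\Delta_\alpha$ is self-adjoint in $L^2(\Omega)$, has compact resolvent, is semibounded from below, and satisfies the regularity $\dom\Delta_\alpha\subset H^{3/2}_\Delta(\Omega)$. Since multiplication by a real $L^\infty$-function on $L^2(\partial\Omega)$ is a standard example of a bounded self-adjoint operator, there is no genuine obstacle here; the entire substance of the statement is contained in Theorem~\ref{eqn:boundedtheta}, and the corollary is merely its concrete interpretation as a Robin boundary condition.
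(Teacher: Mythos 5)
Your proposal is correct and matches the paper's (implicit) argument exactly: the paper states this corollary as an immediate specialization of Theorem~\ref{eqn:boundedtheta}, with $\vartheta$ taken to be the multiplication operator by the real function $\alpha\in L^\infty(\partial\Omega)$, which is bounded and self-adjoint in $L^2(\partial\Omega)$ precisely by the elementary verification you give. Nothing further is needed.
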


In the end of this subsection we establish the link to \cite{GeMi11} and briefly discuss two more special cases of bounded Lipschitz domains: 
so-called quasi-convex domains in Theorem~\ref{thm:GeMi} and $C^{1,r}$-domains with $r\in(\frac 12,1]$ in Theorem~\ref{thm:C1r}.

For the definition of quasi-convex domains we refer to \cite[Definition~8.9]{GeMi11}.
We mention that all convex domains, all almost-convex domains,
all domains that satisfy a local exterior ball condition, as well as all $C^{1,r}$-domains with $r\in(\frac 12,1]$ are quasi-convex, 
for more details on almost-convex domains see~\cite{MiTaVa05}. 
The key feature of a quasi-convex domain is that the Dirichlet- and Neumann Laplacian have $H^2$-regularity, 
\begin{equation}\label{eqn:quasireg}
  \dom \Delta_{D}\subset H^2(\Omega),\quad \dom \Delta_{N} \subset H^2(\Omega).
\end{equation}
For the next theorem we recall the definition of the tangential gradient operator 
\begin{equation*}
\nabla_{\tan}: H^1(\partial\Omega) \to L^2(\partial\Omega)^n,\quad 
\nabla_{\tan} f := \Bigl( \sum_{j=1}^n \mathfrak n_j \partial_{\tau_{j,k}} f \Bigr)^\top_{k=1,\dots,n}
\end{equation*} 
from \cite[(6.1)]{GeMi11}. Here $\partial_{\tau_{j,k}} := \mathfrak n_j \partial_k - \mathfrak n_k \partial_j$,
$j,\, k\in \{1,\dots,n\}$, are the first-order tangential differential operators
acting continuously from $H^1(\partial\Omega)$ to $L^2(\partial\Omega)$.

\begin{Thm}\label{thm:GeMi}
  Let $\Omega$ be a quasi-convex domain. Then the following statements hold.
  \begin{enumerate}[\rm(i)]
  \item The spaces $\mathscr G_0$ and $\mathscr G_1$ in Theorem~{\rm\ref{thm:LipBLtriple}} are given by 
    \begin{equation*}
      \begin{split}
        \mathscr G_0 &= \bigl \{ \varphi \in H^1(\partial\Omega) : \nabla_{\tan} \varphi \in H^{1/2}(\partial\Omega)^n \bigr \}, \\ 
        \mathscr G_1 &= \bigl \{ \psi \in L^2(\partial\Omega) :  \psi \, \mathfrak n \in H^{1/2}(\partial\Omega)^n  \bigr\},
      \end{split}
    \end{equation*}
    and for the norms $\|\cdot\|_{\mathscr G_0}$ and $\|\cdot\|_{\mathscr G_1}$ induced by the inner products in \eqref{eqn:norms} 
    the following equivalences hold: 
    \begin{equation*}
      \begin{split}
      \| \varphi\|_{\mathscr G_0} &\sim \| \varphi \|_{L^2(\partial\Omega)} +  \| \nabla_{\tan} \varphi \|_{H^{1/2}(\partial\Omega)^n}, 
      \quad \varphi\in \mathscr G_0,\\ 
      \|\psi\|_{\mathscr G_1} &\sim \| \psi \, \mathfrak n \|_{H^{1/2}(\partial\Omega)^n}, \quad \psi\in \mathscr G_1.
      \end{split}
    \end{equation*}
  \item The Dirichlet trace operator $\tau_D$ and Neumann trace operator $\tau_N$ admit continuous, surjective extensions to 
    \begin{equation*}
      \begin{split}
        \widetilde \tau_D &: H^0_\Delta(\Omega) \to \bigl(\bigl\{ \psi \in L^2(\partial\Omega) :  
        \psi \, \mathfrak n \in H^{1/2}(\partial\Omega)^n \bigr\}\bigr)', \\ 
        \widetilde \tau_N &: H^0_\Delta(\Omega) \to\bigl( \bigl\{ \varphi \in H^1(\partial\Omega) : 
        \nabla_{\tan} \varphi \in H^{1/2}(\partial\Omega)^n \bigr\}\bigr)'.
      \end{split}
    \end{equation*}
  \end{enumerate}
\end{Thm}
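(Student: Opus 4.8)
The plan is to split the argument into an identification of the sets $\mathscr G_0,\mathscr G_1$ and a separate matching of their abstract norms from \eqref{eqn:norms} with the explicit $H^{1/2}$-type norms. Recall from Definition~\ref{def:g0g1} and Theorem~\ref{thm:LipBLtriple} that $\mathscr G_0=\tau_D(\dom\Delta_N)$ and $\mathscr G_1=\tau_N(\dom\Delta_D)$, the sign in $\Gamma_1=-\tau_N$ not affecting the ranges. For a quasi-convex domain the $H^2$-regularity \eqref{eqn:quasireg} and the self-adjointness of $\Delta_D,\Delta_N$ give $\dom\Delta_N=\{f\in H^2(\Omega):\tau_N f=0\}$ and $\dom\Delta_D=\{f\in H^2(\Omega):\tau_D f=0\}$, so that $\mathscr G_0$ and $\mathscr G_1$ are exactly the trace images studied by Gesztesy and Mitrea; the set identities in (i) then follow from their characterization of these spaces for quasi-convex domains, see \cite{GeMi11}.

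For the norm equivalences I would invoke the uniqueness parts of Proposition~\ref{pro:Gamma_0_extension} and Corollary~\ref{cor:Gamma_1_extension}. These assert that the inner-product norms $|\!|\!|\cdot|\!|\!|_{\mathscr G_1}$ and $|\!|\!|\cdot|\!|\!|_{\mathscr G_0}$ induced by $\Lambda=\overline{\im M(i)}$ and $\Sigma=\overline{\im(-M(i)^{-1})}$ — that is, precisely the norms in \eqref{eqn:norms} — are equivalent to \emph{any} norm turning $\mathscr G_1$ (resp.\ $\mathscr G_0$) into a reflexive Banach space continuously embedded in $L^2(\partial\Omega)$ for which $\gamma(\lambda)^*=\Gamma_1(\Delta_D-\bar\lambda)^{-1}$ (resp.\ its analogue $\Gamma_0(\Delta_N-\bar\lambda)^{-1}$ for the quasi boundary triple $\{L^2(\partial\Omega),-\Gamma_1,\Gamma_0\}$) is continuous into the space in question. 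It therefore suffices to check that the explicit norms $\|\varphi\|_{L^2(\partial\Omega)}+\|\nabla_{\tan}\varphi\|_{H^{1/2}(\partial\Omega)^n}$ and $\|\psi\,\mathfrak n\|_{H^{1/2}(\partial\Omega)^n}$ have these two properties.

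Reflexivity and completeness are immediate, since both explicit spaces are Hilbert spaces, and the continuous embeddings into $L^2(\partial\Omega)$ hold because each explicit norm dominates $\|\cdot\|_{L^2(\partial\Omega)}$. The remaining — and genuinely technical — point is the continuity of the $\gamma$-field adjoints into the explicitly normed spaces. Here I would combine $H^2$-regularity and elliptic estimates, which yield boundedness of $(\Delta_D-\bar\lambda)^{-1}:L^2(\Omega)\to H^2(\Omega)$ and $(\Delta_N-\bar\lambda)^{-1}:L^2(\Omega)\to H^2(\Omega)$, with the sharp trace results of \cite{GeMi11} to the effect that $\tau_N:H^2(\Omega)\to(\mathscr G_1,\|\cdot\|_{\mathscr G_1})$ and $\tau_D:H^2(\Omega)\to(\mathscr G_0,\|\cdot\|_{\mathscr G_0})$ are bounded for the explicit norms. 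Composing gives the required continuity, and the cited uniqueness statements then deliver the norm equivalences. Controlling the normal component $\psi\,\mathfrak n$ and the tangential gradient $\nabla_{\tan}\varphi$ in $H^{1/2}(\partial\Omega)^n$ when $\mathfrak n$ is merely in $L^\infty$ is the main obstacle, but it is exactly what \cite{GeMi11} supplies.

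Finally, part (ii) is immediate from Corollary~\ref{cor:exttau}: the extensions $\widetilde\tau_D:H^0_\Delta(\Omega)\to\mathscr G_1'$ and $\widetilde\tau_N:H^0_\Delta(\Omega)\to\mathscr G_0'$ constructed there are already continuous and surjective with kernels $\dom\Delta_D$ and $\dom\Delta_N$, and substituting the explicit descriptions of $\mathscr G_0$ and $\mathscr G_1$ from part (i) identifies their anti-duals with the spaces appearing in the statement of (ii).
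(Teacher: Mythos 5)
Your proposal follows essentially the same route as the paper's proof: identify the sets via the Maz'ya--Mitrea--Shaposhnikova/Gesztesy--Mitrea trace characterization combined with the $H^2$-regularity \eqref{eqn:quasireg} of $\dom\Delta_D$ and $\dom\Delta_N$ on quasi-convex domains, then obtain the norm equivalences and part (ii) by checking that the explicit norms satisfy conditions (i)--(ii) of Proposition~\ref{pro:Gamma_0_extension} and Corollary~\ref{cor:Gamma_1_extension} and invoking their uniqueness statements.

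One imprecision is worth fixing: you claim that $\tau_N : H^2(\Omega)\to(\mathscr G_1,\|\cdot\|_{\mathscr G_1})$ and $\tau_D : H^2(\Omega)\to(\mathscr G_0,\|\cdot\|_{\mathscr G_0})$ are bounded on all of $H^2(\Omega)$. For a genuinely non-smooth quasi-convex domain these maps are not even well defined there: the trace result only guarantees $\nabla_{\tan}\tau_D f+\tau_N f\,\mathfrak n\in H^{1/2}(\partial\Omega)^n$, so if $\nabla_{\tan}\tau_D f\notin H^{1/2}(\partial\Omega)^n$ then $\tau_N f\notin\mathscr G_1$. What \cite{GeMi11} actually supplies (and what the paper uses) is boundedness and surjectivity of $\tau_N$ restricted to $\{f\in H^2(\Omega):\tau_D f=0\}$ onto $\mathscr G_1$, and of $\tau_D$ restricted to $\{f\in H^2(\Omega):\tau_N f=0\}$ onto $\mathscr G_0$. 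This restricted statement is all your argument needs, since $(\Delta_D-\bar\lambda)^{-1}$ and $(\Delta_N-\bar\lambda)^{-1}$ map $L^2(\Omega)$ boundedly into exactly these subspaces (equipped with the $H^2$-norm, via the closed graph theorem and \eqref{eqn:quasireg}), so the composition argument goes through unchanged after this correction.
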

\begin{proof}
  Let $\Omega$ be a bounded Lipschitz domain.
  According to \cite{MaMiSh10} the trace operator 
  $f\mapsto (\tau_D f, \tau_N f)^\top$, $f\in C^\infty(\overline\Omega)$,
  admits a continuous extension to a mapping from $H^2(\Omega)$ onto 
  the space of all $(\varphi,\psi)^\top \in H^1(\partial\Omega) \times L^2(\partial\Omega)$
  such that $\nabla_{\tan} \varphi + \psi\, \mathfrak n \in H^{1/2}(\partial\Omega)^n$; here $H^1(\partial\Omega) \times L^2(\partial\Omega)$ is
  equipped with the norm
  \begin{equation*}
    \| \varphi \|_{H^1(\partial\Omega)} +  \| \psi \|_{L^2(\partial\Omega)} + \| \nabla_{\tan} \varphi + \psi \, 
    \mathfrak n\|_{H^{1/2}(\partial\Omega)^n}.
  \end{equation*}
  The kernel of this extension of $(\tau_D,\, \tau_N)^\top$ is $H^2_0(\Omega)$. 
  This implies that 
  the Dirichlet trace operator $\tau_D$ admits a continuous extension to a surjective mapping from
  \begin{equation*}
    \{ f\in H^2(\Omega) : \tau_N f = 0 \} \quad \text{onto} \quad 
    \bigl  \{ \varphi \in H^1(\partial\Omega) : \nabla_{\tan} \varphi \in H^{1/2}(\partial\Omega)^n \bigr \}
  \end{equation*}
  and the Neumann trace operator $\tau_N$ admits a continuous extension to a surjective mapping from 
  \begin{equation*}
    \{ f\in H^2(\Omega) : \tau_D f = 0 \} \quad \text{onto} \quad 
    \bigl \{ \psi \in L^2(\partial\Omega) :  \psi \, \mathfrak n \in H^{1/2}(\partial\Omega)^n  \bigr\};
  \end{equation*}
  cf.~\cite[Lemma~6.3 and Lemma~6.9]{GeMi11}. 
  Now let $\Omega$ be a quasi-convex domain. 
  Then according to \cite[Lemma~8.11]{GeMi11} 
  the regularity properties \eqref{eqn:quasireg} hold, and since $\mathscr G_0$, $\mathscr G_1$ 
  are Hilbert spaces, which are dense in $L^2(\partial\Omega)$ the assertions follow from
  Proposition~\ref{pro:Gamma_0_extension} and Corollary~\ref{cor:Gamma_1_extension}.  
\end{proof}

  We note that
  Theorem~\ref{thm:GeMi} is essentially the same as
  \cite[Theorems~6.4 and 6.10]{GeMi11}, and also implies \cite[Corollaries~10.3 and 10.7]{GeMi11}.
  Theorem~\ref{thm:GeMi} together with Corollary~\ref{cor:mainlaplace} 
  yields results of similar form as in \cite[Sections 14 and 15]{GeMi11};
  the Kre\u{\i}n type resolvent formulas in \cite[Section 16]{GeMi11} can also be viewed as consequences of Corollary~\ref{cor:laplacekrein}.

In the next theorem we treat the case of  $C^{1,r}$-domains with $r \in (\frac 12,1]$. In a similar manner as above this theorem 
combined with the earlier abstract results leads to various results on self-adjoint realizations or
Krein type resolvent formulas in the flavour of \cite{GeMi11}.

\begin{Thm}\label{thm:C1r}
  Let $\Omega$ be a $C^{1,r}$-domain with $r \in (\frac 12,1]$. Then 
  the following statements hold.
 \begin{itemize}
  \item [{\rm (i)}] The spaces $\mathscr G_0$ and $\mathscr G_1$ in Theorem~{\rm\ref{thm:LipBLtriple}} are given by
  \begin{equation*}
    \mathscr G_0 = H^{3/2}(\partial\Omega)\quad\text{and}\quad \mathscr G_1 = H^{1/2}(\partial\Omega)
  \end{equation*}
  and the norms induced by the inner products in \eqref{eqn:norms} are equivalent to the usual norms in 
   $ H^{3/2}(\partial\Omega)$ and $ H^{1/2}(\partial\Omega)$, respectively.
  \item [{\rm (ii)}] The Dirichlet trace operator $\tau_D$ and Neumann trace operator $\tau_N$ admit continuous, 
  surjective extensions to 
    \begin{equation*}
      \tau_D : H^0_\Delta(\Omega) \to H^{-1/2}(\partial\Omega)\quad\text{and}\quad
      \tau_N : H^0_\Delta(\Omega) \to H^{-3/2}(\partial\Omega).
    \end{equation*}
 \end{itemize}
 Moreover, the following regularity result holds: For $0 \leq s \leq \frac 32$
  \begin{equation}\label{eqn:lapreg}
    \dom \Theta \subset H^s(\partial\Omega) \quad\text{implies}\quad \dom \Delta_\Theta \subset H^s_\Delta(\Omega).
  \end{equation}
\end{Thm}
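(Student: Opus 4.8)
The plan is to prove the three assertions in the order (i), (ii), regularity, drawing on the quasi-convexity of $C^{1,r}$-domains with $r\in(\tfrac12,1]$ (so that \eqref{eqn:quasireg} holds), the trace theory underlying Theorem~\ref{thm:LipBLtriple}, and the abstract extension and regularity machinery of Section~\ref{2.3} and Section~\ref{sec:parameterization}.

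For part (i), I would start from Definition~\ref{def:g0g1} and the identifications $\mathscr G_0=\ran(\Gamma_0\upharpoonright\ker\Gamma_1)=\tau_D(\dom\Delta_N)$ and $\mathscr G_1=\ran(\Gamma_1\upharpoonright\ker\Gamma_0)=\tau_N(\dom\Delta_D)$ coming from Theorem~\ref{thm:LipBLtriple}. Since a $C^{1,r}$-domain with $r>\tfrac12$ is quasi-convex, \eqref{eqn:quasireg} gives $\dom\Delta_D,\dom\Delta_N\subset H^2(\Omega)$, so that $\dom\Delta_N=\{f\in H^2(\Omega):\tau_N f=0\}$ and $\dom\Delta_D=\{f\in H^2(\Omega):\tau_D f=0\}$. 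Invoking the surjectivity of the full trace $(\tau_D,\tau_N)^\top:H^2(\Omega)\to H^{3/2}(\partial\Omega)\times H^{1/2}(\partial\Omega)$ with kernel $H^2_0(\Omega)$ from \cite{MaMiSh10} (cf.~\cite[Lemma~6.3 and~6.9]{GeMi11}) then yields the set equalities $\mathscr G_0=H^{3/2}(\partial\Omega)$ and $\mathscr G_1=H^{1/2}(\partial\Omega)$: the inclusions ``$\subset$'' follow from $\tau_D(H^2)=H^{3/2}$, $\tau_N(H^2)=H^{1/2}$, and ``$\supset$'' from choosing, for given boundary data, an $H^2$-preimage with vanishing complementary trace. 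For the norm equivalences it suffices, by the concluding statements of Proposition~\ref{pro:Gamma_0_extension} and Corollary~\ref{cor:Gamma_1_extension}, to check that the usual norms of $H^{3/2}(\partial\Omega)$ and $H^{1/2}(\partial\Omega)$ satisfy conditions (i)--(ii) there; reflexivity and the continuous embeddings into $L^2(\partial\Omega)$ are clear, and the continuity of $\tau_D(\Delta_N-\bar\lambda)^{-1}:L^2(\Omega)\to H^{3/2}(\partial\Omega)$ and $\tau_N(\Delta_D-\bar\lambda)^{-1}:L^2(\Omega)\to H^{1/2}(\partial\Omega)$ follows from $H^2$-regularity of the resolvents together with \eqref{eqn:trace2}. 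The equivalence to the norms induced by \eqref{eqn:norms} is then automatic.

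Part (ii) is then immediate from Corollary~\ref{cor:exttau}: once $\mathscr G_1=H^{1/2}(\partial\Omega)$ and $\mathscr G_0=H^{3/2}(\partial\Omega)$ are identified, their anti-duals with respect to the $L^2(\partial\Omega)$-pairing are $\mathscr G_1'=H^{-1/2}(\partial\Omega)$ and $\mathscr G_0'=H^{-3/2}(\partial\Omega)$, and the continuous surjective extensions $\widetilde\tau_D,\widetilde\tau_N$ together with their kernels are exactly the content of Corollary~\ref{cor:exttau}. For the regularity result I would apply \eqref{eqn:mainlaplace2} of Corollary~\ref{cor:mainlaplace} with the intermediate operator $\Delta_s:=\Delta_{\max}\upharpoonright H^s_\Delta(\Omega)$; for $0\le s\le\tfrac32$ one has $H^{3/2}_\Delta(\Omega)\subset H^s_\Delta(\Omega)\subset H^0_\Delta(\Omega)$, so $T\subset\Delta_s\subset\Delta_{\max}$ as required. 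It then remains to show $\dom\Theta\subset\ran\bigl(\iota_-\widetilde\tau_D\upharpoonright\dom\Delta_s\bigr)$. I would compute this range as $\iota_-\bigl(\widetilde\tau_D(H^s_\Delta(\Omega))\bigr)$ in two steps: first, the extended trace satisfies $\widetilde\tau_D(H^s_\Delta(\Omega))=H^{s-1/2}(\partial\Omega)$ for all $s\in[0,\tfrac32]$ (the classical range $s\in[\tfrac12,\tfrac32]$ being \eqref{eqn:trace2}, the remaining low-regularity range following from the extended trace theory on $C^{1,r}$-domains, cf.~\cite{GeMi08,GeMi11}); second, identifying $\iota_-$ with $\Lambda^{1/2}$ for $\Lambda=\overline{\im M(i)}$, and using that by \eqref{mg} and \eqref{eqn:gamma_field} $\Lambda=\gamma(i)^*\gamma(i)$ is an elliptic pseudodifferential operator of order $-1$ on $\partial\Omega$ (the Poisson operator $\gamma(i)$ gaining $\tfrac12$ and $\gamma(i)^*$ gaining a further $\tfrac12$ derivative), so that $\Lambda^{1/2}=\iota_-$ is of order $-\tfrac12$ and maps $H^{s-1/2}(\partial\Omega)$ isomorphically onto $H^s(\partial\Omega)$. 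Hence the range equals $H^s(\partial\Omega)\supset\dom\Theta$, and \eqref{eqn:mainlaplace2} gives $\dom\Delta_\Theta\subset H^s_\Delta(\Omega)$.

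The main obstacle is the second part of the last paragraph: establishing the precise Sobolev mapping properties uniformly across the scale $s\in[0,\tfrac32]$ on a merely $C^{1,r}$ boundary. Concretely, one must justify (a) the extended trace surjectivity $\widetilde\tau_D(H^s_\Delta(\Omega))=H^{s-1/2}(\partial\Omega)$ down to negative orders, and (b) the order-$(-\tfrac12)$ behaviour of $\iota_-=\Lambda^{1/2}$, equivalently the ellipticity of order $-1$ of $\overline{\im M(i)}$. Both rest on the Dirichlet-to-Neumann map $M(\lambda)$ being a first-order elliptic pseudodifferential operator with real principal symbol together with the $H^2$-regularity \eqref{eqn:quasireg}, and at the regularity threshold $r=\tfrac12$ these mapping properties are delicate; I would handle them by citing the corresponding results of \cite{GeMi08,GeMi11} rather than redeveloping the pseudodifferential calculus, with part (i) supplying the base-level identifications from which the full-scale statements are obtained by ellipticity and complex powers.
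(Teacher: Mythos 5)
Your proof is correct and follows essentially the same route as the paper: statements (i)--(ii) are obtained from the $H^2$-regularity \eqref{eqn:quasireg} of the Dirichlet and Neumann realizations together with the surjectivity of $(\tau_D,\tau_N)^\top:H^2(\Omega)\to H^{3/2}(\partial\Omega)\times H^{1/2}(\partial\Omega)$ fed into Proposition~\ref{pro:Gamma_0_extension} and Corollary~\ref{cor:Gamma_1_extension}, and the regularity assertion \eqref{eqn:lapreg} from the abstract result \eqref{eqn:mainlaplace2} of Corollary~\ref{cor:mainlaplace} applied to $\Delta_s=\Delta_{\max}\upharpoonright H^s_\Delta(\Omega)$. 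The only difference is that you spell out the two facts the paper merely asserts in its last step, namely $\ran\bigl(\widetilde\tau_D\upharpoonright H^s_\Delta(\Omega)\bigr)=H^{s-1/2}(\partial\Omega)$ and that $\iota_-$ (essentially $\Lambda^{1/2}$) maps $H^{s-1/2}(\partial\Omega)$ isomorphically onto $H^{s}(\partial\Omega)$, which is a sound and in fact more careful completion of the same argument.
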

 \begin{proof}
   Note that \eqref{eqn:quasireg} holds for the Dirichlet and Neumann Laplacian and
   that the trace operator $f\mapsto (\tau_D,\tau_N)^\top$, $f \in C^\infty(\overline\Omega)$, 
   admits a continuous extension to a mapping from $H^2(\Omega)$ onto 
   $H^{3/2}(\partial\Omega)\times H^{1/2}(\partial\Omega)$, see, e.g.~\cite[Theorem~2]{Ma87}. 
   Hence 
   statements (i) and (ii) follow from Proposition~\ref{pro:Gamma_0_extension} and Corollary~\ref{cor:Gamma_1_extension}. 
   It remains to verify the regularity result \eqref{eqn:lapreg}.
   Let $\Delta_s := \Delta_{\max} \upharpoonright H^s_\Delta(\Omega)$ with $0\leq s\leq \frac 32$, so that
   $T$ in \eqref{eqn:DeltaT} is contained in $\Delta_s \subset A^* =\Delta_{\max}$. Since 
   $\ran (\widetilde \tau_D \upharpoonright \dom \Delta_s) = H^{s-1/2}(\partial\Omega)$ 
   and $\iota_-$ is an isometry from $H^{s-1/2}(\partial\Omega)$ onto $H^{s}(\partial\Omega)$ 
   the assertion \eqref{eqn:lapreg} follows from the abstract regularity result \eqref{eqn:mainlaplace2} in Corollary~{\rm\ref{cor:mainlaplace}}.
\end{proof}
\subsection{Elliptic differential operators of order $2m$ on bounded smooth domains}\label{sec:elliptic}
In this subsection we illustrate some of the abstract results from Section~\ref{sec:qbt} and Section~\ref{sec:extension}
for elliptic differential operators of order $2m$ on a bounded smooth domain. The description of the selfadjoint realizations
in this case can already be found in Grubb \cite{Gr68}, other extension properties obtained below can be found in the monograph of
Lions and Magenes \cite{LiMa72}. 
We also refer the reader to the classical contributions \cite{BF62,Be65,Br60,F62,Gr68,LiMa72,Sc59} for more details on the notation 
and references, and to, e.g. \cite{BrGrWo09,Gr12,Ma10} for 
some recent connected publications.

Let $\Omega \subset \mathbb R^n$, $n\geq 2$, be a bounded domain 
with $C^\infty$-boundary $\partial\Omega$. 
Let $A$ and $T$ be the realizations of the $2m$-th order, properly elliptic, formally self-adjoint differential expression
\begin{equation*}
  \mathscr L := \sum_{|\alpha|, |\beta| \leq m} (-1)^{|\alpha|} \partial^\alpha a_{\alpha \beta} \, \partial^\beta, \qquad  
  a_{\alpha \beta}\in C^\infty(\overline \Omega), 
\end{equation*}
on $H^{2m}_0(\Omega)$ and $H^{2m}(\Omega)$, respectively; cf. \cite[Chapter~2.1]{LiMa72} for more details. 
As in Section~\ref{sec:laplace} we define the Hilbert spaces 
\begin{equation}\label{eqn:HsL}
  H^s_{\mathscr L}(\Omega) := \bigl\{f\in H^s(\Omega) : \mathscr L f \in L^2(\Omega) \bigr\},\quad s\geq 0,
\end{equation}
with norms induced by the inner products given by 
\begin{equation}\label{eqn:HsLip}
( f, g)_{ H^s_{\mathscr L}(\Omega)} := (f, g)_{H^s(\Omega)} 
  +  (\mathscr L f, \mathscr L g)_{L^2(\Omega)}, \quad f,g\in H^s_{\mathscr L}(\Omega).
\end{equation}
We note that $H^s_{\mathscr L}(\Omega) = H^s(\Omega)$ with equivalent norms if $s\geq 2m$ 
and that $C^\infty(\overline\Omega)$ is dense in $H^s_{\mathscr L}(\Omega)$ for $s\geq 0$.
The minimal and the maximal realization of the differential expression $\mathscr L$ are given by
\begin{equation*}
    \mathscr L_{\min} := A =  \mathscr L \upharpoonright H^{2m}_0(\Omega)\quad\text{and}\quad 
    \mathscr L_{\max} := A^* = \mathscr L \upharpoonright H^0_{\mathscr L}(\Omega),
\end{equation*}
respectively.
We mention that $A$ is a closed, densely defined, symmetric operator in $L^2(\Omega)$ with equal 
infinite deficiency indices.

In the next theorem a quasi boundary triple for the elliptic differential operator 
$T$ is defined. Here we make use of normal systems $D =\{D_j\}_{j=0}^{m-1}$ and $N=\{N_j\}_{j=0}^{m-1}$ 
of boundary differential operators,
\begin{eqnarray}\label{eqn:Dop}
  D_j f &:=& \sum_{|\beta| \leq m_j}  b_{j \beta}\, \partial^\beta f \upharpoonright_{\partial\Omega}, \quad f \in H^{2m}(\Omega), 
  \quad m_j \leq 2 m-1, \\ \label{eqn:Nop}
  N_j f &:=& \sum_{|\beta| \leq \mu_j}  c_{j \beta}\, \partial^\beta f \upharpoonright_{\partial\Omega}, \quad f \in H^{2m}(\Omega), 
  \quad \mu_j \leq 2 m-1,
  \end{eqnarray}
with $C^\infty$ coefficients $b_{j \beta}, c_{j \beta}$ on $\partial\Omega$ 
and which cover $\mathscr L$ on $\partial\Omega$; 
cf. \cite[Chapter~2.1]{LiMa72}.

\begin{Thm}\label{thm:BLtriple}
  Let $D$ be a normal system of boundary differential operators as in \eqref{eqn:Dop}.
  Then there exists a normal system of boundary differential operators $N$ of the form \eqref{eqn:Nop} 
  of order $\mu_j = 2m-m_j-1$, such that 
  $\{L^2(\partial\Omega)^m,\Gamma_0,\Gamma_1\}$,
  \[ 
  \Gamma_0,\Gamma_1 : H^{2m}(\Omega) \to L^2(\partial\Omega)^m,\quad \Gamma_0 f := D f, \quad \Gamma_1 f := N f
  \]
  is a quasi boundary triple for $T \subset A^*$.
  The minimal realization $A=\mathscr L_{\min}$ coincides with $T\upharpoonright\ker\Gamma$
  and the following statements hold.
  \begin{enumerate}[\rm(i)]
  \item The Dirichlet realization $\mathscr L_{D}$ and Neumann realization $\mathscr L_{N}$ correspond to $\ker \Gamma_0$ and $\ker \Gamma_1$, 
    \begin{equation*}
      \begin{split}
        \mathscr L_D &:=  T\upharpoonright \ker \Gamma_0 = \mathscr L_{\max} \upharpoonright \bigl\{ f\in H^{2m} (\Omega): D f =0\bigr \}, \\
        \mathscr L_N &:=  T\upharpoonright \ker \Gamma_1 = \mathscr L_{\max} \upharpoonright \bigl\{ f\in H^{2m} (\Omega): N f =0\bigr \},
      \end{split}
    \end{equation*}
    respectively, and $ \mathscr L_D$ is self-adjoint in $L^2(\Omega)$.
  \item The spaces 
    \begin{equation}\label{eqn:G0G1ell}
      \begin{split}
        \mathscr G_0 &:= \ran (\Gamma_0\upharpoonright \ker\Gamma_1) = \prod_{j=0}^{m-1} H^{2m-m_j-1/2}(\partial\Omega), \\ 
        \mathscr G_1 &:= \ran (\Gamma_1\upharpoonright \ker\Gamma_0) = \prod_{j=0}^{m-1} H^{m_j+1/2}(\partial\Omega)
      \end{split}
    \end{equation}
    are dense in $L^2(\partial\Omega)^m$.
  \item The values $\gamma(\lambda) : L^2(\partial\Omega)^m \supset \prod_{j=0}^{m-1} H^{2m-m_j-1/2}(\partial\Omega) \to L^2(\Omega)$  
    of the $\gamma$-field are given by
    \begin{equation*}
      \gamma(\lambda) \varphi = f, \qquad \varphi\in \prod_{j=0}^{m-1} H^{2m-m_j-1/2}(\partial\Omega), \quad \lambda \in \rho(\mathscr L_D),
    \end{equation*}
    where $f\in L^2(\Omega)$ is the unique solution of the boundary value problem 
    \begin{equation}\label{eqn:2morderPDE}
        (\mathscr L -\lambda) f = 0,\qquad  D f = \varphi.
    \end{equation}
  \item The values 
    $M(\lambda) :  L^2(\partial\Omega)^m \supset \prod_{j=0}^{m-1} H^{2m-m_j-1/2}(\partial\Omega) \to  L^2(\partial\Omega)^m$ 
    of the Weyl function are given by
    \begin{equation*}
      M(\lambda) \varphi = N f, \qquad \varphi\in \prod_{j=0}^{m-1} H^{2m-m_j-1/2}(\partial\Omega), \quad \lambda \in \rho(\mathscr L_D),
    \end{equation*}
    where $f = \gamma(\lambda) \varphi$ is the unique solution of \eqref{eqn:2morderPDE}.
  \end{enumerate}
\end{Thm}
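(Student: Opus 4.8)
The plan is to verify the three defining properties of a quasi boundary triple for $\{L^2(\partial\Omega)^m,\Gamma_0,\Gamma_1\}$ and then establish items (i)--(iv), closely following the structure of the proof of Theorem~\ref{thm:LipBLtriple} but now relying on the classical elliptic theory of Lions--Magenes \cite{LiMa72} and Grubb \cite{Gr68} in place of the Lipschitz-specific references. The essential input from elliptic theory is the second Green's formula for the operator $\mathscr L$: given the normal system $D=\{D_j\}$, one constructs a complementary (Dirichlet) system and an adjoint boundary system so that there is a normal system $N=\{N_j\}$ of orders $\mu_j=2m-m_j-1$ for which
\begin{equation*}
  (\mathscr L f,g)_{L^2(\Omega)}-(f,\mathscr L g)_{L^2(\Omega)}=(Nf,Dg)_{L^2(\partial\Omega)^m}-(Df,Ng)_{L^2(\partial\Omega)^m}
\end{equation*}
holds for all $f,g\in H^{2m}(\Omega)$; this is exactly the abstract Green's identity \eqref{eqn:abstract_green} for the triple, so condition (i) of Definition~\ref{qbt} follows. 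The existence of such an $N$ covering $\mathscr L$, together with the fact that $\{D,N\}$ forms a Dirichlet system of order $2m$, is the content of the Green formula in \cite[Chapter~2.1]{LiMa72} and is the first step I would cite.

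Next I would address the self-adjointness of $\mathscr L_D=T\upharpoonright\ker\Gamma_0$ (condition (iii) of Definition~\ref{qbt}) and, in passing, the realization descriptions in (i). Since $\mathscr L$ is properly elliptic and formally self-adjoint and $D$ is a normal system covering $\mathscr L$, the Dirichlet realization on $\{f\in H^{2m}(\Omega):Df=0\}$ is self-adjoint with compact resolvent by the standard elliptic boundary value theory; one picks $\eta\in\rho(\mathscr L_D)\cap\mathbb R$ for later use. To establish the density of $\ran\Gamma$ in $L^2(\partial\Omega)^m\times L^2(\partial\Omega)^m$ (condition (ii)), I would proceed exactly as in the Lipschitz case: show $\mathscr G_0\times\mathscr G_1\subset\ran\Gamma$ and prove density of each factor separately. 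Here the trace theory gives surjectivity of the full boundary map $(D,N)^\top:H^{2m}(\Omega)\to\prod_j H^{2m-m_j-1/2}(\partial\Omega)\times\prod_j H^{m_j+1/2}(\partial\Omega)$ with kernel $H^{2m}_0(\Omega)$; this is the classical trace/lifting theorem of Lions--Magenes. From surjectivity of this combined map the identities \eqref{eqn:G0G1ell} for $\mathscr G_0$ and $\mathscr G_1$ follow by restricting to $\ker\Gamma_1$ resp.\ $\ker\Gamma_0$, and the density in $L^2(\partial\Omega)^m$ is immediate because the Sobolev spaces $H^{s}(\partial\Omega)$ embed densely in $L^2(\partial\Omega)$. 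This simultaneously yields (ii) of the theorem and condition (ii) of the definition. Finally $\overline{T}=A^*$ follows from the density of $C^\infty(\overline\Omega)$ in $H^0_{\mathscr L}(\Omega)=\dom A^*$, so $\{L^2(\partial\Omega)^m,\Gamma_0,\Gamma_1\}$ is a quasi boundary triple and $T\upharpoonright\ker\Gamma=\dom A=\mathscr L_{\min}$.

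For items (iii) and (iv) I would simply unwind the definitions of the $\gamma$-field and Weyl function (Definition~\ref{def:weyl}) relative to this triple. By \eqref{decot} the restriction $\Gamma_0\upharpoonright\mathcal N_\lambda(T)$ is a bijection onto $\ran\Gamma_0=\mathscr G_0$ for $\lambda\in\rho(\mathscr L_D)$, so $\gamma(\lambda)\varphi$ is the unique $f\in\mathcal N_\lambda(T)$ with $Df=\varphi$, which is precisely the unique solution of the boundary value problem \eqref{eqn:2morderPDE}; uniqueness comes from $\lambda\in\rho(\mathscr L_D)$. Then $M(\lambda)\varphi=\Gamma_1\gamma(\lambda)\varphi=N f$ by definition, giving (iv). The main obstacle I anticipate is not any single hard estimate but rather the bookkeeping of the boundary systems: one must invoke the correct form of the elliptic Green formula to produce a normal system $N$ of the prescribed orders $\mu_j=2m-m_j-1$ that covers $\mathscr L$ and makes $\{D,N\}$ a Dirichlet system, and then correctly identify the resulting range spaces in \eqref{eqn:G0G1ell}. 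Once the trace surjectivity and the Green formula from \cite{Gr68,LiMa72} are in hand, the verification is a direct adaptation of the proof of Theorem~\ref{thm:LipBLtriple}, with the Lipschitz references replaced by their smooth-domain counterparts.
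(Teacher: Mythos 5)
Your proposal follows essentially the same route as the paper's proof: $\overline T=A^*$ from density of $C^\infty(\overline\Omega)$ in $H^0_{\mathscr L}(\Omega)$, the Lions--Magenes Green formula and Dirichlet-system completion to produce $N$ and verify the abstract Green identity, surjectivity of $(D,N)^\top$ from $H^{2m}(\Omega)$ onto $\prod_{j}H^{2m-m_j-1/2}(\partial\Omega)\times\prod_{j}H^{m_j+1/2}(\partial\Omega)$ with kernel $H^{2m}_0(\Omega)$ to obtain \eqref{eqn:G0G1ell}, the density statements and $T\upharpoonright\ker\Gamma=\mathscr L_{\min}$, and a direct unwinding of Definition~\ref{def:weyl} for items (iii)--(iv). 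The only place you are looser than the paper is the self-adjointness of $\mathscr L_D$: proper ellipticity, formal self-adjointness of $\mathscr L$ and normality/covering of $D$ by themselves do not imply it (oblique-derivative-type boundary conditions are normal and covering yet give non-symmetric realizations), so the correct logic is that the symmetric Green formula furnished by $N$ yields symmetry of $\mathscr L_D$ and the theory of adjoint elliptic boundary value problems then gives maximality, which is exactly what the paper's citation of \cite[Theorem~2.8.4]{LiMa72} supplies.
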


\begin{proof}
  First we remark that $C^\infty(\overline\Omega)$, and hence $H^{2m}(\Omega)$, is dense in $H^0_{\mathscr L}(\Omega)$.
  This implies $\overline T = A^*$. 
  According to \cite[Chapter~2.1]{LiMa72} for a given normal system $D$ 
  of boundary differential operators as in \eqref{eqn:Dop} there exists a system
  a normal system $N$ of boundary differential operators of the form \eqref{eqn:Nop} 
  of order $\mu_j = 2m-m_j-1$ such that $\{D, N\}$ is a Dirichlet system of order $2\,m$, which acts as a mapping 
  from $H^{2m}(\Omega)$ onto 
  \begin{equation}\label{eqn:elltrace}
    \prod_{j=0}^{m-1} H^{2m-m_j-1/2}(\partial\Omega)\times 
    \prod_{j=0}^{m-1} H^{m_j+1/2}(\partial\Omega)\hookrightarrow L^2(\partial\Omega)^{2m}.
  \end{equation} 
  The kernel of this map is $H^{2m}_0(\Omega)$ and Green's formula 
  \begin{equation*}
    \begin{split}
      (\mathscr L f,\, g)_{L^2(\Omega)} - (f,\, \mathscr L & g)_{L^2(\Omega)}\\  
      &= (N f,\, D g)_{L^2(\partial\Omega)^m} - (D f,\, N g)_{L^2(\partial\Omega)^m}
    \end{split}
\end{equation*}
  holds for all $f,g\in H^{2m}(\Omega)$; cf.~\cite[Theorem~2.2.1]{LiMa72}.
  From \eqref{eqn:elltrace} we conclude that \eqref{eqn:G0G1ell} holds and
  the spaces $\mathscr G_0$ and $\mathscr G_1$ are dense in $L^2(\partial\Omega)^m$.
  This also implies that $\ran \Gamma$ is dense in $L^2(\partial\Omega)^m\times L^2(\partial\Omega)^m$. 
  Moreover $A_0:=T \upharpoonright \ker \Gamma_0 = \mathscr L_D$ is self-adjoint in $L^2(\Omega)$ by \cite[Theorem~2.8.4]{LiMa72}.
  Hence $\{L^2(\partial\Omega)^m,\Gamma_0,\Gamma_1\}$ is a quasi boundary triple for $T\subset A^*$
  with $T\upharpoonright \ker \Gamma = \mathscr L_{\min} = A$.
  The remaining statements follows from the definition of the $\gamma$-field and the Weyl function. 
\end{proof}
The next two corollaries show that the abstract theory from Section~\ref{2.3} implies some fundamental extension results due to Lions and Magenes. 
The proofs immediately follow from Proposition~\ref{pro:Gamma_0_extension}, Corollary~\ref{cor:Gamma_1_extension} 
and standard interpolation theory of Sobolev spaces, see also Lemma~\ref{lem:exgamma} and Lemma~\ref{lem:exweyl}.
\begin{Cor}\label{cor:BLtriple}
  Let $\{L^2(\partial\Omega)^m,\Gamma_0,\Gamma_1\}$ be the quasi boundary triple for $T\subset A^*$ from Theorem~{\rm\ref{thm:BLtriple}}
  with Weyl function $M$. Then the following statements hold.
  \begin{enumerate}[\rm(i)]
  \item The mapping $\Gamma_0 = D$ admits a continuous extension to a surjective mapping
    \begin{equation}\label{eqn:Dext}
      \widetilde D : H^0_{\mathscr L}(\Omega) \to \prod_{j=0}^{m-1} H^{-m_j-1/2}(\partial\Omega)
    \end{equation}
    such that $\ker \widetilde D = \ker D = \dom \mathscr L_D$.
  \item The norm 
    \begin{equation*}
      \| \Lambda^{-1/2} f \|_{L^2(\partial\Omega)^m},\quad \Lambda := \overline{\im M(i)}, \quad f\in \prod_{j=0}^{m-1} H^{m_j+1/2}(\partial\Omega),
    \end{equation*}
    defines an equivalent norm on $\prod_{j=0}^{m-1} H^{m_j+1/2}(\partial\Omega)$.
  \end{enumerate}
\end{Cor}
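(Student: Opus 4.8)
The plan is to obtain both assertions directly from Proposition~\ref{pro:Gamma_0_extension}, applied to the quasi boundary triple $\{L^2(\partial\Omega)^m,\Gamma_0,\Gamma_1\}$ of Theorem~\ref{thm:BLtriple}, once the hypotheses of that proposition are matched with the present Sobolev setting. Recall that here $A^*=\mathscr L_{\max}$ has domain $\dom A^*=H^0_{\mathscr L}(\Omega)$, and that $A_0=T\upharpoonright\ker\Gamma_0=\mathscr L_D$ is self-adjoint in $L^2(\Omega)$ with $\dom\mathscr L_D\subset H^{2m}(\Omega)$ by Theorem~\ref{thm:BLtriple}~(i). By Rellich's theorem the embedding $H^{2m}(\Omega)\hookrightarrow L^2(\Omega)$ is compact, so $\mathscr L_D$ has compact resolvent and therefore discrete spectrum; in particular $\rho(A_0)\cap\mathbb R\neq\emptyset$. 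Moreover $\mathscr G_1=\prod_{j=0}^{m-1}H^{m_j+1/2}(\partial\Omega)$ is dense in $L^2(\partial\Omega)^m$ by Theorem~\ref{thm:BLtriple}~(ii). Thus the standing assumptions of Proposition~\ref{pro:Gamma_0_extension} hold, and it remains to equip $\mathscr G_1$ with its natural product Sobolev norm and to verify conditions (i)--(ii) there.

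For condition (i) I would observe that each factor $H^{m_j+1/2}(\partial\Omega)$ is a Hilbert space, hence reflexive, and since $m_j+1/2>0$ it embeds continuously and densely into $L^2(\partial\Omega)$; the finite product $\mathscr G_1$ is therefore a reflexive Banach space continuously (and densely) embedded in $L^2(\partial\Omega)^m$. For condition (ii) the key input is that $(\mathscr L_D-\bar\lambda)^{-1}$ maps $L^2(\Omega)$ into $\dom\mathscr L_D\subset H^{2m}(\Omega)$, together with the mapping properties of the normal system $N=\{N_j\}$: since $N_j$ has order $\mu_j=2m-m_j-1$, the associated trace map acts continuously as $N_j:H^{2m}(\Omega)\to H^{2m-\mu_j-1/2}(\partial\Omega)=H^{m_j+1/2}(\partial\Omega)$. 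Composing, $\Gamma_1(A_0-\bar\lambda)^{-1}=N(\mathscr L_D-\bar\lambda)^{-1}:L^2(\Omega)\to\mathscr G_1$ is continuous, which is exactly (ii).

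With (i)--(ii) in hand, Proposition~\ref{pro:Gamma_0_extension} produces a unique surjective continuous extension $\widetilde\Gamma_0=\widetilde D:(H^0_{\mathscr L}(\Omega),\|\cdot\|_{A^*})\to\mathscr G_1'$. To conclude assertion (i) I would then identify the anti-dual $\mathscr G_1'$ with $\prod_{j=0}^{m-1}H^{-m_j-1/2}(\partial\Omega)$, using the standard $L^2(\partial\Omega)$-duality $(H^s(\partial\Omega))'=H^{-s}(\partial\Omega)$ valid on the smooth compact boundary, together with the fact that the anti-dual of a finite product is the product of the anti-duals; this is the point where the interpolation/duality theory of boundary Sobolev spaces is invoked. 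The kernel identity $\ker\widetilde D=\ker D=\dom\mathscr L_D$ follows at once from Lemma~\ref{lem:exgamma}~(i). Assertion (ii) of the corollary is then nothing more than the concluding norm-equivalence statement of Proposition~\ref{pro:Gamma_0_extension}: the norm $\|\Lambda^{-1/2}\,\cdot\,\|_{L^2(\partial\Omega)^m}$ induced by the inner product \eqref{eqn:G_1_innerproduct} is equivalent to any norm satisfying (i)--(ii), hence in particular to the product Sobolev norm just shown to be admissible.

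I expect the only genuinely nontrivial step to be the verification of condition (ii) — namely pinning down that the resolvent of $\mathscr L_D$ lands in the full $H^{2m}$-domain and that $N$ then maps precisely onto the product $\prod_{j}H^{m_j+1/2}(\partial\Omega)$, with the correct boundary exponents $2m-\mu_j-1/2=m_j+1/2$, rather than into a larger trace space; everything else is bookkeeping with the abstract proposition and with the duality of Sobolev spaces on $\partial\Omega$.
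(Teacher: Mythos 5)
Your proposal is correct and follows essentially the same route as the paper, whose proof consists of the remark that both assertions follow immediately from Proposition~\ref{pro:Gamma_0_extension} (together with Lemma~\ref{lem:exgamma} for the kernel identity and standard Sobolev duality on $\partial\Omega$). You have simply filled in the verification of hypotheses (i)--(ii) -- reflexivity of the product trace space, the mapping property $N(\mathscr L_D-\bar\lambda)^{-1}:L^2(\Omega)\to\prod_{j}H^{m_j+1/2}(\partial\Omega)$ via elliptic regularity, and $\rho(\mathscr L_D)\cap\mathbb R\neq\emptyset$ via compactness of the resolvent -- exactly as the paper intends.
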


In the next corollary we assume, in addition, that $\mathscr L_N=T\upharpoonright\ker\Gamma_1$ is self-adjoint.

\begin{Cor}\label{cor:BLtriple2}
 Let $\{L^2(\partial\Omega)^m,\Gamma_0,\Gamma_1\}$ be the quasi boundary triple for $T\subset A^*$ from Theorem~{\rm\ref{thm:BLtriple}}
  with $\gamma$-field $\gamma$ and Weyl function $M$. 
  Assume that the realization $\mathscr L_N$ of $\mathscr L$ is self-adjoint in $L^2(\Omega)$.
  Then the following statements hold.
  \begin{enumerate}[\rm(i)]
  \item 
    The mapping $\Gamma_1 = N$ admits a continuous extension to
    a surjective mapping
    \begin{equation} \label{eqn:Next}
      \widetilde N : H^0_{\mathscr L}(\Omega) \to \prod_{j=0}^{m-1} H^{-2m+m_j+1/2}(\partial\Omega)
    \end{equation}
    such that $\ker \widetilde N = \ker N = \dom \mathscr L_N$.
  \item The norm 
    \[
    \| \Sigma^{-1/2} f \|_{L^2(\partial\Omega)^m},\quad \Sigma := \overline{\im(-M(i)^{-1})}, \quad f\in\prod_{j=0}^{m-1} H^{2m-m_j-1/2}(\partial\Omega),
    \]
    defines an equivalent norm on $\prod_{j=0}^{m-1} H^{2m-m_j-1/2}(\partial\Omega)$.
  \item The values of the $\gamma$-field $\gamma$ and the Weyl function $M$ admit continuous extensions 
    \begin{equation*}
      \begin{split}
      \widetilde \gamma(\lambda) &: \prod_{j=0}^{m-1} H^{-m_j-1/2}(\partial\Omega)\to L^2(\Omega),\\
      \widetilde M(\lambda) &:  \prod_{j=0}^{m-1} H^{-m_j-1/2}(\partial\Omega)\to \prod_{j=0}^{m-1} H^{-2m+m_j+1/2}(\partial\Omega)
      \end{split}
    \end{equation*}
    for all $\lambda \in \rho(\mathscr L_D)$.
  \item The restrictions 
    \begin{equation} \label{eqn:megatrace}
      \begin{split}
        \widetilde D \upharpoonright H^s_{\mathscr L}(\Omega) &: H^s_{\mathscr L}(\Omega) \to \prod_{j=0}^{m-1} H^{s-m_j-1/2}(\partial\Omega), \\
        \widetilde N \upharpoonright H^s_{\mathscr L}(\Omega) &: H^s_{\mathscr L}(\Omega) \to \prod_{j=0}^{m-1} H^{s-2m+m_j+1/2}(\partial\Omega)
      \end{split}
    \end{equation}
    are continuous and surjective for all $s\in[0,2\,m]$.
  \end{enumerate}
\end{Cor}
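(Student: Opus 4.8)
The plan is to obtain items (i)--(iii) as direct applications of the abstract extension machinery of Section~\ref{2.3} and to derive item~(iv) from them by complex interpolation. Throughout I would equip $\mathscr G_0=\prod_{j=0}^{m-1}H^{2m-m_j-1/2}(\partial\Omega)$ and $\mathscr G_1=\prod_{j=0}^{m-1}H^{m_j+1/2}(\partial\Omega)$ with their natural Sobolev norms, the essential book-keeping device being that with $L^2(\partial\Omega)$ as pivot space the anti-dual of $H^s(\partial\Omega)$ is $H^{-s}(\partial\Omega)$, so that $\mathscr G_0'=\prod_{j=0}^{m-1}H^{-2m+m_j+1/2}(\partial\Omega)$ and $\mathscr G_1'=\prod_{j=0}^{m-1}H^{-m_j-1/2}(\partial\Omega)$; these are precisely the spaces appearing in the statement.

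For item~(i) I would apply Corollary~\ref{cor:Gamma_1_extension} to the quasi boundary triple of Theorem~\ref{thm:BLtriple}. Its hypotheses are in force: $A_1=\mathscr L_N$ is self-adjoint by assumption and $\mathscr G_0$ is dense in $L^2(\partial\Omega)^m$ by Theorem~\ref{thm:BLtriple}~(ii). With the Sobolev norm $(\mathscr G_0,\|\cdot\|_{\mathscr G_0})$ is a reflexive Banach space continuously embedded in $L^2(\partial\Omega)^m$, and condition~(ii) of Corollary~\ref{cor:Gamma_1_extension} follows from elliptic regularity: since $\dom\mathscr L_N\subset H^{2m}(\Omega)$ with $(\mathscr L_N-\bar\lambda)^{-1}\colon L^2(\Omega)\to H^{2m}(\Omega)$ bounded, composing with the continuous trace map $\Gamma_0=D\colon H^{2m}(\Omega)\to\mathscr G_0$ from \eqref{eqn:elltrace} shows $\Gamma_0(\mathscr L_N-\bar\lambda)^{-1}\colon L^2(\Omega)\to\mathscr G_0$ is continuous. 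Corollary~\ref{cor:Gamma_1_extension} then yields the surjective continuous extension $\widetilde N=\widetilde\Gamma_1\colon H^0_{\mathscr L}(\Omega)\to\mathscr G_0'$, and $\ker\widetilde N=\ker N=\dom\mathscr L_N$ is Lemma~\ref{lem:exweyl}~(i). Item~(ii) is the norm equivalence recorded in the remark following Corollary~\ref{cor:Gamma_1_extension}, applied with the Sobolev norm as one admissible choice. Item~(iii) is then Definition~\ref{def:extweyl} together with the boundedness statements in Lemma~\ref{lem:exgamma}~(ii) and Lemma~\ref{lem:exweyl}~(iv), after rewriting $\mathscr G_1'$ and $\mathscr G_0'$ as the indicated Sobolev products.

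The substantive work is item~(iv), which I would prove by interpolating between $s=0$ and $s=2m$. For continuity, $\widetilde D\colon H^0_{\mathscr L}(\Omega)\to\mathscr G_1'$ is continuous by Corollary~\ref{cor:BLtriple}~(i), while $D\colon H^{2m}_{\mathscr L}(\Omega)=H^{2m}(\Omega)\to\mathscr G_0$ is continuous by \eqref{eqn:elltrace}; complex interpolation, using $[H^{a}(\partial\Omega),H^{b}(\partial\Omega)]_\theta=H^{(1-\theta)a+\theta b}(\partial\Omega)$ on each factor, gives continuity of $\widetilde D\colon H^s_{\mathscr L}(\Omega)\to\prod_{j=0}^{m-1}H^{s-m_j-1/2}(\partial\Omega)$ for $s=2m\theta$, and symmetrically for $\widetilde N$ using item~(i) at $s=0$. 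For surjectivity I would interpolate right inverses rather than the operators: the $\gamma$-field $\gamma(\lambda)\colon\mathscr G_0\to H^{2m}(\Omega)$ and its extension $\widetilde\gamma(\lambda)\colon\mathscr G_1'\to H^0_{\mathscr L}(\Omega)$ from item~(iii) are bounded right inverses of $\widetilde D$ at the two endpoints, and since $\widetilde\gamma(\lambda)$ extends $\gamma(\lambda)$ they agree on $\mathscr G_0\subset\mathscr G_1'$; their interpolant is therefore a bounded right inverse $\prod_{j=0}^{m-1}H^{s-m_j-1/2}(\partial\Omega)\to H^s_{\mathscr L}(\Omega)$. The analogous argument for $\widetilde N$ uses the $\gamma$-field of the transformed triple $\{\mathcal G,-\Gamma_1,\Gamma_0\}$ and its extension.

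The main obstacle is the interpolation identity $[H^0_{\mathscr L}(\Omega),H^{2m}_{\mathscr L}(\Omega)]_\theta=H^s_{\mathscr L}(\Omega)$ for the graph-norm spaces defined in \eqref{eqn:HsL}--\eqref{eqn:HsLip}: unlike the boundary factors, these are domains of an operator rather than ordinary Sobolev spaces, so the identity is a Lions--Magenes-type interpolation result and is exactly the point where ``standard interpolation theory'' must be invoked with some care. Once this identity is granted, both the continuity and---via the compatible right inverses---the surjectivity in item~(iv) follow on the whole range $s\in[0,2m]$, the preservation of surjectivity being guaranteed precisely because the $\gamma$-field identities force the endpoint right inverses to be restrictions of one another.
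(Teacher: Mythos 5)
Your proposal is correct and follows essentially the same route as the paper, whose proof consists precisely of invoking Proposition~\ref{pro:Gamma_0_extension}, Corollary~\ref{cor:Gamma_1_extension}, Lemma~\ref{lem:exgamma}, Lemma~\ref{lem:exweyl} and standard interpolation theory of Sobolev spaces. Your additional details --- verifying hypothesis (ii) of Corollary~\ref{cor:Gamma_1_extension} via $\dom\mathscr L_N\subset H^{2m}(\Omega)$, and obtaining surjectivity in item (iv) by interpolating the compatible right inverses $\gamma(\lambda)\subset\widetilde\gamma(\lambda)$, with the Lions--Magenes identity $[H^0_{\mathscr L}(\Omega),H^{2m}_{\mathscr L}(\Omega)]_\theta=H^{2m\theta}_{\mathscr L}(\Omega)$ flagged as the point needing care --- are a faithful and accurate implementation of what the paper leaves implicit.
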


 Corollary~\ref{cor:BLtriple} and Corollary~\ref{cor:BLtriple2} imply that the maximal possible domain for a quasi boundary triple with boundary mappings $\widetilde D$ 
and $\widetilde N$ is given by the space $H^{2m-1/2}_{\mathscr L}(\Omega)$, see also \cite{Be65}.
\begin{Pro}\label{pro:BLtriples}
  Let $s\in[0, 2m]$, $T_s := \mathscr L_{\max} \upharpoonright H^s_{\mathscr L}(\Omega)$, assume that $\mathscr L_N$ is self-adjoint and let   
  \begin{equation*}
    \begin{split}
      \Gamma_0^s&: H^s_{\mathscr L} (\Omega) \to\prod_{j=0}^{m-1} H^{s-m_j-1/2}(\partial\Omega),\quad \Gamma_0^s f := \widetilde D f, \\
      \Gamma_1^s&: H^s_{\mathscr L} (\Omega) \to\prod_{j=0}^{m-1} H^{s-2m+m_j+1/2}(\partial\Omega),\quad\Gamma_1^s f := \widetilde N f.
    \end{split}
  \end{equation*}
  Then the spaces 
  \begin{equation*}
    \begin{split}
      \mathscr G_0 &= \ran (\Gamma_0^s\upharpoonright \ker\Gamma_1^s) = \prod_{j=0}^{m-1} H^{2m-m_j-1/2}(\partial\Omega),\\
      \mathscr G_1 &= \ran (\Gamma_1^s\upharpoonright \ker\Gamma_0^s) = \prod_{j=0}^{m-1} H^{m_j+1/2}(\partial\Omega)
    \end{split}
  \end{equation*} 
  are dense in $L^2(\partial\Omega)$ and do not depend on $s$.
  Moreover, if $s\in [2m-\frac 12, 2m]$ then $\ran \Gamma_0^s\subset L^2(\partial\Omega)^m$, 
  $\ran \Gamma_1^s \subset L^2(\partial\Omega)^m$, and 
  $\{L^2(\partial\Omega)^m,\Gamma_0^s,\Gamma_1^s\}$ is a quasi boundary triple for $T_s \subset A^* = \mathscr L_{\max}$.
\end{Pro}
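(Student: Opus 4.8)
The plan is to reduce everything to the quasi boundary triple $\{L^2(\partial\Omega)^m,\Gamma_0,\Gamma_1\}$ on $H^{2m}(\Omega)$ from Theorem~\ref{thm:BLtriple}, together with the trace extension results of Corollary~\ref{cor:BLtriple} and Corollary~\ref{cor:BLtriple2}. The first thing I would record is that the kernels of the extended boundary maps do not feel the parameter $s$. Indeed, $\ker\widetilde D=\dom\mathscr L_D$ and $\ker\widetilde N=\dom\mathscr L_N$ by Corollary~\ref{cor:BLtriple}~(i) and Corollary~\ref{cor:BLtriple2}~(i), and both $\dom\mathscr L_D$ and $\dom\mathscr L_N$ are contained in $H^{2m}(\Omega)=H^{2m}_{\mathscr L}(\Omega)\subset H^s_{\mathscr L}(\Omega)$ for every $s\in[0,2m]$ by Theorem~\ref{thm:BLtriple}~(i). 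Hence $\ker\Gamma_0^s=\dom\mathscr L_D$ and $\ker\Gamma_1^s=\dom\mathscr L_N$, independently of $s$. On these domains $\widetilde D$ and $\widetilde N$ coincide with the original $D$ and $N$, so that $\mathscr G_0=\ran(\Gamma_0^s\upharpoonright\ker\Gamma_1^s)=\ran(\Gamma_0\upharpoonright\ker\Gamma_1)$ and $\mathscr G_1=\ran(\Gamma_1^s\upharpoonright\ker\Gamma_0^s)=\ran(\Gamma_1\upharpoonright\ker\Gamma_0)$; by Theorem~\ref{thm:BLtriple}~(ii) these equal the claimed Sobolev products and are dense in $L^2(\partial\Omega)^m$. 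This settles the first assertion.

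For the second assertion I would fix $s\in[2m-\tfrac12,2m]$. The range inclusions follow from Corollary~\ref{cor:BLtriple2}~(iv), which identifies $\ran\Gamma_0^s=\prod_{j=0}^{m-1}H^{s-m_j-1/2}(\partial\Omega)$ and $\ran\Gamma_1^s=\prod_{j=0}^{m-1}H^{s-2m+m_j+1/2}(\partial\Omega)$ and gives surjectivity onto these spaces. Since $0\le m_j\le 2m-1$, the bound $s\ge 2m-\tfrac12$ makes every exponent $s-m_j-\tfrac12$ and $s-2m+m_j+\tfrac12$ nonnegative; therefore both ranges embed continuously into $L^2(\partial\Omega)^m$, and in particular $\Gamma_0^s,\Gamma_1^s$ are continuous maps from $H^s_{\mathscr L}(\Omega)$ into $L^2(\partial\Omega)^m$. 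It then remains to verify the three axioms of Definition~\ref{qbt} for $T_s=\mathscr L_{\max}\upharpoonright H^s_{\mathscr L}(\Omega)$. Because $C^\infty(\overline\Omega)$ is dense in $H^0_{\mathscr L}(\Omega)=\dom A^*$ for the graph norm and lies in $H^s_{\mathscr L}(\Omega)$, one has $\overline{T_s}=A^*$. Axiom~(iii) is immediate from the first part, since $T_s\upharpoonright\ker\Gamma_0^s=\mathscr L_{\max}\upharpoonright\dom\mathscr L_D=\mathscr L_D$ is self-adjoint by Theorem~\ref{thm:BLtriple}~(i).

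The abstract Green's identity (axiom~(i)) I would obtain by continuous extension. It holds on $H^{2m}(\Omega)$ by Theorem~\ref{thm:BLtriple}; the left-hand side $(\mathscr L f,g)_{L^2(\Omega)}-(f,\mathscr L g)_{L^2(\Omega)}$ is bounded by $\|f\|_{H^s_{\mathscr L}(\Omega)}\|g\|_{H^s_{\mathscr L}(\Omega)}$ and the right-hand side is sesquilinear and continuous thanks to the $L^2$-continuity of $\Gamma_0^s,\Gamma_1^s$ noted above; since $C^\infty(\overline\Omega)$ is dense in $H^s_{\mathscr L}(\Omega)$, the two continuous sesquilinear forms agree on all of $H^s_{\mathscr L}(\Omega)\times H^s_{\mathscr L}(\Omega)$. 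For the density of the range (axiom~(ii)) I would argue directly that $\mathscr G_0\times\mathscr G_1\subset\ran\Gamma^s$: given $x\in\mathscr G_0$ and $y\in\mathscr G_1$, choose $f_1\in\ker\Gamma_1^s=\dom\mathscr L_N$ with $\Gamma_0^s f_1=x$ and $f_0\in\ker\Gamma_0^s=\dom\mathscr L_D$ with $\Gamma_1^s f_0=y$; both lie in $H^{2m}(\Omega)\subset H^s_{\mathscr L}(\Omega)=\dom T_s$, so by linearity $f:=f_0+f_1\in\dom T_s$ satisfies $\Gamma^s f=(x,y)^\top$. Density of $\mathscr G_0,\mathscr G_1$ in $L^2(\partial\Omega)^m$ then forces $\ran\Gamma^s$ to be dense in $L^2(\partial\Omega)^m\times L^2(\partial\Omega)^m$.

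The main point to watch is the extension of Green's identity, which rests precisely on the hypothesis $s\ge 2m-\tfrac12$: this is exactly what makes the boundary maps take values in $L^2(\partial\Omega)^m$ continuously, so that both sides of the identity are continuous on $H^s_{\mathscr L}(\Omega)$ and the density of $C^\infty(\overline\Omega)$ can be invoked. Everything else is bookkeeping, relying on the $s$-independence of the kernels, which in turn comes from the $H^{2m}$-regularity of $\dom\mathscr L_D$ and $\dom\mathscr L_N$ built into Theorem~\ref{thm:BLtriple}.
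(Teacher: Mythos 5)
Your proof is correct, and it follows exactly the route the paper intends: the proposition is stated there without proof, as an immediate consequence of Theorem~\ref{thm:BLtriple} and Corollaries~\ref{cor:BLtriple} and~\ref{cor:BLtriple2}, and your argument fills in precisely those details (the $s$-independence of the kernels via $\dom\mathscr L_D,\dom\mathscr L_N\subset H^{2m}(\Omega)$, the range inclusions from $s\ge 2m-\tfrac12$ and $0\le m_j\le 2m-1$, Green's identity by continuity and density of $C^\infty(\overline\Omega)$ in $H^s_{\mathscr L}(\Omega)$, and density of $\ran\Gamma^s$ via $\mathscr G_0\times\mathscr G_1\subset\ran\Gamma^s$). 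Nothing is missing.
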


By applying Theorem~\ref{thm:regularisation} to the quasi boundary triple 
$\{L^2(\partial\Omega)^m,\Gamma_0,\Gamma_1\}$
from Theorem~\ref{thm:BLtriple} we get the ordinary boundary triple introduced by Grubb in \cite{Gr68}, 
see also \cite{BrGrWo09, Gr09} and \cite[Proposition~3.5,~5.1]{Ma10}. 
Recall that there exist isometric isomorphisms 
\begin{equation}\label{iotas2}
  \iota_\pm : \prod_{j=0}^{m-1}H^{\pm (m_j + 1/2)}(\partial \Omega) \to L^2(\partial \Omega)^m
\end{equation}
such that
$(\iota_- x',\, \iota_+ x)_{L^2(\partial\Omega)^m} =  \langle x',\, x  \rangle_{\mathscr G_1' \times \mathscr G_1}$
holds for all 
\begin{equation}\label{iotas3}
x\in\mathscr G_1 = \prod_{j=0}^{m-1} H^{m_j+1/2}(\partial\Omega) \quad\text{and}\quad
x'\in\mathscr G_1'=\prod_{j=0}^{m-1} H^{-m_j-1/2}(\partial\Omega);
\end{equation}
cf.~\cite[(3.4)--(3.6)]{Ma10}.
\begin{Cor}\label{cor:Grubbtriple}
  Let $\eta\in \rho(\mathscr L_D)\cap \mathbb R$ and define $\Upsilon_0, \Upsilon_1 : H^0_{\mathscr L}(\Omega) \to L^2(\partial\Omega)^m$ by
  \begin{equation*} 
  \Upsilon_0 f := \iota_- \widetilde D f, \quad \Upsilon_1 f := \iota_+ N f_D, \quad  f=f_D + f_\eta \in \dom \mathscr L_D\dotplus\mathcal N_\eta(A^*).
  \end{equation*}
  Then $\{L^2(\partial\Omega)^m,\Upsilon_0,\Upsilon_1\}$ is an ordinary boundary triple for $A^* = \mathscr L_{\max}$ with
  $A^*\upharpoonright\ker\Upsilon_0=\mathscr L_D$ and  
  \[
  A^*\upharpoonright\ker\Upsilon_1=\mathscr L_{\min}\dot + \bigl\{(f_\eta,\eta f_\eta)^\top: 
  \mathscr L f_\eta=\eta f_\eta,\,f_\eta\in H^0_{\mathscr L} (\Omega)\bigr\}.
  \]
\end{Cor}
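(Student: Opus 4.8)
The plan is to recognize Corollary~\ref{cor:Grubbtriple} as a direct specialization of Theorem~\ref{thm:regularisation} to the quasi boundary triple $\{L^2(\partial\Omega)^m,\Gamma_0,\Gamma_1\}$ furnished by Theorem~\ref{thm:BLtriple}. Consequently the bulk of the argument consists in verifying that the hypotheses of Theorem~\ref{thm:regularisation} are in force and then matching the abstract boundary maps and kernels with their concrete PDE counterparts. I do not expect a genuine obstacle; the only care required is careful bookkeeping of the identifications.

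First I would check the hypotheses. By Theorem~\ref{thm:BLtriple}~(i) the operator $A_0 = T\upharpoonright\ker\Gamma_0 = \mathscr L_D$ is self-adjoint in $L^2(\Omega)$, and in the proof of Theorem~\ref{thm:BLtriple} it was shown that $\overline T = A^* = \mathscr L_{\max}$, so that $A_0 = A^*\upharpoonright\ker\Gamma_0$. The standing assumption $\eta\in\rho(\mathscr L_D)\cap\mathbb R$ supplies the required real point in $\rho(A_0)$, and Theorem~\ref{thm:BLtriple}~(ii) gives that $\mathscr G_1 = \prod_{j=0}^{m-1} H^{m_j+1/2}(\partial\Omega)$ is dense in $\mathcal G = L^2(\partial\Omega)^m$. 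Thus all assumptions of Theorem~\ref{thm:regularisation} are satisfied, with $\mathscr G_1$ carried by the norm of Corollary~\ref{cor:BLtriple}~(ii), which satisfies conditions {\rm(i)}--{\rm(ii)} of Proposition~\ref{pro:Gamma_0_extension}.

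Next I would identify the concrete objects. The continuous extension $\widetilde\Gamma_0$ of $\Gamma_0 = D$ to $\dom A^* = H^0_{\mathscr L}(\Omega)$ is precisely the map $\widetilde D$ of Corollary~\ref{cor:BLtriple}~(i), and the Gelfand-triple isomorphisms $\iota_\pm$ of Theorem~\ref{thm:regularisation} are the maps in \eqref{iotas2}--\eqref{iotas3}. With these identifications the abstract boundary maps $\Upsilon_0 f = \iota_-\widetilde\Gamma_0 f$ and $\Upsilon_1 f = \iota_+\Gamma_1 f_0$, where $f = f_0 + f_\eta\in\dom A_0\dotplus\mathcal N_\eta(A^*)$, become exactly $\Upsilon_0 f = \iota_-\widetilde D f$ and $\Upsilon_1 f = \iota_+ N f_D$ with $f = f_D + f_\eta\in\dom\mathscr L_D\dotplus\mathcal N_\eta(A^*)$, since $\Gamma_1 = N$ and the $\dom A_0$-component $f_0$ is the Dirichlet part $f_D$. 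Theorem~\ref{thm:regularisation} then yields that $\{L^2(\partial\Omega)^m,\Upsilon_0,\Upsilon_1\}$ is an ordinary boundary triple for $A^* = \mathscr L_{\max}$ with $A^*\upharpoonright\ker\Upsilon_0 = A_0 = \mathscr L_D$.

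Finally, for the kernel of $\Upsilon_1$, Theorem~\ref{thm:regularisation} gives $A^*\upharpoonright\ker\Upsilon_1 = A\dotplus\widehat{\mathcal N}_\eta(A^*)$. Here $A = \mathscr L_{\min}$ and $\widehat{\mathcal N}_\eta(A^*) = \{(f_\eta,\eta f_\eta)^\top : f_\eta\in\mathcal N_\eta(A^*)\}$, where $\mathcal N_\eta(A^*) = \ker(\mathscr L_{\max}-\eta) = \{f_\eta\in H^0_{\mathscr L}(\Omega) : \mathscr L f_\eta = \eta f_\eta\}$, which is exactly the stated description. The points demanding attention—rather than forming a real difficulty—are confirming $\dom A^* = H^0_{\mathscr L}(\Omega)$ so that $\widetilde D$ is genuinely defined on the full maximal domain, and observing that all admissible norms on $\mathscr G_1$ are equivalent by Proposition~\ref{pro:Gamma_0_extension}, so the specific topology chosen in \eqref{iotas2} is legitimate for invoking Theorem~\ref{thm:regularisation}.
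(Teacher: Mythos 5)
Your proposal is correct and takes essentially the same route as the paper: the corollary is obtained there precisely by applying Theorem~\ref{thm:regularisation} to the quasi boundary triple of Theorem~\ref{thm:BLtriple}, with the density of $\mathscr G_1$, the real point $\eta\in\rho(\mathscr L_D)$, and the identification $\widetilde\Gamma_0=\widetilde D$ from Corollary~\ref{cor:BLtriple} playing exactly the roles you assign them. Your added remarks on the admissible (and mutually equivalent) norms on $\mathscr G_1$ via Proposition~\ref{pro:Gamma_0_extension} are consistent with the paper's conventions in \eqref{iotas2}--\eqref{iotas3}.
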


As an example of the consequences of the abstract results from Section~\ref{sec:qbt} and Section~\ref{sec:extension}
we state only a version of Kre\u{\i}n's formula for the case of $2m$-th order elliptic differential operators.
We leave it to the reader to formulate the other corollaries from the general results, e.g. the description of
the closed (symmetric,  self-adjoint, (maximal) dissipative, (maximal) accumulative, respectively) extensions 
  $\mathscr L_\vartheta \subset \mathscr L_{\max}$ of 
  $\mathscr L_{\min}$ in $L^2(\Omega)$, regularity results or sufficient criteria for self-adjointness, see also Section~\ref{sec:ellipticsecond}
  for the second order case.
\begin{Cor}\label{cor:superkrein}
  Let $\{L^2(\partial\Omega)^m,\Gamma_0,\Gamma_1\}$ be the quasi boundary triple from Theorem~\ref{thm:BLtriple}, and let 
  $\widetilde \gamma(\lambda)$ and $\widetilde M(\lambda)$, $\lambda \in \rho(\mathscr L_D)$, be the extended $\gamma$-field and Weyl function, 
  respectively. Assume that $\mathscr L_N$ is self-adjoint, that
  \begin{equation*}
    \vartheta \subset \prod_{j=0}^{m-1} H^{-m_j-1/2}(\partial\Omega) \times \prod_{j=0}^{m-1} H^{-2m+m_j+1/2}(\partial\Omega)
  \end{equation*}
  is a linear relation in $\ran (\widetilde D, \widetilde N)$ and that the corresponding extension 
  \begin{equation*}
    \mathscr L_{\vartheta} := \mathscr L_{\max} \upharpoonright \bigl\{ f\in H^0_{\mathscr L}(\Omega) : \vartheta 
     \widetilde D f -\widetilde N f =0  \bigr\}
  \end{equation*}
  is closed in $L^2(\Omega)$.
  Then for all $\lambda\in \rho(\mathscr L_{D})$ the following assertions {\rm(i)}-{\rm(iv)} hold:
  \begin{enumerate}[\rm(i)]
  \item $\lambda \in \sigma_p(\mathscr L_{\vartheta})$ 
    if and only if $0\in \sigma_p(\iota_{+} ( \vartheta-\widetilde M(\lambda) ) \iota_{-}^{-1})$, in this case
    \[
    \ker (\mathscr L_{\vartheta}-\lambda) =\widetilde \gamma(\lambda) \ker(\vartheta-\widetilde M(\lambda)),
    \]
  \item $\lambda \in \sigma_c(\mathscr L_{\vartheta})$ if and only if $0\in \sigma_c(\iota_{+} ( \vartheta-\widetilde M(\lambda)  ) \iota_{-}^{-1})$,
  \item $\lambda \in \sigma_r(\mathscr L_{\vartheta})$ if and only if $0\in \sigma_r(\iota_{+} ( \vartheta-\widetilde M(\lambda)  ) \iota_{-}^{-1})$,
  \item $\lambda \in \rho(\mathscr L_{\vartheta})$ if and only if $0\in \rho( \iota_{+} ( \vartheta-\widetilde M(\lambda) ) \iota_{-}^{-1})$ and 
  \begin{equation*}
    ( \mathscr L_{\vartheta} -\lambda )^{-1} 
    = (\mathscr L_{D} -\lambda )^{-1} 
    +\widetilde \gamma(\lambda)  \bigl(\vartheta-\widetilde M(\lambda)\bigr)^{-1} \widetilde \gamma(\bar\lambda)'
  \end{equation*}
  holds for all $\lambda\in\rho(\mathscr L_{\vartheta})\cap\rho(\mathscr L_{D})$.
  \end{enumerate}
\end{Cor}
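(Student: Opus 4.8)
The plan is to obtain Corollary~\ref{cor:superkrein} as a direct application of the abstract Kre\u{\i}n formula in Corollary~\ref{cor:krein} to the concrete quasi boundary triple $\{L^2(\partial\Omega)^m,\Gamma_0,\Gamma_1\}=\{L^2(\partial\Omega)^m,D,N\}$ from Theorem~\ref{thm:BLtriple}. Accordingly, the first task is to confirm that all standing hypotheses of Corollary~\ref{cor:krein} are met in the present situation, and the second is to identify the abstract objects $\widetilde\gamma$, $\widetilde M$, $\widetilde\Gamma_0$, $\widetilde\Gamma_1$, $\mathscr G_0'$, $\mathscr G_1'$ and $\widetilde A_\vartheta$ with their concrete elliptic counterparts. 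Once these identifications are in place, assertions (i)--(iv) become a literal transcription of the corresponding statements in Corollary~\ref{cor:krein}.

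To verify the hypotheses, recall from Theorem~\ref{thm:BLtriple} that $\{L^2(\partial\Omega)^m,D,N\}$ is a quasi boundary triple for $T\subset A^*=\mathscr L_{\max}$ with $\gamma$-field $\gamma$ and Weyl function $M$, that $A_0=T\upharpoonright\ker\Gamma_0=\mathscr L_D$ is self-adjoint, and that $A_1=T\upharpoonright\ker\Gamma_1=\mathscr L_N$ is self-adjoint by assumption; moreover $\mathscr G_0=\prod_{j=0}^{m-1}H^{2m-m_j-1/2}(\partial\Omega)$ and $\mathscr G_1=\prod_{j=0}^{m-1}H^{m_j+1/2}(\partial\Omega)$ are dense in $L^2(\partial\Omega)^m$ by Theorem~\ref{thm:BLtriple}~(ii). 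Since $\mathscr L_D$ is a self-adjoint properly elliptic realization on a bounded domain, it is semibounded from below with compact resolvent, so $\rho(\mathscr L_D)\cap\mathbb R\neq\emptyset$; this supplies the required $\eta$. Thus every assumption of Corollary~\ref{cor:krein} holds.

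Next I would match the extended data. By Corollary~\ref{cor:BLtriple}~(i) and Corollary~\ref{cor:BLtriple2}~(i) the continuous extensions of $\Gamma_0=D$ and $\Gamma_1=N$ are precisely $\widetilde\Gamma_0=\widetilde D$ and $\widetilde\Gamma_1=\widetilde N$, mapping into the anti-dual spaces $\mathscr G_1'=\prod_{j=0}^{m-1}H^{-m_j-1/2}(\partial\Omega)$ and $\mathscr G_0'=\prod_{j=0}^{m-1}H^{-2m+m_j+1/2}(\partial\Omega)$, while $\widetilde\gamma(\lambda)$ and $\widetilde M(\lambda)$ are the extended $\gamma$-field and Weyl function from Corollary~\ref{cor:BLtriple2}~(iii), and the isometric isomorphisms $\iota_\pm$ are those of \eqref{iotas2}--\eqref{iotas3}. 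Consequently the defining boundary condition $\vartheta\widetilde D f-\widetilde N f=0$ of $\mathscr L_\vartheta$ is exactly $\widetilde\Gamma f=(\widetilde\Gamma_0 f,\widetilde\Gamma_1 f)^\top\in\vartheta$, i.e.\ $\mathscr L_\vartheta=\widetilde A_\vartheta$ in the notation of Corollary~\ref{cor:krein}; the hypothesis that $\vartheta\subset\mathscr G_1'\times\mathscr G_0'$ is a linear relation in $\ran(\widetilde D,\widetilde N)=\ran\widetilde\Gamma$ with $\widetilde A_\vartheta$ closed is exactly what is imposed in the statement. With this dictionary fixed, each of (i)--(iv) follows by quoting the matching assertion of Corollary~\ref{cor:krein}.

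I expect no conceptual obstacle: once the machinery of Section~\ref{sec:extension} is available, the corollary is a translation exercise. The only points demanding care are the bookkeeping of the (anti-)dual trace spaces --- ensuring that the Sobolev exponents appearing in $\mathscr G_0'$ and $\mathscr G_1'$ are those produced by the duality pairing \eqref{iotas2}--\eqref{iotas3} and not transposed --- and the verification that $\rho(\mathscr L_D)$ meets the real axis, which rests on the semiboundedness of the self-adjoint Dirichlet realization. Neither is deep, so the substance of the proof is the invocation of Corollary~\ref{cor:krein} after the identifications above.
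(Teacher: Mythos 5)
Your proposal is correct and is exactly the route the paper intends: the paper states Corollary~\ref{cor:superkrein} without a separate proof, as a direct application of the abstract Kre\u{\i}n formula (Corollary~\ref{cor:krein}) to the quasi boundary triple of Theorem~\ref{thm:BLtriple}, using the identifications $\widetilde\Gamma_0=\widetilde D$, $\widetilde\Gamma_1=\widetilde N$ and the extended $\gamma$-field and Weyl function from Corollaries~\ref{cor:BLtriple} and~\ref{cor:BLtriple2}. Your verification of the hypotheses (self-adjointness of $\mathscr L_D$ and $\mathscr L_N$, density of $\mathscr G_0,\mathscr G_1$, and $\rho(\mathscr L_D)\cap\mathbb R\neq\emptyset$ via the discreteness of the spectrum) and the translation of the boundary condition $\vartheta\widetilde D f-\widetilde N f=0$ into $\widetilde\Gamma f\in\vartheta$ supply precisely the bookkeeping the paper leaves implicit.
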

\subsection{Second order elliptic differential operators on smooth domains with compact boundary}\label{sec:ellipticsecond}
In this section we pay particular attention to a special second order case which appears in the literature in different contexts, 
see, e.g., \cite{Be10,BeLa12,BeLaLo11,BeLaLo13,Gr11,Gr11a,Gr11b}. 

Let $\Omega\subset\mathbb R^n$, $n\geq 2$, be a bounded 
or unbounded domain with a compact $C^\infty$-smooth boundary $\partial\Omega$ and consider the 
second order differential expression on $\Omega$ given by
\begin{equation*}
  \mathscr L =  - \sum_{j,\, k=1}^n \partial_j  a_{j k} \partial_k  + a
\end{equation*}
with coefficients $a_{j k}\in C^\infty(\overline\Omega)$ such that $a_{j k}(x) = a_{k j}(x)$ for all $x\in\overline\Omega$ and 
$j,\, k \in \{1,\dots,n\}$,  
and $a\in L^\infty(\Omega)$ real. In the case that $\Omega$ is unbounded we also assume that the first partial derivatives
of the functions $a_{jk}$ are bounded in $\Omega$.
Furthermore, for some $c>0$ the ellipticity condition 
$\sum_{j,\, k=1}^n a_{j k}(x) \xi_j \xi_k \geq c \sum_{k=1}^n \xi_k^2$ is assumed to hold for all $\xi\in \mathbb R^n$ and $x\in\overline\Omega$.
As in Section~\ref{sec:elliptic} we define the Hilbert spaces $H^s_{\mathscr L}(\Omega)$ and inner products via \eqref{eqn:HsL} and \eqref{eqn:HsLip},
respectively. 
The minimal and maximal realization of the differential expression $\mathscr L$ are 
\begin{equation*}
  A = \mathscr L_{\min} = \mathscr L \upharpoonright H^2_0(\Omega)\quad\text{and}\quad
  A^* = \mathscr L_{\max} = \mathscr L \upharpoonright H^0_{\mathscr L}(\Omega),
\end{equation*}
and we set $T := \mathscr L \upharpoonright H^2(\Omega)$. The minimal operator
$A$ is a closed, densely defined, symmetric operator in $L^2(\Omega)$ with equal 
infinite deficiency indices. The Dirichlet and Neumann trace operator are defined by
\begin{equation*}
  \tau_D = f \upharpoonright_{\partial\Omega}\quad\text{and}\quad \tau_N f 
  =  \sum_{j,\, k=1}^n a_{j k} \mathfrak n_j \partial_k f\upharpoonright_{\partial\Omega},\quad f\in C^\infty(\overline\Omega),
\end{equation*}
and extended by continuity to a surjective mapping
$(\tau_D,\tau_N)^\top:H^2(\Omega)\rightarrow H^{3/2}(\partial\Omega)\times H^{1/2}(\partial\Omega)$; cf. \cite{LiMa72}.
Here $\mathfrak n=(\mathfrak n_1,\,\mathfrak n_2,\,\dots,\,\mathfrak n_n)^\top$ 
denotes the unit vector field pointing out of $\Omega$.

The next theorem is a variant of Theorem~\ref{thm:LipBLtriple} and Theorem~\ref{thm:BLtriple} with 
$D=\tau_D$ and $N=-\tau_N$; cf.~\cite{BeLa12,BeLaLo11}. We do not repeat the proof here and refer only to \cite[Theorem 5]{Br60} and 
\cite[Theorem 7.1]{Be65} 
for the self-adjointness of $\mathscr L_D$ and $\mathscr L_N$, respectively.
As in the previous theorems the spaces $\mathscr G_0$ and $\mathscr G_1$ from Definition~\ref{def:g0g1} turn out to be dense in $L^2(\partial\Omega)$,
the $\gamma$-field coincides with a family of Poisson operators and the values of the Weyl function are (up to a minus sign) 
Dirichlet-to-Neumann maps.
\begin{Thm}\label{thm:BLtriplesecond}
Let $T=\mathscr L\upharpoonright H^2(\Omega)$ and let
  \[ 
  \Gamma_0,\Gamma_1 : H^2(\Omega) \to L^2(\partial\Omega),\qquad \Gamma_0 f := \tau_D f, \quad \Gamma_1 f := -\tau_N f.
  \]
  Then $\{L^2(\partial\Omega),\Gamma_0,\Gamma_1\}$ 
  is a quasi boundary triple for $T \subset A^*=\mathscr L_{\max}$ such that the minimal realization $A=\mathscr L_{\min}$ coincides with
  $T\upharpoonright\ker\Gamma$ and the following statements hold.
  \begin{enumerate}[\rm(i)]
  \item The Dirichlet realization $\mathscr L_{D}$ and Neumann realization $\mathscr L_{N}$ correspond to $\ker \Gamma_0$ and $\ker \Gamma_1$, 
    \begin{equation*}
      \begin{split}
        \mathscr L_D &:=  T\upharpoonright \ker \Gamma_0 = \mathscr L_{\max} \upharpoonright \bigl\{ f\in H^{2} (\Omega): \tau_D f =0\bigr \}, \\
        \mathscr L_N &:=  T\upharpoonright \ker \Gamma_1 = \mathscr L_{\max} \upharpoonright \bigl\{ f\in H^{2} (\Omega): \tau_N f =0\bigr \},
      \end{split}
    \end{equation*}
    respectively, and both operators are self-adjoint in $L^2(\Omega)$. 
  \item The spaces 
    \begin{equation*}
      \begin{split}
        \mathscr G_0 &:= \ran (\Gamma_0\upharpoonright \ker\Gamma_1) = H^{3/2}(\partial\Omega), \\ 
        \mathscr G_1 &:= \ran (\Gamma_1\upharpoonright \ker\Gamma_0) = H^{1/2}(\partial\Omega)
      \end{split}
    \end{equation*}
    are dense in $L^2(\partial\Omega)$.
  \item The values $\gamma(\lambda) : L^2(\partial\Omega) \supset H^{3/2}(\partial\Omega) \to L^2(\Omega)$  
    of the $\gamma$-field are given by
    \begin{equation*}
      \gamma(\lambda) \varphi = f, \qquad \varphi\in H^{3/2}(\partial\Omega), \quad \lambda \in \rho(\mathscr L_D),
    \end{equation*}
    where $f\in L^2(\Omega)$ is the unique solution of the boundary value problem 
    \begin{equation}\label{eqn:secondPDE}
        (\mathscr L -\lambda) f = 0 \qquad \tau_D f = \varphi.
    \end{equation}
  \item The values 
    $M(\lambda) :  L^2(\partial\Omega)\supset H^{3/2}(\partial\Omega) \to  L^2(\partial\Omega)$ 
    of the Weyl function are given by
    \begin{equation*}
      M(\lambda) \varphi = -\tau_N f, \qquad \varphi\in H^{3/2}(\partial\Omega), \quad \lambda \in \rho(\mathscr L_D),
    \end{equation*}
    where $f = \gamma(\lambda) \varphi$ is the unique solution of \eqref{eqn:secondPDE}.
  \end{enumerate}
\end{Thm}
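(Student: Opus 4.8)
The plan is to follow the proof of Theorem~\ref{thm:LipBLtriple} almost verbatim, replacing the Lipschitz trace theory by the classical trace theory on a smooth compact boundary. First I would record that $C^\infty(\overline\Omega)$, and hence $H^2(\Omega)$, is dense in $H^0_{\mathscr L}(\Omega) = \dom A^*$, which yields $\overline T = A^* = \mathscr L_{\max}$; combined with the general identity $\ker\Gamma = \dom A$ that holds in every quasi boundary triple, this gives $T\upharpoonright\ker\Gamma = \mathscr L_{\min} = A$ once the triple property has been checked.

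To verify Definition~\ref{qbt} I would treat the three conditions in turn. For condition~(i) I would start from the classical second order Green's formula
\[
(\mathscr L f, g)_{L^2(\Omega)} - (f, \mathscr L g)_{L^2(\Omega)} = (\tau_N f, \tau_D g)_{L^2(\partial\Omega)} - (\tau_D f, \tau_N g)_{L^2(\partial\Omega)},
\]
valid for $f, g\in C^\infty(\overline\Omega)$, and extend it to all $f, g\in H^2(\Omega)$ by continuity of $\tau_D,\tau_N$ on $H^2(\Omega)$ together with density of $C^\infty(\overline\Omega)$; with $\Gamma_0 = \tau_D$ and $\Gamma_1 = -\tau_N$ this is precisely \eqref{eqn:abstract_green}. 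Condition~(iii) is the self-adjointness of $A_0 = T\upharpoonright\ker\Gamma_0 = \mathscr L_D$, which I would import from \cite[Theorem~5]{Br60}; the self-adjointness of $\mathscr L_N$ from \cite[Theorem~7.1]{Be65} then settles the remaining part of assertion~(i). Condition~(ii), density of $\ran\Gamma$, I would deduce at the end from assertion~(ii) via the inclusion $\mathscr G_0\times\mathscr G_1\subset\ran\Gamma$ noted after Definition~\ref{def:g0g1}.

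The substantive step is assertion~(ii). The key input is that the combined trace map $(\tau_D,\tau_N)^\top$ extends to a continuous surjection from $H^2(\Omega)$ onto $H^{3/2}(\partial\Omega)\times H^{1/2}(\partial\Omega)$. Since the cited self-adjointness results also supply the regularity $\dom\mathscr L_D\subset H^2(\Omega)$ and $\dom\mathscr L_N\subset H^2(\Omega)$, I would restrict this surjection to $\ker\Gamma_0 = \{f\in H^2(\Omega): \tau_D f = 0\}$ to identify $\mathscr G_1 = \ran(-\tau_N\upharpoonright\dom\mathscr L_D) = H^{1/2}(\partial\Omega)$, and to $\ker\Gamma_1$ to identify $\mathscr G_0 = \ran(\tau_D\upharpoonright\dom\mathscr L_N) = H^{3/2}(\partial\Omega)$; their density in $L^2(\partial\Omega)$ is the standard Sobolev embedding. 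Assertions~(iii) and~(iv) are then immediate from Definition~\ref{def:weyl}: for $\varphi\in\ran\Gamma_0 = H^{3/2}(\partial\Omega)$, the vector $\gamma(\lambda)\varphi$ is by definition the unique $f\in\mathcal N_\lambda(T)$ with $\tau_D f = \varphi$, i.e.\ the solution of \eqref{eqn:secondPDE}, and $M(\lambda)\varphi = \Gamma_1\gamma(\lambda)\varphi = -\tau_N f$.

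The hard part will not be the abstract bookkeeping but the two analytic inputs underpinning it: the self-adjointness with $H^2$-regularity of $\mathscr L_D$ and $\mathscr L_N$, and the surjectivity of $(\tau_D,\tau_N)^\top$ onto $H^{3/2}\times H^{1/2}$. As $\Omega$ may be unbounded, I would make sure these are invoked for domains with \emph{compact} smooth boundary: the trace surjectivity is a local statement near $\partial\Omega$ and is therefore unaffected, while the self-adjointness statements are exactly what \cite{Br60} and \cite{Be65} provide at this level of generality. Modulo these references the argument is a direct transcription of the proof of Theorem~\ref{thm:LipBLtriple}.
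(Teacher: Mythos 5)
Your proposal is correct and follows essentially the route the paper intends: the paper explicitly declines to repeat the proof, declaring the theorem a variant of Theorem~\ref{thm:LipBLtriple} and Theorem~\ref{thm:BLtriple} with $D=\tau_D$, $N=-\tau_N$, and citing exactly your two references \cite[Theorem~5]{Br60} and \cite[Theorem~7.1]{Be65} for the self-adjointness of $\mathscr L_D$ and $\mathscr L_N$. Your reconstruction (Green's identity by density of $C^\infty(\overline\Omega)$, identification of $\mathscr G_0$ and $\mathscr G_1$ via the surjectivity of $(\tau_D,\tau_N)^\top:H^2(\Omega)\to H^{3/2}(\partial\Omega)\times H^{1/2}(\partial\Omega)$, density of $\ran\Gamma$ via $\mathscr G_0\times\mathscr G_1\subset\ran\Gamma$, and the $\gamma$-field and Weyl function read off from Definition~\ref{def:weyl}) matches the argument used for those earlier theorems, including the care needed for unbounded $\Omega$ with compact boundary.
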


Let $\{L^2(\partial\Omega),\Gamma_0,\Gamma_1\}$ be the quasi boundary triple from Theorem~\ref{thm:BLtriplesecond}.
In the same way as in  \eqref{eqn:Dext} and \eqref{eqn:Next}   
we obtain that $(\tau_D,\tau_N )^\top$ admits a continuous extension to a mapping 
\begin{equation*}
  (\widetilde \tau_D, \widetilde \tau_N)^\top : H^0_{\mathscr L}(\Omega) \to H^{-1/2}(\partial\Omega) \times H^{-3/2}(\partial\Omega),
\end{equation*}
where for all $s\in [0,2]$ the restrictions
\begin{equation*}
 \begin{split}
  \widetilde \tau_D \upharpoonright H^s_{\mathscr L}(\Omega)&: H^s_{\mathscr L}(\Omega) \to H^{s-1/2}(\partial\Omega),\\
  \widetilde \tau_N \upharpoonright H^s_{\mathscr L}(\Omega)&: H^s_{\mathscr L}(\Omega) \to H^{s-3/2}(\partial\Omega),
 \end{split}
\end{equation*}
are continuous and surjective; 
cf.~\eqref{eqn:megatrace}.

The quasi boundary triples in the next proposition were first introduced in \cite{BeLa07} on the domains
$H^2(\Omega)$ and $H^{3/2}_{\mathscr L}(\Omega)$. We note that the latter space coincides with the first order Beals space 
$\mathscr B^1_{\mathscr L}(\Omega)$, see \cite{Be65}.

\begin{Pro}\label{pro:secondBLtriples}
  Let $s\in[0, 2]$, $T_s := \mathscr L_{\max} \upharpoonright H^s_{\mathscr L}(\Omega)$, and let 
  \begin{equation*}
    \begin{split}
      \Gamma_0^s: H^s_{\mathscr L} (\Omega) \to H^{s-1/2}(\partial\Omega),\quad \Gamma_0^s f &:= \widetilde\tau_D f, \\
      \Gamma_1^s: H^s_{\mathscr L} (\Omega) \to H^{s-3/2}(\partial\Omega),\quad \Gamma_1^s f &:= -\widetilde\tau_N f.
    \end{split}
  \end{equation*}
  Then the spaces 
  \begin{equation*}
    \begin{split}
      \mathscr G_0 &= \ran (\Gamma_0^s\upharpoonright \ker\Gamma_1^s) = H^{3/2}(\partial\Omega),\\
      \mathscr G_1 &= \ran (\Gamma_1^s\upharpoonright \ker\Gamma_0^s) = H^{1/2}(\partial\Omega)
    \end{split}
  \end{equation*} 
  are dense in $L^2(\partial\Omega)$ and do not depend on $s$.
  Moreover, if $s\in [\frac 32, 2]$ then $\ran \Gamma_0^s\subset L^2(\partial\Omega)$, 
  $\ran \Gamma_1^s \subset L^2(\partial\Omega)$, and 
  $\{L^2(\partial\Omega),\Gamma_0^s,\Gamma_1^s\}$ is a quasi boundary triple for $T_s \subset A^* = \mathscr L_{\max}$.
\end{Pro}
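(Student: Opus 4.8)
The plan is to transfer the $s=2$ quasi boundary triple $\{L^2(\partial\Omega),\Gamma_0,\Gamma_1\}$ of Theorem~\ref{thm:BLtriplesecond} to the larger domains $H^s_{\mathscr L}(\Omega)$ using the continuity and surjectivity of the extended traces recorded after that theorem, together with the $H^2$-regularity of the Dirichlet and Neumann realizations. First I would pin down the two kernels. Since $\ker\widetilde\tau_D=\dom\mathscr L_D$ and $\ker\widetilde\tau_N=\dom\mathscr L_N$ (the second-order analogues of Corollary~\ref{cor:BLtriple}~(i) and Corollary~\ref{cor:BLtriple2}~(i); cf.\ Corollary~\ref{cor:exttau}) and since for a smooth boundary both realizations satisfy $\dom\mathscr L_D,\dom\mathscr L_N\subset H^2(\Omega)\subset H^s_{\mathscr L}(\Omega)$ for every $s\in[0,2]$, one obtains
\[
\ker\Gamma_0^s=\dom\mathscr L_D\qquad\text{and}\qquad\ker\Gamma_1^s=\dom\mathscr L_N
\]
independently of $s$. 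On these kernels the extended traces coincide with the classical ones, so $\Gamma_0^s\upharpoonright\ker\Gamma_1^s=\tau_D\upharpoonright\dom\mathscr L_N$ and $\Gamma_1^s\upharpoonright\ker\Gamma_0^s=-\tau_N\upharpoonright\dom\mathscr L_D$; this already shows that $\mathscr G_0$ and $\mathscr G_1$ do not depend on $s$ and coincide with the spaces in Theorem~\ref{thm:BLtriplesecond}~(ii).

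Next I would compute the ranges. Using surjectivity of $(\tau_D,\tau_N)^\top:H^2(\Omega)\to H^{3/2}(\partial\Omega)\times H^{1/2}(\partial\Omega)$, for $\varphi\in H^{3/2}(\partial\Omega)$ I would pick $f\in H^2(\Omega)$ with $\tau_D f=\varphi$ and $\tau_N f=0$, so that $f\in\dom\mathscr L_N$ and $\tau_D f=\varphi$; hence $\mathscr G_0=\tau_D(\dom\mathscr L_N)=H^{3/2}(\partial\Omega)$, and the symmetric argument gives $\mathscr G_1=-\tau_N(\dom\mathscr L_D)=H^{1/2}(\partial\Omega)$. Both spaces are dense in $L^2(\partial\Omega)$, which settles the first assertion.

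For the ``moreover'' part with $s\in[\frac 32,2]$, the inequalities $s-\frac 12\geq1$ and $s-\frac 32\geq0$ give $\ran\Gamma_0^s\subset H^{s-1/2}(\partial\Omega)\subset L^2(\partial\Omega)$ and $\ran\Gamma_1^s\subset H^{s-3/2}(\partial\Omega)\subset L^2(\partial\Omega)$ by the continuity statements following Theorem~\ref{thm:BLtriplesecond}, so $\Gamma_0^s,\Gamma_1^s$ are genuinely $L^2(\partial\Omega)$-valued. I would then check the three items of Definition~\ref{qbt}. Green's identity holds on $H^2(\Omega)$ by Theorem~\ref{thm:BLtriplesecond}; as $C^\infty(\overline\Omega)\subset H^2(\Omega)$ is dense in $H^s_{\mathscr L}(\Omega)$ and, in this range of $s$, both traces are continuous from $H^s_{\mathscr L}(\Omega)$ into $L^2(\partial\Omega)$ while the expression $(\mathscr L f,g)_{L^2(\Omega)}-(f,\mathscr L g)_{L^2(\Omega)}$ is continuous in the norm of $H^s_{\mathscr L}(\Omega)$, both sides extend by continuity, yielding~(i). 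Item~(ii) follows because $\mathscr G_0\times\mathscr G_1\subset\ran\Gamma^s$ holds by linearity alone and $\mathscr G_0=H^{3/2}(\partial\Omega)$, $\mathscr G_1=H^{1/2}(\partial\Omega)$ are dense in $L^2(\partial\Omega)$; item~(iii) holds since $T_s\upharpoonright\ker\Gamma_0^s=\mathscr L_D$ is self-adjoint. Finally $\overline{T_s}=A^*$ results from $T\subset T_s\subset A^*$ and $\overline T=A^*$.

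The main obstacle is the kernel-restricted surjectivity $\tau_D(\dom\mathscr L_N)=H^{3/2}(\partial\Omega)$ and $\tau_N(\dom\mathscr L_D)=H^{1/2}(\partial\Omega)$: this is where the $H^2$-regularity of the Dirichlet and Neumann problems (available because $\partial\Omega$ is smooth) and the surjectivity of the full second-order trace map are genuinely used, and one must take care that restricting to each kernel still exhausts the whole trace space. The remaining verifications are routine transfers of the $s=2$ triple along the continuous extended traces.
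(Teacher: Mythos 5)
Your proposal is correct and follows precisely the route the paper intends: Proposition~\ref{pro:secondBLtriples} is stated there without proof, as an immediate consequence of Theorem~\ref{thm:BLtriplesecond} together with the kernel identities $\ker\widetilde\tau_D=\dom\mathscr L_D$, $\ker\widetilde\tau_N=\dom\mathscr L_N$ and the continuity/surjectivity of the restricted trace maps recorded after that theorem, which is exactly the material you invoke. Your identification of the $s$-independent kernels via the $H^2$-regularity of $\mathscr L_D$ and $\mathscr L_N$, the computation of $\mathscr G_0=H^{3/2}(\partial\Omega)$ and $\mathscr G_1=H^{1/2}(\partial\Omega)$ from the surjectivity of $(\tau_D,\tau_N)^\top$ on $H^2(\Omega)$, and the density/continuity transfer of Green's identity together with $T\subset T_s\subset A^*$ for the case $s\in[\tfrac 32,2]$ supply precisely the details the paper leaves to the reader.
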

If we apply Theorem~\ref{thm:regularisation} to a quasi boundary triple from 
Proposition~\ref{pro:secondBLtriples} we obtain the second order version of the 
ordinary boundary triple $\{L^2(\partial\Omega),\Upsilon_0,\Upsilon_1\}$ for $A^*=\mathscr L_{\max}$ 
from Corollary~\ref{cor:Grubbtriple}. This boundary triple appears already in~\cite{Gr68} in an implicit form,
see also \cite{Be10, BeLa12, BrGrWo09, Gr09, Ma10, PoRa09}. 
Let again $\iota_\pm : H^{\pm 1/2}(\partial \Omega) \to L^2(\partial \Omega)$ be a pair of isometric isomorphisms such that
\begin{equation*}
(\iota_- x',\, \iota_+ x)_{L^2(\partial\Omega)} =  \langle x',\, x  \rangle_{H^{-1/2}(\partial\Omega) \times H^{1/2}(\partial\Omega)}
\end{equation*}
holds for all $x\in H^{1/2}(\partial\Omega)$ and $x'\in H^{-1/2}(\partial\Omega)$; cf.~\eqref{iotas}, \eqref{iotas2} and \eqref{iotas3}.
\begin{Cor}\label{cor:Grubbtriplesecond}
   Let $\eta\in \rho(\mathscr L_D)\cap \mathbb R$ and define
   $\Upsilon_0, \Upsilon_1 : H^0_{\mathscr L}(\Omega) \to L^2(\partial\Omega)$ by
  \begin{equation*} 
    \Upsilon_0 f := \iota_-\widetilde \tau_D f, \quad \Upsilon_1 f := -\iota_+ \tau_N f_D,
    \quad f=f_D+f_\eta\in\dom \mathscr L_D\dotplus\mathcal N_\eta(A^*).
  \end{equation*}
  Then $\{L^2(\partial\Omega),\Upsilon_0,\Upsilon_1\}$ is an ordinary boundary triple for $A^*=\mathscr L_{\max}$ with
  $A^*\upharpoonright\ker\Upsilon_0=\mathscr L_D$ and  
  \[
  A^*\upharpoonright\ker\Upsilon_1=\mathscr L_{\min}\dot + \bigl\{(f_\eta,\eta f_\eta)^\top: 
  \mathscr L f_\eta=\eta f_\eta,\,f_\eta\in H^0_{\mathscr L}(\Omega)\bigr\}.
  \]
\end{Cor}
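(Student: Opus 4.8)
The plan is to derive the statement directly from Theorem~\ref{thm:regularisation}, applied to the quasi boundary triple $\{L^2(\partial\Omega),\Gamma_0,\Gamma_1\}$ of Theorem~\ref{thm:BLtriplesecond}, where $\Gamma_0 f=\tau_D f$ and $\Gamma_1 f=-\tau_N f$ for $f\in\dom T=H^2(\Omega)$ and $A_0=\mathscr L_D$. First I would record that the hypotheses of Theorem~\ref{thm:regularisation} are in force: by assumption there is some $\eta\in\rho(\mathscr L_D)\cap\mathbb R$, and by Theorem~\ref{thm:BLtriplesecond}~(ii) the space $\mathscr G_1=H^{1/2}(\partial\Omega)$ is dense in $\mathcal G=L^2(\partial\Omega)$.

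The one point needing verification is that the standard $H^{1/2}(\partial\Omega)$-norm on $\mathscr G_1$ is admissible in the sense of Proposition~\ref{pro:Gamma_0_extension}, so that the associated Gelfand triple is $H^{1/2}(\partial\Omega)\hookrightarrow L^2(\partial\Omega)\hookrightarrow H^{-1/2}(\partial\Omega)$ with duality realized by the isometric isomorphisms $\iota_\pm$ fixed before the corollary. Condition~(i) is immediate, as $H^{1/2}(\partial\Omega)$ is a Hilbert space, hence reflexive, and is continuously embedded in $L^2(\partial\Omega)$. For condition~(ii) I would use that $\dom\mathscr L_D\subset H^2(\Omega)$ by the very definition in Theorem~\ref{thm:BLtriplesecond}, so that $(\mathscr L_D-\bar\lambda)^{-1}$ maps $L^2(\Omega)$ into $H^2(\Omega)$, combined with the continuity of the Neumann trace $\tau_N\colon H^2(\Omega)\to H^{1/2}(\partial\Omega)$; this shows that $\Gamma_1(\mathscr L_D-\bar\lambda)^{-1}=-\tau_N(\mathscr L_D-\bar\lambda)^{-1}\colon L^2(\Omega)\to H^{1/2}(\partial\Omega)$ is continuous for every $\lambda\in\rho(\mathscr L_D)$. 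Consequently, the extension $\widetilde\Gamma_0$ provided by Proposition~\ref{pro:Gamma_0_extension} is precisely the surjection $\widetilde\tau_D\colon H^0_{\mathscr L}(\Omega)\to H^{-1/2}(\partial\Omega)$ constructed after Theorem~\ref{thm:BLtriplesecond} and reused in Proposition~\ref{pro:secondBLtriples}.

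Under these identifications the maps furnished by Theorem~\ref{thm:regularisation} become $\Upsilon_0 f=\iota_-\widetilde\Gamma_0 f=\iota_-\widetilde\tau_D f$ and $\Upsilon_1 f=\iota_+\Gamma_1 f_D=-\iota_+\tau_N f_D$ for $f=f_D+f_\eta\in\dom\mathscr L_D\dotplus\mathcal N_\eta(A^*)$, which coincide with the boundary mappings in the statement. Theorem~\ref{thm:regularisation} then gives that $\{L^2(\partial\Omega),\Upsilon_0,\Upsilon_1\}$ is an ordinary boundary triple for $A^*=\mathscr L_{\max}$ with $A^*\upharpoonright\ker\Upsilon_0=A_0=\mathscr L_D$ and $A^*\upharpoonright\ker\Upsilon_1=A\dotplus\widehat{\mathcal N}_\eta(A^*)$. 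It then remains only to rewrite the last space in the concrete form of the claim, using $A=\mathscr L_{\min}$ and $\mathcal N_\eta(A^*)=\ker(\mathscr L_{\max}-\eta)=\{f_\eta\in H^0_{\mathscr L}(\Omega):\mathscr L f_\eta=\eta f_\eta\}$, whence $\widehat{\mathcal N}_\eta(A^*)=\{(f_\eta,\eta f_\eta)^\top:\mathscr L f_\eta=\eta f_\eta,\ f_\eta\in H^0_{\mathscr L}(\Omega)\}$.

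Since the corollary is a specialization of the abstract regularization theorem, there is no genuine obstacle; the only substantive input is the admissibility of the natural $H^{1/2}$-topology on $\mathscr G_1$, resting on the mapping property of the Neumann trace and the inclusion $\dom\mathscr L_D\subset H^2(\Omega)$, together with the identification $\mathscr G_1'=H^{-1/2}(\partial\Omega)$ through $\iota_\pm$. I would finally remark that, by the norm-equivalence assertion in Proposition~\ref{pro:Gamma_0_extension}, this choice agrees up to equivalence with the intrinsic norm induced by $\overline{\im M(i)}$, so the resulting ordinary boundary triple does not depend on the admissible norm chosen on $\mathscr G_1$.
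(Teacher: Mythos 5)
Your proposal is correct and follows exactly the route the paper intends: the corollary is stated there as an immediate application of Theorem~\ref{thm:regularisation} to the quasi boundary triple of Theorem~\ref{thm:BLtriplesecond} (equivalently, the $s=2$ case of Proposition~\ref{pro:secondBLtriples}), with $\mathscr G_1=H^{1/2}(\partial\Omega)$ dense in $L^2(\partial\Omega)$ and the Gelfand triple $H^{1/2}(\partial\Omega)\hookrightarrow L^2(\partial\Omega)\hookrightarrow H^{-1/2}(\partial\Omega)$. Your explicit verification of conditions (i)--(ii) of Proposition~\ref{pro:Gamma_0_extension} (where the continuity of $\tau_N(\mathscr L_D-\bar\lambda)^{-1}:L^2(\Omega)\to H^{1/2}(\partial\Omega)$ follows from $\dom\mathscr L_D\subset H^2(\Omega)$ via a standard closed-graph argument) and the identification $\widetilde\Gamma_0=\widetilde\tau_D$ by uniqueness of the continuous extension are precisely the details the paper leaves implicit.
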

%

As in Section~\ref{sec:laplace} we apply Theorem~\ref{thm:main3} to the quasi boundary triple from Theorem~\ref{thm:BLtriplesecond}. 
The regularity statement can be proven in the same way as in Theorem~\ref{thm:C1r}. 
\begin{Cor}\label{cor:mainellsecond}
  Let $\eta\in \mathbb R \cap \rho(\mathscr L_D)\cap \rho(\mathscr L_N)$ 
  and $\widetilde M(\eta):H^{-1/2}(\partial\Omega)\to H^{-3/2}(\partial\Omega)$ be the extended Dirichlet-to-Neumann map. 
  Then the mapping
  \begin{equation*}
   \Theta\mapsto \mathscr L_\vartheta = \mathscr L_{\max}\upharpoonright\bigl\{ f\in H^0_{\mathscr L}(\Omega) : 
    \vartheta\widetilde \tau_D f +\widetilde\tau_N f= 0 \bigr\},
    \quad \vartheta = \iota_+^{-1} \Theta \iota_- + \widetilde M(\eta),
  \end{equation*}
  establishes a bijective correspondence between all
  closed (symmetric,  self-adjoint, (maximal) dissipative, (maximal) accumulative) 
  linear relations $\Theta$ in $L^2(\partial\Omega)$ and 
  all closed (symmetric,  self-adjoint, (maximal) dissipative, (maximal) accumulative, respectively) extensions $\mathscr L_\vartheta\subset \mathscr L_{\max}$ of 
  $\mathscr L_{\min}$ in $L^2(\Omega)$.
  Moreover, the following regularity result holds: For $s \in [0,2]$
  \begin{equation*}
  \dom \Theta \subset H^s(\partial\Omega) \quad \text{implies} \quad \dom \mathscr L_{\vartheta} \subset H^s_{\mathscr L}(\Omega).
  \end{equation*}
\end{Cor}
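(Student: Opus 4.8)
The plan is to derive the entire statement as a direct application of Theorem~\ref{thm:main3} to the quasi boundary triple $\{L^2(\partial\Omega),\Gamma_0,\Gamma_1\}$ from Theorem~\ref{thm:BLtriplesecond}, so first I would verify that all hypotheses of Theorem~\ref{thm:main3} are in force. By Theorem~\ref{thm:BLtriplesecond}~(i) the operators $A_0=\mathscr L_D$ and $A_1=\mathscr L_N$ are both self-adjoint in $L^2(\Omega)$, and by Theorem~\ref{thm:BLtriplesecond}~(ii) the spaces $\mathscr G_0=H^{3/2}(\partial\Omega)$ and $\mathscr G_1=H^{1/2}(\partial\Omega)$ are dense in $\mathcal G=L^2(\partial\Omega)$; together with the hypothesis $\eta\in\mathbb R\cap\rho(\mathscr L_D)\cap\rho(\mathscr L_N)$ this is exactly what Theorem~\ref{thm:main3} requires. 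Here the extended boundary maps are $\widetilde\Gamma_0=\widetilde\tau_D$ and $\widetilde\Gamma_1=-\widetilde\tau_N$, using the continuous trace extensions recorded after Theorem~\ref{thm:BLtriplesecond}.

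With these identifications, I would translate the abstract boundary condition $\widetilde\Gamma f\in\vartheta$ into the concrete form of the statement, precisely as in the proof of Corollary~\ref{cor:mainlaplace}. Since $\widetilde\Gamma f=(\widetilde\tau_D f,-\widetilde\tau_N f)^\top$, the inclusion $\widetilde\Gamma f\in\vartheta$ for a linear relation $\vartheta$ means $(\widetilde\tau_D f,-\widetilde\tau_N f)^\top\in\vartheta$, i.e.\ $\vartheta\,\widetilde\tau_D f+\widetilde\tau_N f=0$ in the sense of relations; hence the operator $\widetilde A_\vartheta$ of Theorem~\ref{thm:main3} coincides with $\mathscr L_\vartheta$. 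The asserted bijective correspondence between the closed (symmetric, self-adjoint, (maximal) dissipative, (maximal) accumulative) relations $\Theta$ in $L^2(\partial\Omega)$ and the extensions $\mathscr L_\vartheta$, with $\vartheta=\iota_+^{-1}\Theta\iota_-+\widetilde M(\eta)$, is then immediate from Theorem~\ref{thm:main3}.

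For the regularity result I would invoke the abstract implication in Theorem~\ref{thm:main3} with the intermediate operator $S:=\mathscr L_s:=\mathscr L_{\max}\upharpoonright H^s_{\mathscr L}(\Omega)$, which indeed satisfies $T\subset\mathscr L_s\subset A^*=\mathscr L_{\max}$ for $s\in[0,2]$. The trace statement recorded after Theorem~\ref{thm:BLtriplesecond} (cf.\ \eqref{eqn:megatrace}) gives $\ran(\widetilde\tau_D\upharpoonright H^s_{\mathscr L}(\Omega))=H^{s-1/2}(\partial\Omega)$, and, exactly as in the proof of Theorem~\ref{thm:C1r}, the isometry $\iota_-$ maps $H^{s-1/2}(\partial\Omega)$ onto $H^s(\partial\Omega)$; hence $\ran(\iota_-\widetilde\tau_D\upharpoonright\dom\mathscr L_s)=H^s(\partial\Omega)$. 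Thus $\dom\Theta\subset H^s(\partial\Omega)$ yields $\dom\Theta\subset\ran(\iota_-\widetilde\tau_D\upharpoonright\dom\mathscr L_s)$, and the abstract regularity result gives $\dom\mathscr L_\vartheta\subset\dom\mathscr L_s=H^s_{\mathscr L}(\Omega)$.

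The only point that is not purely formal, and hence the main obstacle, is the Sobolev-scale behaviour of $\iota_-$: one needs that it shifts order by $1/2$, mapping $H^{s-1/2}(\partial\Omega)$ onto $H^s(\partial\Omega)$, rather than merely $\mathscr G_1'=H^{-1/2}(\partial\Omega)$ onto $L^2(\partial\Omega)$. This rests on the fact that, for a smooth compact boundary, the $\mathscr G_1$-norm induced by $\Lambda=\overline{\im M(i)}$ is equivalent to the usual $H^{1/2}(\partial\Omega)$-norm, so that $\iota_-$ is, up to equivalence, an elliptic operator of order $1/2$ on the Sobolev scale. This is precisely the content already established in the identification of $\mathscr G_0$ and $\mathscr G_1$ (Theorem~\ref{thm:BLtriplesecond}~(ii)) and exploited in Theorem~\ref{thm:C1r}, which I would simply quote.
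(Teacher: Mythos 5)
Your proposal is correct and follows essentially the same route as the paper, whose proof of this corollary consists precisely of applying Theorem~\ref{thm:main3} to the quasi boundary triple of Theorem~\ref{thm:BLtriplesecond} and repeating the regularity argument of Theorem~\ref{thm:C1r} with $S=\mathscr L_{\max}\upharpoonright H^s_{\mathscr L}(\Omega)$, the surjectivity of $\widetilde\tau_D\upharpoonright H^s_{\mathscr L}(\Omega)$ onto $H^{s-1/2}(\partial\Omega)$, and the mapping property of $\iota_-$ from $H^{s-1/2}(\partial\Omega)$ onto $H^s(\partial\Omega)$. The only cosmetic slip is calling $\iota_-$ an elliptic operator of order $1/2$ when the stated (and correct) mapping property, gaining half a derivative, corresponds to order $-1/2$; this does not affect the argument.
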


Next we state a version of Lemma~\ref{lem:symmetric4} for the realizations of the second order elliptic differential expression in $\mathscr L$.

\begin{Lem}\label{lem:symmetric3}
  Let $\vartheta$ be a linear relation in $L^2(\partial\Omega)$. Then 
  \begin{equation*}
    \mathscr L_{\vartheta} := \mathscr L_{\max} \upharpoonright  
    \bigl\{ f\in H^0_{\mathscr L}(\Omega) : \vartheta \widetilde\tau_D f + \widetilde\tau_N f = 0\bigr\}
  \end{equation*}
  has regularity $\dom \mathscr L_\vartheta \subset H^{3/2}_{\mathscr L}(\Omega)$. 
  Moreover, $\mathscr L_\vartheta$ is symmetric in $L^2(\Omega)$ if and only if 
  $\vartheta$ is symmetric $L^2(\partial\Omega)$.
\end{Lem}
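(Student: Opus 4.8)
The plan is to imitate the proof of Lemma~\ref{lem:symmetric4} almost verbatim, replacing the Laplacian by $\mathscr L$ and the trace results of Theorem~\ref{thm:LipBLtriple} by those of Theorem~\ref{thm:BLtriplesecond} together with the trace extension statements recorded after it. The real content lies in the regularity claim $\dom\mathscr L_\vartheta\subset H^{3/2}_{\mathscr L}(\Omega)$; once this is in place, rewriting $\mathscr L_\vartheta$ as a restriction of the $s=\tfrac32$ quasi boundary triple from Proposition~\ref{pro:secondBLtriples} reduces the symmetry equivalence to a direct application of Lemma~\ref{lem:symmetric}.

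For the regularity I would first fix $f\in\dom\mathscr L_\vartheta$. Since $\vartheta$ is a linear relation in $L^2(\partial\Omega)$, the boundary condition $\vartheta\widetilde\tau_D f+\widetilde\tau_N f=0$ forces $\widetilde\tau_N f\in L^2(\partial\Omega)$: it means $(\widetilde\tau_D f,-\widetilde\tau_N f)\in\vartheta\subset L^2(\partial\Omega)\times L^2(\partial\Omega)$. By the surjectivity of the restricted Neumann trace $\widetilde\tau_N\upharpoonright H^{3/2}_{\mathscr L}(\Omega):H^{3/2}_{\mathscr L}(\Omega)\to L^2(\partial\Omega)$ recorded after Theorem~\ref{thm:BLtriplesecond} (the case $s=\tfrac32$, where $H^{s-3/2}(\partial\Omega)=L^2(\partial\Omega)$), I can choose $g\in H^{3/2}_{\mathscr L}(\Omega)$ with $\widetilde\tau_N g=\widetilde\tau_N f$. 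Then $f-g\in\ker\widetilde\tau_N=\ker\tau_N=\dom\mathscr L_N$, and since $\dom\mathscr L_N\subset H^2(\Omega)\subset H^{3/2}_{\mathscr L}(\Omega)$ by Theorem~\ref{thm:BLtriplesecond}~(i), I obtain $f=(f-g)+g\in H^{3/2}_{\mathscr L}(\Omega)$. Here the identity $\ker\widetilde\tau_N=\dom\mathscr L_N$ is the equality $\ker\widetilde\Gamma_1=\ker\Gamma_1$ from Lemma~\ref{lem:exweyl}~(i), which applies because $\mathscr L_N$ is self-adjoint and $\mathscr G_0,\mathscr G_1$ are dense in $L^2(\partial\Omega)$.

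With the regularity established, I would observe that $\mathscr L_\vartheta=\mathscr L_{\max}\upharpoonright\{f\in H^{3/2}_{\mathscr L}(\Omega):\vartheta\Gamma_0^{3/2}f-\Gamma_1^{3/2}f=0\}$, where $\{L^2(\partial\Omega),\Gamma_0^{3/2},\Gamma_1^{3/2}\}$ is the quasi boundary triple for $T_{3/2}=\mathscr L_{\max}\upharpoonright H^{3/2}_{\mathscr L}(\Omega)$ from Proposition~\ref{pro:secondBLtriples}, with $\Gamma_0^{3/2}=\widetilde\tau_D$ and $\Gamma_1^{3/2}=-\widetilde\tau_N$. Because $(\Gamma_0^{3/2}f,\Gamma_1^{3/2}f)=(\widetilde\tau_D f,-\widetilde\tau_N f)$, the boundary condition reads precisely $\Gamma^{3/2}f\in\vartheta$, so $\mathscr L_\vartheta=T_{3/2}\upharpoonright\{f:\Gamma^{3/2}f\in\vartheta\}$. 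Lemma~\ref{lem:symmetric} then gives that $\mathscr L_\vartheta$ is symmetric in $L^2(\Omega)$ if and only if $\vartheta$ is symmetric in $L^2(\partial\Omega)$. I expect the only genuine obstacle to be the regularity step, namely quoting correctly the surjectivity of $\widetilde\tau_N$ onto $L^2(\partial\Omega)$ at the level $s=\tfrac32$ and the kernel identification $\ker\widetilde\tau_N=\dom\mathscr L_N$; the symmetry part is then purely formal, exactly as in Lemma~\ref{lem:symmetric4}.
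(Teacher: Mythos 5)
Your proof is correct and takes essentially the same route as the paper: Lemma~\ref{lem:symmetric3} is stated as the second-order analog of Lemma~\ref{lem:symmetric4}, whose proof you reproduce faithfully --- reading off $\widetilde\tau_N f\in L^2(\partial\Omega)$ from the boundary condition, correcting by some $g\in H^{3/2}_{\mathscr L}(\Omega)$ with the same Neumann trace via the surjectivity of $\widetilde\tau_N\upharpoonright H^{3/2}_{\mathscr L}(\Omega)$, using $\ker\widetilde\tau_N=\ker\tau_N=\dom\mathscr L_N\subset H^2(\Omega)$ from Lemma~\ref{lem:exweyl}~(i), and then invoking Lemma~\ref{lem:symmetric} for the symmetry equivalence. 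Your choice of the $s=\tfrac32$ quasi boundary triple from Proposition~\ref{pro:secondBLtriples} (rather than the $H^2(\Omega)$-triple of Theorem~\ref{thm:BLtriplesecond}) for that last step is exactly the right substitute for the $H^{3/2}_{\Delta}(\Omega)$-triple used in the Laplacian case.
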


The next corollary is a consequence of Proposition~\ref{pro:assumptionpara} and Proposition~\ref{pro:assumptionpara2}. In item (i) we obtain 
an additional regularity statement.  

\begin{Cor}\label{cor:assumself}
  Let $\eta \in\mathbb R\cap \rho(\mathscr L_D)\cap \rho(\mathscr L_N)$ and
  $M(\eta):H^{3/2}(\partial\Omega)\to H^{1/2}(\partial\Omega)$ be the Dirichlet-to-Neumann map from Theorem~\ref{thm:BLtriplesecond}~{\rm (iv)}. 
  Let $\vartheta$ be a symmetric linear operator in $L^2(\partial\Omega)$ such that 
  \begin{equation}\label{eqn:varthetadom}
    H^{3/2}(\partial\Omega) \subset \dom \vartheta\quad\text{and}\quad  
    \ran \bigl(\vartheta\upharpoonright {H^{3/2}(\partial\Omega)}\bigr) \subset H^{1/2}(\partial\Omega),
  \end{equation}
  and assume that there exist $c_1>0$ and $c_2\in[0,1]$ such that 
  \begin{equation*}
    \|\vartheta x\|_{H^{1/2}(\partial\Omega)} 
    \leq c_1 \| x\|_{H^{-1/2}(\partial\Omega)} + c_2 \| M(\eta) x\|_{H^{1/2}(\partial\Omega)},\quad x\in H^{3/2}(\partial\Omega).
  \end{equation*}
  Then the following statements hold. 

  \begin{itemize}
   \item [{\rm (i)}] If $c_2\in[0,1)$ then 
    \begin{equation}\label{juli}
  \mathscr L_{\vartheta} = \mathscr L_{\max} \upharpoonright \bigl\{f\in H^0_{\mathscr L}(\Omega):\vartheta\widetilde \tau_D f + \widetilde\tau_N f = 0\bigr\}.
  \end{equation}
   is self-adjoint in $L^2(\Omega)$ with regularity
  $\dom \mathscr L_\vartheta \subset H^2(\Omega)$.
  \item [{\rm (ii)}] If $c_2=1$ then $\mathscr L_{\vartheta}$ in \eqref{juli} is essentially self-adjoint in $L^2(\Omega)$ with regularity
  $\dom \mathscr L_\vartheta \subset H^{3/2}_{\mathscr L}(\Omega)$. 
  \end{itemize}
\end{Cor}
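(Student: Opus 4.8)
The plan is to derive both parts from Proposition~\ref{pro:assumptionpara} and Proposition~\ref{pro:assumptionpara2} applied to the quasi boundary triple $\{L^2(\partial\Omega),\Gamma_0,\Gamma_1\}$ of Theorem~\ref{thm:BLtriplesecond}, and then to bridge from the operator $A_\vartheta = T\upharpoonright\{f\in H^2(\Omega):\Gamma f\in\vartheta\}$ produced by those propositions to the operator $\mathscr L_\vartheta$ defined through the extended traces. First I would check that the hypotheses of Proposition~\ref{pro:assumptionpara} hold in this setting: here $\mathcal G=L^2(\partial\Omega)$, the operators $A_0=\mathscr L_D$ and $A_1=\mathscr L_N$ are both self-adjoint, $\eta\in\mathbb R\cap\rho(\mathscr L_D)\cap\rho(\mathscr L_N)$, and the spaces $\mathscr G_0=H^{3/2}(\partial\Omega)$, $\mathscr G_1=H^{1/2}(\partial\Omega)$ with $\mathscr G_1'=H^{-1/2}(\partial\Omega)$ are dense in $L^2(\partial\Omega)$ and carry norms satisfying (i)--(ii) of Corollary~\ref{cor:Gamma_1_extension} and Proposition~\ref{pro:Gamma_0_extension}, respectively, the latter resting on smooth-domain elliptic regularity of $(\mathscr L_D-\bar\lambda)^{-1}$ and $(\mathscr L_N-\bar\lambda)^{-1}$. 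The domain and range conditions \eqref{eqn:varthetadom} are exactly $\mathscr G_0\subset\dom\vartheta$ and $\ran(\vartheta\upharpoonright\mathscr G_0)\subset\mathscr G_1$, and the assumed estimate is precisely condition (iii) of Proposition~\ref{pro:assumptionpara} when $c_2\in[0,1)$, respectively the hypothesis of Proposition~\ref{pro:assumptionpara2} when $c_2=1$. This yields self-adjointness of $A_\vartheta$ for part (i) and essential self-adjointness of $A_\vartheta$ for part (ii).

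Next I would identify $A_\vartheta$ with $\mathscr L_\vartheta$. Since on $H^2(\Omega)$ the extended traces agree with $\tau_D,\tau_N$, the condition $\Gamma f\in\vartheta$ is equivalent to $\vartheta\tau_D f+\tau_N f=0$, which gives the easy inclusion $A_\vartheta\subset\mathscr L_\vartheta$; moreover $\mathscr L_\vartheta$ is symmetric by Lemma~\ref{lem:symmetric3}, as $\vartheta$ is symmetric. For part (i) this produces the chain $A_\vartheta\subset\mathscr L_\vartheta\subset\mathscr L_\vartheta^*\subset A_\vartheta^*=A_\vartheta$, so that $\mathscr L_\vartheta=A_\vartheta$ is self-adjoint, and the regularity $\dom\mathscr L_\vartheta\subset H^2(\Omega)$ is then automatic because $\dom A_\vartheta\subset\dom T=H^2(\Omega)$. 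For part (ii) I would pass to closures instead: from $A_\vartheta\subset\mathscr L_\vartheta$ one gets $\overline{A_\vartheta}\subset\overline{\mathscr L_\vartheta}$, where $\overline{\mathscr L_\vartheta}$ is symmetric and $\overline{A_\vartheta}$ is self-adjoint; maximality of self-adjoint operators among symmetric extensions forces $\overline{\mathscr L_\vartheta}=\overline{A_\vartheta}$, so that $\mathscr L_\vartheta$ is essentially self-adjoint, while the regularity $\dom\mathscr L_\vartheta\subset H^{3/2}_{\mathscr L}(\Omega)$ is supplied directly by Lemma~\ref{lem:symmetric3}.

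The only genuinely delicate point is this identification step: Propositions~\ref{pro:assumptionpara}--\ref{pro:assumptionpara2} act on the triple with domain $\dom T=H^2(\Omega)$, whereas $\mathscr L_\vartheta$ is a priori defined on the larger space $H^0_{\mathscr L}(\Omega)$ via the extended traces, so one must ensure the abstract conclusion transfers to the correct operator. This is exactly what the two chain arguments accomplish, using only the inclusion $A_\vartheta\subset\mathscr L_\vartheta$, the symmetry of $\mathscr L_\vartheta$ from Lemma~\ref{lem:symmetric3}, and maximality of (essentially) self-adjoint operators; no further estimate is required. I would also record explicitly that in part (i) the sharper regularity $H^2(\Omega)$, rather than the generic $H^{3/2}_{\mathscr L}(\Omega)$, comes for free precisely because $A_\vartheta$ is a restriction of $T$.
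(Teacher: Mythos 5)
Your proposal is correct and follows essentially the same route as the paper: apply Proposition~\ref{pro:assumptionpara}~(iii) (resp. Proposition~\ref{pro:assumptionpara2}) to the quasi boundary triple of Theorem~\ref{thm:BLtriplesecond} to get (essential) self-adjointness of the restriction acting on $H^2(\Omega)$, and then use Lemma~\ref{lem:symmetric3} to see that $\mathscr L_\vartheta$ is a symmetric extension of that operator, so maximality forces them (or their closures) to coincide, with the stated regularity. Your explicit verification of the hypotheses and the spelled-out closure argument in part (ii) merely make precise what the paper leaves implicit.
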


\begin{proof}
  (i) The restriction $\theta := \vartheta \upharpoonright H^{3/2}(\partial\Omega) : H^{3/2}(\partial\Omega) \to H^{1/2}(\partial\Omega)$ satisfies
  the assumptions in Proposition~\ref{pro:assumptionpara}~(iii) and hence we conclude that 
  \begin{equation*}
  \mathscr L_{\theta} = \mathscr L_{\max} \upharpoonright \bigl\{f\in H^2(\Omega):\theta \tau_D f + \tau_N f = 0\bigr\}
  \end{equation*}
  is self-adjoint in $L^2(\Omega)$.  
  According to Lemma~\ref{lem:symmetric3} the operator $\mathscr L_\vartheta$ is a symmetric extension of the self-adjoint operator 
  $\mathscr L_\theta$ and hence both coincide.
  
  (ii) follows in the same way as (i) from Proposition~\ref{pro:assumptionpara2} and Lemma~\ref{lem:symmetric3}.
\end{proof}

In the next example we consider a one parameter family $\mathscr L_{\vartheta_\alpha}$ of extensions of $\mathscr L_{\min}$ 
which correspond to $\vartheta_\alpha = \alpha \, M(\eta)$. It turns out that for $\alpha\not=1$ the extensions are self-adjoint 
and for $\alpha=1$ essentially self-adjoint.

\begin{Exm}
  Let $M(\eta):H^{3/2}(\partial\Omega)\to H^{1/2}(\partial\Omega)$ be as in Corollary~\ref{cor:assumself} and consider the symmetric operators
  $\vartheta_\alpha := \alpha \, M(\eta)$, $\alpha\in\mathbb R$, in $L^2(\partial\Omega)$ with 
  $\dom \vartheta_\alpha = H^{3/2}(\partial\Omega)$ and $\alpha \in\mathbb R$.
  Then according to Corollary~\ref{cor:assumself} the extension
  \begin{equation*}
   \begin{split}
    \mathscr L_{\vartheta_\alpha}&=\mathscr L_{\max} \upharpoonright \bigl\{f\in H^0_{\mathscr L}(\Omega):\vartheta_\alpha \widetilde \tau_Df 
  + \widetilde\tau_N f = 0\bigr\}\\
  &=\mathscr L_{\max} \upharpoonright \bigl\{f\in H^2(\Omega):\alpha M(\eta) \tau_Df 
  + \tau_N f = 0\bigr\}
   \end{split}
  \end{equation*}
  in \eqref{juli} is self-adjoint if $|\alpha|<1$ and essentially self-adjoint if $|\alpha|=1$. Here we have used 
  $\widetilde\tau_D f=\tau_D f$ and $\widetilde\tau_N f=\tau_N f$ for $f\in H^2(\Omega)$. 
  It follows in the same way as in 
  Example~\ref{exm:kreinneumann} that 
  \begin{equation*}
  \begin{split}
   \mathscr L_{\vartheta_1}&=\mathscr L_{\max} \upharpoonright \bigl\{f\in H^2(\Omega): M(\eta)\tau_Df 
  + \tau_N f = 0\bigr\}\\   
   &=\mathscr L_{\min}\dot + \bigl\{(f_\eta,\eta f_\eta)^\top: 
  \mathscr L f_\eta=\eta f_\eta,\,f_\eta\in H^2(\Omega)\bigr\}.
  \end{split}
  \end{equation*} 
  We also remark that
  \[
  \overline{\mathscr L}_{\!\!\vartheta_1} =\mathscr L_{\min}\dot + \bigl\{(f_\eta,\eta f_\eta)^\top: 
  \mathscr L f_\eta=\eta f_\eta,\,f_\eta\in H^0_{\mathscr L}(\Omega)\bigr\}=\mathscr L_{\min}\dot +\widehat {\mathcal N}_\eta(A^*).
  \]
  For $\alpha\leq -1$ and $\alpha>1$ we make use of Corollary~\ref{cor:main2}. For this we set 
  \[
  \Theta_\alpha:=\iota_+ \bigl(\vartheta_\alpha - M(\eta) \bigr) \iota_-^{-1}= (\alpha - 1)  \iota_+ M(\eta) \iota_-^{-1},\quad 
  \dom \Theta_\alpha = H^2(\partial\Omega),
  \]
  and note that the operators $\Theta_\alpha$ are self-adjoint in $L^2(\partial\Omega)$. Hence Corollary~\ref{cor:main2} yields that
  for $\alpha\leq -1$ and $\alpha>1$
  the extensions $\mathscr L_{\vartheta_\alpha}$ are self-adjoint in $L^2(\Omega)$.
\end{Exm}

The following example is related to the case $\alpha=1$ in the above example. It contains an observation which can 
also be interpreted from a slightly more abstract point of view. Namely, Example~\ref{cex:1} shows that there exists 
a quasi boundary triple $\{\mathcal G,\Gamma_0,\Gamma_1\}$ for $T \subset A^*$ and a self-adjoint relation $\vartheta$ in $\mathcal G$ 
with $\vartheta\subset \ran \Gamma$ such that the extension 
$A_\vartheta := T \upharpoonright \{ f \in \dom T : \Gamma f \in \vartheta \}$
is not self-adjoint in $\mathcal H$; cf. Section~\ref{sec:parameterization}.

\begin{Exm}\label{cex:1}
  Let $\{L^2(\partial\Omega),\Gamma_0^s,\Gamma_1^s\}$ be the quasi boundary triple from Proposition~\ref{pro:secondBLtriples} for  $s=\frac 32$
  defined on the domain of
  \[
  T_{3/2} = \mathscr L_{\max}\upharpoonright H^{3/2}_{\mathscr L}(\Omega)\subset A^*.
  \]
  The values of the corresponding Weyl function $M_{3/2}$ are mappings from $H^1(\partial\Omega)$ to $L^2(\partial\Omega)$. For 
  $\eta \in \mathbb R\cap\rho(\mathscr L_D) \cap \rho(\mathscr L_N)$ set $\vartheta  := M_{3/2}(\eta) $ with $\dom \vartheta=H^1(\partial\Omega)$.
  Then $\vartheta$ is a bijective symmetric operator in $L^2(\partial\Omega)$ and hence self-adjoint. As in Example~\ref{exm:kreinneumann}
  one verifies that the corresponding extension $\mathscr L_\vartheta$ is given by
  \begin{equation*}
    \mathscr L_\vartheta
    = \mathscr L_{\max} \upharpoonright \bigl\{ f \in H^{3/2}_{\mathscr L}(\Omega) : \vartheta \widetilde \tau_D f + \widetilde \tau_N f = 0 \bigr\} \\
    = \mathscr L_{\min} \dotplus \widehat{\mathcal N}_\eta(T_{3/2})
  \end{equation*}
  and that $\overline{\mathscr L}_{\!\!\vartheta}=\mathscr L_{\min} \dotplus \widehat{\mathcal N}_\eta(A^*)=A^*\upharpoonright\ker\Upsilon_0$ holds;
  here $\Upsilon_0$ is the boundary mapping from Corollary~\ref{cor:Grubbtriplesecond}.
  Therefore $\mathscr L_\vartheta$ is a proper restriction of the self-adjoint extension 
  $\overline{\mathscr L}_{\!\!\vartheta}$ and it follows, in particular, that  $\mathscr L_\vartheta$ is essentially self-adjoint, but not self-adjoint in
  $L^2(\Omega)$. 
\end{Exm}

The next example is a variant of Example~\ref{exm:kreinneumann2}; cf. Proposition~\ref{pro:assumptionpara}~(iii).

\begin{Exm}
  Let $\vartheta$ be a symmetric linear operator in $L^2(\partial\Omega)$ with $H^{3/2}(\partial\Omega) \subset \dom \vartheta$ and 
  $\ran (\vartheta\upharpoonright {H^{3/2}(\partial\Omega)}) \subset H^{1/2}(\partial\Omega)$, and assume that $\vartheta$ is bounded from
  $(H^{3/2}(\partial\Omega), \|\cdot\|_{H^{-1/2}(\partial\Omega)} )$ to $H^{1/2}(\partial\Omega)$. Then the corresponding extension 
  \begin{equation*}
    \mathscr L_{\vartheta} = \mathscr L_{\max} \upharpoonright 
    \bigl\{f\in H^0_{\mathscr L}(\Omega):\vartheta\widetilde \tau_D f + \widetilde \tau_N f = 0\bigr\}
  \end{equation*}
  of $A=\mathscr L_{\min}$ is self-adjoint in $L^2(\Omega)$ with regularity
  $\dom \mathscr L_\vartheta \subset H^2(\Omega)$.
\end{Exm}

Proposition~\ref{pro:assumptionpara} together with well known compact embedding properties of Sobolev spaces 
yield some simple sufficient conditions for self-adjoint realizations of $\mathscr L$.

\begin{Pro}\label{pro:embedding}
  Let $\vartheta$ be a symmetric operator in $L^2(\Omega)$ such that \eqref{eqn:varthetadom} holds, and assume
  that $\vartheta$ is continuous as a mapping from $H^{3/2-\delta_1}(\partial\Omega)$ to $H^{1/2+\delta_2}(\partial\Omega)$, where 
  $\delta_1\in [0,\frac 32]$, $\delta_2\geq 0$ and $\delta_1+\delta_2>0$. 
  Then 
  \begin{equation*}
    \mathscr L_{\vartheta} = \mathscr L_{\max} \upharpoonright \bigl\{ f\in H^0_{\mathscr L}(\Omega): 
    \vartheta\widetilde \tau_D f + \widetilde \tau_N f = 0\bigr\}
  \end{equation*}
   is self-adjoint in $L^2(\Omega)$ with regularity $\dom \mathscr L_\vartheta \subset H^2(\Omega)$.
\end{Pro}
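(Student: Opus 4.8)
The plan is to reduce the assertion to an application of Proposition~\ref{pro:assumptionpara}~(i) for the quasi boundary triple $\{L^2(\partial\Omega),\Gamma_0,\Gamma_1\}$ from Theorem~\ref{thm:BLtriplesecond}, and then to pass from the $H^2$-realization to the maximal-domain operator $\mathscr L_\vartheta$ with the help of Lemma~\ref{lem:symmetric3}, exactly as in the proof of Corollary~\ref{cor:assumself}. First I would record that all structural hypotheses of Proposition~\ref{pro:assumptionpara} are in force: for this triple one has $\mathscr G_0=H^{3/2}(\partial\Omega)$ and $\mathscr G_1=H^{1/2}(\partial\Omega)$ with norms equivalent to the usual Sobolev norms (so that (i)--(ii) of Proposition~\ref{pro:Gamma_0_extension} and Corollary~\ref{cor:Gamma_1_extension} hold), both spaces are dense in $L^2(\partial\Omega)$ by Theorem~\ref{thm:BLtriplesecond}~(ii), $A_1=\mathscr L_N$ is self-adjoint, and a common point $\eta\in\mathbb R\cap\rho(\mathscr L_D)\cap\rho(\mathscr L_N)$ exists since $\mathscr L_D$ and $\mathscr L_N$ are self-adjoint and bounded from below. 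The symmetry of $\vartheta$ and \eqref{eqn:varthetadom} supply the remaining standing assumptions, so that only condition (i) of Proposition~\ref{pro:assumptionpara} --- compactness of $\vartheta$ as an operator from $\mathscr G_0$ into $\mathscr G_1$ --- needs to be checked.

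To verify this compactness I would factor $\vartheta\upharpoonright\mathscr G_0$ as the composition
\[
H^{3/2}(\partial\Omega)\hookrightarrow H^{3/2-\delta_1}(\partial\Omega)\xrightarrow{\ \vartheta\ }H^{1/2+\delta_2}(\partial\Omega)\hookrightarrow H^{1/2}(\partial\Omega),
\]
where the two outer arrows are the canonical embeddings (well defined because $\delta_1\geq 0$ and $H^{3/2}(\partial\Omega)\subset H^{3/2-\delta_1}(\partial\Omega)\subset\dom\vartheta$ by \eqref{eqn:varthetadom}) and the middle arrow is continuous by hypothesis. Since $\partial\Omega$ is a compact $C^\infty$-manifold, the embedding $H^{s}(\partial\Omega)\hookrightarrow H^{t}(\partial\Omega)$ is compact whenever $s>t$ by the Rellich theorem. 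Because $\delta_1+\delta_2>0$, at least one of $\delta_1>0$ or $\delta_2>0$ holds, so at least one of the two embeddings in the composition is compact; hence the whole composition, that is, $\vartheta$ regarded as an operator from $\mathscr G_0$ to $\mathscr G_1$, is compact. Proposition~\ref{pro:assumptionpara}~(i) then yields that $A_\vartheta=T\upharpoonright\{f\in H^2(\Omega):\vartheta\tau_D f+\tau_N f=0\}$ is self-adjoint in $L^2(\Omega)$, and, being a restriction of $T=\mathscr L\upharpoonright H^2(\Omega)$, it satisfies $\dom A_\vartheta\subset H^2(\Omega)$.

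Finally I would pass to the maximal-domain operator. As $\vartheta$ is symmetric in $L^2(\partial\Omega)$, Lemma~\ref{lem:symmetric3} shows that $\mathscr L_\vartheta$ is symmetric, and, using $\widetilde\tau_D f=\tau_D f$ and $\widetilde\tau_N f=\tau_N f$ for $f\in H^2(\Omega)$, that $A_\vartheta\subset\mathscr L_\vartheta$. Since a self-adjoint operator admits no proper symmetric extension, it follows that $A_\vartheta=\mathscr L_\vartheta$; hence $\mathscr L_\vartheta$ is self-adjoint with $\dom\mathscr L_\vartheta=\dom A_\vartheta\subset H^2(\Omega)$, which is the asserted regularity.

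The proof is essentially routine once the machinery of Section~\ref{sec:krein} and Theorem~\ref{thm:BLtriplesecond} is available; the only content-bearing step is the factorization above, and the key point making it work is the strict inequality $\delta_1+\delta_2>0$, which forces one of the two Sobolev embeddings to be genuinely compact. The one place requiring a little care is interpreting the continuity hypothesis on $\vartheta$ consistently with \eqref{eqn:varthetadom}, so that $\vartheta\upharpoonright\mathscr G_0$ really coincides with the displayed composition and maps into $\mathscr G_1$.
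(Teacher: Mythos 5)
Your proof is correct and follows essentially the same route as the paper: the factorization of $\vartheta\upharpoonright H^{3/2}(\partial\Omega)$ through one compact Sobolev embedding (guaranteed by $\delta_1+\delta_2>0$), the application of Proposition~\ref{pro:assumptionpara}~(i) to get self-adjointness of the $H^2$-realization, and Lemma~\ref{lem:symmetric3} to identify it with $\mathscr L_\vartheta$, since a self-adjoint operator has no proper symmetric extensions. Your version merely spells out the hypothesis-checking and the embedding argument in more detail than the paper does.
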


\begin{proof}
  Observe that at least one of the embeddings 
  $H^{3/2}(\partial\Omega) \hookrightarrow H^{3/2-\delta_1}(\partial\Omega)$ or
  $H^{1/2+\delta_2}(\partial\Omega)\hookrightarrow H^{1/2}(\partial\Omega)$ is compact; cf.  \cite[Theorem~7.10]{Wl87}. 
  Hence we conclude that 
  $\theta := \vartheta \upharpoonright H^{3/2}(\partial\Omega) : H^{3/2}(\partial\Omega) \to H^{1/2}(\partial\Omega)$
  is a compact operator. Therefore Proposition~\ref{pro:assumptionpara}~(i) yields that $\mathscr L_\theta$ is self-adjoint in $L^2(\Omega)$
  with regularity $\dom \mathscr L_\theta \subset H^2(\Omega)$; cf. the proof of Corollary~\ref{cor:assumself}. It follows from
  Lemma~\ref{lem:symmetric3} that $\mathscr L_\vartheta$ is a symmetric extension of the self-adjoint operator 
  $\mathscr L_\theta$ and hence both operators $\mathscr L_\vartheta$ and $\mathscr L_\theta$ coincide.
\end{proof}
Finally we illustrate Proposition~\ref{pro:embedding} with a simple example. 
\begin{Exm}\label{cor:robin} 
  Let $0<\varepsilon\leq\frac 32$ and assume that 
  \begin{equation*}
  \alpha \in \mathcal M\bigl(H^{3/2}(\partial\Omega),  H^{1/2+\varepsilon}(\partial\Omega)\bigr)\quad\text{or}\quad
  \alpha \in \mathcal M\bigl(H^{3/2-\varepsilon}(\partial\Omega), H^{1/2}(\partial\Omega)\bigr),
  \end{equation*}
  where $\mathcal M(\cdot,\cdot)$ denotes the space of all pointwise multipliers; cf.~\cite{MaSh08, Tr83}.
  Then it follows  from Proposition~\ref{pro:embedding} that
  \begin{equation*}
    \mathscr L_\alpha = \mathscr L_{\max} \upharpoonright 
    \bigl\{ f\in H^0_{\mathscr L}(\Omega) : \alpha\cdot\widetilde\tau_D f + \widetilde \tau_N f = 0\bigr\}
  \end{equation*}
  is self-adjoint in $L^2(\Omega)$ with regularity $\dom \mathscr L_\alpha \subset H^2(\Omega)$. In particular, since 
  $C^r(\partial\Omega) \subset \mathcal M(H^{1/2}(\partial\Omega), H^{1/2}(\partial\Omega))$ for $r\in (\frac 12,\, 1)$
  the assertion holds for all $\alpha\in C^r(\partial\Omega)$,   $r\in (\frac 12,\, 1)$.
\end{Exm}

\section*{Appendix}

Throughout this paper linear relations are used in different contexts. In this appendix we provide the necessary 
definitions. For more details we refer the reader to, e.g., \cite{A61,C98,HSS07}. 

Let $\mathcal G$ be a Hilbert space. A (closed) linear relation $\Theta$ 
in $\mathcal G$ is a (closed) linear subspace of $\mathcal G\times\mathcal G$. For the elements in a linear relation $\Theta$ usually a
vector notation is used, e.g. $\bigl(\begin{smallmatrix} x\\ x'\end{smallmatrix}\bigr)\in\Theta$.  For a linear relation $\Theta$ in $\mathcal G$ we shall write
\begin{equation*}
  \begin{split}
 \dom \Theta  :=& \left\{x \in \mathcal G : \begin{pmatrix} x \\ x' \end{pmatrix} \in \Theta\,\,\text{for some}\,\, x'\in\mathcal G\right\},\\
 \ran \Theta  :=& \left\{x' \in \mathcal G : \begin{pmatrix} x \\ x' \end{pmatrix} \in \Theta\,\,\text{for some}\,\, x\in\mathcal G\right\},\\
 \ker \Theta  :=& \left\{x \in \mathcal G : \begin{pmatrix} x \\ 0 \end{pmatrix} \in \Theta\right\},\\
 \mul \Theta  :=& \left\{x' \in \mathcal G : \begin{pmatrix} 0 \\ x' \end{pmatrix} \in \Theta\right\},
\end{split}
\end{equation*}
for the {\it domain}, {\it range}, {\it kernel} and {\it multivalued part} of $\Theta$, respectively. 
Note that each linear operator
$\Theta$ in  $\mathcal G$ is a linear relation if we identify the operator with its graph,
\begin{equation*}
\Theta=\left\{\begin{pmatrix} x\\ \Theta x\end{pmatrix}: x\in\dom \Theta\right\},
\end{equation*} 
and that a linear relation $\Theta$ is (the graph of) an operator if and only if the multivalued part of $\Theta$
is trivial, that is, $\mul \Theta=\{0\}$. 

The {\it inverse}
$\Theta^{-1}$ of a linear relation $\Theta$ in $\mathcal G$ is defined by
\begin{equation*}
\Theta^{-1}=\left\{\begin{pmatrix} x' \\ x\end{pmatrix}: \begin{pmatrix} x \\ x' \end{pmatrix} \in \Theta\right\}.
\end{equation*}
It is easy to see that $\dom \Theta^{-1}=\ran \Theta$ and $\ker \Theta^{-1}=\mul \Theta$ hold. 
The {\it sum} and {\it product} of two linear relations $\Theta_1$ and $\Theta_2$ in $\mathcal G$ are defined as
\begin{equation*}
  \begin{split}
    \Theta_1+\Theta_2 &:= \left\{\begin{pmatrix} x \\ x'+x'' \end{pmatrix} : 
    \begin{pmatrix} x \\ x' \end{pmatrix} \in \Theta_1,\, \begin{pmatrix} x  \\ x'' \end{pmatrix}\in \Theta_2\right\},\\ 
    \Theta_2 \Theta_1 &:= \left\{\begin{pmatrix} x \\ x''  \end{pmatrix}  : 
    \begin{pmatrix} x \\ x' \end{pmatrix} \in \Theta_1,\,  \begin{pmatrix} x' \\ x'' \end{pmatrix}\in \Theta_2\right\}. 
  \end{split}
\end{equation*}
If, e.g., $\Theta$ is a linear relation in $\mathcal G$, and $\iota_+:\mathscr G_1\rightarrow\mathcal G$ and 
$\iota_-:\mathscr G_1'\rightarrow\mathcal G$ are bijective linear operators then
\[
\iota_+^{-1}\Theta\iota_-=\left\{\begin{pmatrix} h \\ \iota_+^{-1} x'\end{pmatrix}: 
\begin{pmatrix} \iota_- h \\ x'\end{pmatrix}\in\Theta,\, h\in\mathscr G_1'\right\}\subset  \mathscr G_1' \times\mathscr G_1
\]
and $\iota_+^{-1}\Theta\iota_-$ can be viewed as a linear relation from $\mathscr G_1'$ in $\mathscr G_1$.

Next the definition of the resolvent set and the spectrum of a closed linear relation $\Theta$ in $\mathcal G$ is recalled.
A point $\lambda\in\mathbb C$ is
said to belong to the {\it resolvent set} $\rho(\Theta)$ of $\Theta$ 
if $(\Theta-\lambda)^{-1}$ is a bounded operator defined on $\mathcal G$.  
The {\em spectrum} $\sigma(\Theta)$ of
$\Theta$ is the complement of $\rho(\Theta)$ in $\mathbb C$, it decomposes into three disjoint components:
The {\it point spectrum} $\sigma_p(\Theta)$, {\it continuous spectrum} $\sigma_c(\Theta)$, and 
{\it residual spectrum}
$\sigma_r(\Theta)$ defined by  
\begin{equation*}
\begin{split}
\sigma_p(\Theta)&=\bigl\{\lambda\in\mathbb C : \ker(\Theta-\lambda)\not=\{0\}\bigr\},\\
\sigma_c(\Theta)&=\bigl\{\lambda\in\mathbb C:  \ker(\Theta-\lambda)=\{0\},\,\,
\ran(\Theta-\lambda) \,\,\text{dense in}\,\,
\mathcal G,\,\lambda\not\in\rho(\Theta)\bigr\},\\
\sigma_r(\Theta)&=\bigl\{\lambda\in\mathbb C: \ker(\Theta-\lambda)=\{0\},\,\,
\ran(H-\lambda)  \,\,\text{not dense in}\,\,
\mathcal G \bigr\}.
\end{split}
\end{equation*}

For a linear relation $\Theta$ in $\mathcal G$ the {\it adjoint} relation $\Theta^*$ is defined by
\[
\Theta^*:=\left\{\begin{pmatrix} y \\ y'\end{pmatrix} : (x',y)=(x,y')\,\,\text{for all}\,\, 
\begin{pmatrix} x \\ x'\end{pmatrix}\in\Theta  \right\}.
\]
It follows that the adjoint relation $\Theta^*$ is closed in $\mathcal G$ 
and that $\Theta^{**}=\overline\Theta$. Observe that $\mul\Theta^*=(\dom\Theta)^\bot$ and that,
in particular, $\Theta^*$ is an operator if and only if $\Theta$ is densely defined. This also implies that for a densely defined operator $\Theta$
the above definition of the adjoint coincides with the usual one for (unbounded) operators. A linear relation $\Theta$ in $\mathcal G$ 
is said to be
{\it symmetric} if $\Theta\subset\Theta^*$ and {\it self-adjoint} if $\Theta=\Theta^*$. We say that $\Theta$ is {\it dissipative} ({\it accumulative}) 
if $\im (x',x)\geq 0$ ($\im (x',x)\leq 0$, respectively) holds for all $(x,x')^\top\in\Theta$, and $\Theta$ is said to be
{\it maximal dissipative} ({\it maximal accumulative}) if $\Theta$ is dissipative (accumulative) and does not admit proper 
dissipative (accumulative, respectively) extensions in $\mathcal G$.

Finally we note that a selfadjoint (maximal dissipative, maximal accumulative) relation $\Theta$ in $\mathcal G$ can always be decomposed into the
direct sum of a selfadjoint (maximal dissipative, maximal accumulative, respectively) operator in the Hilbert space $\overline{\dom\Theta}$ and
a purely multivalued relation in the Hilbert space $\mul\Theta$. This also shows that the spectral theory of selfadjoint (maximal dissipative, 
maximal accumulative) operators in Hilbert spaces extends in a natural form to selfadjoint (maximal dissipative, 
maximal accumulative, respectively) relations in Hilbert spaces.

\end{document}